\newtheorem{theorem}{Theorem}[section]
\newtheorem{corollary}[theorem]{Corollary}
\newtheorem{lemma}[theorem]{Lemma}
\newtheorem{proposition}[theorem]{Proposition}
\newtheorem{conjecture}[theorem]{Conjecture}
\theoremstyle{definition}
\newtheorem{definition}[theorem]{Definition}
\newtheorem{remark}[theorem]{Remark}
\newtheorem{example}[theorem]{Example}
\newtheorem{case}{Case}
\numberwithin{equation}{section}
\DeclareMathAlphabet{\mathpzc}{OT1}{pzc}{m}{it}
\DeclareMathOperator{\Perf}{\mathsf{Perf}}
\DeclareMathOperator{\SL}{\mathsf{SL}}
\DeclareMathOperator{\rk}{rk}
\DeclareMathOperator{\rad}{\mathsf{rad}}
\DeclareMathOperator{\coker}{\mathsf{coker}}
\renewcommand{\ker}{\mathsf{ker}}
\renewcommand{\dim}{\mathsf{dim}}
\DeclareMathOperator{\tor}{\mathsf{tor}}
\DeclareMathOperator{\Coh}{\mathsf{Coh}}
\DeclareMathOperator{\QCoh}{\mathsf{QCoh}}
\DeclareMathOperator{\VB}{\mathsf{VB}}
\DeclareMathOperator{\Tri}{\mathsf{Tri}}
\DeclareMathOperator{\MM}{\mathsf{M}}
\DeclareMathOperator{\art}{\mathsf{fnlg}}
\DeclareMathOperator{\can}{\mathsf{can}}
\DeclareMathOperator{\Art}{\mathsf{Tor}}
\DeclareMathOperator{\CM}{\mathsf{CM}}
\DeclareMathOperator{\Rep}{\mathsf{Rep}}
\DeclareMathOperator{\Ob}{\mathsf{Ob}}
\DeclareMathOperator{\krdim}{\mathsf{kr.dim}}
\DeclareMathOperator{\syz}{\mathsf{syz}}
\DeclareMathOperator{\depth}{\mathsf{depth}}
\DeclareMathOperator{\Supp}{\mathsf{Supp}}
\DeclareMathOperator{\Hom}{\mathsf{Hom}}
\DeclareMathOperator{\Ext}{\mathsf{Ext}}
\DeclareMathOperator{\GL}{\mathsf{GL}}
\DeclareMathOperator{\Ann}{\mathsf{ann}}
\DeclareMathOperator{\End}{\mathsf{End}}
\DeclareMathOperator{\Mat}{\mathsf{Mat}}
\DeclareMathOperator{\Spec}{\mathsf{Spec}}
\def\scX{\vec{\mathsf{x}}}
\def\scY{\vec{\mathsf{y}}}
\def\scU{\vec{\mathsf{u}}}
\def\scV{\vec{\mathsf{v}}}
\def\val{\mathsf{val}}
\def\bK{\mathbb{K}}
\def\dec{\vartriangleleft}
\newcommand{\bD}{\mathbbm{D}}
\newcommand{\kk}{\mathbbm{k}}
\newcommand{\FF}{\mathbb{F}}
\newcommand{\GG}{\mathbb{G}}
\newcommand{\LL}{\mathbb{L}}
\renewcommand{\mod}{\mathsf{mod}}
\newcommand{\llbrace}{(\!(}
\newcommand{\rrbrace}{)\!)}
\newcommand{\kA}{\mathcal{A}}
\newcommand{\kB}{\mathcal{B}}
\newcommand{\kF}{\mathcal{F}}
\newcommand{\kG}{\mathcal{G}}
\newcommand{\kI}{\mathcal{I}}
\newcommand{\kP}{\mathcal{P}}
\newcommand{\sP}{\mathcal{P}}
\newcommand{\lar}{\longrightarrow}
\newcommand{\gA}{A}
\newcommand{\gm}{\mathfrak{m}}
\newcommand{\gR}{R}
\newcommand{\gD}{D}
\newcommand{\gJ}{J}
\newcommand{\rA}{A}
\newcommand{\rB}{B}
\newcommand{\rC}{C}
\newcommand{\rR}{R}
\newcommand{\rD}{D}
\newcommand{\rS}{S}
\newcommand{\rQ}{Q}
\newcommand{\rK}{\mathbbm{K}}
\newcommand{\rL}{\mathbbm{L}}
\newcommand{\idm}{\mathfrak{m}}
\newcommand{\idn}{\mathfrak{n}}
\newcommand{\idp}{\mathfrak{p}}
\newcommand{\mM}{M}
\newcommand{\mN}{N}
\newcommand{\mK}{K}
\newcommand{\mT}{T}
\newcommand{\mE}{E}
\newcommand{\mF}{F}
\newcommand{\mL}{L}
\newcommand{\mX}{X}
\newcommand{\mY}{Y}
\newcommand{\mV}{V}
\newcommand{\mP}{P}
\newcommand{\mU}{U}
\newcommand{\mQ}{Q}
\newcommand{\sT}{T}
\newcommand{\BB}{\mathbb{B}}
\newcommand{\EE}{\mathbb{E}}
\newcommand{\HH}{\mathbb{H}}
\newcommand{\DD}{\mathbb{D}}
\newcommand{\PP}{\mathbb{P}}
\newcommand{\II}{\mathbb{I}}
\def\dE{\mathfrak E}		\def\dF{\mathfrak F}		
\def\dX{\mathfrak X}		
\def\dJ{\mathfrak J}
		\def\aK{\Bbbk}		
\def\sd{\mathsf d}	
\def\bnu{\boldsymbol{\nu}}
		\def\od{\odot}	\def\bu{\bullet}
\def\lb{\textup{(}}	\def\rb{\textup{)}}
\def\bnu{\boldsymbol{\nu}}
\def\bmu{\boldsymbol{\mu}}
\def\rad{\mathop\mathrm{rad}}
\def\rep{\mathop\mathsf{Rep}}
\def\el{\mathop\mathsf{El}}
\def\Mat{\mathop\mathrm{Mat}}
\def\End{\mathop\mathrm{End}\nolimits}
\def\st{\mathrm{st}}
\def\8{\infty}			
\def\bop{\bigoplus}	\def\+{\oplus}		\def\xx{\times}
\def\*{\otimes}
\def\str{\stackrel}
\def\dec{\vartriangleleft}	\def\ced{\vartriangleright}
\def\lha{\leftharpoondown}
\def\rha{\rightharpoondown\phantom{I\!}\!}
\def\row#1#2{\left( #1_1 , #1_2 , \dots , #1_{#2} \right)}
\def\lst#1#2{ #1_1 , #1_2 , \dots , #1_{#2}}
\def\iff{if and only if }
\def\kA{\mathcal A} 
\def\kB{\mathcal B} 
 \def\kP{\mathcal P}
\def\kF{\mathcal F} 
\def\kG{\mathcal G} 
\def\kI{\mathcal I}
\def\mZ{\mathbb Z}	\def\NN{\mathbb N}
\def\dX{\mathfrak X}
\def\DMO{\DeclareMathOperator}
\DMO{\ob}{Ob}            \DMO{\mor}{Mor}
\DMO{\Ker}{Ker}
\DMO{\id}{Id}
\newcommand{\hdline}[3]{\draw[dashed] (#1-#2-1.south west) -- (#1-#2-#3.south east);}
\newcommand{\vdline}[3]{\draw[dashed] (#1-1-#3.north east) -- (#1-#2-#3.south east);}
\newcommand{\hsline}[3]{\draw[thick] (#1-#2-1.south west) -- (#1-#2-#3.south east);}
\newcommand{\vsline}[3]{\draw[thick] (#1-1-#3.north east) -- (#1-#2-#3.south east);}
\newcommand{\sfrm}[3]{
\node[draw,solid, thick, fit=(#1-1-1)(#1-#2-#3), inner sep=0pt]{};}
\tikzset{
  decorate with/.style={decorate,decoration={shape backgrounds,shape=#1,shape size=1.5mm}},
   deco/.style={decorate with=dart},
   ordi/.style={draw,-stealth,  thick},
   conj/.style={dashed, draw, thick},
   ve/.style={circle, draw, thick, fill=blue!20, inner sep=1pt, outer sep=2pt, minimum size=7pt},
    dot/.style={fill=blue!10,circle,draw, inner sep=1pt, minimum size=5pt},
  dv/.style={star,star points=5,
star point ratio=2, draw, thick, fill=green!20, inner sep=1pt,outer sep=2pt,minimum size=7pt}
}
\tikzset{
    tbl5 nodes/.style={
        rectangle,
        execute at begin node=$,
       execute at end node=$,
       fill=blue!5,
        align=center,
        text depth=0.5ex,
        text height=2ex,
        inner xsep=0pt,
        outer sep=0pt,
           },
    tbl5/.style={
        matrix of nodes,
        row sep=-\pgflinewidth,
        column sep=-\pgflinewidth,
        nodes={
            tbl5 nodes
        },
        execute at empty cell={\node[draw=none]{};}
    }
  }
\title[Cohen--Macaulay modules and matrix problems]{Maximal Cohen--Macaulay
modules over  non--isolated surface singularities and matrix problems}
\author{Igor Burban}
\address{
Universit\"at zu K\"oln,
Mathematisches Institut,
Weyertal 86-90,
D-50931 K\"oln,
Germany
}
\email{burban@math.uni-koeln.de}
\author{Yuriy Drozd}
\address{
 Institute of Mathematics
National Academy of Sciences,
Tereschenkivska str. 3,
01004 Kyiv, Ukraine
}
\email{drozd@imath.kiev.ua}
\dedicatory{Dedicated to Claus Ringel
on the occasion of his  birthday}
\begin{document}

\begin{abstract}
In this article  we develop a new method to deal with maximal
Cohen--Macaulay modules over  non--isolated surface singularities. In particular, we give a
negative answer on an old question  of Schreyer about surface singularities with only countably
many indecomposable maximal Cohen--Macaulay modules. Next, we prove that
the degenerate cusp singularities have tame  Cohen--Macaulay representation type.
Our approach is illustrated on the case of
$\kk\llbracket x,y,z\rrbracket/(xyz)$ as well as several other rings.
This  study of  maximal Cohen--Macaulay modules over non--isolated singularities leads to a new
class  of problems of linear algebra, which we call representations of decorated
bunches of chains.
We prove that these  matrix problems have tame representation type and describe the underlying canonical forms.
\end{abstract}

\maketitle

\tableofcontents

\section{Introduction, motivation  and historical remarks}
\label{sec1}

In this article,  we essentially deal with the following question.
Let $f \in (x, y, z)^2 \subseteq \mathbb{C}\llbracket x, y, z\rrbracket =:
\rS$ be a polynomial.
How to describe all  pairs  $(\varphi, \psi) \in \Mat_{n \times n}(\rS)^{\times 2}$
such that
$
\varphi \cdot \psi = \psi \cdot \varphi = f \cdot \mathbbm{1}_{n \times n}?
$
Such a pair of matrices $(\varphi, \psi)$ is also called \emph{matrix factorization} of
$f$.
One of the earliest examples of this kind, dating   back to Dirac, is the following
 formula for a ``square root'' of the Laplace operator:
\begin{equation*}\label{E:Dirac}
\Delta := \Bigl[
\partial_x^2  + \partial_y^2 + \partial_z^2
\Bigr] \mathbbm{1}_{2 \times 2} =
\left[ \partial_x
\left(
\begin{array}{cc}
1 & 0 \\
0 & -1
\end{array}
\right) + \partial_y
\left(
\begin{array}{cc}
0   &  1 \\
1  & 0
\end{array}
\right) + \partial_z
\left(
\begin{array}{cc}
0   &  i \\
-i  & 0
\end{array}
\right)
\right]^2 =: \nabla^2.
\end{equation*}
Equivalently, the pair $(\varphi, \varphi)$ is a matrix factorization of the polynomial
$f = x^2 + y^2 + z^2$,  where
$
\varphi =
\left(\begin{smallmatrix}
x & y -i z \\
y + iz & -x
\end{smallmatrix}
\right).
$
One can show (see, for example \cite[Chapter 11]{Yoshino}) that up to a certain natural equivalence relation, the pair $(\varphi, \varphi)$ is  the \emph{only} non--trivial matrix factorization of $f$.
A certain version of this fact  was  already known
to Dirac. One of the  results of our  paper is a complete classification
of all matrix factorizations of the polynomial  $f = xyz$, see Subsection \ref{SS:degcusp2}.

According to Eisenbud \cite{Eisenbud}, the problem  of description   of all matrix factorizations of a polynomial $f$ can be rephrased as the question to classify  all \emph{maximal Cohen--Macaulay} modules over
the hypersurface singularity $\rA = \rS/(f)$. The latter problem can (and actually should) be posed
in a much broader context of  local Cohen--Macaulay rings or even in the non-commutative set-up
of orders over local Noetherian rings. This point of view was  promoted by Auslander starting from his
work  \cite{PhilNotes}.  The theory of maximal Cohen--Macaulay modules over
orders (called in this framework \textit{lattices}) dates back to the beginning of the
twentieth  century and has its origin in the theory of integral representations of finite groups, see for example  \cite{CurtisReiner}.

For a \emph{Gorenstein} local ring $(\rA, \idm)$,  Buchweitz observed that the
stable category of maximal Cohen--Macaulay modules $\underline{\CM}(\rA)$ is triangulated and
proved that the functor
\begin{equation}\label{E:Buchweitz}
\underline{\CM}(\rA) \lar D_{\mathsf{sg}}(\rA) := \frac{D^b(\rA-\mod)}{\Perf(\rA)}
\end{equation}
is an equivalence of triangulated categories,
where  $D^b(\rA-\mod)$ is the derived category of Noetherian $\rA$--modules and $\Perf(\rA)$
is its full subcategory of of perfect complexes \cite{Buchweitz}.

In the past decade, there was a significant growth of interest to a study of maximal Cohen--Macaulay modules
and matrix factorizations. At this place, we mention only the  following four  major directions, which were born in this period.
\begin{itemize}
\item Kapustin and Li discovered  a  connection between the theory of matrix factorizations
  with  topological quantum field theories \cite{KapLi}.
\item Khovanov and Rozansky suggested a new approach to construct  invariants of links,
based on matrix
factorizations of certain polynomials \cite{KhovanovRozansky}.
\item Van den Bergh introduced the notion of a non-commutative crepant resolution  of
a normal Gorenstein singularity \cite{VandenBergh}.
It turned out that this theory is closely related with the study of cluster--tilting objects in
 stable
categories of maximal Cohen--Macaulay modules over Gorenstein singularities \cite{BIKR, IyamaReiten,IyamaYoshino}.
\item Finally, Orlov established a close connection between the stable category of \emph{graded} Cohen--Macaulay modules over a \emph{graded} Gorenstein $\kk$--algebra $\rA$ and  the derived category of coherent sheaves
on $\mathsf{Proj}(\rA)$ \cite{Orlov}. This brought a new light on the study of  D--branes in Landau--Ginzburg models
and provided a new powerful tool for the homological mirror symmetry of Kontsevich \cite{Kontsevich}, see for example \cite{Sheridan}. As was proven by Keller, Murfet and Van den Bergh \cite{KellerMurfetVdB}, the stable categories of graded and non-graded maximal Cohen--Macaulay modules over a graded Gorenstein singularity are related by a  triangulated orbit category construction.
\end{itemize}

In this article, we deal with  the representation--theoretic study of maximal Cohen--Macaulay modules over \emph{surface} singularities. Of course, there are close interactions of this traditional area
 with all four new directions, mentioned above.
Moreover, for surface singularities,  the theory of maximal Cohen--Macaulay modules is particularly rich and interesting.
In a certain sense (which can be rigorously formulated) it is parallel to the theory
of vector bundles on projective curves. Following this analogy, the
\emph{normal} surface singularities
correspond to \emph{smooth} projective curves.

 One of the most beautiful applications
of the study   of maximal Cohen--Macaulay modules over surface singularities
is a conceptual  explanation of the \emph{McKay correspondence} for the finite
subgroups of $\SL_2(\mathbb{C})$, see \cite{GSVerdier,ArtinVerdier, Auslander, Yoshino, SurvOnCM, LeusckeWiegand}. One of the conclusions  of this theory states
that for the  \emph{simple} hypersurface singularities
$x^2 + y^{n+1} + z^2, n \ge 1$ (type $A_n$), $x^2 y + y^{n-1} + z^2, n \ge 4$ (type $D_n$),
 $x^3 + y^4 + z^2, x^3 + xy^3 + z^2 $ and
$ x^3 + y^5 + z^2$ (types $E_6$, $E_7$ and $E_8$)
there are only finitely many indecomposable matrix factorizations.

According to Buchweitz, Greuel and Schreyer  \cite{BGS}, two limiting \emph{non-isolated}
hypersurface singularities $A_\infty$ (respectively $D_\infty$) given by the equation
$x^2  + z^2$ (respectively $x^2 y + z^2$), have only countably many indecomposable maximal
Cohen--Macaulay modules.  In other words, $A_\infty$ and $D_\infty$ have \emph{discrete}
 Cohen--Macaulay representation type. Moreover, in \cite{BGS} it was shown that the simple
hypersurface singularities  are {exactly} the hypersurface singularities of {finite} Cohen--Macaulay
representation type. Moreover, if the base field has uncountably many elements, then
   $A_\infty$ and $D_\infty$ are the {only hypersurface} singularities
with countably many indecomposable maximal Cohen--Macaulay modules.

Going in another direction, in works of Kahn \cite{Kahn}, Dieterich  \cite{Dieterich},
 Drozd, Greuel and Kashuba \cite{DGK}
it was shown that the \emph{minimally elliptic} hypersurface singularities
$$
T_{p,q,r}(\lambda) = x^p + y^q + z^r + \lambda xyz,
$$ where
$
\frac{1}{p}+ \frac{1}{q} + \frac{1}{r} \le 1$ and
$\lambda \in \mathbb{C}\setminus \Delta_{(p,q,r)}$
have \emph{tame} Cohen--Macaulay representation type ($\Delta_{(p,q,r)}$ is a certain finite set).
In the case  $\frac{1}{p}+ \frac{1}{q} + \frac{1}{r} = 1$ the singularity $T_{p,q,r}(\lambda)$
is quasi-homogeneous
and called
\emph{simply elliptic}.  For $\frac{1}{p}+ \frac{1}{q} + \frac{1}{r} < 1$  it is a
\emph{cusp singularity},
 in this case one may  without  loss of generality assume $\lambda = 1$. A special interest to the study
 of maximal Cohen--Macaulay modules over this
  class of surface singularities is in particular motivated  by recent development in the homological mirror symmetry: according to Seidel \cite{Seidel} and Efimov \cite{Efimov}, the stable category (of certain equivariant) matrix factorizations of the potential $x^{2g+1} + y^{2g+1} + z^{2g+1} - xyz$ is  equivalent to the Fukaya category of a compact Riemann surface of genus $g \ge 2$.

 In the approach of Kahn \cite{Kahn},
a description of maximal Cohen--Macaulay modules over  simply elliptic singularities
reduces to the study of vector bundles on elliptic curves, whereas in the case
of the cusp singularities \cite{DGK} it boils down to a classification of vector bundles on the
Kodaira cycles of projective lines. In both cases the complete classification
of  indecomposable vector bundles is known: see \cite{Atiyah} for the case
of elliptic curves and \cite{DrozdGreuelBundles, Survey} for the case of Kodaira cycles.
The method  of  Dieterich \cite{Dieterich}   is  based
on the technique of representation theory of finite dimensional algebras and
can be applied
 only to  certain  simply elliptic singularities.
Unfortunately, neither of  these approaches
 leads to a fairly   explicit description of  indecomposable
matrix factorizations. See, however, recent work of Galinat \cite{Galinat} about the
$T_{3,3,3}(\lambda)$ case.

\medskip
\noindent
Our  article grew up from an attempt   to answer the following questions:
\begin{itemize}
\item Let $\rA$ be a non--isolated Cohen--Macaulay surface singularity of discrete
Cohen--Macaulay representation type over an uncountable
algebraically closed field of characteristic zero.
Is it true that $\rA \cong \rB^G$, where $\rB$ is a singularity of type $A_\infty$ or $D_\infty$ and
$G$ is a finite group of automorphisms of $\rB$? This  question was posed   in 1987 by Schreyer  \cite{Schreyer}.

\item Can a {non--isolated} Cohen--Macaulay surface singularity have
{tame}
Cohen--Macaulay representation type?
\end{itemize}
Now we present  the main results obtained in this article.

\medskip
\noindent
\textbf{Result A}. Let $(\rA, \idm)$ be a reduced complete  Cohen--Macaulay surface singularity, which
is not normal (hence non-isolated). We introduce new categories $\Tri(\rA)$ (\emph{category of triples})
and $\Rep(\mathfrak{X}_\rA)$ (\emph{category of elements} of a certain \emph{bimodule} $\mathfrak{X}_\rA$)
and a pair of functors
\begin{equation}\label{E:generalline}
\CM(\rA) \stackrel{\FF}\lar \Tri(\rA) \stackrel{\HH}\lar \Rep(\mathfrak{X}_\rA),
\end{equation}
such that $\FF$ is an equivalence of categories and $\HH$ preserves indecomposability and isomorphism
classes of objects, see Theorem \ref{T:BurbanDrozd} and Proposition \ref{P:passage-to-MP}.
In other words, this construction reduces a description of indecomposable
maximal Cohen--Macaulay $\rA$--modules to a certain problem of linear algebra (matrix problem).

\medskip
\noindent
\textbf{Result B}. The  above categorical construction leads to a new class of tame matrix problems
which we call \emph{representations of a decorated bunch of chains}, see Definition \ref{bunch}. It generalizes
the usual representations of a bunch of chains  \cite{bo1}, which are widely used
in the representation theory of finite dimensional algebras and its applications.

This new class of problems
is actually interesting by itself.
For example, it contains the following generalization of  the classical problem to find Jordan normal form
of a square matrix. Let $(\bD, \idn)$ be a discrete
valuation ring and
 $X \in \Mat_{n \times n}(\bD)$ for some $n \ge 1$. What is the canonical form of $X$
under the transformation rule
\begin{equation}\label{E:FormJordanique}
X \mapsto S_1 X S_2^{-1},
\end{equation}
where  $S_1, S_2 \in \GL_n(\bD)$ are such that
$S_1 \equiv S_2 \, \mod \, \idn$ (decorated conjugation problem)?
 It turns out that $X$ can be transformed  into a direct sum
of canonical forms from Definition \ref{D:decorJordanBlock}, see Theorem \ref{T:JordanKronecker}.

Another problem of this kind is a generalized Kronecker problem, stated as follows.
 Let $\rK$ be the field of fractions
of $\bD$, $X, Y \in \Mat_{m \times n}(\rK)$ be two matrices of the same size. To what form can
we bring the pair $(X, Y)$ under the transformation rule
\begin{equation}\label{E:FormKronecker}
(X, Y) \mapsto \bigl(S_1 X T^{-1}, S_2 Y T^{-1}\bigr),
\end{equation}
where  $T \in \GL_n(\rK)$ and $S_1, S_2 \in \GL_m(\bD)$ are such that
$S_1 \equiv S_2 \, \mod \, \idn$? In this case, the complete list of indecomposable canonical forms
is given in Subsection  \ref{Ex:CanonicalFormsDecKronecker}.

For a general  decorated bunch of chains $\mathfrak{X}$
there are two types of indecomposable objects in $\Rep(\mathfrak{X})$:
\emph{strings} (discrete series) and \emph{bands} (continuous series). We  prove
this result in Theorem \ref{list}, a separate treatment of  the  decorated conjugation problem (\ref{E:FormJordanique}) is also given in Section \ref{S:DecoratedConjugation}.

\medskip
\noindent
\textbf{Result C}. Using this technique, we give a \emph{negative} answer on Schreyer's question.
For example, let $\rR = \kk\llbracket u, v\rrbracket$ and
$$
\underbrace{\rR \times \rR \times \dots \rR}_{n+1 \; \mbox{\scriptsize{times}}} \supset
\rA = \Bigl\{(r_1, r_2, \dots, r_{n+1}) \, \big| \, r_{i}(0, z) = r_{i+1}(z, 0) \; \mbox{for} \;  1 \le i \le n \Bigr\}.
$$
Note that
for $n = 1$,  the ring $\rA$ is just a hypersurface singularity of type $A_\infty$. However, for $n >1$ it
is not isomorphic to $\rA_\infty^G$ or $\rD_\infty^G$, where $G$ is a finite group.

 We prove that
$\rA$ has only countably many indecomposable maximal Cohen--Macaulay modules, see Theorem \ref{T:Schreyer}. Moreover,  for
 an indecomposable maximal Cohen--Macaulay module $\mM$ we  have the
 following equality for its multi-rank:
$$\underline{\rk}(\mM) = (0, \dots, 0, 1, \dots, 1, 0, \dots, 0).$$
  Our approach leads to a wide  class
of non--isolated Cohen--Macaulay surface singularities of discrete Cohen--Macaulay
representation type, see Section \ref{sec11} for details.

\medskip
\noindent
\textbf{Result D}. There is an important class of non--isolated Gorenstein surface singularities
called \emph{degenerate cusps}. They were  introduced by Shepherd--Barron in \cite{ShepherdBarron}.
For example, the  natural limits of $T_{p,q,r}(\lambda)$--singularities, like the ordinary triple
point $T_{\infty\infty\infty} = \kk\llbracket x,y,z\rrbracket/(xyz)$, are degenerate cusps.
We prove that for a degenerate cusp $\rA$, the corresponding bimodule $\mathfrak{X}_A$ is a decorated bunch of chains. For example, the classification of maximal Cohen--Macaulay modules over $\kk\llbracket x,y,z\rrbracket/(xyz)$ reduces to the matrix problem, whose objects are representations of the
$\tilde{A}_5$--quiver
\begin{equation}\label{E:affinequiv}
\begin{xy}
(0,0)*+{\circ}="A";
(7,12)*+{\bullet}="B";
(20,12)*+{\circ}="C";
(27,0)*+{\bullet}="D";
(7,-12)*+{\bullet}="E";
(20,-12)*+{\circ}="F";
{\ar@{->} "B";"A"};
{\ar@{->} "B";"C"};
{\ar@{->} "D";"C"};
{\ar@{->} "D";"F"};
{\ar@{->} "E";"A"};
{\ar@{->} "E";"F"};
\end{xy}
\end{equation}
over the field of Laurent power series $\kk\llbrace t\rrbrace$.
The transformation rules at the sources $\bullet$ are
as for quiver representations, whereas for the targets $\circ$ they are like row transformations in the
decorated Kronecker problem (\ref{E:FormKronecker}), see (\ref{E:MPTinfty}) for the precise definition.

Hence, the degenerate cusps  have \emph{tame} Cohen--Macaulay representation type, see
Theorem \ref{T:cusp-are-tame}.
This fact is  quite
surprising for us from the following reason. The category of Cohen--Macaulay
modules over a simply elliptic singularity $T_{p,q,r}(\lambda)$ is tame of polynomial growth.
The cusp singularities $T_{p,q,r}$ are tame of exponential growth. In the approach of Kahn,
one reduces first the classification problem  to a description
of vector bundles on Kodaira cycles on projective lines. The latter problem
can be reduced to representations
of a  usual bunch of chains \cite{DrozdGreuelBundles, Survey}.
 This class  of tame matrix problems  was believed  to be the most general
among those with  exponential growth. The singularity $T_{\infty\infty\infty}$ is the
natural limit of the entire  family of all $T_{p,q,r}(\lambda)$ singularities. The tameness
of the underlying classification problem would suggest that it has to be
of the type which goes beyond representations of bunches of chains. But no problems
of such type have been known  before in the representation theory of finite dimensional algebras!

Using the periodicity of Kn\"orrer \cite{Knoerrer2}, the tameness of degenerate cusps
implies  that the non--reduced curve
singularities  $\kk\llbracket x,y\rrbracket/(x^2y^2)$ and
$\kk\llbracket x,y\rrbracket/(x^2y^2 - x^p)$, $ p \ge 3$   are
Cohen--Macaulay tame as well (at least if $\mathsf{char}(\kk) = 0$),
see Theorem \ref{T:tame-curve-sing}.

The categorical construction (\ref{E:generalline})
 turns out to be convenient
in  the following situation. Having a non--isolated Cohen--Macaulay surface singularity
$\rA$, it is natural to restrict oneself  to  the category $\CM^{\mathsf{lf}}(\rA)$ of those
maximal Cohen--Macaulay modules which are \emph{locally free on the punctured spectrum} of $\rA$.
It is natural to study this category because
\begin{itemize}
\item The stable category $\underline{\CM}^{\mathsf{lf}}(\rA)$ is $\Hom$--finite, whereas
the ambient  category $\underline{\CM}(\rA)$ is not.
\item If $\rA$ is  Gorenstein then $\underline{\CM}^{\mathsf{lf}}(\rA)$ is a triangulated
subcategory of $\underline{\CM}(\rA)$. Moreover, according to Auslander \cite{PhilNotes},
the shift functor $\Sigma = \Omega^{-1}$ is a Serre functor in $\underline{\CM}^{\mathsf{lf}}(\rA)$ (in other words,
$\underline{\CM}^{\mathsf{lf}}(\rA)$ is a 1--Calabi--Yau category).

\item In the terms of the Buchweitz's equivalence (\ref{E:Buchweitz}), $\underline{\CM}^{\mathsf{lf}}(\rA)$
can be identified with the thick subcategory of $D_{\mathsf{sg}}(\rA)$ generated
by the class of the residue field $\kk = \rA/\idm$, see  \cite{KellerMurfetVdB} or
 \cite{OrlovOcCompletions} for a proof.
\end{itemize}
It turns out that for a degenerate cusp $\rA$, the essential image of $\underline{\CM}^{\mathsf{lf}}(\rA)$ under the composition
$\HH \circ \FF$ is the additive closure of the category of band objects of $\Rep(\mathfrak{X}_\rA)$, see
Theorem \ref{T:cusp-are-tame}.

\medskip
\noindent
\textbf{Result E}.
We  illustrate our method  on several examples.
\begin{itemize}
\item For the singularity $\rA = T_{23\infty} = \kk\llbracket x,y,z\rrbracket/(x^3 + y^2 - xyz)$,
we give an explicit description of  all indecomposable maximal Cohen--Macaulay modules. In fact,
our method reduces their description to the decorated Kronecker problem.
Moreover, we compute generators of all maximal Cohen--Macaulay modules of rank one (written as ideals
in $\rA$) and describe  several families
of matrix factorizations corresponding to them. See Section \ref{sec5} for details.

\item For the singularity $\rA = T_{\infty\infty\infty} =
\kk\llbracket x,y,z\rrbracket/(xyz)$, we give an explicit  description of  all indecomposable objects of
${\CM}^{\mathsf{lf}}(\rA)$. For the rank one objects of ${\CM}^{\mathsf{lf}}(\rA)$, we compute the corresponding matrix factorizations of the polynomial $xyz$, see Subsection \ref{SS:degcusp2} and especially Theorem \ref{T:xyz} and Proposition \ref{P:xyz}.

According  to
Sheridan \cite[Theorem 1.2]{Sheridan} and Abouzaid et al.~\cite[Section 7.3]{AAEKO}, the triangulated category $\underline{\CM}^{\mathsf{lf}}(\rA)$ admits a symplectic mirror description. We hope that our results will bring a new light in the study of  Fukaya categories of Riemann surfaces.

\item Our method equally allows to treat those degenerate cusps, which are \emph{not} hypersurface
singularities. We consider the following two cases:
\begin{itemize}
\item $\rA = T_{\infty\infty\infty\infty} = \kk\llbracket x,y,u,v\rrbracket/(xy, uv)$. This surface singularity is a complete intersection. We describe  all rank one objects of   ${\CM}^{\mathsf{lf}}(\rA)$. For some of them, we compute the corresponding presentation matrices,  see Subsection \ref{SS:degcusp3}.
\item $\rA = \kk\llbracket x,y,z,u,v\rrbracket/(xz, xu, yu, yv, zv)$. It is a Gorenstein surface singularity, which is not a complete intersection. We explicitly describe all rank one objects of   ${\CM}^{\mathsf{lf}}(\rA)$, see Subsection \ref{SS:degcusp4}.
\end{itemize}
\item Finally, we treat the integral Cohen--Macaulay surface singularity $$\rA = \kk\llbracket u, v, w, a, b\rrbracket/(uv - w^2, ab - w^3, aw - bu, bw - av, a^2-uw^2, b^2-vw^2),$$ which is representation tame and not Gorenstein, see Subsection \ref{SS:sing-semichain}. We describe
all maximal Cohen--Macaulay modules of rank one, see Proposition \ref{P:computesemichains},
 and compute some  one--parameter families of indecomposable
objects of ${\CM}^{\mathsf{lf}}(\rA)$ of rank two, see Remark \ref{R:SemichainsContSeries}.
\end{itemize}

\medskip
\noindent
\emph{Acknowledgement}. Parts of this work were done during the authors stay
at   Max Planck Institute f\"ur Mathematik in Bonn and at the
Mathematical Research Institute in Oberwolfach within the ``Research in Pairs'' programme from
March 18 -- March 30, 2012 and July 21 -- August 2, 2013.
The research of the first-named author was  supported by the DFG
projects Bu--1866/1--1 and    Bu--1866/2--1.  The research of the second-named author was
supported by the INTAS grant 06--1000017--9093. The first-named
author would like to thank Duco van Straten for
helpful discussions  and Wassilij Gnedin for pointing out
numerous misprints in the previous version of this paper. Our special thanks go to Lesya Bodnarchuk for supplying us with TikZ pictures illustrating representation theory of decorated bunches of chains.

\section{Generalities on maximal Cohen--Macaulay modules}
\label{sec2}

Let $(\rA,\idm)$ be a Noetherian local ring,
$\kk = \rA/\idm$ its residue field and $d = \krdim(\rA)$ its Krull dimension.
Throughout the paper
$\rA\mathsf{-mod}$ denotes the category of Noetherian (i.e. finitely generated) $\rA$--modules,
whereas
$\rA\mathsf{-Mod}$ stands for the category of all $\rA$--modules,  $\rQ = \rQ(\rA)$ is the
total ring of fractions of $\rA$ and $\kP$ is the set of prime ideals of height $1$.

\begin{definition}\label{D:CMM}
A  Noetherian  $\rA$--module $\mM$ is  called \emph{maximal Cohen--Macaulay} if
$$\Ext_\rA^i(\kk, \mM) = 0 \quad \mbox{for all} \quad 0 \le i < d.$$
\end{definition}

\subsection{Maximal Cohen--Macaulay modules over surface singularities}
In this article we focus on the study of  maximal Cohen--Macaulay modules over  Noetherian rings of Krull dimension two,
also called surface singularities. This case is actually rather  special because of the
following well--known lemma.

\begin{lemma}\label{L:CMdim2} Let $(\rA, \idm)$ be a surface singularity,
 $\mN$ be a maximal Cohen--Macaulay $\rA$--module and $\mM$ a Noetherian $\rA$--module. Then
the $\rA$--module $\Hom_\rA(\mM, \mN)$ is maximal Cohen--Macaulay.
\end{lemma}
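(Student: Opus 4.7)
The plan is to translate the Cohen-Macaulay condition into a depth statement and then chase it through a free presentation of $\mM$. Since $d = \krdim(\rA) = 2$, the hypothesis on $\mN$ amounts to $\Hom_\rA(\kk, \mN) = 0$ and $\Ext^1_\rA(\kk, \mN) = 0$, and the goal is to establish the same two vanishings for $\Hom_\rA(\mM, \mN)$.

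First I would pick a finite presentation of $\mM$, i.e.\ write a short exact sequence
$$
0 \lar K \lar \rA^n \lar \mM \lar 0,
$$
with $K$ finitely generated. Applying $\Hom_\rA(-, \mN)$ gives a left exact sequence
$$
0 \lar \Hom_\rA(\mM,\mN) \lar \mN^n \lar \Hom_\rA(K,\mN),
$$
so $\Hom_\rA(\mM,\mN)$ embeds into the Cohen-Macaulay module $\mN^n$. Setting $L$ to be the image of the right-hand map, I split this into
$$
0 \lar \Hom_\rA(\mM,\mN) \lar \mN^n \lar L \lar 0,
\qquad 0 \lar L \lar \Hom_\rA(K,\mN).
$$

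The vanishing of $\Hom_\rA(\kk, \Hom_\rA(\mM,\mN))$ is immediate, since this group injects into $\Hom_\rA(\kk, \mN^n) = 0$. For the $\Ext^1$ vanishing, the long exact sequence associated with the first short exact sequence above, combined with $\Hom_\rA(\kk,\mN^n) = 0 = \Ext^1_\rA(\kk,\mN^n)$, yields
$$
\Ext^1_\rA\bigl(\kk, \Hom_\rA(\mM,\mN)\bigr) \;\hookrightarrow\; \Hom_\rA(\kk, L).
$$
Left exactness of $\Hom_\rA(\kk, -)$ applied to $L \hookrightarrow \Hom_\rA(K,\mN)$ embeds $\Hom_\rA(\kk,L)$ into $\Hom_\rA(\kk, \Hom_\rA(K,\mN))$.

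The final step is Hom--tensor adjunction:
$$
\Hom_\rA\bigl(\kk, \Hom_\rA(K,\mN)\bigr) \;\cong\; \Hom_\rA(\kk \otimes_\rA K, \mN) \;\cong\; \Hom_\rA(K/\idm K, \mN).
$$
Since $K/\idm K$ is a finite-dimensional $\kk$-vector space, this last group is a direct sum of copies of $\Hom_\rA(\kk, \mN)$, which vanishes because $\mN$ is Cohen-Macaulay. Chaining the injections, $\Ext^1_\rA(\kk, \Hom_\rA(\mM,\mN)) = 0$, completing the proof. The only genuine subtlety is making sure that both vanishings of $\Ext^i(\kk, \mN)$ are actually used and that the adjunction identification is valid for a finitely generated $K$; both are standard, so I expect no real obstacle.
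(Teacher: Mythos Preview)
Your proof is correct. The direction of the map you write as $\Ext^1_\rA(\kk,\Hom_\rA(\mM,\mN)) \hookrightarrow \Hom_\rA(\kk,L)$ is slightly off---the connecting homomorphism goes from $\Hom_\rA(\kk,L)$ to the $\Ext^1$---but since both flanking terms vanish you get an isomorphism, so the conclusion is unaffected.

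The paper proceeds differently: it takes a \emph{two-step} free presentation $\rA^n \to \rA^m \to \mM \to 0$, dualizes to obtain a four-term exact sequence
\[
0 \lar \Hom_\rA(\mM,\mN) \lar \mN^m \lar \mN^n \lar \coker \lar 0,
\]
and then applies the Depth Lemma twice to conclude $\depth_\rA \Hom_\rA(\mM,\mN)\ge 2$. Your route stays with a one-step presentation and instead exploits the adjunction $\Hom_\rA(\kk,\Hom_\rA(K,\mN))\cong\Hom_\rA(K/\idm K,\mN)$ to kill the obstruction directly. The paper's argument is shorter and packages the bookkeeping into the Depth Lemma; yours is more explicit and avoids invoking that lemma, at the cost of a few more lines. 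Both are standard and equally valid here.
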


\noindent
\emph{Proof}. From  a free presentation $\rA^n \stackrel{\varphi}\to \rA^m \to M \to 0$
of $M$ we obtain an exact sequence:
$$
0 \lar \Hom_\rA(\mM, \mN) \lar\mN^m \stackrel{\varphi^*}\lar \mN^n \lar \coker(\varphi^*) \lar 0.
$$
Since $\depth_\rA(\mN) = 2$, applying the Depth Lemma twice we obtain:
$$\depth_\rA\bigl(\Hom_\rA(\mM, \mN)\bigr) \ge  2.$$ Hence, the module $\Hom_\rA(\mM, \mN)$ is maximal Cohen--Macaulay.
\qed

\vspace{1mm}
\noindent
The following  standard  result is due to  Serre \cite{Serre}, see also \cite[Proposition 3.7]{SurvOnCM}.

\begin{theorem}\label{T:Serre}
Let $(\rA, \idm)$ be a surface singularity. Then we have:
\begin{enumerate}
\item
 The ring $\rA$ is normal (i.e.~it is a domain, which is integrally
closed in its field of fractions) if and only if it is
Cohen--Macaulay and isolated.
\item Assume $A$ to be Cohen--Macaulay and Gorenstein in codimension one (e.g.~$\rA$
is normal) and  $\mM$ be
a Noetherian $\rA$--module. Then $\mM$ is maximal Cohen--Macaulay if and only if
it is reflexive.
\end{enumerate}
\end{theorem}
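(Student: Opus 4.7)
\medskip

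\noindent\textbf{Plan for part (1).} I would deduce this from Serre's criterion: a Noetherian ring is normal if and only if it satisfies conditions $(R_1)$ and $(S_2)$. For the 2-dimensional local ring $\rA$, the condition $(S_2)$ is equivalent to being Cohen-Macaulay (since then $\depth_\rA(\rA)=2=\krdim(\rA)$, and $(S_2)$ localizes to the height-one primes automatically). The condition $(R_1)$ says $\rA_\gp$ is regular for every prime of height $\le 1$; since the only remaining prime is $\idm$ itself, this is exactly the statement that $\rA$ is an isolated singularity. Finally, a 2-dimensional Noetherian local ring that is normal in this sense is automatically a domain, because a normal Noetherian ring is a finite product of normal domains and $\Spec(\rA)$ is connected. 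Conversely, if $\rA$ is normal it is integrally closed, so by Serre's criterion satisfies $(R_1)$ and $(S_2)$, hence is CM and isolated.

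\medskip

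\noindent\textbf{Plan for part (2), easy direction.} Suppose $\mM$ is reflexive, so $\mM \cong \Hom_\rA(\mM^{*}, \rA)$ with $\mM^{*} := \Hom_\rA(\mM, \rA)$. Since $\rA$ is itself Cohen-Macaulay of depth $2$, Lemma \ref{L:CMdim2} applied twice says first that $\mM^{*}$ is CM and then that $\mM^{**}$ is CM; hence $\mM$ is CM.

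\medskip

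\noindent\textbf{Plan for part (2), hard direction.} Now assume $\mM$ is Cohen-Macaulay; I want to prove that the canonical morphism $\vi\colon \mM\to\mM^{**}$ is an isomorphism. The strategy has two ingredients.

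\emph{Step 1: $\vi$ is an isomorphism in codimension $\le 1$.} At any prime $\gp$ with $\mathrm{ht}(\gp)\le 1$, the ring $\rA_\gp$ is Gorenstein of dimension $\le 1$ by hypothesis, and $\mM_\gp$ is a maximal Cohen-Macaulay $\rA_\gp$-module. Over a $0$- or $1$-dimensional Gorenstein local ring, any maximal Cohen-Macaulay module is reflexive (the canonical module is free, so $\rA_\gp$ is itself a dualizing module, and $\mM_\gp$ being MCM means $\Ext^{i}_{\rA_\gp}(\mM_\gp,\rA_\gp)=0$ for $i>0$, which gives $\mM_\gp\cong \mM_\gp^{**}$). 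Hence $\ker(\vi)$ and $\coker(\vi)$ are supported in $\{\idm\}$, so have dimension $0$.

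\emph{Step 2: vanishing of kernel and cokernel via depth.} Since $\mM$ is CM of depth $2$, every nonzero submodule of $\mM$ has positive depth (a module of depth $\ge 2$ contains no elements annihilated by a power of $\idm$), so $\ker(\vi)=0$. For the cokernel, consider the short exact sequence
\[
0\lar \mM\lar \mM^{**}\lar \coker(\vi)\lar 0.
\]
By Step 1 applied via Lemma \ref{L:CMdim2}, $\mM^{**}$ is CM, i.e.\ has depth $2$. The Depth Lemma then forces $\depth_\rA(\coker(\vi))\ge 1$, contradicting $\ddim(\coker(\vi))\le 0$ unless $\coker(\vi)=0$.

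\medskip

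\noindent\textbf{Main obstacle.} The delicate point is the assertion in Step 1 that maximal Cohen-Macaulay modules over a one-dimensional Gorenstein local ring are automatically reflexive; everything else is a clean application of the Depth Lemma combined with Lemma \ref{L:CMdim2}. The Gorenstein-in-codimension-one hypothesis enters precisely at this step and cannot be relaxed, since without it a height-one localization of $\mM$ need not agree with its bidual.
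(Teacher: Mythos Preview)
Your proof is correct. However, note that the paper itself does not give a proof of this theorem at all: it simply states ``This theorem is a classical result of Serre \cite{Serre}, see also \cite[Proposition 3.7]{SurvOnCM}'' and moves on. So there is nothing to compare against; you have supplied a complete argument where the paper only gives a citation.

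For what it is worth, your outline is exactly the standard route one finds when unpacking Serre's criterion in dimension two. The only place where a reader might want a word more is the claim in Step~1 that MCM modules over a Gorenstein local ring of dimension $\le 1$ are reflexive: this is indeed well known (it follows, as you say, from $\Ext$-vanishing and the fact that $\rA_\gp$ is its own canonical module), but since it is the crux of the argument you might cite a reference such as \cite[Theorem~3.3.10]{BrunsHerzog} or the corresponding statement in \cite{Serre} when writing this up. Everything else is a clean application of Lemma~\ref{L:CMdim2} and the Depth Lemma, just as you describe.
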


\medskip
\noindent
The next  result  underlines  some features of maximal Cohen--Macaulay modules
which occur only in the case of  surface singularities. See for example
\cite[Proposition 3.2 and Proposition 3.7]{SurvOnCM}
for the proof.

\begin{theorem}\label{P:Macaulafic}
Let $(\rA, \idm)$ be a reduced Cohen--Macaulay surface singularity with a canonical module
$K$. Then we have:
\begin{enumerate}
\item The canonical embedding functor
$\CM(\rA) \to \rA\mathsf{-mod}$ has a left adjoint functor
$\mM \stackrel{\delta}\mapsto \mM^\dagger := \mM^{\vee\vee} =
\Hom_\rA\bigl(\Hom_\rA(\mM, \mK), \mK\bigr).$
In other words, for an arbitrary Noetherian module $\mM$ and a maximal Cohen--Macaulay module
$\mN$ the map $\delta$ induces an  isomorphism
$
\Hom_\rA(\mM^\dagger, \mN) \cong \Hom_\rA(\mM, \mN).
$
The constructed functor $\dagger$ will be  called \emph{Macaulayfication functor}.
\item
Moreover, for any Noetherian $\rA$--module $\mM$ the following sequence is exact:
$$
0 \lar \tor(\mM) \lar \mM \stackrel{\delta}\lar  M^{\dagger} \lar T \lar 0,
$$
where $\tor(\mM) = \ker(\mM \to \rQ\otimes_\rA \mM)$ is the \emph{torsion part} of $\mM$
and $\mT$ is some $\rA$--module of finite length.
\item Moreover, if $\rA$ is Gorenstein in codimension one, then for any Noetherian $\rA$--module
$\mM$
there exists a natural isomorphism $\mM^\dagger \cong \mM^{**}$, where
$* = \Hom_\rA(\,-\,,\rA)$.
\end{enumerate}
\end{theorem}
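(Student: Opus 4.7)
The plan is to exploit canonical-module duality together with a codimension analysis. Throughout set $\vee := \Hom_\rA(-, \mK)$.

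For (1), Lemma~\ref{L:CMdim2} applied with the CM module $\mK$ shows that $\mM^\vee$ is CM for every Noetherian $\mM$, so $\dagger = \vee\circ\vee$ lands in $\CM(\rA)$. The essential input is the standard fact that $\vee$ restricts to an exact contravariant auto-equivalence of $\CM(\rA)$ with biduality isomorphism $\mathrm{id} \cong \vee^2$ (canonical-module duality for CM modules over a CM ring). Given $\mN \in \CM(\rA)$, combining the tensor-hom adjunction with biduality on the CM modules $\mM^\vee$ and $\mN$ produces
$$
\Hom_\rA(\mM^\dagger, \mN) \cong \Hom_\rA(\mN^\vee, \mM^\vee) \cong \Hom_\rA(\mM, \mN),
$$
whose composite one checks is precomposition with $\delta$, giving the desired adjunction.

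For (2), since $\rA$ is reduced and $\mM^\dagger$ is CM (hence torsion-free), $\tor(\mM) \subseteq \ker(\delta)$. Let $\rQ$ be the total ring of fractions of $\rA$. Being reduced, $\rQ$ is a finite product of fields, over which $\mK \otimes_\rA \rQ$ is invertible of rank one. The biduality map therefore becomes an isomorphism after tensoring with $\rQ$, so $\ker(\delta)\otimes_\rA \rQ = 0$ and $\ker(\delta) \subseteq \tor(\mM)$, forcing equality. For the cokernel, localise at a prime $\idp$ of height one: the ring $\rA_\idp$ is a one-dimensional reduced CM local ring with canonical module $\mK_\idp$, itself torsion-free. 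Hence $\Hom_{\rA_\idp}(\tor(\mM_\idp), \mK_\idp) = 0$ and $\vee$ factors through $\mM_\idp/\tor(\mM_\idp)$; this quotient is torsion-free, so CM over $\rA_\idp$, and one-dimensional canonical duality gives $(\mM_\idp/\tor(\mM_\idp))^{\vee\vee} \cong \mM_\idp/\tor(\mM_\idp)$. Consequently $\mM_\idp \to (\mM^\dagger)_\idp$ is surjective, $\Supp(\mT) \subseteq \{\idm\}$, and $\mT$ has finite length.

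For (3), the codimension-one Gorenstein hypothesis yields $\mK_\idp \cong \rA_\idp$ for every prime $\idp$ of height $\leq 1$. Since $\rA$ is itself CM, Lemma~\ref{L:CMdim2} with $\mN = \rA$ shows $\mM^*$ and hence $\mM^{**}$ are CM, so by (1) the biduality map $\mM \to \mM^{**}$ factors uniquely through $\delta$ as some $\varphi \colon \mM^\dagger \to \mM^{**}$. Both source and target are reflexive by Theorem~\ref{T:Serre}(2), so it suffices to verify $\varphi$ is an isomorphism in codimension one; there any identification $\mK_\idp \cong \rA_\idp$ identifies $\vee$ with $*$ and consequently $(\mM^\dagger)_\idp$ with $(\mM^{**})_\idp$ compatibly with the canonical maps from $\mM_\idp$. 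The main obstacle is the bookkeeping in (1) -- ensuring the composite natural isomorphism is genuinely induced by $\delta$ rather than only existing abstractly -- together with the one-dimensional analysis in (2), which relies crucially on reducedness of $\rA$ so that $\mK$ is torsion-free and $\vee$ annihilates all torsion submodules.
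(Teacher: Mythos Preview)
Your proof is correct. Note, however, that the paper itself does not prove this theorem: it simply defers to \cite[Propositions~3.2 and~3.7]{SurvOnCM}. So there is no ``paper's own proof'' to compare against; you have supplied the argument that the authors chose to outsource.

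A few remarks on the substance. In part~(1), the chain
\[
\Hom_\rA(\mM^\dagger,\mN)\;\cong\;\Hom_\rA(\mN^\vee,\mM^\vee)\;\cong\;\Hom_\rA(\mM,\mN)
\]
is indeed the right one: the first isomorphism comes from $\vee$ being a duality on $\CM(\rA)$, and the second from tensor--hom adjunction together with $\mN\cong\mN^{\vee\vee}$. Your caveat about checking that the composite is precomposition with $\delta$ is well placed --- this is a matter of tracing $\id_{\mM^\dagger}$ through the chain and using that the two natural identifications $\mM^{\vee\vee\vee}\cong\mM^\vee$ (via $\delta_{\mM^\vee}$ and via $\delta_\mM^\vee$) agree, which is the standard triangle identity for the biduality.

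Parts~(2) and~(3) are clean. The codimension-one localisation argument in~(2) is exactly the right mechanism, and your use of Theorem~\ref{T:Serre}(2) in~(3) --- reducing to an isomorphism in codimension one between reflexive modules --- is the standard move (this is essentially \cite[Lemma~3.6]{SurvOnCM}, which the paper invokes elsewhere).
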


\medskip

\noindent
The following lemma provides  a useful tool to compute  the Macaulayfication of a given
Noetherian module.

\begin{lemma}\label{L:fact-on-Macaulaf} In the notations of Theorem \ref{P:Macaulafic},
let $\mM$ be a Noetherian $\rA$--module, which is a submodule of a maximal Cohen--Macaulay  $\rA$--module
$\mX$. Let $x \in \mX \setminus \mM$ be such that $\idm^t x \, \in \mM$ for some $t \ge 1$.
Then $\mM^\dagger \cong \langle \mM, x \rangle^\dagger$, where
$\langle \mM, x \rangle$ is the $\rA$--submodule of $\mX$ generated by $\mM$ and $x$.
\end{lemma}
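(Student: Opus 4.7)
The plan is to apply the universal property of the Macaulayfication functor established in Theorem~\ref{P:Macaulafic}(1): the functor $\dagger$ is left adjoint to the inclusion $\CM(\rA) \hookrightarrow \rA\mathsf{-mod}$. Setting $\mN := \langle \mM, x \rangle$, the inclusion $\iota \colon \mM \hookrightarrow \mN$ functorially induces a morphism $\iota^\dagger \colon \mM^\dagger \to \mN^\dagger$, and it suffices to show that $\iota^\dagger$ is an isomorphism. By the Yoneda lemma applied in $\CM(\rA)$, this reduces to verifying that for every Cohen-Macaulay $\rA$-module $\mN'$ the restriction map
$$
\Hom_\rA(\mN, \mN') \longrightarrow \Hom_\rA(\mM, \mN')
$$
is bijective.

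The key observation is that the quotient $T := \mN/\mM$ is cyclic, generated by the class of $x$, and by the assumption $\idm^t x \in \mM$ this generator is annihilated by $\idm^t$. Hence $T$ is a finitely generated module over the Artinian ring $\rA/\idm^t$, and in particular has finite length. Applying $\Hom_\rA(-, \mN')$ to the short exact sequence
$$
0 \to \mM \to \mN \to T \to 0,
$$
the desired bijection will follow from the vanishing $\Ext^i_\rA(T, \mN') = 0$ for $i = 0, 1$.

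To establish this vanishing I would argue by d\'evissage. Filtering $T$ by a composition series with quotients isomorphic to $\kk$, the long exact sequences of $\Ext$ reduce the problem to the vanishing $\Ext^i_\rA(\kk, \mN') = 0$ for $i = 0, 1$, which is precisely the defining property of $\mN'$ being Cohen-Macaulay on the two-dimensional ring $\rA$ (Definition~\ref{D:CMM}). I do not foresee a genuine obstacle: every step is a formal consequence either of the adjunction from Theorem~\ref{P:Macaulafic} or of the definition of depth, and the principle that a finite-length module is \emph{invisible} to low-degree $\Ext$-groups with values in a module of sufficiently large depth is a standard tool in Cohen-Macaulay theory.
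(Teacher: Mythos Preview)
Your proof is correct and complete. Both you and the paper begin by observing that $T = \langle M, x\rangle/M$ has finite length, but from there the arguments diverge. The paper localizes: since $T$ has finite length, $\iota_{\idp}$ is an isomorphism for every height-one prime $\idp$, hence $\iota^\dagger$ is an isomorphism in codimension one, and then an external lemma (\cite[Lemma~3.6]{SurvOnCM}) is invoked to conclude that a morphism of Cohen--Macaulay modules which is an isomorphism in codimension one is an isomorphism outright. Your route instead exploits the adjunction of Theorem~\ref{P:Macaulafic}(1) together with Yoneda, reducing to the vanishing $\Ext^i_\rA(T,\mN')=0$ for $i=0,1$ and any Cohen--Macaulay $\mN'$; the d\'evissage to $\Ext^i_\rA(\kk,\mN')=0$ then makes this literally the definition of $\mN'$ having depth $\ge 2$. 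Your argument is more self-contained---it does not need the cited lemma about reflexive modules---while the paper's localization viewpoint is closer in spirit to the geometric picture of $\MM(\rA)\simeq\Coh(U)$ used elsewhere in the article.
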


\begin{proof}
Consider the short exact sequence
$
0 \rightarrow  \mM \stackrel{\imath}\rightarrow \langle \mM, x\rangle \rightarrow  \mT
\rightarrow  0.
$
From the assumptions of Lemma it follows that $\mT$ is a finite length module. In particular,
for any $\idp \in \sP$ the map $\imath_\idp$ is an isomorphism. By the functoriality of Macaulayfication
we conclude that the morphism $\imath^\dagger: \mM^\dagger \to \langle \mM, x\rangle^\dagger$ is
an isomorphism in codimension one. By \cite[Lemma 3.6]{SurvOnCM}, the morphism
$\imath^\dagger$ is an isomorphism.
\end{proof}

Let us additionally assume our Cohen--Macaulay surface singularity $\rA$ to be Henselian and
$\rA \subseteq \rB$ to be a finite ring extension. Then the ring $\rB$ is semi--local.
Moreover,
$\rB \cong  (\rB_1, \idn_1) \times (\rB_2, \idn_2) \times \dots \times (\rB_t, \idn_t)$, where all
$(\rB_i, \idn_i)$ are local. Assume that all  rings  $\rB_i$ are Cohen--Macaulay.

\begin{proposition}
The functor $\rB \boxtimes_\rA - : \CM(\rA) \to \CM(\rB)$ mapping
 a maximal Cohen--Macaulay module
$\mM$ to $\rB \boxtimes_\rA  \mM := (\rB \otimes_\rA \mM)^\dagger$ is left adjoint to the forgetful functor
$\CM(\rB) \to \CM(\rA)$. In other words, for any maximal Cohen--Macaulay $\rA$--module
$\mM$ and a maximal Cohen--Macaulay $\rB$--module $\mN$ we have:
$$
\Hom_\rB(\rB \boxtimes_\rA \mM, \mN) \cong \Hom_\rA(\mM, \mN).
$$
Assume additionally  $\rA$ and $\rB$  to be both   reduced.
Then for any Noetherian
$\rB$--module $\mM$ there exist a natural isomorphism $\mM^{\dagger_\rA} \cong \mM^{\dagger_\rB}$ in the category of  $\rA$--modules.
\end{proposition}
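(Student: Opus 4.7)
The plan is a routine two-step chain of adjunctions. First, the classical extension-of-scalars adjunction supplies a natural isomorphism
\[
\Hom_\rB(\rB \otimes_\rA \mM, \mN) \;\cong\; \Hom_\rA(\mM, \mN)
\]
for any $\rA$--module $\mM$ and any $\rB$--module $\mN$. Since $\mN$ is Cohen-Macaulay over $\rB$, I would apply Theorem \ref{P:Macaulafic}(1) to $\rB$ (working component-wise on the local factors $\rB_i$, each of which is reduced Cohen-Macaulay) to obtain
\[
\Hom_\rB\bigl((\rB \otimes_\rA \mM)^{\dagger_\rB}, \mN\bigr) \;\cong\; \Hom_\rB(\rB \otimes_\rA \mM, \mN).
\]
Composing the two natural isomorphisms yields $\Hom_\rB(\rB \boxtimes_\rA \mM, \mN) \cong \Hom_\rA(\mM, \mN)$ as desired.

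\textbf{Second assertion (independence of Macaulayfication).} My plan is to exploit the double-dual formula $(-)^\dagger = \Hom(\Hom(-, \mK), \mK)$ of Theorem \ref{P:Macaulafic}(1) in combination with the classical description of the canonical module of a finite extension. Since $\rA \subseteq \rB$ is a finite ring extension and both rings are reduced Cohen-Macaulay surface singularities, the canonical module of $\rB$ will be given by
\[
\mK_\rB \;\cong\; \Hom_\rA(\rB, \mK_\rA),
\]
to be verified componentwise on the local factors $\rB_i$. Then for any Noetherian $\rB$--module $\mM$, the tensor-hom adjunction produces a natural $\rB$--linear isomorphism
\[
\Hom_\rA(\mM, \mK_\rA) \;\cong\; \Hom_\rB\bigl(\mM, \Hom_\rA(\rB, \mK_\rA)\bigr) \;\cong\; \Hom_\rB(\mM, \mK_\rB).
\]
Applying this identity twice to $\mM^{\dagger_\rA} = \Hom_\rA\bigl(\Hom_\rA(\mM, \mK_\rA), \mK_\rA\bigr)$ will yield a canonical $\rB$--linear (hence a fortiori $\rA$--linear) isomorphism
\[
\mM^{\dagger_\rA} \;\cong\; \Hom_\rB\bigl(\Hom_\rB(\mM, \mK_\rB), \mK_\rB\bigr) \;=\; \mM^{\dagger_\rB}.
\]
Naturality in $\mM$ is inherited from the naturality of the tensor-hom adjunction.

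\textbf{Main obstacle.} The substantive input is the identification $\mK_\rB \cong \Hom_\rA(\rB, \mK_\rA)$ in the semi-local Cohen-Macaulay setting. In the local case this is the standard description (Bruns-Herzog, Theorem 3.3.7); in our semi-local situation the decomposition $\rB = \rB_1 \times \dots \times \rB_t$ reduces the problem to verifying that $\Hom_\rA(\rB_i, \mK_\rA)$ is a canonical module of each local factor $\rB_i$, which one checks after localizing at the corresponding idempotent. Once this identification is granted, the remainder of the argument is straightforward bookkeeping with standard adjunctions.
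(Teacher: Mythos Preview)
Your argument is correct and is essentially the standard one. The paper does not actually supply a proof of this proposition; it simply cites \cite[Proposition~3.18]{SurvOnCM} and moves on, so there is nothing substantive in the paper to compare against.

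One small remark on your first assertion: you invoke Theorem~\ref{P:Macaulafic}(1) over $\rB$ (componentwise), which in the paper is stated for a \emph{reduced} local Cohen--Macaulay surface singularity. Strictly speaking, reducedness of $\rB$ is only assumed explicitly in the second half of the proposition. In practice this is harmless: the running hypothesis in this stretch of the paper is that $\rA$ is reduced, and in all applications $\rB$ sits inside $\rQ(\rA)$ (e.g.\ $\rB=\rR$ the normalization), so the $\rB_i$ are automatically reduced domains. It would not hurt to make this implicit assumption explicit when you write it up.

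Your handling of the second assertion via $\mK_\rB \cong \Hom_\rA(\rB,\mK_\rA)$ and iterated tensor--hom adjunction is exactly the expected route and is clean; the semi-local reduction to the local factors $\rB_i$ is the only thing to be careful about, and you have flagged it correctly.
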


\noindent
For a proof,  see for example \cite[Proposition 3.18]{SurvOnCM}. \qed

\begin{lemma}\label{L:onMacaulafInDimOne}
Let $(\rA, \idm)$ be a reduced Noetherian ring  of Krull dimension one with a canonical
module
$\mK$. Then for any Noetherian $\rA$--module $\mM$ we have a functorial isomorphism:
$
\mM^{\vee\vee} \cong  \mM/\tor(\mM),$ where $\vee = \Hom_\rA(\,-\,,\mK)$.
\end{lemma}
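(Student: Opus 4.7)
\emph{Plan.} The first step is to reduce to torsion-free modules. Since $\mK$ is a canonical module over the one-dimensional Cohen-Macaulay ring $\rA$, it has depth one and is in particular torsion-free. Every $\rA$-linear map $\mM \to \mK$ therefore annihilates $\tor(\mM)$ and descends uniquely to a map $\mM/\tor(\mM) \to \mK$. This yields natural isomorphisms $\mM^\vee \cong (\mM/\tor(\mM))^\vee$ and, applying $\vee$ once more, $\mM^{\vee\vee} \cong (\mM/\tor(\mM))^{\vee\vee}$; the biduality morphism $\mM \to \mM^{\vee\vee}$ factors through the canonical projection $\mM \twoheadrightarrow \mM/\tor(\mM)$. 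It therefore suffices to prove that for a torsion-free Noetherian $\rA$-module $\mN$ the biduality map $\mN \to \mN^{\vee\vee}$ is an isomorphism.

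The second step is the biduality itself. Over a reduced Cohen-Macaulay ring of dimension one, ``torsion-free'' coincides with ``maximal Cohen-Macaulay''. I would verify the statement prime-by-prime: because $\mN$ is finitely presented, the formation of $\vee$ commutes with localization, so $(\mN^{\vee\vee})_\idp$ is naturally identified with the biduality of $\mN_\idp$ computed with respect to the canonical module $\mK_\idp$ of $\rA_\idp$. At minimal primes $\rA_\idp$ is a field by reducedness, everything becomes a finite dimensional vector space, and biduality is trivial linear algebra. At the maximal (height one) primes one is reduced to the standard biduality theorem for the canonical module over a local one-dimensional reduced Cohen-Macaulay ring, which identifies $\vee$ as an anti-equivalence of $\CM(\rA_\idp)$ with itself.

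The main (and essentially only non-formal) obstacle is this last local biduality. If a self-contained argument is preferred in place of a citation, one can appeal to the derived-category interpretation: $\mK$, viewed as a complex concentrated in degree zero, is a dualizing complex for $\rA$, so $\RHom_\rA(-, \mK)$ is an involutive duality on $\Dbcoh(\rA)$; for $\mN$ maximal Cohen-Macaulay both $\RHom_\rA(\mN, \mK)$ and its double dual are concentrated in degree zero, which forces $\mN \cong \mN^{\vee\vee}$ naturally. The functoriality of the whole isomorphism $\mM^{\vee\vee} \cong \mM/\tor(\mM)$ then follows immediately from the functoriality of each of the two reductions above.
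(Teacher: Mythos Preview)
Your proof is correct and follows essentially the same route as the paper's: first kill torsion using that $\mK$ is torsion-free, then invoke that $\vee$ is a duality on maximal Cohen--Macaulay modules so that $\mN\to\mN^{\vee\vee}$ is an isomorphism for torsion-free $\mN$. The paper simply records this last step as ``$\vee$ is a dualizing functor'' without further justification, whereas you supply a prime-by-prime or derived-category argument for it; but the overall structure is the same.
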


\begin{proof} From  the canonical short exact sequence
$
0 \to \tor(\mM) \to \mM \to \mM/\tor(\mM) \to 0
$
we get the isomorphism $\bigl(\mM/\tor(\mM)\bigr)^\vee \to \mM^\vee$. Since $\mM/\tor(\mM)$ is
a maximal Cohen--Macaulay $\rA$--module and $\vee$ is a dualizing functor, we get two natural
isomorphisms
$$
\mM/\tor(\mM) \stackrel{\cong}\lar \bigl(\mM/\tor(\mM)\bigr)^{\vee\vee}
\stackrel{\cong}\longleftarrow \mM^{\vee\vee},
$$
being a part of the commutative diagram
$$
\xymatrix
{
\mM \ar[rr] \ar[d] & & \mM/\tor(\mM) \ar[d] \\
\mM^{\vee\vee} \ar[rr] & & \bigl(\mM/\tor(\mM)\bigr)^{\vee\vee},
}
$$
in which all  morphisms are the canonical ones.
This yields the claim.
\end{proof}

\begin{corollary}\label{C:compat-of-Macaulayf}
Let $(\rA, \idm)$ be a reduced Cohen--Macaulay surface singularity
with a canonical module $\mK$ and
$\rA \subseteq \rR$ be its normalization. Then for any Noetherian $\rA$--module $\mM$
and any $\idp \in \kP$  we have a natural
isomorphism
$$
(\rR \boxtimes_\rA \mM)_\idp \cong \rR_\idp \otimes_{\rA_\idp} \mM_\idp/\tor(\rR_\idp \otimes_{\rA_\idp} \mM_\idp).
$$
\end{corollary}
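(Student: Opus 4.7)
The plan is to unfold the definition and apply Theorem~\ref{P:Macaulafic}(2) to the $\rA$-module $\mN := \rR \otimes_\rA \mM$. By definition $\rR \boxtimes_\rA \mM = \mN^\dagger$, and the cited theorem gives a four-term exact sequence
$$
0 \lar \tor(\mN) \lar \mN \stackrel{\delta}\lar \mN^\dagger \lar T \lar 0,
$$
where $T$ is an $\rA$-module of finite length. I would split this into two short exact sequences: one computing the image of $\delta$ as $\mN/\tor(\mN)$, and a second
$$
0 \lar \mN/\tor(\mN) \lar \mN^\dagger \lar T \lar 0.
$$

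Now I would localize at $\idp$ with $\mathsf{ht}(\idp)=1$. Since $\rA$ is a surface singularity, a finite length $\rA$-module is supported only at the maximal ideal $\idm$ (of height $2$), so $T_\idp = 0$. Exactness of localization then yields
$$
(\mN^\dagger)_\idp \;\cong\; \bigl(\mN/\tor(\mN)\bigr)_\idp \;\cong\; \mN_\idp/\tor(\mN)_\idp.
$$
Because localization commutes with tensor product, $\mN_\idp = \rR_\idp \otimes_{\rA_\idp} \mM_\idp$. It remains to identify $\tor(\mN)_\idp$ with $\tor(\mN_\idp) = \tor(\rR_\idp \otimes_{\rA_\idp} \mM_\idp)$; this is a general fact, following from the compatibility of localization with the total ring of fractions applied to the reduced ring $\rA$ (the nonzerodivisors of $\rA$ map to nonzerodivisors of $\rA_\idp$, and conversely every nonzerodivisor of $\rA_\idp$ is, up to a unit, the image of one from $\rA$). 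Putting the isomorphisms together delivers the desired natural identification.

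The steps are all routine given the machinery in Theorem~\ref{P:Macaulafic}; the only mildly subtle point is the vanishing of $T_\idp$, which is really the whole content of the statement: the Macaulayfication differs from the naive torsion-free quotient only by a module concentrated at the closed point, so outside the closed point the two operations agree. Naturality in $\mM$ is automatic since each step in the above chain of isomorphisms is functorial.
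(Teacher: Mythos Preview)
Your argument is correct, but it follows a different route from the paper's. The paper localizes first and then invokes Lemma~\ref{L:onMacaulafInDimOne}: since $\rA_\idp$ is a reduced one-dimensional Cohen--Macaulay ring with canonical module $\mK_\idp$, and since $\Hom$ over finitely generated modules commutes with localization, one has $(\mN^\dagger)_\idp = (\mN_\idp)^{\vee\vee}$ computed over $\rA_\idp$; the one-dimensional lemma then identifies this double dual with $\mN_\idp/\tor(\mN_\idp)$ directly.

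Your approach bypasses Lemma~\ref{L:onMacaulafInDimOne} entirely by localizing the four-term sequence of Theorem~\ref{P:Macaulafic}(2) and killing the finite-length term $T$. The trade-off is that you then need the identification $\tor(\mN)_\idp = \tor(\mN_\idp)$. This is true for reduced $\rA$, and the cleanest justification is that $\rQ(\rA)\otimes_\rA \rA_\idp \cong \rQ(\rA_\idp)$ (only the minimal primes contained in $\idp$ survive), so localizing the defining sequence $0\to\tor(\mN)\to\mN\to\rQ(\rA)\otimes_\rA\mN$ yields exactly the torsion sequence over $\rA_\idp$. Your parenthetical explanation (that every nonzerodivisor of $\rA_\idp$ is, up to a unit, the image of one from $\rA$) is not quite the right mechanism and can fail when $\rA$ has minimal primes not contained in $\idp$, but the conclusion stands via the total-ring-of-fractions argument. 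Both proofs are short; yours is more self-contained from Theorem~\ref{P:Macaulafic}, while the paper's highlights the conceptual point that Macaulayfication localizes to the one-dimensional Macaulayfication.
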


\begin{proof}
Note that $\rA_\idp$ is a reduced Cohen--Macaulay ring  of Krull dimension one,
$\mK_\idp$ is the canonical module
of $\rA_\idp$ and  $\rR_\idp$ is the  normalization of  $\rA_\idp$. Hence, this corollary
 is a consequence of Lemma \ref{L:onMacaulafInDimOne}.
\end{proof}

\subsection{On the category $\CM^{\mathsf{lf}}(\rA)$}
Let $(\rA, \idm)$ be  a  Cohen--Macaulay ring of an arbitrary Krull dimension $d$.

\begin{definition}
 A maximal
Cohen--Macaulay $\rA$--module $\mM$ is locally free  on the punctured spectrum of $\rA$
if for any $\idp \in \Spec(\rA) \setminus \{\idm\}$ the localization
$\mM_\idp$ is a free $\rA_\idp$--module.
\end{definition}

\noindent
Of course, any maximal Cohen--Macaulay module fulfills the above property provided $\rA$ is an
\emph{isolated
singularity}. However, for the non--isolated ones  we get  a very nice proper subcategory of $\CM(\rA)$.

\begin{theorem}
Let $\CM^{\mathsf{lf}}(\rA)$ be the category of maximal Cohen--Macaulay modules which are locally free
on the punctured spectrum. Then the following results are true.
\begin{itemize}
\item The stable category $\underline{\CM}^{\mathsf{lf}}(\rA)$ is $\Hom$--finite. This means that
for any objects $\mM$ and $\mN$ of $\underline{\CM}^{\mathsf{lf}}(\rA)$ the $\rA$--module
$\underline{\Hom}_\rA(\mM, \mN)$ has finite length.
\item Moreover, assume that
 $\rA$ is Gorenstein. Then we have:
\begin{itemize}
\item  $\underline{\CM}^{\mathsf{lf}}(\rA)$ is a triangulated subcategory
of $\underline{\CM}(\rA)$.
\item The shift functor $\Sigma = \Omega^{-1}$ is a Serre functor in $\underline{\CM}^{\mathsf{lf}}(\rA)$. This means that
for any objects $\mM$ and $\mN$ of $\underline{\CM}^{\mathsf{lf}}(\rA)$ we have a bifunctorial
isomorphism
\begin{equation*}\label{E:SerreDual}
\underline{\Hom}_\rA(\mM, \mN) \cong \mathbbm{D}\Bigl(\underline{\Hom}_\rA\bigl(\mN, \Sigma(\mM)\bigr)\Bigr),
\end{equation*}
where $\DD$ is the Matlis duality functor.
\item In the terms of Buchweitz's equivalence (\ref{E:Buchweitz}), we have an exact equivalence:
$$
\underline{\CM}^{\mathsf{lf}}(\rA) \lar \mathsf{thick}(\kk) \subset D_{\mathsf{sg}}(\rA),
$$
where $\mathsf{thick}(\kk)$ is the smallest triangulated subcategory of $D_{\mathsf{sg}}(\rA)$
containing the class of the residue field $\kk$ and closed under direct summands in
$D_{\mathsf{sg}}(\rA)$.
\end{itemize}
\end{itemize}
\end{theorem}

\noindent
\emph{Comment on the proof}. For the proof of the first statement, see for example \cite[Proposition 9.4]{SurvOnCM}. The second  result  easily follows from the definition of the triangulated category
structure of  $\underline{\CM}(\rA)$. The third statement dates back to Auslander \cite[Proposition 8.8 in  Chapter 1
and Proposition 1.3 in  Chapter 3]{PhilNotes}, see also \cite[Chapter 3]{Yoshino}.
Finally, the proof of the last result is  essentially contained in the proof of \cite[Proposition A.2]{KellerMurfetVdB} as well as in \cite[Lemma 2.6 and Proposition 2.7]{OrlovOcCompletions}.
\qed

\begin{remark}
Observe that if $d \ge 2$ and $\mM$ is an objects of ${\CM}^{\mathsf{lf}}(\rA)$ then there exists
$n \ge 1$ such that $\mM_\idp \cong \rA_\idp^n$ for any $\idp \in \Spec(\rA)\setminus \{\idm\}$.
However, this is not true if $d = 1$ and $\Spec(\rA)$ has several irreducible components.
\end{remark}

\vspace{1mm}
\noindent
Let $d \ge 2$,  $(\rS, \idn)$ be a regular ring of Krull dimension $d + 1$ and $f \in \idn^2$ be  such that
the hypersurface singularity
$(\rA, \idm) = \rS/(f)$ is  reduced. Let $(\varphi, \psi) \in \Mat_{n \times n}(\rS)^{\times 2}
$
be a matrix factorization of $f$ and  $\bar\varphi, \bar\psi$ be the corresponding images in
 $\Mat_{n \times n}(\rA)$. Let $\mM = \mathsf{cok}(\bar\varphi)$ be the maximal Cohen--Macaulay
$\rA$--module corresponding to $(\varphi, \psi)$. Next, for any $1 \le p \le n$,  let $I_p(\bar\varphi)$
be the $p$--th Fitting ideal of $\mM$, i.e.~the ideal generated by all $p \times p$ minors of $\bar\varphi$.

\begin{lemma}\label{P:CriterionLF} Let $(\varphi, \psi) \in \Mat_{n \times n}(\rS)^{\times 2}
$
be a matrix factorization of $f$ and $\mM = \mathsf{cok}(\bar\varphi)$.
Then  $\mM$ belongs to ${\CM}^{\mathsf{lf}}(\rA)$
if and only if the following is true.
\begin{itemize}
\item There exists $t \ge 1$ and a unit $u \in \rS$ such that $\det(\varphi) = u \cdot f^t$.
\item $\sqrt{I_{n - t}(\bar\varphi)} = \idm$ and $I_{n - t + 1}(\bar\varphi) = 0$.
\end{itemize}
\end{lemma}

\begin{proof} Recall that  $\rQ$ denotes  the total ring of fractions of $\rA$.
 If $\mM$ belongs to ${\CM}^{\mathsf{lf}}(\rA)$
then there exists $t \ge 1$ such that $\rQ \otimes_\rA \mM = \rQ^t$. By \cite[Lemma 2.34]{SurvOnCM},
we get the first condition. The second condition follows from \cite[Lemma 1.4.8]{BrunsHerzog}.
\end{proof}

\section{Main construction}
\label{sec3}

Let
$(\rA, \idm)$ be a reduced complete (or analytic) Cohen--Macaulay ring
of Krull dimension two, which is not an isolated singularity.
 Let  $\rR$  be the  normalization of $\rA$.
 It is well--known that $\rR$ is again complete (resp.~analytic)  and the
ring extension $\rA \subseteq \rR$ is finite, see \cite{GLS} or
\cite{deJongPfister}.  Moreover,  $\rR$ is isomorphic to the  product of a finite number of normal
local rings:
$$
\rR \cong  (\rR_1, \idn_1) \times (\rR_1, \idn_1) \times \dots \times
(\rR_t, \idn_t).
$$
 According to Theorem \ref{T:Serre}, all rings $\rR_i$ are automatically Cohen--Macaulay.

Let $I = \Ann(\rR/\rA)$
be the \emph{conductor ideal}. It is easy to see  that  $I$ is also an ideal in $\rR$. Denote
$\bar\rA = \rA/I$ and $\bar\rR = \rR/I$.

\begin{lemma}\label{L:prop-of-conduct}
In the notations as above we have.
\begin{enumerate}
\item The ideal $I$ is a maximal Cohen--Macaulay module,  both over $\rA$ and over $\rR$.
\item The rings $\bar{\rA}$ and $\bar{\rR}$ are Cohen--Macaulay of Krull dimension one.
\item The inclusion $\bar{\rA} \to \bar{\rR}$ induces the injective homomorphism
of rings of fractions $\rQ(\bar\rA) \to \rQ(\bar\rR)$. Moreover, the
canonical morphism $\bar\rR \otimes_{\bar\rA} \rQ(\bar\rA) \to \rQ(\bar\rR)$ is an
isomorphism.
\end{enumerate}
\end{lemma}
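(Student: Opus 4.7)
The plan is to handle the three parts in sequence: for (1) identify $I$ with a $\Hom$-module and invoke Lemma \ref{L:CMdim2}; for (2) apply the Depth Lemma to the defining short exact sequences; for (3) localize both sides at minimal primes.

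For (1), since $\rA$ is reduced, $\rR$ is the integral closure of $\rA$ in its total ring of fractions $\rQ(\rA)$. Because $\rR$ is a finite $\rA$-module, a single non-zero-divisor of $\rA$ clears all denominators, whence $\rR \otimes_\rA \rQ(\rA) \cong \rQ(\rA)$. Given $\varphi \in \Hom_\rA(\rR, \rA)$, the induced $\rQ(\rA)$-linear endomorphism of $\rQ(\rA)$ is multiplication by $\varphi(1)$; this forces $\varphi$ itself to be multiplication by $\varphi(1)$, and $\varphi(1)$ lies in $I$ since $\varphi(\rR) \subseteq \rA$. The resulting $\rR$-linear isomorphism $\Hom_\rA(\rR, \rA) \cong I$, combined with Lemma \ref{L:CMdim2}, gives $\depth_\rA(I) = 2$. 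Since $\rA \subseteq \rR$ is finite, both depth and dimension are preserved on the $\rR$-module side, so $I$ is Cohen-Macaulay of dimension two over $\rR$ as well.

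For (2), apply the Depth Lemma to the short exact sequences $0 \to I \to \rA \to \bar\rA \to 0$ and $0 \to I \to \rR \to \bar\rR \to 0$: part (1) gives $\depth(\bar\rA), \depth(\bar\rR) \ge 1$. The conductor has grade two in $\rA$, hence contains an $\rA$-regular element, so $\dim \bar\rA \le 1$. On the other hand, the non-isolated hypothesis produces a height-one prime $\idp$ at which $\rA_\idp$ fails to be regular; as a one-dimensional Cohen-Macaulay local ring it then fails to be normal, so the conductor satisfies $I \subseteq \idp$. Thus $\dim \bar\rA \ge 1$, and finiteness of $\rR$ over $\rA$ (with $I \subseteq \rA$) yields $\dim \bar\rR = \dim \bar\rA = 1$. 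Both quotients are therefore Cohen-Macaulay of dimension one.

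For (3), injectivity of $\bar\rA \hookrightarrow \bar\rR$ follows from $\rA \cap I = I$. Denote by $\idp_1, \dots, \idp_n$ the minimal primes of $\bar\rA$; by Cohen-Macaulayness there are no embedded primes, so the set of non-zero-divisors of $\bar\rA$ equals $\bar\rA \setminus \bigcup_i \idp_i$ and $\rQ(\bar\rA) \cong \prod_i \bar\rA_{\idp_i}$. Finiteness of $\bar\rR$ over $\bar\rA$ identifies the primes $\idq_{ij}$ of $\bar\rR$ contracting to $\idp_i$ as a finite set, and these constitute all the minimal primes of $\bar\rR$; the semi-local splitting $(\bar\rA \setminus \idp_i)^{-1}\bar\rR \cong \prod_j \bar\rR_{\idq_{ij}}$ then gives $\bar\rR \otimes_{\bar\rA} \rQ(\bar\rA) \cong \prod_{i,j} \bar\rR_{\idq_{ij}} \cong \rQ(\bar\rR)$. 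Injectivity of $\rQ(\bar\rA) \to \rQ(\bar\rR)$ follows from flatness of localization combined with $\bar\rA \hookrightarrow \bar\rR$. The chief technical point lies in (3): since $\bar\rA$ and $\bar\rR$ need not be reduced, one must rely exclusively on the absence of embedded primes granted by Cohen-Macaulayness, and the semi-local splitting of the localization of $\bar\rR$ at $\bar\rA \setminus \idp_i$ must be justified from first principles rather than by invoking reducedness.
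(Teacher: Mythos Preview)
Your argument is essentially correct and parts (1) and (2) follow the same lines as the paper. There is one slip in (2): you assert that ``the conductor has grade two in $\rA$''. This confuses $\depth_\rA(I)=2$ (the depth of $I$ as a module, established in (1)) with the grade of the ideal $I$, i.e.\ $\depth(I,\rA)$. In fact, since $\rA$ is Cohen--Macaulay and you are about to show $\dim\bar\rA=1$, one has $\mathrm{grade}(I)=\mathrm{ht}(I)=1$, not $2$. Fortunately your argument only uses the weaker consequence that $I$ contains a non-zero-divisor, which is immediate from $\rR\subseteq\rQ(\rA)$ and the finiteness of $\rR$ over $\rA$; so the conclusion $\dim\bar\rA\le 1$ survives. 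The paper instead argues geometrically: $V(I)$ is the non-normal locus, which has dimension exactly one because $\rA$ is non-isolated and reduced (invoking Theorem~\ref{T:Serre}).

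For (3) your route genuinely differs from the paper's proof of this lemma. The paper argues directly: a non-zero-divisor of $\bar\rA$ stays a non-zero-divisor in the Cohen--Macaulay $\bar\rA$-module $\bar\rR$, giving the injection $\rQ(\bar\rA)\hookrightarrow\rQ(\bar\rR)$; then it shows $\bar\rR\otimes_{\bar\rA}\rQ(\bar\rA)$ is an Artinian ring in which the image of every non-zero-divisor of $\bar\rR$ is a unit, and invokes the universal property of localization to produce the inverse map. Your approach---decomposing $\rQ(\bar\rA)$ and $\rQ(\bar\rR)$ as products over minimal primes using the absence of embedded primes, and matching the factors via lying-over/incomparability---is the content of the paper's Proposition~\ref{P:prop-of-conduct}, which is stated and proved separately. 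Both approaches are valid; yours gives more structural information (the explicit prime decomposition) at the cost of a longer bookkeeping argument, while the paper's is shorter but postpones the prime-by-prime description to a later proposition.
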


\begin{proof}
First note that $I \cong \Hom_\rA(\rR, \rA)$. Hence,
by Lemma \ref{L:CMdim2}, the ideal   $I$ is maximal Cohen--Macaulay, viewed  as $\rA$--module.
Since the ring extension $\rA \subseteq \rR$ is finite, $I$   is also maximal Cohen--Macaulay
as a module over
$\rR$.

Next, the closed subscheme  $V(I) \subset \Spec(\rA)$ is exactly the non-normal locus
of  $\rA$.
If $\rA$ is normal then $\rA = \rR$ and there is nothing to prove. If $\rA$ is not normal, then
$\krdim\bigl(V(I)\bigr) \ge 1$. Indeed, by Theorem \ref{T:Serre},
an isolated  surface singularity which is not normal, can   not be
Cohen--Macaulay. Since $\rA$ is reduced,  we have:
 $\krdim\bigl(V(I)\bigr) =  1$. In particular,
 $
 \krdim(\bar\rA) = 1 = \krdim(\bar\rR).
 $
 Applying Depth Lemma to the short exact sequences
 $$
 0 \lar I \lar \rA \lar \bar\rA \lar 0 \quad
 \mbox{and} \quad 0 \lar I \lar \rR \lar \bar\rR \lar 0
 $$
 we conclude that $\bar{\rA}$ and $\bar{\rR}$ are both Cohen--Macaulay (but not necessarily reduced).

 Let $\bar{a} \in \bar\rA$ be a regular element. Since $\bar\rR$ is a Cohen--Macaulay
 $\bar\rA$--module, $\bar{a}$ is regular in  $\bar\rR$, too. Hence, we obtain
 a well--defined injective morphism of rings $\rQ(\bar\rA) \to \rQ(\bar\rR)$.

 Finally, consider the canonical ring homomorphism $\gamma: \bar\rR \otimes_{\bar\rA} \rQ(\bar\rA)
 \to \rQ(\bar\rR)$, mapping a simple tensor $\bar{r} \otimes \frac{\displaystyle \bar{a}}{\displaystyle \bar{b}}$
 to $\frac{\displaystyle \bar{r}\bar{a}}{\displaystyle \bar{b}}$. Since any element
 of $\bar\rR \otimes_{\bar\rA} \rQ(\bar\rA)$ has the form $\bar{r} \otimes \frac{\displaystyle \bar{1}}{\displaystyle \bar{b}}$ for some $\bar{r} \in \bar\rR$ and $\bar{b} \in \bar\rA$,
it is easy to see $\gamma$ is injective. Next, consider the canonical ring homomorphism
$\bar{\rR} \to \bar\rR \otimes_{\bar\rA} \rQ(\bar\rA)$.
It is easy to see that $\bar{r} \otimes \bar{1}$ is a non--zero divisor
in $\bar\rR \otimes_{\bar\rA} \rQ(\bar\rA)$ provided $\bar{r} \in \bar{R}$ is regular.
Since $\bar\rR \otimes_{\bar\rA} \rQ(\bar\rA)$
is a finite ring extension of $\rQ(\bar\rA)$, it is artinian.  In particular, any regular element  in this  ring is invertible.
From the universal property of localization we obtain a ring homomorphism
$\rQ(\bar\rR) \to \bar\rR \otimes_{\bar\rA} \rQ(\bar\rA)$, which is inverse to $\gamma$.
  \end{proof}

\begin{lemma}\label{L:canmorphdim2}
Let $\mM$ be a maximal Cohen--Macaulay $\rA$--module. Then we have:

\noindent $\bullet$
  The canonical morphism of $\rQ(\bar\rR)$--modules
$$
\theta_\mM:
\rQ(\bar\rR) \otimes_{\rQ(\bar\rA)} \bigl(\rQ(\bar\rA)
\otimes_{\rA} \mM\bigr)
\stackrel{\cong}\lar \rQ(\bar\rR) \otimes_{\rR} \bigl(\rR  \otimes_{\rA} \mM\bigr)
 \lar
 \rQ(\bar\rR) \otimes_{\rR} \bigl(\rR  \boxtimes_{\rA} \mM\bigr)
$$
is an epimorphism.

\noindent $\bullet$
The canonical morphism of $\rQ(\bar\rA)$--modules
$$\tilde{\theta}_\mM:  \rQ(\bar\rA) \otimes_{\rA} \mM \to
\rQ(\bar\rR) \otimes_{\rA} \mM \stackrel{\theta_\mM}\lar
\rQ(\bar\rR) \otimes_{\rR} \bigl(\rR  \boxtimes_{\rA} \mM\bigr)
$$
is a monomorphism.
\end{lemma}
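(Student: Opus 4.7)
The plan is to verify both claims by localizing at the generic points of the non-normal locus---the minimal primes $\idp$ of $\bar\rA$, equivalently the height-one primes of $\rA$ containing $I$---and exploiting that $I$ is simultaneously an ideal of $\rA$ and of $\rR$.

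For the surjectivity of $\theta_\mM$, the first arrow is a composition of standard base-change identifications of $\rQ(\bar\rR)\otimes_{\rQ(\bar\rA)}(\rQ(\bar\rA)\otimes_{\rA}\mM)$, $\rQ(\bar\rR)\otimes_{\rA}\mM$ and $\rQ(\bar\rR)\otimes_{\rR}(\rR\otimes_{\rA}\mM)$, hence an isomorphism. Applying Theorem \ref{P:Macaulafic}(2) to the Noetherian $\rR$-module $\rR\otimes_{\rA}\mM$, its cokernel $T$ in $\rR\boxtimes_{\rA}\mM$ has finite length over $\rR$, hence is annihilated by a power of each maximal ideal $\idn_i$. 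Writing $\rQ(\bar\rR)\cong\prod_{\idq}\bar\rR_\idq$ over the minimal primes of $\bar\rR$ (height-one primes of $\rR$, strictly contained in each $\idn_i$), some power of an element of $\idn_i\setminus\idq$ annihilates $T$ while being invertible in $\bar\rR_\idq$; hence $\rQ(\bar\rR)\otimes_{\rR}T=0$, giving the surjectivity.

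For the injectivity of $\tilde\theta_\mM$, I would invoke Lemma \ref{L:prop-of-conduct}(3), which gives $\rQ(\bar\rR)\cong\bar\rR\otimes_{\bar\rA}\rQ(\bar\rA)$, to rewrite both sides as $\rQ(\bar\rA)\otimes_{\bar\rA}(-)$ applied to $\mM/I\mM$ and to $(\rR\boxtimes_{\rA}\mM)/I(\rR\boxtimes_{\rA}\mM)$ respectively. Since $\rQ(\bar\rA)$ is a flat $\bar\rA$-algebra, injectivity of $\tilde\theta_\mM$ is equivalent to injectivity of the natural map $\mM/I\mM\to(\rR\boxtimes_{\rA}\mM)/I(\rR\boxtimes_{\rA}\mM)$ after localization at every minimal prime $\idp$ of $\bar\rA$. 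By Corollary \ref{C:compat-of-Macaulayf}, this localized map takes the form $\mM_\idp/I\mM_\idp\to\mN_\idp/I\mN_\idp$ with $\mN_\idp:=(\rR_\idp\otimes_{\rA_\idp}\mM_\idp)/\tor$.

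The canonical morphism $\gamma:\mM_\idp\to\mN_\idp$ is injective, because $\mM_\idp$ is torsion-free over the one-dimensional reduced Cohen-Macaulay local ring $\rA_\idp$, and $\rQ(\rR_\idp)$ is a product of fields flat over the product of fields $\rQ(\rA_\idp)$, so the inclusion $\mM_\idp\hookrightarrow\rQ(\rR_\idp)\otimes_{\rA_\idp}\mM_\idp$ factors through $\gamma$. The key step for injectivity of $\gamma\otimes\bar\rA_\idp$ is then: given $m\in\mM_\idp$ with $\gamma(m)\in I\mN_\idp$, write $\gamma(m)=\sum_{k}i_{k}n_{k}$ and lift each $n_{k}$ to $\tilde n_{k}=\sum_{\ell}r_{k\ell}\otimes x_{k\ell}\in\rR_\idp\otimes_{\rA_\idp}\mM_\idp$; since $I$ is an ideal of $\rR$, each $i_{k}r_{k\ell}\in I\subset\rA$, so by $\rA$-linearity of the tensor $\sum_{k}i_{k}\tilde n_{k}=1\otimes y$ with $y=\sum_{k,\ell}(i_{k}r_{k\ell})x_{k\ell}\in I\mM_\idp$. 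Hence $1\otimes(m-y)\in\tor$, so $\gamma(m-y)=0$ in $\mN_\idp$, and injectivity of $\gamma$ forces $m=y\in I\mM_\idp$.

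The main obstacle is bookkeeping: correctly setting up all tensor identifications and tracking how Corollary \ref{C:compat-of-Macaulayf} interacts with the product decomposition of the semi-local ring $\rR_\idp$. Once this formalism is in place, the surjectivity follows directly from Theorem \ref{P:Macaulafic}(2), while the injectivity reduces---via the dual role of the conductor $I$ as an ideal in both rings---to the injectivity of $\gamma$ itself.
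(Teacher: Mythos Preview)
Your proof is correct. For the first bullet (surjectivity of $\theta_\mM$) you argue exactly as the paper does: the cokernel of $\rR\otimes_\rA\mM\to\rR\boxtimes_\rA\mM$ has finite length by Theorem~\ref{P:Macaulafic}, hence dies upon tensoring with $\rQ(\bar\rR)$.

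For the second bullet you take a genuinely different route. The paper works globally: it sets $\widetilde\mM'=\rR\otimes_\rA\mM/\tor$ and $\widetilde\mM=(\widetilde\mM')^\dagger$, shows that the restriction $I\mM\to I\widetilde\mM'$ is an isomorphism (via the same conductor trick you use), observes that $I\widetilde\mM'\hookrightarrow I\widetilde\mM$ has finite-length cokernel, and then applies the snake lemma to the diagram
\[
\xymatrix{
0\ar[r] & I\mM\ar[r]\ar[d]_{\bar\jmath} & \mM\ar[r]\ar[d]_{\jmath} & \bar\rA\otimes_\rA\mM\ar[r]\ar[d]^{\eta} & 0\\
0\ar[r] & I\widetilde\mM\ar[r] & \widetilde\mM\ar[r] & \bar\rR\otimes_\rR\widetilde\mM\ar[r] & 0
}
\]
to conclude that $\ker(\eta)$ has finite length, hence is killed by $\rQ(\bar\rA)\otimes_{\bar\rA}-$. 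You instead localize immediately at each minimal prime of $\bar\rA$, invoke Corollary~\ref{C:compat-of-Macaulayf} to replace $(\rR\boxtimes_\rA\mM)_\idp$ by $(\rR_\idp\otimes_{\rA_\idp}\mM_\idp)/\tor$, and then prove injectivity of $\mM_\idp/I_\idp\mM_\idp\to\mN_\idp/I_\idp\mN_\idp$ by a direct element chase. Both approaches hinge on the same idea---that $I$ is an ideal in $\rR$ as well as in $\rA$, so $I\cdot(\rR\otimes_\rA\mM)$ lands back in the image of $\mM$---but the paper extracts from this a global statement ($\ker(\eta)$ is of finite length, not merely generically zero), which is reused later in the construction of the functor $\FF'$ in Section~4. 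Your localized argument is slightly more elementary and self-contained, at the cost of not yielding that stronger global conclusion.
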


\begin{proof}
By Theorem \ref{P:Macaulafic}, the
cokernel of the canonical morphism $\rR \otimes_\rA \mM \to \rR \boxtimes_\rA \mM$
has finite length. Hence, it vanishes after tensoring with $\rQ(\bar\rR)$. Thus,  the map
$\theta_\mM$ is surjective. The first statement of lemma is proven.

Denote by
$
\widetilde\mM' := \rR \otimes_\rA  \mM/\mathsf{tor}_\rR(\rR \otimes_\rA \mM)$ and
$\widetilde\mM := \widetilde\mM'^\dagger$.
First note that the canonical morphism
of $\rA$--modules
$ \mM \stackrel{\kappa}\lar  \widetilde\mM', \; m \mapsto [1 \otimes m]
$
is a monomorphism. As a result, the morphism
$I \mM \stackrel{\bar{\kappa}}\lar  I \widetilde\mM'$, which is a restriction
 of $\kappa$, is also injective. Moreover,
$\bar{\kappa}$ is also surjective: for any $a \in I, b \in \rR$ and $m \in \mM$ we have:
$a \cdot [b \otimes m] = [ab \otimes m] = [1 \otimes (ab) \cdot m]$ and $ab \in I$.

Since the module $\widetilde\mM'$ is torsion free, by  Theorem \ref{P:Macaulafic} we have a short exact sequence
$$
0 \lar \widetilde\mM' \stackrel{\xi}\lar \widetilde\mM \lar \mT \lar 0,
$$
where $\mT$ is a module of finite length. It implies that the cokernel
of the induced map $I \widetilde\mM' \stackrel{\bar\xi}\lar I \widetilde\mM$
has finite length as well. Let $\mM \stackrel{\jmath}\lar  \widetilde\mM$ be the composition
of $\kappa$ and $\xi$ and $I\mM \stackrel{\bar\jmath}\lar  I\widetilde\mM$ be the induced map.
Then we have the following commutative diagram with exact rows:
\begin{equation}
\begin{array}{c}
\xymatrix
{ 0 \ar[r] & I \mM  \ar[r] \ar[d]_{\bar{\jmath}} & M \ar[r] \ar[d]_{\jmath} & \bar{A} \otimes_{{A}} \mM
\ar[d]^{\eta} \ar[r] & 0 \\
0 \ar[r] & I \widetilde\mM  \ar[r] & \widetilde\mM \ar[r] & \bar{R} \otimes_{{R}} \widetilde\mM
\ar[r] & 0. }
\end{array}
\end{equation}
Since $\jmath$ is injective and the cokernel  $\bar{\jmath}$ is of finite length, snake lemma
implies that $\ker(\eta)$ has finite length. Since $\rQ(\bar{\rA}) \otimes_{\bar\rA} \,-\,$
is an exact functor, we obtain an  exact sequence
$$
0 \lar \rQ(\bar{\rA}) \otimes_{\bar\rA} \ker(\eta)
\lar  \rQ(\bar{\rA}) \otimes_{\bar\rA} \bar{A} \otimes_{{A}} \mM
\xrightarrow{1 \otimes \eta} \rQ(\bar{\rA}) \otimes_{\bar\rA} \bar{R} \otimes_{{R}} \widetilde\mM.
$$
It remains to take into account that $\rQ(\bar{\rA}) \otimes_{\bar\rA} \ker(\eta) = 0$,
$\rQ(\bar{\rA}) \otimes_{\bar\rA} \bar{R} = \rQ(\bar\rR)$ and $1 \otimes \eta$ coincides
with the morphism $\widetilde{\theta}_\mM$.
\end{proof}

\begin{definition}\label{D:triples}
In the notations of this section,  consider the following
\emph{category of triples} $\Tri(\rA)$. Its objects
are triples $(\widetilde\mM, V, \theta)$, where $\widetilde\mM$
is a maximal Cohen--Macaulay $\rR$--module, $V$ is a Noetherian
$\rQ(\bar\rA)$--module and
$\theta:  \rQ(\bar\rR) \otimes_{\rQ(\bar\rA)} V \to
\rQ(\bar\rR) \otimes_\rR \widetilde\mM$ is an epimorphism
of $\rQ(\bar\rR)$--modules such that the induced morphism
of $\rQ(\bar\rA)$--modules
$$V \lar  \rQ(\bar\rR) \otimes_{\rQ(\bar\rA)} V
\stackrel{\theta}\lar \rQ(\bar\rR) \otimes_\rR \widetilde\mM $$
is a monomorphism. In what follows,  $\theta$ will be frequently called
\emph{gluing map}.

A morphism between two triples $(\widetilde\mM, V, \theta)$
and $(\widetilde\mM', V', \theta')$ is given by a pair $(\varphi, \psi)$, where
$\varphi: \widetilde\mM \to \widetilde\mM'$ is a morphism of $\rR$--modules and
$\psi: V \to V'$ is a morphism of $\rQ(\bar\rA)$--modules such that
the following diagram
$$
\xymatrix
{
\rQ(\bar\rR) \otimes_{\rQ(\bar\rA)} V \ar[rr]^-{\theta} \ar[d]_{\mathbbm{1} \otimes \varphi} & &
\rQ(\bar\rR) \otimes_\rR \widetilde\mM \ar[d]^{\mathbbm{1} \otimes \psi}\\
\rQ(\bar\rR) \otimes_{\rQ(\bar\rA)} V' \ar[rr]^-{\theta'} & &
\rQ(\bar\rR) \otimes_\rR \widetilde\mM'
}
$$
is commutative in the category of $\rQ(\bar\rR)$--modules.
\end{definition}

\begin{remark}
Consider the  pair of functors
\begin{equation}\label{E:pair-of-functr}
\CM(\rR) \xrightarrow{\bar\rR \otimes_\rR \;-} \rQ(\bar\rR)\mathsf{-mod}
\xleftarrow{\rQ(\bar\rR) \otimes_{\rQ(\bar\rA)} \;-} \rQ(\bar\rA)\mathsf{-mod}.
\end{equation}
Then the  category $\Tri(\rA)$ is a full subcategory of the comma--category defined
by  (\ref{E:pair-of-functr}).
\end{remark}

\noindent
The raison d'\^etre for Definition \ref{D:triples} is  the following
theorem.

\begin{theorem}\label{T:BurbanDrozd} In the notations of this section,
the functor
$$
\mathbb{F}: \CM(\rA) \lar \Tri(\rA), \quad
\mM \mapsto \FF(\mM) :=
\bigl(\rR\boxtimes_\rA \mM, \rQ(\bar\rA) \otimes_\rA M,
\theta_\mM\bigr),$$
  is an equivalence of categories.
\end{theorem}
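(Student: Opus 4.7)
The plan is to exhibit an explicit quasi-inverse $\mathbb{G}: \Tri(\rA) \to \CM(\rA)$ using a fibered-product construction, and simultaneously to identify $\mM \in \CM(\rA)$ with the pullback of its image under $\FF$. The technical heart is the assertion that for every $\mM \in \CM(\rA)$ the square
\begin{equation*}
\xymatrix{\mM \ar[r] \ar[d] & \rR \boxtimes_\rA \mM \ar[d] \\ \rQ(\bar\rA) \otimes_\rA \mM \ar[rr]^-{\widetilde{\theta}_\mM} & & \rQ(\bar\rR) \otimes_\rR (\rR \boxtimes_\rA \mM)}
\end{equation*}
is Cartesian in $\rA\mathsf{-mod}$. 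Granting this, a morphism of triples $\FF(\mM) \to \FF(\mN)$ is by definition a compatible pair $(\varphi, \psi)$, and by the universal property of the fibered product computing $\mN$, such a pair descends to a unique $\rA$-linear map $\mM \to \mN$ recovering $(\varphi, \psi)$ under $\FF$. This will give fully faithfulness.

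To prove the Cartesian property I would argue locally on $\Spec(\rA)$. Outside the non-normal locus $V(I)$ we have $\rA_\idp = \rR_\idp$, so the horizontal map on the right is an isomorphism and the square collapses trivially; at primes of height one lying in $V(I)$ and at the closed point one applies Theorem \ref{P:Macaulafic} (the torsion part of $\mM$ vanishes since $\mM$ is Cohen-Macaulay, and the cokernel of $\mM \to \rR \boxtimes_\rA \mM$ has finite length) together with the snake lemma to identify the pullback with $\mM$. Lemma \ref{L:canmorphdim2} already provides the injectivity of $\widetilde{\theta}_\mM$ and the surjectivity of $\theta_\mM$, which are exactly what is needed to recognise the diagram as a pullback.

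For essential surjectivity, given a triple $(\widetilde{\mM}, V, \theta)$, define
\begin{equation*}
\mathbb{G}(\widetilde{\mM}, V, \theta) \;:=\; \widetilde{\mM} \times_{\rQ(\bar\rR) \otimes_\rR \widetilde{\mM}} V,
\end{equation*}
where $V \hookrightarrow \rQ(\bar\rR) \otimes_\rR \widetilde{\mM}$ is the monomorphism built into the definition of a triple, and $\widetilde{\mM}$ maps in via its canonical localization. I then check: (i) the pullback $\mM$ is finitely generated over $\rA$; (ii) $\mM$ is Cohen-Macaulay, via the Depth Lemma applied to the pullback sequence $0 \to \mM \to \widetilde{\mM} \oplus V \to \rQ(\bar\rR) \otimes_\rR \widetilde{\mM}$, using that $\widetilde{\mM}$ has depth two over $\rR$ and that $V$ is supported in codimension one; (iii) the natural comparison morphism $\FF(\mM) \to (\widetilde{\mM}, V, \theta)$ is an isomorphism in $\Tri(\rA)$.

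The main obstacle is step (iii), and specifically the identification $\rR \boxtimes_\rA \mM \cong \widetilde{\mM}$. The plain tensor product $\rR \otimes_\rA \mM$ need not be torsion free or reflexive, so one must pass through the Macaulayfication; by Theorem \ref{P:Macaulafic} and Lemma \ref{L:fact-on-Macaulaf} it suffices to verify that the canonical map $\rR \otimes_\rA \mM \to \widetilde{\mM}$ is an isomorphism in codimension one and has finite-length cokernel, for which Corollary \ref{C:compat-of-Macaulayf} is exactly the right tool. For the identification $\rQ(\bar\rA) \otimes_\rA \mM \cong V$, surjectivity is automatic from $\mM$ being a pullback together with the mono-condition $V \hookrightarrow \rQ(\bar\rR) \otimes_\rR \widetilde{\mM}$, while injectivity uses the epimorphism condition on $\theta$ combined with exactness of localization at $\rQ(\bar\rA)$. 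The two gluing conditions in Definition \ref{D:triples} are thus precisely what is required to recover $\mM$ faithfully from its triple; without them, either step (ii) or the compatibility in step (iii) would break.
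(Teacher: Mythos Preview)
Your strategy---construct $\GG$ via a fibered product and verify $\FF\GG\simeq\mathrm{id}$, $\GG\FF\simeq\mathrm{id}$---matches the paper's, but step (ii) has a real gap. The Depth Lemma applies only to finitely generated modules, and neither $V$ nor $\rQ(\bar\rR)\otimes_\rR\widetilde\mM$ is finitely generated over $\rA$: the ring $\rQ(\bar\rA)$ is a genuine localization of the one-dimensional ring $\bar\rA$, so $V$ is not Noetherian as an $\rA$-module. You therefore cannot read off $\depth_\rA(\mM)=2$ from the pullback sequence as written. Worse, even after replacing $V$ by a Cohen--Macaulay $\bar\rA$-module $U$ representing it and forming the honest pullback $\mN=\widetilde\mM\times_{\bar\rR\otimes_\rR\widetilde\mM}U$ inside $\rA\mathsf{-mod}$, the module $\mN$ is in general only torsion free, \emph{not} Cohen--Macaulay. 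The paper's Lemma~\ref{L:how-to-avoid-Macaulayfication} makes this explicit: $\mN$ is Cohen--Macaulay precisely when $\coker(\tilde\theta)$ is Cohen--Macaulay over $\bar\rA$, a condition that certainly fails for some triples. The correct definition is $\GG(T)=\mN^\dagger$, and the Macaulayfication cannot be skipped.

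This is exactly why the paper routes the construction through the Serre quotient $\MM(\rA)=\rA\mathsf{-mod}/\art(\rA)$. There the pullback is taken (Definition of $\BB$), yielding an object of $\MM(\rA)$ which one then brings back to $\CM(\rA)$ via the left adjoint $\dagger$ of Theorem~\ref{T:prop-of-Serre-quot}\,(\ref{it4:prop-of-Serre-quot}). The same issue undermines your Cartesian-square argument for fully faithfulness: the sequence $\mM\to\widetilde\mM\oplus\overline\mM\to\widehat\mM\to 0$ (with $\overline\mM=\bar\rA\otimes_\rA\mM$, $\widehat\mM=\bar\rR\otimes_\rR\widetilde\mM$) becomes short exact only in $\MM(\rA)$, because the comparison $I\mM\to I\widetilde\mM$ has a finite-length cokernel that need not vanish (see the discussion around diagram~\eqref{E:some-not-for-trpls}). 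Your localized square with $\rQ(\bar\rA)\otimes_\rA\mM$ in place of $\overline\mM$ does not obviously repair this: the kernel of $\widetilde\mM\to\rQ(\bar\rR)\otimes_\rR\widetilde\mM$ contains $I\widetilde\mM$, which already may strictly contain $I\mM$, so the naive pullback in $\rA\mathsf{-mod}$ can be strictly larger than $\mM$. The paper recovers $\mM$ from this larger module by Macaulayfying---which is harmless since $\mM$ is already Cohen--Macaulay---but that step is doing real work and your proposal omits it.
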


\noindent
Lemma \ref{L:canmorphdim2} assures that the functor $\FF$
is well--defined. The proof of this theorem as well as the construction
of a quasi--inverse functor $\GG$ will be given in the next section.

\medskip
\noindent
 Now we
shall investigate the compatibility of the functor $\FF$ with localizations with respect to
the prime ideals of height $1$.

\begin{proposition}\label{P:prop-of-conduct}
 Let $a(I) := \mathrm{ass}(I) = \bigl\{\idp_1, \idp_2, \dots, \idp_t\bigr\}$
be the associator of the
conductor ideal $I \subseteq \rA$.  Then  we have:
\begin{enumerate}
\item for all $1 \le i \le t$ the ideal $\idp_i$ has height one;
\item Let $\idp \in \kP$. Then $(\rR/\rA)_\idp  = 0$ for all
$\idp \notin a(I)$;
\item Let  $\bar{\idp}_i$ be   the image of $\idp_i$ in the ring $\bar{A}$ for
$1 \le i \le t$.  Then
$$\rQ(\bar\rA) \cong  \bar\rA_{\bar{\idp}_1} \times \dots \times \bar\rA_{\bar{\idp}_t}\;
\mbox{and}\;
 \rQ(\bar\rR) \cong  \bar\rR_{\bar{\idp}_1} \times \dots \times \bar\rR_{\bar{\idp}_t}.$$
 \item Moreover, for any $\idp \in a(I)$ the ring  $\rR_\idp$ is the normalization
 of $\rA_\idp$, $I_\idp$ is the conductor ideal of $\rA_\idp$, $\rQ(\bar\rA)_\idp =
 \bar\rA_{\bar\idp}$ and $\rQ(\bar\rR)_\idp =
 \bar\rR_{\bar\idp}$.
\end{enumerate}
\end{proposition}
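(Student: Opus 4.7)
The plan is to deduce the four statements from the fact, established in Lemma \ref{L:prop-of-conduct}, that $\bar\rA$ and $\bar\rR$ are Cohen-Macaulay of Krull dimension one.

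For (1), in a Cohen-Macaulay module over the Cohen-Macaulay ring $\rA$ there are no embedded associated primes, and every associated prime has dimension equal to the dimension of the module; applied to $\bar\rA$ this gives $\dim(\rA/\idp_i) = 1$, so $\mathrm{ht}(\idp_i) = 1$. For (2), I would use that $\Supp_\rA(\rR/\rA) = V(\Ann_\rA(\rR/\rA)) = V(I)$, and observe that a height-one prime $\idp$ belongs to $V(I)$ if and only if it is minimal over $I$; the minimal primes of $\rA$ itself cannot contain $I$, because tensoring $\rA \hookrightarrow \rR$ with $\rQ(\rA)$ yields the identity (a reduced zero-dimensional ring equals its own normalization). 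Since $\bar\rA$ has no embedded primes, minimality over $I$ is equivalent to membership in $a(I)$.

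For (3), the associated primes of $\bar\rA$ coincide with its minimal primes $\{\bar\idp_1, \ldots, \bar\idp_t\}$, so $\rQ(\bar\rA)$, obtained by inverting the complement of $\bigcup_i \bar\idp_i$, is semilocal and Artinian; the structure theorem for Artinian rings then yields $\rQ(\bar\rA) \cong \prod_i \bar\rA_{\bar\idp_i}$. Tensoring over $\bar\rA$ with $\bar\rR$ and invoking part (iii) of Lemma \ref{L:prop-of-conduct}, which identifies $\bar\rR \otimes_{\bar\rA} \rQ(\bar\rA)$ with $\rQ(\bar\rR)$, gives the analogous product decomposition of $\rQ(\bar\rR)$ since tensor product commutes with finite direct products.

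The main work lies in (4). That $\rR_\idp$ is the normalization of $\rA_\idp$ and that $I_\idp$ is the conductor of this extension are standard compatibilities of normalization and of annihilators of finitely generated modules with localization. The identification $\rQ(\bar\rA)_\idp \cong \bar\rA_{\bar\idp}$ is the delicate point: the height-one primes $\idp_i$ are pairwise incomparable, so for $i \neq j$ one can pick $s \in \idp_i \setminus \idp_j$; then $s$ becomes invertible when inverting $\rA \setminus \idp_j$, while its image in the Artinian local ring $\bar\rA_{\bar\idp_i}$ lies in the unique (hence nilpotent) maximal ideal, forcing $(\bar\rA_{\bar\idp_i})_{\idp_j} = 0$. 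The only surviving factor is $\bar\rA_{\bar\idp_j}$, which is unchanged by further localization at $\bar\idp_j$; the same reasoning handles $\rQ(\bar\rR)_\idp$. The main obstacle is precisely this bookkeeping step matching the product decomposition of the total ring of fractions with localization at a single $\idp_j$; everything else reduces to standard facts about Cohen-Macaulay modules once (1)--(3) are in hand.
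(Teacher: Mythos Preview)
Your proof is correct and follows essentially the same route as the paper's. Both arguments hinge on Lemma~\ref{L:prop-of-conduct} to get that $\bar\rA$ is one-dimensional Cohen--Macaulay, deduce that $\rQ(\bar\rA)$ is Artinian with maximal ideals $\bar\idp_i\rQ(\bar\rA)$, and then read off (3) and (4) from the resulting product decomposition together with the identification $\rQ(\bar\rR)\cong\bar\rR\otimes_{\bar\rA}\rQ(\bar\rA)$; your treatment of (4) just spells out in a bit more detail why only one factor survives the localization.
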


\begin{proof}
According to  Lemma \ref{L:prop-of-conduct}, the ring  $\bar\rA$ is a Cohen--Macaulay curve singularity.
Hence, it is equidimensional, what  proves the first statement.

It is well--known that $a(I)$ coincides with
 the set of minimal elements of $\Supp(\bar\rA)$, see \cite{Serre}.
Hence, for any $\idp \in \kP$ we have: $\bar\rA_\idp \ne  0$ if and only if
$\idp \in a(I)$. Since the ring extension $\bar\rA \subseteq \bar\rR$ is finite,
$\bar\rR_\idp \ne  0$ if and only if
$\idp \in a(I)$.
 This proves the second statement.

Next,  $\bar\rA$ is a one--dimensional Cohen--Macaulay ring and the set of its minimal prime
ideals   is $a(0) = \bigl\{\bar{\idp}_1, \dots, \bar{\idp}_t\bigr\}$. Hence, we have:
$\bar\rA_{\bar{\idp}_i} = \rQ(\bar\rA)_{\bar{\idp}_i \rQ(\bar\rA)}$ for all $1 \le i \le t$.
Since  $\bar\rA$ is Cohen--Macaulay, its  total ring of fractions  $\rQ(\bar\rA)$ is artinian.
Moreover,  $\Bigl\{\bar{\idp}_1 \rQ(\bar\rA), \dots, \bar{\idp}_t \rQ(\bar\rA)\Bigr\}$
is the set of maximal ideals of $\rQ(\bar\rA)$.
In particular, the morphism
$$
\rQ(\bar\rA) \lar \rQ(\bar\rA)_{\bar{\idp}_1 \rQ(\bar\rA)} \times \rQ(\bar\rA)_{\bar{\idp}_2 \rQ(\bar\rA)} \times
\dots \times \rQ(\bar\rA)_{\bar{\idp}_t \rQ(\bar\rA)}
\lar
\bar\rA_{\bar{\idp}_1} \times \dots \times \bar\rA_{\bar{\idp}_t}
$$
is an isomorphism. Taking into account Lemma \ref{L:prop-of-conduct}, we obtain an isomorphism
$$
\bar\rR_{\bar{\idp}_1} \times \dots \times \bar\rR_{\bar{\idp}_t}
\lar
\left(\bar\rA_{\bar{\idp}_1} \times \dots \times \bar\rA_{\bar{\idp}_t}\right) \otimes_{\bar\rA}
 \bar\rR \lar
\rQ(\bar\rA) \otimes_{\bar\rA} \bar\rR \lar \rQ(\bar\rR).
$$
This concludes a proof of the third statement.

For any prime ideal $\idp$  the ring $\rR_\idp$ is the normalization of $\rA_\idp$. Next, we have:
$I_\idp = \bigl(\Ann_\rA(\rR/\rA)\bigr)_\idp \cong \Ann_{\rA_\idp}(\rR_\idp/\rA_\idp)$, hence
$I_\idp$ is the conductor ideal of $\rA_{\idp}$. The ring isomorphisms  $\rQ(\bar\rA)_\idp \cong
 \bar\rA_{\bar\idp}$ and $\rQ(\bar\rR)_\idp \cong
 \bar\rR_{\bar\idp}$ follow from the previous part.
\end{proof}

\begin{remark}
For a Cohen--Macaulay curve singularity  $\rC$, there exists the notion
of  the category
of triples $\Tri(\rC)$ parallel to Definition \ref{D:triples}, see
Section  \ref{S:AppendixTriplesDimOne}.
\end{remark}

\begin{proposition}\label{P:Tripl-and-Localiz}
For any prime ideal $\idp \in a(I)$ we have the localization functor
$\LL_\idp: \Tri(\rA) \to \Tri(\rA_\idp)$ mapping a triple
$\mT = (\widetilde\mM, V, \theta)$ to the triple $\mT_\idp = \LL_\idp(\mT) =
(\widetilde\mM_\idp, V_\idp, \theta_\idp)$.  Moreover,
there is the following  diagram of  categories and functors
$$
\xymatrix{
\CM(\rA) \ar[rr]^-{\rA_\idp \otimes_\rA \,-\,} \ar[d]_{\FF^\rA} & & \CM(\rA_\idp)
\ar@{=>}[dll]_\xi \ar[d]^{\FF^{\rA_\idp}} \\
\Tri(\rA) \ar[rr]^-{\LL_\idp} & & \Tri(\rA_\idp)
}
$$
where the natural transformation $\xi: \FF^{\rA_\idp} \circ (\rA_\idp \otimes_\rA \,-\,) \rightarrow  \LL_\idp \circ \FF^\rA$ is an isomorphism.
Moreover, for a triple $\mT = (\widetilde\mM, V, \theta)$ the gluing morphism
$\theta$ is an isomorphism if and only if $\theta_\idp$ is an isomorphism
for all $\idp \in a(I)$.
\end{proposition}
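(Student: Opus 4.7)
The plan is to establish the three assertions in turn: well-definedness of $\LL_\idp$, construction of the natural isomorphism $\xi$, and the local criterion for $\theta$ to be an isomorphism.

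First, I would verify that $\LL_\idp$ lands in $\Tri(\rA_\idp)$ (the one-dimensional category of triples defined in the Appendix). The necessary identifications come from Proposition \ref{P:prop-of-conduct}: for $\idp \in a(I)$ the ring $\rR_\idp$ is the normalization of $\rA_\idp$ with conductor $I_\idp$, and $\rQ(\bar\rA)_\idp \cong \bar\rA_{\bar\idp} \cong \rQ(\bar\rA_\idp)$, and similarly $\rQ(\bar\rR)_\idp \cong \rQ(\bar\rR_\idp)$. Given a triple $(\widetilde\mM, V, \theta)$, the localization $\widetilde\mM_\idp$ is Cohen-Macaulay over $\rR_\idp$ and $V_\idp$ is Noetherian over $\rQ(\bar\rA_\idp)$. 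Since $\rA_\idp$ is a flat $\rA$-algebra, both the epimorphism condition on $\theta$ and the induced monomorphism condition on $V \to \rQ(\bar\rR) \otimes_\rR \widetilde\mM$ are preserved by localization, so $\LL_\idp(\widetilde\mM, V, \theta) = (\widetilde\mM_\idp, V_\idp, \theta_\idp)$ is a valid triple; functoriality on morphisms is automatic from the same exactness.

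Second, to build the natural isomorphism $\xi_M: \FF^{\rA_\idp}(M_\idp) \to \LL_\idp\FF^\rA(M)$ for $M \in \CM(\rA)$, I would assemble it from two familiar component isomorphisms. Corollary \ref{C:compat-of-Macaulayf} supplies a natural isomorphism $\rR_\idp \boxtimes_{\rA_\idp} M_\idp \cong (\rR \boxtimes_\rA M)_\idp$, while the ring identifications above give $\rQ(\bar\rA_\idp) \otimes_{\rA_\idp} M_\idp \cong (\rQ(\bar\rA) \otimes_\rA M)_\idp$ by flat base change. The main obstacle will be the compatibility check: these two isomorphisms must intertwine the gluing maps $\theta_{M_\idp}$ and $(\theta_M)_\idp$. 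This reduces to a diagram chase tracing the construction of $\theta_M$ in Lemma \ref{L:canmorphdim2} through the localization at $\idp$, using only that this localization is exact and commutes with each tensor product and Macaulayfication involved. Naturality of $\xi$ in $M$ then follows at once from the naturality of the two building blocks.

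Third, for the local criterion, I would exploit the product decomposition $\rQ(\bar\rR) \cong \prod_{i=1}^{t} \bar\rR_{\bar\idp_i}$ from Proposition \ref{P:prop-of-conduct}. Every $\rQ(\bar\rR)$-module and every $\rQ(\bar\rR)$-linear morphism splits accordingly into $t$ components indexed by $a(I)$; by flatness of each localization, the $i$-th component of $\theta$ is precisely $\theta_{\idp_i}$. Since $\theta$ is already an epimorphism by the triple axioms, it is an isomorphism if and only if $\ker(\theta) = 0$, and since $\ker$ respects the product decomposition, this is equivalent to $\ker(\theta_{\idp_i}) = 0$ for each $i$, i.e.~to every $\theta_{\idp_i}$ being an isomorphism. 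For $\idp \notin a(I)$ the same Proposition gives $\rQ(\bar\rR)_\idp = 0$, so $\theta_\idp$ is trivially an isomorphism and contributes no constraint, which explains why the criterion only ranges over $a(I)$.
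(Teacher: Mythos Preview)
Your proposal is correct and follows essentially the same route as the paper: both rely on Proposition~\ref{P:prop-of-conduct} for the ring identifications $\rQ(\bar\rA)_\idp \cong \bar\rA_{\bar\idp}$, $\rQ(\bar\rR)_\idp \cong \bar\rR_{\bar\idp}$, on Corollary~\ref{C:compat-of-Macaulayf} for the natural isomorphism $(\rR\boxtimes_\rA M)_\idp \cong \rR_\idp\boxtimes_{\rA_\idp} M_\idp$, and on flatness of localization for the remaining compatibilities. Your treatment of the third claim via the product decomposition $\rQ(\bar\rR)\cong\prod_i\bar\rR_{\bar\idp_i}$ is actually more explicit than the paper's, which leaves that step to the reader.
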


\begin{proof}
Let $\mT = (\widetilde\mM, V, \theta)$ be an object of $\Tri(\rA)$.
By Proposition \ref{P:prop-of-conduct},
for any prime ideal $\idp$
the localization
$I_\idp$ is the conductor ideal of the ring $\rA_\idp$,
$\rQ(\bar\rA) \cong  \bar\rA_{\bar{\idp}_1} \times \dots \times \bar\rA_{\bar{\idp}_t}$ and
 $\rQ(\bar\rR) \cong  \bar\rR_{\bar{\idp}_1} \times \dots \times \bar\rR_{\bar{\idp}_t}$.
 Hence, for any prime ideal $\idp \in a(I)$ we have: $\widetilde\mM_\idp$ is a maximal Cohen--Macaulay
 $\rR_\idp$--module, $V_\idp = V_{\bar\idp}$ is a Noetherian $\bar\rA_{\bar\idp}$--module. We have a commutative diagram
 $$
 \xymatrix
 {
 \bigl(\rQ(\bar\rR)  \otimes_{\rQ(\bar\rA)} V\bigr)_{\bar\idp} \ar[d]_{\cong}
  \ar[rr]^{\theta_{\bar\idp}} & & \bigl(\rQ(\bar\rR)  \otimes_{\rR} \widetilde\mM\bigr)_{\bar\idp} \ar[d]^{\cong}\\
 \bar\rR_{\bar\idp} \otimes_{\bar\rA_{\bar\idp}} V_{\bar\idp} \ar[rr]^{\theta_\idp} & & \bar\rR_{\bar\idp} \otimes_{\rR_\idp} \widetilde\mM_\idp
 }
 $$
where both vertical maps are canonical isomorphisms. In a similar way, we have a commutative diagram
$$
 \xymatrix
 {
 V_{\bar\idp} \ar[d]_{\mathbbm{1}}
  \ar[rr]^-{\tilde\theta_{\bar\idp}} & & \bigl(\rQ(\bar\rR)  \otimes_{\rR} \widetilde\mM\bigr)_{\bar\idp} \ar[d]^{\cong}\\
 V_{\bar\idp} \ar[rr]^-{\tilde\theta_\idp} & & \bar\rR_{\bar\idp} \otimes_{\rR_\idp} \widetilde\mM_\idp
 }
 $$
 and the morphisms $\tilde\theta_\idp$ and $\theta_\idp$ are mapped to each other under the adjunction
 maps.

By Corollary \ref{C:compat-of-Macaulayf},
for any $\idp \in \kP$  and any maximal Cohen--Macaulay $\rA$--module $\mM$ we have an isomorphism
 $(\rR \boxtimes_\rA \mM)^\dagger_\idp \to \rR_\idp \otimes_{\rA_\idp} \mM_\idp/\tor(\rR_\idp \otimes_{\rA_\idp} \mM_\idp)$ which is natural in $\mM$. Moreover, this map coincides
  the localization  $\theta_{\mM_\idp}$ of
 $\theta_\mM$. This shows the claim.
\end{proof}

\noindent
Combining  Theorem \ref{T:BurbanDrozd}
 and Proposition \ref{P:Tripl-and-Localiz}, we obtain
the following result.

\begin{theorem}\label{T:BurbanDrozd-app-on-lfree}
The functor $\FF$ establishes
an equivalence  between $\CM^{\mathsf{lf}}(\rA)$ and
 the full subcategory $\Tri^{\mathsf{lf}}(\rA)$  of $\Tri(\rA)$ consisting
of those triples $(\widetilde\mM, V, \theta)$ for which the gluing morphism
$\theta$ is an isomorphism.
\end{theorem}

\section{Serre quotients and proof of  Main Theorem}
\label{sec4}

The goal of this section is to give a proof of Theorem \ref{T:BurbanDrozd}.
To do that we need  the technique
of Serre quotient categories, studied by Gabriel in his thesis \cite{Gabriel}, see also
\cite{Popescu}.

\begin{definition}
For a Noetherian ring $\rA$ let  $\art(\gA)$ be the category of finite length mo\-dules.
Then $\art(\rA)$ is a thick subcategory, i.e.~it is closed under taking kernels, cokernels
and extensions inside of $\rA\mathsf{-mod}$. The Serre quotient category
$$\MM(\gA) = \gA\mathsf{-mod}/\art(\gA)$$
is defined as follows.

\vspace{1mm}
\noindent
1. The objects  of $\MM(\rA)$ and $\rA\mathsf{-mod}$ are the same.

\vspace{1mm}
\noindent
2. To define morphisms in $\MM(\rA)$,
for any pair of $\rA$--modules $\mM$ and $\mN$ consider the following
partially ordered set of quadruples
$
I_{\mM, \mN} :=
\bigl\{
Q = (\mX, \varphi, \mY, \psi)
\bigr\},
$
where $\mX$ and $\mY$ are $\rA$--modules,  $\mX \stackrel{\varphi}\lar \mM$ is an
injective homomorphism of $\rA$--modules whose cokernel belongs to $\art(\rA)$ and
$\mN  \stackrel{\psi}\lar \mY$ is a surjective  homomorphism of $\rA$--modules whose
kernel belongs to $\art(\rA)$. For a pair of such quadruples $Q =
(\mX, \varphi, \mY, \psi)$ and $Q' = (\mX', \varphi', \mY', \psi')$ we say that
$Q \le Q'$ if any only if there exists morphisms $\mX' \stackrel{\xi}\lar \mX$ and
$\mY \stackrel{\zeta}\lar \mY'$ such that $\varphi' = \varphi \xi$ and
$\psi' = \zeta \psi$.  Then $I_{\mM, \mN}$ is a directed partially ordered set and we define:
$$\Hom_{\MM(\gA)}(M, N) := \varinjlim\limits_{Q \in I_{\mM, \mN}}
\Hom_\rA(\mX, \mY).
$$

\vspace{1mm}
\noindent
3. Note that for any pair of $\rA$--modules $\mM$ and $\mN$ we have a canonical homomorphism
of abelian groups
$p(M, N): \Hom_\rA(\mM, \mN) \lar \varinjlim \Hom_\rA(\mX, \mY) =  \Hom_{\MM(\rA)}(\mM, \mN).$
\end{definition}

\begin{theorem}
The category $\MM(\gA)$ is abelian and the canonical functor
$$\mathbb{P}_{\rA}: \gA\mathsf{-mod} \lar \MM(\gA)$$ is exact. In particular, if $\mM \xrightarrow{\psi} \mN$
is a morphism in $\rA\mathsf{-mod}$ then $\PP_{\rA}(\psi)$ is a monomorphism (resp.~epimorphism)
if and only if the kernel (resp.~cokernel)  of $\psi$ belongs to $\art(\rA)$.

Moreover, $\MM(\gA)$ is equivalent to the localized category $\MM(\rA)^\circ = \gA\mathsf{-mod}[\Sigma^{-1}]$, where the localizing subclass $\Sigma \subset \mathsf{Mor}(\rA)$
 consists of all
 morphisms
in the category $\rA\mathsf{-mod}$, whose kernels and cokernels have finite length.
\end{theorem}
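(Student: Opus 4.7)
The plan is to recognize this as a special case of Gabriel's general theory of Serre quotient categories \cite{Gabriel}, and to reduce the verification to three concrete steps adapted to our situation.

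\textbf{Step 1: $\art(\rA)$ is a Serre subcategory.} I would first check that finite length $\rA$-modules are closed under subobjects, quotients, and extensions inside $\rA\mathsf{-mod}$; this is immediate from the existence and additivity of composition series. By general nonsense (Gabriel's thesis), this guarantees that the quotient $\MM(\rA)$ can be made into an abelian category with an exact quotient functor $\PP_\rA$, provided the Hom-sets are well-defined and composition is associative.

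\textbf{Step 2: well-definedness of $\MM(\rA)$.} I would verify that the partially ordered set $I_{\mM, \mN}$ is filtered: given two quadruples $Q = (\mX, \varphi, \mY, \psi)$ and $Q' = (\mX', \varphi', \mY', \psi')$, the intersection $\mX \cap \mX' \hookrightarrow \mM$ still has finite length cokernel (since $\art(\rA)$ is closed under extensions), and dually $\mN \twoheadrightarrow \mY \oplus_{\mY\cap\mY'}\mY'$-type constructions give a common upper bound. The colimit $\Hom_{\MM(\rA)}(\mM, \mN) = \varinjlim \Hom_\rA(\mX, \mY)$ is then a well-defined abelian group, and composition is defined by choosing compatible refinements of representatives. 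Abelianness of $\MM(\rA)$ and exactness of $\PP_\rA$ follow by observing that for any morphism $\psi:\mM \to \mN$ in $\rA\mathsf{-mod}$, the objects $\ker(\psi)$ and $\coker(\psi)$ represent the kernel and cokernel of $\PP_\rA(\psi)$ in $\MM(\rA)$ via a direct diagram chase at the level of the filtered colimits.

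\textbf{Step 3: characterization of monos/epis.} If $\ker(\psi) \in \art(\rA)$, then the inclusion $\ker(\psi) \hookrightarrow \mM$ becomes the zero morphism in $\MM(\rA)$ (via the quadruple $(0, 0, \ker\psi, \mathbbm{1})$ which is final), so $\PP_\rA(\psi)$ is injective on the level of the quotient. Conversely, if $\PP_\rA(\psi)$ is a monomorphism, then the inclusion $\ker(\psi) \hookrightarrow \mM$ must be annihilated in $\MM(\rA)$, forcing $\ker(\psi) \in \art(\rA)$ by the description of the zero morphisms in the colimit. The epimorphism case is dual. In particular, $\PP_\rA$ inverts precisely the morphisms in $\Sigma$.

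\textbf{Step 4: universal property.} For the final equivalence with $\rA\mathsf{-mod}[\Sigma^{-1}]$, I would use the standard universal property argument. Since $\PP_\rA$ inverts $\Sigma$ (by Step 3), there is a unique comparison functor $\rA\mathsf{-mod}[\Sigma^{-1}] \to \MM(\rA)$. For the inverse, any functor $F: \rA\mathsf{-mod} \to \kC$ inverting $\Sigma$ extends to $\MM(\rA)$ by sending a representative $(\mX, \varphi, \mY, \psi, f)$ to $F(\psi)^{-1}\circ F(f)\circ F(\varphi)^{-1}$; the directedness of $I_{\mM,\mN}$ ensures this is independent of the chosen representative. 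The hardest step is Step 2, where one must carefully check that composition in $\MM(\rA)$ is associative and bilinear, since the naive formulas on representatives need refinement lemmas to be well-defined; but this is a purely formal calculation once the filteredness of $I_{\mM,\mN}$ is established.
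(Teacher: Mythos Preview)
Your proposal is correct and follows the same route as the paper: both invoke Gabriel's general theory of Serre quotients, with the paper simply citing \cite[Chapitre III]{Gabriel} for the abelianness and exactness, and \cite{Popescu} for the equivalence with the Gabriel--Zisman localization $\rA\mathsf{-mod}[\Sigma^{-1}]$. You have spelled out in more detail what those references actually contain, which is fine; the paper does not attempt an independent argument here.
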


\begin{proof}
The first part of this theorem was shown by Gabriel, see
\cite[Chapitre III]{Gabriel}.
For the second part we refer to
\cite{Popescu}. In particular, for any pair of objects $\mM$ and $\mN$ and a morphism
$\mM \xrightarrow{\psi} \mN$ in the category $\MM(\rA)$ there exists an $\rA$--module
$\mE$ and a pair of morphisms $\mM \xleftarrow{\phi} \mE \xrightarrow{\varphi} \mN$
such that $\ker(\phi)$ and $\coker(\phi)$ belong to  $\art(\rA)$ and
$\psi = \PP_\rA(\varphi) \cdot \PP_\rA(\phi)^{-1}$.
\end{proof}

\noindent
It turns out that the category $\MM(\rA)$ is very natural from the point of view of
 singularity theory. The following theorem summarizes some of its well--known properties.

\begin{theorem}\label{T:prop-of-Serre-quot}
Let $(\gA, \gm)$ be a local Noetherian ring.
\begin{enumerate}
\item\label{it1:prop-of-Serre-quot} If $\rA$ is Cohen--Macaulay of Krull dimension one then the exact functor
$\rQ(\rA) \otimes_\rA \,-\,: \rA\mathsf{-mod} \to \rQ(\rA)\mathsf{-mod}$ induces
an equivalence of categories $\MM(\rA) \to \rQ(\rA)\mathsf{-mod}$;

\item\label{it2:prop-of-Serre-quot} Let
 $X = \Spec(\gA)$ and $x = \{\gm\}$ be the unique closed
point of $X$. For  $U := X \setminus\{x\}$ let   $\imath: U \to X$ be the canonical embedding
and $\Coh_x(X)$ be the category of coherent sheaves on $X$ supported at $x$.
Then the functor $\imath^*$ induces an equivalence of categories $\Coh(X)/\Coh_x(X) \to
\Coh(U)$.  In particular, the categories $\MM(\rA)$ and $\Coh(U)$ are equivalent.

\item\label{it3:prop-of-Serre-quot} Let $\rA$ be of Krull dimension at least two then the canonical functor
$$
\II: \CM(\rA) \lar \rA\mathsf{-mod} \stackrel{\PP_\rA}\lar  \MM(\rA)
$$
is fully faithful. Moreover, if $\rA$ is a normal surface singularity then the category
$\Coh(U)$ is hereditary and
$\CM(\rA)$ is equivalent to the category $\VB(U)$ of locally free coherent sheaves
on $U$.

\item\label{it4:prop-of-Serre-quot}
Let $\rA$ be a reduced Cohen--Macaulay surface singularity
then the Macaulayfication functor $\dagger: \rA\mathsf{-mod} \to \CM(\rA)$ induces a functor
$\MM(\rA) \to \CM(\rA)$ which is left adjoint to the embedding $\II$.
Moreover, for a torsion free $\rA$--module $\mM$ we have a natural isomorphism
$
\mM^\dagger \to   \Gamma(\imath_* \imath^* \widetilde\mM),
$
 where $\widetilde\mM$
is the coherent sheaf on $X$ obtained by sheafifying the module $\mM$.
\end{enumerate}
\end{theorem}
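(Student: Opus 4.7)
The plan is to prove the four parts in order, treating (\ref{it3:prop-of-Serre-quot}) and (\ref{it4:prop-of-Serre-quot}) as an adjoint pair once (\ref{it1:prop-of-Serre-quot}) and (\ref{it2:prop-of-Serre-quot}) are in hand.

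For (\ref{it1:prop-of-Serre-quot}), I would first identify $\art(\rA)$ with $\ker(\rQ(\rA) \otimes_\rA -)$: a finitely generated $\rA$-module $T$ is killed by $\rQ(\rA) \otimes_\rA -$ iff it is annihilated by a non-zero-divisor $a \in \gm$, and in a one-dimensional Cohen-Macaulay ring $\rA/(a)$ is Artinian, which forces $T$ to have finite length. Gabriel's universal property of Serre quotients then yields an exact factorization $\MM(\rA) \to \rQ(\rA)\mathsf{-mod}$; essential surjectivity is clear (any finitely generated $\rQ(\rA)$-module comes from an $\rA$-module), and full faithfulness follows from the roof description of morphisms. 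For (\ref{it2:prop-of-Serre-quot}), the kernel of $\imath^* \colon \Coh(X) \to \Coh(U)$ is precisely $\Coh_x(X)$, and any coherent sheaf on an open subscheme of a Noetherian scheme extends to the whole scheme (standard EGA), giving the quotient equivalence $\Coh(X)/\Coh_x(X) \to \Coh(U)$. Combining with $\rA\mathsf{-mod} \simeq \Coh(X)$ and the identification of $\Coh_x(X)$ with $\art(\rA)$, we obtain $\MM(\rA) \simeq \Coh(U)$.

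For (\ref{it3:prop-of-Serre-quot}), the crucial input is that $\depth_\rA \mM = 2$ for $\mM \in \CM(\rA)$, so $H^0_\gm(\mM) = H^1_\gm(\mM) = 0$, equivalently $\widetilde \mM \cong \imath_* \imath^* \widetilde \mM$ on $X$. Combining with the identification $\MM(\rA) \simeq \Coh(U)$ from (\ref{it2:prop-of-Serre-quot}) and the $(\imath^*, \imath_*)$-adjunction yields
\begin{align*}
\Hom_{\MM(\rA)}(\mM, \mN) & \cong \Hom_{\Coh(U)}(\imath^* \widetilde \mM, \imath^* \widetilde \mN) \\
& \cong \Hom_{\Coh(X)}(\widetilde \mM, \imath_* \imath^* \widetilde \mN) \cong \Hom_\rA(\mM, \mN),
\end{align*}
proving full faithfulness of $\II$. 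For the hereditary claim in the normal case, I would invoke the Scheja--Hartshorne theorem that the punctured spectrum of a Cohen-Macaulay local ring of dimension $d$ has cohomological dimension $d-1$; for $d = 2$ this forces $\Ext^2$ on $\Coh(U)$ to vanish. Finally, for normal $\rA$ the scheme $U$ is regular, so reflexive coherent sheaves on $U$ are precisely the locally free ones, and by Serre's criterion (Theorem \ref{T:Serre}) they correspond under $\II$ to Cohen-Macaulay modules.

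For (\ref{it4:prop-of-Serre-quot}), the Macaulayfication $\dagger = (-)^{\vee\vee}$ with $\vee = \Hom_\rA(-, \mK)$ sends any morphism with finite length kernel and cokernel to an isomorphism, because $\Ext^i(T, \mK) = 0$ for $i < \depth \mK = 2$ when $T$ has finite length (a standard induction on the length using $\Ext^i(\kk, \mK) = 0$ for $i < 2$). Hence $\dagger$ descends to a functor $\MM(\rA) \to \CM(\rA)$, and in particular the canonical morphism $\delta_\mM \colon \mM \to \mM^\dagger$ becomes an isomorphism in $\MM(\rA)$. The adjunction with $\II$ is then immediate:
$$
\Hom_{\MM(\rA)}(\mM, \II \mN) \cong \Hom_{\MM(\rA)}(\II(\mM^\dagger), \II \mN) \cong \Hom_\rA(\mM^\dagger, \mN)
$$
for $\mN \in \CM(\rA)$, by part (\ref{it3:prop-of-Serre-quot}). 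The natural isomorphism $\mM^\dagger \cong \Gamma(\imath_* \imath^* \widetilde \mM)$ for torsion-free $\mM$ then follows because both sides define left adjoints to $\II$ on the torsion-free subcategory (one needs to observe that $\Gamma(\imath_* \imath^* \widetilde \mM)$ has depth $2$ at $\gm$, hence is Cohen-Macaulay). The main obstacle I anticipate is the hereditary statement in (\ref{it3:prop-of-Serre-quot}), which genuinely needs the cohomological dimension estimate for $U$; all remaining assertions reduce via local cohomology and formal Serre-quotient manipulations.
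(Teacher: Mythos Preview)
Your proposal is correct and follows essentially the same route as the paper. The main difference is one of presentation: where the paper outsources several steps to references (Gabriel's \cite[Lemme III.2.1 and Proposition III.2.4]{Gabriel} for parts (\ref{it1:prop-of-Serre-quot})--(\ref{it3:prop-of-Serre-quot}), and the authors' survey \cite{SurvOnCM} for part (\ref{it4:prop-of-Serre-quot}) and the equivalence $\CM(\rA)\simeq\VB(U)$), you give more self-contained arguments. In particular, your use of local cohomology ($H^0_{\gm}=H^1_{\gm}=0$ on $\CM(\rA)$, hence $\widetilde\mM\cong\imath_*\imath^*\widetilde\mM$) to establish full faithfulness in (\ref{it3:prop-of-Serre-quot}) is more explicit than the paper's citation of Gabriel, and your argument for why $\dagger$ descends to $\MM(\rA)$ (via $\Ext^i_\rA(T,\mK)=0$ for $i<2$ and $T$ of finite length) is what lies behind the reference to \cite[Lemma 3.6]{SurvOnCM}. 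The hereditary claim, which the paper merely calls ``well-known,'' you justify via the cohomological-dimension bound for the punctured spectrum; this is the standard argument. Both approaches buy the same result; yours is more transparent, the paper's is shorter.
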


\begin{proof} (\ref{it1:prop-of-Serre-quot}) Let $\rA\mathsf{-Mod}$ be the category of all
$\rA$--modules and $\Art(\rA)$ be its full subcategory consisting of those modules, for  which
any element is $\idm$--torsion. In other words, $\Art(\rA)$ is the category of modules, which
are direct limits of its finite length submodules.

The total ring of fractions $Q(\gA)$ is flat as an $\gA$--module, hence
$\FF= Q(\gA)\otimes_\gA : \gA\mathsf{-Mod} \to  Q(\gA)\mathsf{-Mod}$ is exact.
The forgetful functor $\GG: Q(\gA)\mathsf{-Mod} \to  \gA\mathsf{-Mod}$ is right
adjoint to $\FF$.
Now note that the counit  of the adjunction  $\xi: \FF \GG \to  \mathbbm{1}_{\rQ(\rA)\mathsf{-mod}}$
is an isomorphism of functors. Since $\FF$ is right exact and $\GG$ is exact,
the composition $\FF \GG$ is right exact. Moreover, $\FF \GG$ commutes with arbitrary direct products.
Hence, to prove that $\xi$  is an isomorphism, it is sufficient to show
that the canonical morphism of $\rQ(\rA)$--modules
$$\xi_{\rQ(\rA)} = \mathsf{mult}: \;
 \rQ(\rA) \otimes_\rA \rQ(\rA) \lar \rQ(\rA)$$
 is an isomorphism, which is a basic property of localization.

Since $\rA$ is a Cohen--Macaulay ring of Krull dimension one, the category
$\sT = \ker(\FF)$  is equal to $\Art(\rA)$.
Let $\widehat{\MM}(\rA) = \rA\mathsf{-Mod}/\Art(\rA)$ (one can consult
\cite{Popescu} for the definition of the  Serre quotients categories in the case they are not small).
By \cite[Proposition III.2.4]{Gabriel}
the functor $\FF$ induces an equivalence of categories $\bar\FF: \widehat{\MM}(\rA) \to
\rQ(\rA)\mathsf{-Mod}$.

It is clear that $\Art(\rA)  \cap \gA\mathsf{-mod} = \art(\gA)$, hence basic properties
of Serre quotients  imply that the functor given by the composition
$$
\gA\mathsf{-mod}/\art(\gA) \lar \gA\mathsf{-Mod}/\Art(\rA)
\stackrel{\bar\FF}\lar Q(\gA)\mathsf{-Mod}
$$
is fully faithful. Since $\rQ(\gA) = \bar\FF(\rA)$ and $\bar\FF: \End_{\MM(\rA)}(\rA)  \to
\rQ(\rA)$ is an isomorphism of rings,
the functor $\bar\FF: \MM(\rA) \lar \rQ(\gA)\mathsf{-mod}$ is essentially surjective.

\vspace{2mm}

\noindent
(\ref{it2:prop-of-Serre-quot}) The proof of this statement is similar to the previous one.
The functor $\imath^*: \QCoh(X) \to \QCoh(U)$ has a right adjoint
$\imath_*: \QCoh(U) \to \QCoh(X)$ and  the counit of the adjunction
$\imath^* \imath_* \to  \mathbbm{1}_{\QCoh(U)}$ is an isomorphism.
It is easy to see
that the kernel  of the functor  $\imath^*$  is the category $\QCoh_x(X)$ consisting  of the
quasi-coherent sheaves on $X$  supported at
the closed point $x$. Again, by  \cite[Proposition III.2.4]{Gabriel} the inverse image functor $\imath^*$ induces an equivalence of categories
$\QCoh(X)/\QCoh_x(X) \to \QCoh(U)$. This functor restricts to a fully faithful
functor $\Coh(X)/\Coh_x(X) \to \Coh(U)$. It remains to verify that this functor is essentially surjective.

Let $\kF$ be a coherent  sheaf on $U$, then the direct image sheaf
$\kG := \imath_*\kF$  is quasi-coherent. However, any quasi-coherent sheaf on a Noetherian scheme
can be written  as  a direct limit of an increasing  sequence of coherent subsheaves
$\kG_1 \subseteq \kG_2 \subseteq \dots \subseteq \kG$. Since the functor
$\imath^*$ is exact, we obtain an increasing filtration
$\imath^*\kG_1  \subseteq \imath^*\kG_1  \subseteq \dots \subseteq
\imath^*\kG$. But $\imath^*\kG = \imath^* \imath_*\kF \cong \kF$. Since the scheme $U$ is Noetherian and $\kF$ is coherent,
it implies that $\kF \cong \imath^*\kG_t$ for some  $t \ge 1$. Hence, the functor
$\imath^*: \Coh(X) \to \Coh(U)$ is essentially surjective and the induced functor
$\Coh(X)/\Coh_x(X) \to \Coh(U)$ is an equivalence of categories.

\vspace{2mm}

\noindent
(\ref{it3:prop-of-Serre-quot}) The fact that the functor
$\II: \CM(\rA) \to \MM(\rA)$ is fully faithful, follows for example from
\cite[Lemme III.2.1]{Gabriel}. It is well--known that for a normal surface singularity  $\rA$
 the category
$\Coh(U)$ is hereditary. A proof of  the equivalence between
$\CM(\rA)$ and $\VB(U)$ can be found for instance in \cite[Corollary 3.12]{SurvOnCM}.
Note that if $\rA$ is an algebra over $\mathbb{C}$, the space $U$ is homotopic to the link
of the singularity $\Spec(A)$.

\vspace{2mm}

\noindent
(\ref{it4:prop-of-Serre-quot})
Let $\rA$ be a reduced Cohen--Macaulay surface singularity.
From  \cite[Lemma 3.6]{SurvOnCM}
we obtain that  $\dagger: \rA\mathsf{-mod} \to \CM(\rA)$ induces the functor
$\MM(\rA)^\circ \to \CM(\rA)$, which for  sake of simplicity will be denoted by the same
symbol  $\dagger$. Moreover, for any Noetherian $\rA$--module $\mM$ and a Cohen--Macaulay
$\rA$--module $\mN$ we have isomorphisms
$$
\Hom_{\MM(\rA)}(\mM, \mN) \xleftarrow{\PP_\rA} \Hom_\rA(\mM, \mN) \lar
\Hom_{\CM(\rA)}(\mM^\dagger, \mN),
$$
which are natural in both arguments. For a proof of the isomorphism
$\mM^\dagger \lar  \Gamma(\imath_* \imath^* \widetilde\mM),$  we refer to \cite[Proposition 3.10]{SurvOnCM}.
\end{proof}

\begin{lemma}\label{L:small-det-onSerrQuot}
Let $\rA \subseteq \rB$ be a finite extension of Noetherian rings. Then
the forgetful functor $\mathsf{for}: \rB\mathsf{-mod} \to \rA\mathsf{-mod}$ and the functor
$\rB \otimes_\rA \,-\,: \rA\mathsf{-mod} \to \rB\mathsf{-mod}$ form an adjoint pair
and induce the functors
$$
\mathsf{for}: \MM(\rB) \lar \MM(\rA) \quad \mathrm{and} \quad  \rB \bar\otimes_\rA \,-\,:
\MM(\rA) \lar \MM(\rB)
$$
which are again adjoint. Moreover, for an arbitrary $\rA$--module
$\mX$ and a $\rB$--module $\mY$ the following diagram is commutative:
$$
\xymatrix
{
\Hom_\rA(\mX, \mY) \ar[rr]^-{\mathsf{can}} \ar[d]_{\PP_\rA} & & \Hom_\rB(\rB \otimes_\rA \mX, \mY) \ar[d]^{\PP_\rB}\\
\Hom_{\MM(\rA)}(\mX, \mY) \ar[rr]^-{\mathsf{can}} & & \Hom_{\MM(\rB)}(\rB \bar\otimes_\rA \mX, \mY)
}
$$
where both horizontal maps are canonical isomorphisms given by adjunction.
\end{lemma}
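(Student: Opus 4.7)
The strategy is threefold: (a) verify that both functors preserve the Serre subcategories of finite-length modules; (b) use Gabriel's universal property to descend the functors and transport the classical adjunction via the roof description; (c) deduce commutativity by naturality.

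For (a), every simple $\rB$-module has the form $\rB/\idn$ for a maximal $\idn \subset \rB$; since $\rA \hookrightarrow \rB$ is integral, the contraction $\idn \cap \rA$ is maximal in $\rA$, and by finiteness of the extension $\rB/\idn$ is a finite-dimensional $\rA/(\idn \cap \rA)$-vector space, in particular of finite $\rA$-length. D\'evissage then gives $\mathsf{for}\bigl(\art(\rB)\bigr) \subseteq \art(\rA)$. Conversely, for $X \in \art(\rA)$ with composition factors among the $\rA/\idm_i$, right-exactness of $\rB \otimes_\rA -$ presents $\rB \otimes_\rA X$ as a successive extension of quotients of $\rB/\idm_i \rB$; each such quotient is a finite $\rA/\idm_i$-algebra, hence of finite $\rB$-length.

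For (b), since $\PP_\rB \circ \mathsf{for}$ and $\PP_\rB \circ (\rB \otimes_\rA \,-\,)$ annihilate the respective $\art$ subcategories, the universal property of Serre quotients \cite[Chapitre III]{Gabriel} yields the desired induced functors $\mathsf{for}: \MM(\rB) \to \MM(\rA)$ and $\rB \bar\otimes_\rA \,-\,: \MM(\rA) \to \MM(\rB)$. To transport the adjunction I would use the localization $\MM(\gA) \simeq \rA\mathsf{-mod}[\Sigma_\rA^{-1}]$ established in the preceding theorem. A morphism in $\Hom_{\MM(\rA)}(X, Y)$ is represented by a roof $X \xleftarrow{\phi} E \xrightarrow{\varphi} Y$ with $\phi \in \Sigma_\rA$; applying $\rB \otimes_\rA \,-\,$ together with the classical Hom-tensor adjunction yields
\[
\rB \otimes_\rA X \xleftarrow{\mathbbm{1} \otimes \phi} \rB \otimes_\rA E \xrightarrow{\widetilde\varphi} Y,
\]
where $\widetilde\varphi$ is the $\rB$-linear adjoint of $\varphi$. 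By (a), $\ker(\mathbbm{1}\otimes\phi)$ and $\coker(\mathbbm{1}\otimes\phi)$ lie in $\art(\rB)$, so $\mathbbm{1} \otimes \phi \in \Sigma_\rB$ and we obtain a valid roof for $\Hom_{\MM(\rB)}(\rB \bar\otimes_\rA X, Y)$. The inverse is produced by pulling back a $\rB$-roof $\rB \otimes_\rA X \xleftarrow{\psi} Z \xrightarrow{\chi} Y$ along the unit $\eta_X: X \to \rB \otimes_\rA X$: set $E := Z \times_{\rB \otimes_\rA X} X$, and use (a) once more to see that the projection $E \to X$ lies in $\Sigma_\rA$.

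Commutativity of the diagram in (c) is then immediate: an honest morphism $\varphi \in \Hom_\rA(X, Y)$, represented by the trivial roof $X \xleftarrow{\mathbbm{1}} X \xrightarrow{\varphi} Y$, is carried through both routes to the class of $\rB \otimes_\rA X \xleftarrow{\mathbbm{1}} \rB \otimes_\rA X \xrightarrow{\widetilde\varphi} Y$, i.e.\ to $\PP_\rB(\widetilde\varphi)$. The main obstacle lies in the well-definedness check in (b): one must verify that the two roof constructions are mutually inverse on roof-equivalence classes, which amounts to a cofinality argument between the directed systems underlying $\varinjlim_{(E,Y')} \Hom_\rA(E, Y')$ and $\varinjlim_{(Z,Y')} \Hom_\rB(Z, Y')$. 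A subtle point is that the unit $\eta_X$ need not be injective when $\rB$ is non-flat over $\rA$, so the pullback-based inverse must rely on the $\art$-estimates from (a) rather than on honest submodule containment inside $\rB \otimes_\rA X$.
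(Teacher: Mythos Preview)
Your plan is correct, and part (a) is in fact more carefully argued than in the paper (which only asserts that $\rB\otimes_\rA-$ preserves finite length, leaving the forgetful direction implicit). The difference lies in how you establish the adjunction on the quotient categories.

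You attempt to transport the Hom-set bijection directly: send an $\rA$-roof $X\xleftarrow{\phi}E\xrightarrow{\varphi}Y$ to the $\rB$-roof $\rB\otimes_\rA X\xleftarrow{\mathbbm{1}\otimes\phi}\rB\otimes_\rA E\xrightarrow{\widetilde\varphi}Y$, and build the inverse by pulling back along the unit. As you correctly flag, the hard part is the cofinality/well-definedness check, and this is genuine work. The paper avoids it entirely by transporting the \emph{unit and counit} instead of the Hom-bijection: one simply applies $\PP_\rA$ to the module-level unit $\xi_M:M\to\rB\otimes_\rA M$ and $\PP_\rB$ to the counit, checks naturality with respect to a single generic roof (one commuting square), and then observes that the triangle identities are equations between composites of natural transformations, hence are preserved by the functors $\PP_\rA,\PP_\rB$ automatically. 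This is strictly less work than your route, because the triangle identities come for free from the module level rather than having to be re-established through a roof-equivalence argument.

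Your treatment of the final commutative square (trivial roof $X\xleftarrow{\mathbbm{1}}X\xrightarrow{\varphi}Y$ going to $\PP_\rB(\widetilde\varphi)$) is the same as what falls out of the paper's construction.
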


\begin{proof}
Since the ring extension $\rA \subseteq \rB$ is finite, the functor
$\rB \otimes_\rA \,-\,$ maps the category $\art(\rA)$ to $\art(\rB)$. The functors
$\FF = \rB \bar\otimes_\rA \,-\,: \MM(\rA)^\circ \to \MM(\rB)^\circ$ and
$\GG:  \MM(\rB)^\circ \to \MM(\rA)^\circ$ are obtained from the
 adjoint pair of functors $\rB \otimes_\rA \,-\,$ and $\mathsf{for}$
using the
universal property of the localization:
$$
\xymatrix
{
\rA\mathsf{-mod} \ar[rr]^-{\rB \otimes_\rA \,-\,}  \ar[d]_{\PP_\rA^\circ} & &
\rB\mathsf{-mod}  \ar[d]^{\PP_\rB^\circ} \\
\MM(\rA)^\circ \ar@{.>}[rr]^-{\FF} & & \MM(\rB)^\circ
} \qquad
\xymatrix
{
\rB\mathsf{-mod} \ar[rr]^-{\mathsf{for}}  \ar[d]_{\PP_\rB^\circ} & &
\rA\mathsf{-mod}  \ar[d]^{\PP_\rA^\circ} \\
\MM(\rB)^\circ \ar@{.>}[rr]^-{\GG} & & \MM(\rA)^\circ.
}
$$
For an $\rA$--module $\mM$ let $\xi_\mM: \mM \to \rB \otimes_\rA \mM$ be the unit of adjunction. Let $\psi: \mM \to \mN$ be a morphism in $\MM(\rA)^\circ$ represented by
the pair of $\mM \xleftarrow{\phi} \mE \xrightarrow{\varphi} \mN$, where
$\ker(\phi)$ and $\coker(\phi)$ have finite length. Since the diagram
$$
\xymatrix
{
\mM \ar[d]_{\xi_\mM} & \mE \ar[d]_{\xi_\mE} \ar[l]_-{\phi} \ar[r]^-{\varphi} & \mN \ar[d]^{\xi_\mN}\\
\rB\otimes_\rA \mM  & \rB \otimes_\rA \mE  \ar[l]_-{\mathbbm{1}\otimes \phi} \ar[r]^-{\mathbbm{1}\otimes \varphi} & \rB \otimes_\rA \mN
}
$$
is commutative, we get a natural transformation of functors
$\xi: \mathbbm{1}_{\MM(\rA)^\circ} \to \GG \, \FF$. In the similar way, we
construct a natural transformation $\zeta: \FF \, \GG  \to \mathbbm{1}_{\MM(\rB)^\circ}$.
Note that the natural transformations
$$
\FF \xrightarrow{\FF(\xi)} \FF \, \GG \,  \FF \xrightarrow{\zeta \; \FF} \FF \qquad \mbox{and}
\qquad
\GG \xrightarrow{\xi \;  \GG} \GG \, \FF \, \GG \xrightarrow{\GG(\zeta)} \GG
$$
are $\mathbbm{1}_{\FF}$ and $\mathbbm{1}_{\GG}$, respectively.
Hence, $(\FF, \GG)$ is an adjoint pair of functors.
\end{proof}

Now we possess all necessary ingredients to formulate an alternative definition of the
category of triples $\Tri(\rA)$, given in  Definition \ref{D:triples}.
Note that we have a pair of functors
\begin{equation}\label{E:input-for-comma}
\CM(\rR)  \xrightarrow{\bar\rR \bar\otimes_\rR \,-\,} \MM(\bar\rR) \xleftarrow{\bar\rR \bar\otimes_{\bar\rA} \,-\,}
\MM(\bar\rA).
\end{equation}

\begin{definition}\label{D:newtriples}
The category $\Tri'(\rA)$ is the following full subcategory of the comma category
defined by the diagram  (\ref{E:input-for-comma}). Its objects
are triples $(\widetilde\mM, V, \theta)$, where $\widetilde\mM$
is a maximal Cohen--Macaulay $\rR$--module, $V$ an object of
$\MM(\bar\rA)$   and
$\theta:  \bar\rR \bar\otimes_{\bar\rA} V \to
\bar\rR \bar\otimes_\rR \widetilde\mM$ is an epimorphism
in $\MM(\bar\rR)$  such that the adjoint  morphism
in $\MM(\bar\rA)$
$$V \lar  \bar\rR \bar\otimes_{\bar\rA} V
\stackrel{\theta}\lar \bar\rR \bar\otimes_\rR \widetilde\mM $$
is an monomorphism.

A morphism between two triples $(\widetilde\mM, V, \theta)$
and $(\widetilde\mM', V', \theta')$ is given by a pair $(\varphi, \psi)$, where
$\varphi: \widetilde\mM \to \widetilde\mM'$ is a morphism in $\CM(\rR)$  and
$\psi: V \to V'$ is a morphism in $\MM(\bar\rA)$ such that
the following diagram
$$
\xymatrix
{
\bar\rR \bar\otimes_{\bar\rA} V \ar[rr]^-{\theta} \ar[d]_{\mathbbm{1} \bar\otimes \psi} & &
\bar\rR \bar\otimes_\rR \widetilde\mM \ar[d]^{\mathbbm{1} \bar\otimes \varphi}\\
\bar\rR \bar\otimes_{\bar\rA} V' \ar[rr]^-{\theta'} & &
\bar\rR \bar\otimes_\rR \widetilde\mM'
}
$$
is commutative in the category $\MM(\bar\rR)$. \qed
\end{definition}

Recall that for a maximal Cohen--Macaulay module $\mM$ we denote
$\widetilde\mM := \rR \boxtimes_\rA \mM$, whereas $\theta_\mM$ is the canonical morphism
of $\rR$--modules given by the composition
$$
\bar\rR \otimes_{\bar\rA} \bar\rA \otimes_\rA \mM
\stackrel{\cong}\lar \bar\rR \otimes_{\rR} \rR\otimes_\rA \mM \xrightarrow{\mathbbm{1} \otimes \delta}
\bar\rR \otimes_{\rR} (\rR \boxtimes_\rA \mM).
$$
By Theorem  \ref{P:Macaulafic}, the canonical morphism $\rR \otimes_\rA \mM
 \stackrel{\delta}\lar  \rR \boxtimes_\rA \mM$ has
cokernel of finite length, hence $\theta_\mM$ has finite length cokernel as well.
This implies that the morphism
$$
\PP_{\bar\rR}(\theta_\mM): \bar\rR \otimes_{\bar\rA} \bar\rA \otimes_\rA \mM
\lar
\bar\rR \otimes_{\rR} (\rR \boxtimes_\rA \mM)
$$
is an epimorphism in $\MM(\bar\rR)$. Next, we have the following commutative diagram
in the category of $\rA$--modules:
\begin{equation}\label{E:some-not-for-trpls}
\begin{array}{c}
\xymatrix
{ 0 \ar[r] & I \mM  \ar[r] \ar[d]_{\bar{\jmath}} & M \ar[r]^-\pi \ar[d]_{\jmath} & \bar\rA \otimes_\rA
\mM
\ar[d]^{\tilde{\theta}_\mM} \ar[r] & 0 \\
0 \ar[r] & I \widetilde\mM  \ar[r] & \widetilde\mM \ar[r]^-\gamma & \bar\rR \otimes_\rR
\widetilde\mM
\ar[r] & 0, }
\end{array}
\end{equation}
where $\widetilde\mM = \rR \boxtimes_\rA \mM$,
$\jmath: \mM \to \widetilde\mM$ is the canonical morphism and
$\bar{\jmath}$ is its restriction on $I\mM$. The morphism $\jmath$ is injective.
Moreover, for any $\idp \in \kP$ the
morphism
$
\bar{\jmath}_\idp: (I \mM)_\idp \lar (I \widetilde\mM)_\idp
$
is an isomorphism, see the proof of Lemma \ref{L:prep-in-dim-one}.
 Hence, $\coker(\bar{\jmath})$ is an $\rA$--module of finite length.
Snake lemma implies that $\ker(\tilde\theta)$ is a submodule of $\coker(\bar{\jmath})$.
Hence, it has finite length,  too. By Lemma \ref{L:small-det-onSerrQuot}, the morphisms
$\PP_{\bar\rR}(\theta_\mM)$ and $\PP_{\bar\rA}(\tilde{\theta}_\mM)$ are mapped to each
other under the morphisms of adjunction. This yields the following corollary.

\begin{corollary}\label{C:twoCatTri}
We have a functor  $\FF': \CM(\rA) \to \Tri'(\rA)$ assigning to a maximal Cohen--Macaulay
$\rA$--module $\mM$ the triple $\bigl(\rR \boxtimes_\rA \mM, \; \bar\rA \bar\otimes_\rA \mM, \;
\PP_{\bar\rR}(\theta_\mM)\bigr)$.
Moreover, the equivalences of categories $\MM(\bar\rA) \to \rQ(\bar\rA)\mathsf{-mod}$ and
$\MM(\bar\rR) \to \rQ(\bar\rR)\mathsf{-mod}$ constructed in Theorem \ref{T:prop-of-Serre-quot}
induce an equivalence of categories $\EE: \Tri'(\rA) \to \Tri(\rA)$ such that
the functors $\FF$ and $\EE \, \FF'$ are isomorphic.
\end{corollary}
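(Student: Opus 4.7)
The plan is to assemble the corollary from three components: well-definedness of $\FF'$; construction of the equivalence $\EE$; and a natural isomorphism $\FF \cong \EE \circ \FF'$.

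First, for $\FF'$ on objects, the paragraph preceding the statement already does most of the work: diagram (\ref{E:some-not-for-trpls}) combined with the Snake Lemma shows $\ker(\tilde\theta_\mM)$ has finite length, so $\PP_{\bar\rA}(\tilde\theta_\mM)$ is a monomorphism in $\MM(\bar\rA)$; Theorem \ref{P:Macaulafic} yields $\coker(\theta_\mM)$ of finite length, so $\PP_{\bar\rR}(\theta_\mM)$ is an epimorphism in $\MM(\bar\rR)$. On morphisms, a map $f\colon \mM\to \mM'$ in $\CM(\rA)$ produces $\rR\boxtimes_\rA f$ in $\CM(\rR)$ and $\PP_{\bar\rA}(\bar\rA\otimes_\rA f)$ in $\MM(\bar\rA)$; the required square commutes by naturality of $\theta_{(-)}$, together with Lemma~\ref{L:small-det-onSerrQuot}, which identifies the $\MM(\bar\rR)$- and $\MM(\bar\rA)$-formulations of the compatibility condition under the adjunction.

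Second, I would construct $\EE$ from the two equivalences provided by Theorem~\ref{T:prop-of-Serre-quot}(\ref{it1:prop-of-Serre-quot}), which for brevity I denote $\Psi_{\bar\rA}\colon \MM(\bar\rA)\to \rQ(\bar\rA)\mathsf{-mod}$ and $\Psi_{\bar\rR}\colon \MM(\bar\rR)\to \rQ(\bar\rR)\mathsf{-mod}$, both given by tensoring with the respective total rings of fractions. The key input is a natural isomorphism
$$
\tau_V\colon \rQ(\bar\rR)\otimes_{\rQ(\bar\rA)}\Psi_{\bar\rA}(V)\xrightarrow{\cong}\Psi_{\bar\rR}\bigl(\bar\rR\bar\otimes_{\bar\rA}V\bigr),
$$
obtained from associativity of tensor product together with the isomorphism $\rQ(\bar\rR)\cong \rQ(\bar\rA)\otimes_{\bar\rA}\bar\rR$ of Lemma~\ref{L:prop-of-conduct}(3); similarly there is a canonical identification $\Psi_{\bar\rR}(\bar\rR\bar\otimes_\rR\widetilde\mM)\cong \rQ(\bar\rR)\otimes_\rR \widetilde\mM$. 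Then define
$$
\EE\colon (\widetilde\mM,V,\theta)\longmapsto \bigl(\widetilde\mM,\,\Psi_{\bar\rA}(V),\,\Psi_{\bar\rR}(\theta)\circ \tau_V\bigr),
$$
with the obvious assignment on morphisms.

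Third, I would check $\EE$ is an equivalence and compare with $\FF$. Fully faithfulness of $\EE$ on the triple data reduces to fully faithfulness of $\Psi_{\bar\rA}$ and $\Psi_{\bar\rR}$ plus the naturality of $\tau$; essential surjectivity uses essential surjectivity of $\Psi_{\bar\rA}$ to lift the second entry and then transports the gluing back through $\tau$. Finally, unwinding definitions gives
$(\EE\circ \FF')(\mM)=\bigl(\rR\boxtimes_\rA\mM,\,\rQ(\bar\rA)\otimes_\rA \mM,\,\vartheta_\mM\bigr),$
where $\vartheta_\mM$, after composing with $\tau$, becomes the map $\theta_\mM$ appearing in the definition of $\FF$; this yields the desired natural isomorphism $\FF\cong \EE\circ \FF'$. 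The main obstacle is bookkeeping: several canonical isomorphisms must be tracked at once, namely the associativity isomorphism defining $\tau_V$, the comparison between $\bar\rA\otimes_\rA$ and $\otimes_\rA$ after tensoring with $\rQ(\bar\rA)$, and the adjunction of Lemma~\ref{L:small-det-onSerrQuot}. None of these is difficult individually, but weaving them into a single coherent naturality diagram is where care is needed.
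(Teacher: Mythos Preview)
Your proposal is correct and follows essentially the same route as the paper. In fact the paper gives no explicit proof of this corollary: it is stated as an immediate consequence of the paragraph preceding it (which establishes that $\PP_{\bar\rR}(\theta_\mM)$ is an epimorphism, $\PP_{\bar\rA}(\tilde\theta_\mM)$ is a monomorphism, and that these correspond under the adjunction of Lemma~\ref{L:small-det-onSerrQuot}) together with the equivalences of Theorem~\ref{T:prop-of-Serre-quot}(\ref{it1:prop-of-Serre-quot}); your argument simply spells out these implicit steps in detail, including the bookkeeping with $\tau_V$ that the paper leaves to the reader. One small point worth making explicit in your write-up is that $\EE$ lands in $\Tri(\rA)$ (rather than merely in the ambient comma category): this amounts to noting that the exact equivalences $\Psi_{\bar\rA}$ and $\Psi_{\bar\rR}$ preserve monomorphisms and epimorphisms, and that the passage between $\theta$ and $\tilde\theta$ is compatible with the adjunction on both sides.
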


\begin{definition}
Consider the functor $\BB: \Tri'(\rA) \to \MM(\rA)$ defined as follows. For an object
$\mT = (\widetilde\mM, V, \theta)$ of the category $\Tri'(\rA)$ let
$\widehat\mM := \bar\rR \otimes_\rR  \widetilde\mM$ and $\gamma: \widetilde\mM
\to \widehat\mM$ be the canonical morphism of $\rR$--modules. Let
$\widetilde\mM \xrightarrow{\bar\gamma} \widehat\mM$ be the morphism
in $\MM(\rA)$ obtained by applying to $\gamma$ the functor $\PP_{\rR}$ and then
the forgetful functor $\MM(\rR) \to \MM(\rA)$.  Then we set
$$
\mN:= \BB(\mT) = \ker\bigl(\; \widetilde\mM \oplus V \xrightarrow{(\begin{smallmatrix}
 \bar\gamma \; \tilde\theta
 \end{smallmatrix})} \widehat\mM \;\bigr)
$$
and define $\BB$ on morphisms using the universal property of a kernel.
Equivalently, we have a commutative diagram in the category $\MM(\rA)$:
\begin{equation}\label{E:functorG}
\begin{array}{c}
\xymatrix
{ 0 \ar[r] & I \widetilde\mM  \ar[r]^\alpha \ar[d]_{=} & \mN \ar[r]^\pi \ar[d]_\imath   & V
\ar[d]^{\tilde{\theta}} \ar[r] & 0 \\
0 \ar[r] & I \widetilde\mM  \ar[r]^{\bar\beta} & \widetilde\mM \ar[r]^{\bar\gamma} & \widehat\mM
\ar[r] & 0.
}
\end{array}
\end{equation}
\end{definition}

\medskip
\noindent
According to Corollary \ref{C:twoCatTri},  Theorem \ref{T:BurbanDrozd} is equivalent  to the following statement.

\begin{theorem}\label{P:MainProp}
The functor $\GG: \Tri'(\rA) \to \CM(\rA)$ given by the composition
of the functors $\BB: \Tri'(\rA) \to \MM(\rA)$ and $\dagger: \MM(\rA) \to \CM(\rA)$, is quasi-inverse to
$\FF'$.
\end{theorem}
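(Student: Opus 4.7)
The plan is to establish $\GG$ as a quasi-inverse to $\FF'$ by exhibiting natural isomorphisms $\eta : \mathbbm{1}_{\CM(\rA)} \xrightarrow{\cong} \GG \FF'$ and $\epsilon : \FF' \GG \xrightarrow{\cong} \mathbbm{1}_{\Tri'(\rA)}$, relying on Macaulayfication theory (Theorem \ref{P:Macaulafic}) and the localization formalism of Proposition \ref{P:Tripl-and-Localiz} to reduce statements at height one primes to the one-dimensional setting.

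For $\eta$, let $\mM \in \CM(\rA)$ and write $\FF'(\mM) = (\widetilde\mM, V, \theta)$ with $\widetilde\mM = \rR \boxtimes_\rA \mM$ and $V = \bar\rA \bar\otimes_\rA \mM$. The commutative diagram (\ref{E:some-not-for-trpls}) together with the Snake Lemma observation preceding the corollary following Definition \ref{D:newtriples} (namely that $\bar\jmath : I\mM \to I\widetilde\mM$ has finite-length cokernel and $\ker(\tilde\theta_\mM)$ has finite length) show that $\mM$ is precisely the pullback computing $\BB(\FF'(\mM))$ in $\MM(\rA)$. Hence the canonical map $\mM \to \BB(\FF'(\mM))$ is an isomorphism in $\MM(\rA)$, and applying $\dagger$ (which is the identity on $\CM(\rA)$ by Theorem \ref{P:Macaulafic}) yields $\eta_\mM$, natural in $\mM$.

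For $\epsilon$, given $\mT = (\widetilde\mM, V, \theta)$ set $\mN = \BB(\mT)$ and $\mM = \GG(\mT) = \mN^\dagger$. The canonical maps $\imath : \mN \to \widetilde\mM$ and $\pi : \mN \to V$ in (\ref{E:functorG}) induce, via adjointness and Lemma \ref{L:small-det-onSerrQuot}, morphisms $\rR \boxtimes_\rA \mM \to \widetilde\mM$ in $\CM(\rR)$ and $\bar\rA \bar\otimes_\rA \mM \to V$ in $\MM(\bar\rA)$. To show these are isomorphisms I will invoke Proposition \ref{P:Tripl-and-Localiz} and reduce to localizations at primes of height at most one. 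For $\idp \notin a(I)$, $(\rR/\rA)_\idp = 0$ by Proposition \ref{P:prop-of-conduct}, so the triple data degenerates and the isomorphism is tautological. For $\idp \in a(I)$, $\rA_\idp$ is a reduced Cohen-Macaulay ring of Krull dimension one, and the claim reduces to the one-dimensional analogue (Theorem \ref{T:BurbanDrozdindimOne}), where Lemma \ref{L:onMacaulafInDimOne} identifies Macaulayfication with quotienting by torsion. The first isomorphism extends from codimension one to all of $\Spec(\rR)$ because both sides are Cohen-Macaulay, hence reflexive over the normal ring $\rR$; the second is checked in $\rQ(\bar\rA)\mathsf{-mod}$ via Theorem \ref{T:prop-of-Serre-quot}(\ref{it1:prop-of-Serre-quot}) and the product decomposition of Proposition \ref{P:prop-of-conduct}. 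Compatibility with the gluing morphism then follows from the naturality of $\theta_{(-)}$ applied to $\mM = \mN^\dagger$ together with the defining pullback of $\mN$.

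The main technical difficulty lies in the second direction, specifically in controlling $\bar\rA \bar\otimes_\rA \mN^\dagger$: passing from $\mN$ (defined up to isomorphism in $\MM(\rA)$) to its Macaulayfication $\mN^\dagger$ and then tensoring with $\bar\rA$ involves several finite-length adjustments that must be tracked through the Serre-quotient adjunctions. The cleanest resolution is to bypass this bookkeeping by localizing at the finitely many height-one primes of $a(I)$ via Proposition \ref{P:Tripl-and-Localiz}, where the one-dimensional theory provides the required identifications directly, and then to glue back using the fact that a morphism of reflexive $\rR$-modules which is an isomorphism in codimension one is an isomorphism globally.
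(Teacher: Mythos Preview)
Your construction of $\eta$ is essentially the paper's argument: the diagram (\ref{E:some-not-for-trpls}) and the observation that $\bar\jmath$ has finite-length cokernel yield $\mM \cong \BB\FF'(\mM)$ in $\MM(\rA)$, whence $\eta_\mM$ after applying~$\dagger$.

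For $\epsilon$ the strategies diverge. The paper does \emph{not} construct $\epsilon$ as a natural transformation; instead it proves separately that $\GG$ is faithful (a short argument using $\rQ(\rA)\otimes_\rA-$) and that $\FF'$ is essentially surjective. For the latter it spends Steps~1--4 replacing the abstract triple $(\widetilde\mM,V,\theta)$ by one in which $V$ and $\widehat\mM^\circ$ are honest Cohen--Macaulay $\bar\rA$--modules and $\tilde\theta^\circ$ is an actual injective module map; with these choices the pullback $\mN$ is already Cohen--Macaulay, and the isomorphism $\FF'(\mN)\cong\mT$ is verified by an explicit diagram chase (Step~6). The one-dimensional Theorem~\ref{T:BurbanDrozdindimOne} enters only once, in Step~5, to show the map $\tilde\imath:\rR\boxtimes_\rA\mN\to\widetilde\mM$ is an isomorphism in all codimension-one localizations.

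Your route---build the candidate $(\psi,\varphi)$ by adjunction and verify everything at height-one primes via Proposition~\ref{P:Tripl-and-Localiz}---is a legitimate alternative, and for $\psi$ it amounts to the paper's Step~5. But two points are underdeveloped. First, Proposition~\ref{P:Tripl-and-Localiz} gives compatibility of $\FF$ with localization, not of $\GG$; you need the analogous statement $(\rA_\idp\otimes_\rA-)\circ\GG\cong\GG^{\rA_\idp}\circ\LL_\idp$, which is plausible but requires an argument (e.g.\ that kernels and Macaulayfication commute with the relevant localizations). Second, the sentence ``compatibility with the gluing morphism follows from naturality of $\theta_{(-)}$'' is where the paper's Step~6 does genuine work: one must check that the square in $\MM(\bar\rR)$ relating $\theta_\mM$ and $\theta$ commutes. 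Your localization idea does resolve this---the square lives in $\rQ(\bar\rR)\text{-}\mathsf{mod}\cong\prod_{\idp\in a(I)}\bar\rR_{\bar\idp}\text{-}\mathsf{mod}$, so componentwise verification via the one-dimensional theorem suffices---but that reduction should be stated, not asserted.

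In short: your outline is sound and more categorical than the paper's hands-on representative-chasing; the trade-off is that the paper's concrete representatives feed directly into the later computations (Corollary~\ref{C:how-to-comp}), whereas your approach would need the missing compatibility of $\GG$ with $\LL_\idp$ made explicit.
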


\begin{proof} Before going to the details, let us first explain the logic of our  proof.
\begin{itemize}
\item We construct an isomorphism of functors $
\mathbbm{1}_{\CM(\rA)} \lar \GG \circ \FF'.
$
\item We show  that $\GG$ is faithful.
\item Finally, we prove that any triple $\mT \in \Ob\bigl(\Tri'(\rA)\bigr)$ is isomorphic to
$\FF' \GG (\mT)$.
\end{itemize}
The first two statements imply that  $\FF'$ is  fully faithful. The last one shows that $\FF'$  is
essentially surjective. Hence, $\FF'$ is an equivalence of categories and $\GG$ is its quasi--inverse.

Now, let  $\mM$ be a maximal Cohen--Macaulay $\rA$--module.
In the notations of the commutative diagram (\ref{E:some-not-for-trpls}), we have the following
exact sequence in the category of $\rA$--modules:
\begin{equation}\label{E:defining-quis-inv}
\mM \xrightarrow{
\left(
\begin{smallmatrix}
- \jmath \\
\; \pi
\end{smallmatrix}
\right)}
\widetilde\mM \oplus \overline\mM \xrightarrow{
\left(
\begin{smallmatrix}
\gamma & \tilde\theta
\end{smallmatrix}
\right)} \widehat\mM \lar 0,
\end{equation}
where $\overline\mM = \bar\rA \otimes_{\rA} \mM$.
Since $\PP_{\rA}(\bar\jmath)$ is an isomorphism in $\MM(\rA)$, the
image of the sequence (\ref{E:defining-quis-inv}) under the functor
$\PP_{\rA}$ becomes short exact.
The morphism $\mM
\xrightarrow{
\left(
\begin{smallmatrix}
- \jmath \\
\pi
\end{smallmatrix}
\right)}
\widetilde\mM \oplus \overline\mM$ is natural in the category of $\rA$--modules, thus it is natural
in $\MM(\rA)$ as well. Hence,  we obtain an isomorphism
of functors
$
\mathbbm{1}_{\CM(\rA)} \lar \GG \circ \FF'.
$
This shows  that  $\FF'$ is faithful.

Next, we prove  that $\GG$ is faithful, too.
 Let $\mT = (\widetilde\mM, V, \theta)$ and $\mT' =
(\widetilde\mM', V', \theta')$
be a pair of objects in $\Tri'(\rA)$ and $\mT \xrightarrow{(\varphi, \psi)} \mT'$ be a morphism
in $\Tri'(\rA)$.  Let $\mM = \BB(T), \mM' = \BB(T')$ and $\phi = \BB\bigl((\varphi, \psi)\bigr)$.
Then we have a commutative diagram in the category $\MM(\rA)$:
\begin{equation}\label{E:proof-of-main}
\begin{array}{c}
\xymatrix{
0 \ar[r] & \mM \ar[r]  \ar[d]_{\phi} & \widetilde\mM \oplus V \ar[r]^-{
\left(\begin{smallmatrix} \gamma & \tilde\theta \end{smallmatrix}\right)} \ar[d]_{\left(\begin{smallmatrix} \varphi & 0 \\ 0 & \psi \end{smallmatrix}\right)} & \widehat\mM \ar[r]
\ar[d]^{\widehat\varphi} & 0 \\
0 \ar[r] & \mM' \ar[r]         & \widetilde\mM' \oplus V' \ar[r]^-{\left(\begin{smallmatrix} \gamma' &
\tilde\theta' \end{smallmatrix}\right)}        & \widehat\mM' \ar[r] & 0.
}
\end{array}
\end{equation}
First note that $(\varphi, \psi) = 0$ in $\Tri'(\rA)$ if and only if $\varphi = 0$. Indeed, one direction is obvious. To show the second, let  $\varphi = 0$.  Then $\widehat\varphi = 0$ and $\tilde\theta' \circ \psi = 0$. It remains to note that
$\tilde\theta'$ is a monomorphism.

Next, a morphism $\varphi: \widetilde\mM \to \widetilde\mM'$ is zero in $\CM(\rR)$ if and only if
$\mathbbm{1} \otimes \varphi: \rQ(\rA) \otimes_\rA \widetilde\mM \to \rQ(\rA) \otimes_\rA \widetilde\mM'$
is zero in $\rQ(\rA)-\mod$.
Assume the morphism of triples $(\varphi, \psi): \mT \to \mT'$ is non-zero.
  Apply the functor $\rQ(\rA) \otimes_\rA \,-\,$ on the
diagram (\ref{E:proof-of-main}). It follows that $\mathbbm{1} \otimes \phi \ne 0$, hence
$\GG\bigl((\varphi, \psi)\bigr) \ne 0$ as well. Hence, $\GG$ is faithful. From the isomorphism
of functors $
\mathbbm{1}_{\CM(\rA)} \lar \GG \circ \FF'
$ it follows that $\FF'$ is full.

The difficult part of the proof is to show that $\FF'$ is essentially surjective.
It is sufficient to show
that for an arbitrary triple $\mT = (\widetilde\mM, V, \theta)$ there
exists  an isomorphism $\mT \cong \FF' \GG(\mT)$ in the category $\Tri'(\rA)$.  We split our
arguments into  several logical steps.

\vspace{1mm}
\noindent
\underline{Step 1}. Since $\bar\rA$ is a Cohen--Macaulay ring of Krull dimension one,
the kernel $\tor(V)$ of the canonical map $V \to \rQ(\bar\rA) \otimes_{\bar\rA} V$ is annihilated by some power of the maximal ideal. Hence, the canonical map
$V \xrightarrow{\nu}  V/\tor(V) =: V'$  is an isomorphism in the category $\MM(\bar\rA)$.
We get  the following  isomorphism in the category $\Tri'(\rA)$
$$
(\mathbbm{1}, \nu): (\widetilde\mM, V, \theta) \lar (\widetilde\mM, V', \theta'),
$$
where the morphism $\theta'$ is induced by $\nu$.
Hence, we may without  loss of generality assume that the object $V$  of the category $\MM(\bar\rA)$
is represented by a maximal Cohen--Macaulay $\bar\rA$--module.

\vspace{1mm}
\noindent
\underline{Step 2}. For a maximal Cohen--Macaulay $\rR$--module $\widetilde\mM$ consider the following
commutative diagram in the category of $\rR$--modules:
$$
\xymatrix
{
0 \ar[r] & I \widetilde\mM \ar[r]^-\beta \ar[d]_\delta & \widetilde\mM \ar[r]^-\gamma \ar[d]^= & \widehat\mM \ar[r] \ar[d]^\rho & 0 \\
0 \ar[r] & (I \widetilde\mM)^\dagger \ar[r]^-{\beta^\circ}        & \widetilde\mM
\ar[r]^-{\gamma^\circ}  & \widehat\mM^\circ \ar[r]  & 0,
}
$$
where $I \widetilde\mM \xrightarrow{\delta} (I \widetilde\mM)^\dagger$ is the canonical morphism
determined by the Macaulayfication functor. Hence, $\coker(\delta)$ is an $\rR$--module
of finite length.
Snake lemma yields  that $\rho$ is a surjective
morphism of $\rR$--modules and $\ker(\rho) \cong \coker(\delta)$. In particular,
$\widehat\mM^\circ$ is annihilated by the conductor ideal $I$, hence it is an $\bar\rR$--module.
Depth Lemma implies that $\depth_{\rR}(\widehat\mM^\circ) = \depth_{\bar\rR}(\widehat\mM^\circ) = 1$, hence $\widehat\mM^\circ$
is maximal Cohen--Macaulay over $\bar\rR$.
Moreover, the morphism
$\bar\rho:= \PP_{\bar\rR}(\rho): \widehat\mM \to \widehat\mM^\circ$ is an isomorphism in
$\MM(\bar\rR)$.

\vspace{1mm}
\noindent
\underline{Step 3}. In the notations as above we have the following isomorphism in
the category $\MM(\rA)$:
$$
\ker\bigl(\; \widetilde\mM \oplus V \xrightarrow{(\begin{smallmatrix}
 \bar\gamma \; \tilde\theta
 \end{smallmatrix})} \widehat\mM \;\bigr)  \cong
 \ker\bigl(\; \widetilde\mM \oplus V \xrightarrow{(\begin{smallmatrix}
 \bar\gamma^\circ  \; \tilde\theta^\circ
 \end{smallmatrix})} \widehat\mM^\circ \,\bigr),
$$
where $\tilde\theta^\circ = \bar\rho \; \tilde\theta: V \to \widehat\mM^\circ$.
Since $\widehat\mM^\circ$ is a maximal Cohen--Macaulay $\bar\rR$--module, it is also maximal Cohen--Macaulay
over $\bar\rA$. In  particular, it has no $\bar\rA$--submodules of finite length. From the definition
of the category $\MM(\rA)$ it follows that  $\tilde\theta^\circ$ can be  written
as
$$
\tilde\theta^\circ = \PP_\rA(\underline{\tilde\theta}) \cdot  \PP_\rA(\tau)^{-1}, \quad
 V \stackrel{\tau}\longleftarrow V' \stackrel{\underline{\tilde\theta}}\lar \widehat\mM^\circ,
$$
where
$\tau: V' \to V$ is a monomorphism of $\bar\rA$--modules with cokernel of finite length
and $ \underline{\tilde\theta}$ is
a morphism of $\bar\rA$-modules. Since we have assumed $V$ to be maximal Cohen--Macaulay
over $\bar\rA$, its submodule $V'$ is maximal Cohen--Macaulay over $\bar\rA$ as well. Next,
$\tilde\theta^\circ$ is a monomorphism in $\MM(\bar\rA)$, hence the kernel
of $ \underline{\tilde\theta}$ has finite length. But $\ker(\underline{\tilde\theta})$ is
a submodule of a maximal Cohen--Macaulay
$\bar\rA$--module
$V'$. Hence, $ \underline{\tilde\theta}$ is a monomorphism of $\bar\rA$--modules.

\vspace{1mm}
\noindent
Identifying $V$ and $V'$ in the category $\MM(\bar\rA)$ we may without  loss of generality assume:

\begin{itemize}
\item In the triple $T = (\widetilde\mM, V, \theta)$,  the module $V$ is Cohen--Macaulay over
$\bar\rA$ and the  morphism $\tilde\theta^\circ: V \to \widehat\mM^\circ$ in
$\MM(\bar\rA)$ is the image of an injective morphism of $\bar\rA$--modules under the functor
$\PP_{\bar\rA}$. For  sake of simplicity, we denote the latter morphism
by the same letter $\tilde\theta^\circ$.

\item The object $\mN = \BB(\mT) \in \mathrm{Ob}\bigl(\MM(\rA)\bigr)$ can be obtained by applying
$\PP_{\rA}$ to the middle term of the upper  short exact sequence in the following diagram
in $\rA\mathsf{-mod}$:
\begin{equation}\label{E:some-interm-diagr}
\begin{array}{c}
\xymatrix
{
0 \ar[r] & (I \widetilde\mM)^\dagger \ar[r]^-{\alpha} \ar[d]_=& \mN  \ar[r]^-\pi \ar[d]^\imath & V \ar[r]
\ar[d]^{\tilde\theta^\circ} & 0 \\
0 \ar[r] & (I \widetilde\mM)^\dagger \ar[r]^-{\beta^\circ}        & \widetilde\mM
\ar[r]^-{\gamma^\circ}  & \widehat\mM^\circ \ar[r]  & 0.
}
\end{array}
\end{equation}
\end{itemize}
Since $\tilde\theta^\circ$ is injective in $\bar\rA\mathsf{-mod}$, snake lemma yields
that $\mN$ is a torsion free $\rA$--module.

\vspace{1mm}
\noindent
\underline{Step 4}. In the notations of the commutative diagram (\ref{E:some-interm-diagr}),
consider the canonical morphism $\mN \xrightarrow{\delta} \mN^\dagger$. Then we obtain the following commutative diagram
 in $\rA\mathsf{-mod}$:
$$
\xymatrix
{
0 \ar[r] & (I \widetilde\mM)^\dagger \ar[r]^-{\alpha} \ar[d]_=& \mN  \ar[r]^-\pi \ar[d]^\delta & V \ar[r]
\ar[d]^{\delta'} & 0 \\
0 \ar[r] & (I \widetilde\mM)^\dagger \ar[r]^-{\alpha^\dagger} & \mN^\dagger
  \ar[r]^-{\pi'}  & W \ar[r]  & 0,
}
$$
where $\pi'$ and $\delta'$ are induced morphisms.
Since $\mN$ is a torsion free $\rA$--module, $\delta$ is injective and its cokernel has finite length,
see Theorem  \ref{P:Macaulafic}.
Snake lemma implies that $\coker(\delta')$ has finite length, too.
Moreover, the universal property of Macaulayfication implies there exists an injective
 morphism  of $\rA$--modules $\mN^\dagger \stackrel{\jmath}\lar \widetilde\mM$ such that
$\jmath \; \delta = \imath$. In particular, we have:
$
\jmath \; \alpha^\dagger = \imath \; \alpha = \beta^\circ
$
and the following diagram
$$
\xymatrix
{
0 \ar[r] & (I \widetilde\mM)^\dagger \ar[d]_= \ar[r]^-{\alpha^\dagger} & \mN^\dagger
  \ar[r]^-{\pi'}  \ar[d]^\jmath & W \ar[r] \ar[d]^{\jmath'}  & 0 \\
 0 \ar[r] & (I \widetilde\mM)^\dagger \ar[r]^-{\beta^\circ}        & \widetilde\mM
\ar[r]^-{\gamma^\circ}  & \widehat\mM^\circ \ar[r]  & 0,
}
$$
commutes in $\rA\mathsf{-mod}$,
where $\jmath'$ is the  morphism induced by $\jmath$. Since $\jmath$ is injective,
the morphism $\jmath'$ is injective as well. Hence, the $\rA$--module $W$ is annihilated by
the conductor ideal $I$. Thus, it is a maximal Cohen--Macaulay  $\bar\rA$--module and the morphism
$\PP_{\bar\rA}(\delta'): V \to W$ is an isomorphism in $\MM(\bar\rA)$.

\vspace{1mm}
\noindent
\underline{Step 5}.
 In other words, we
have shown that any object $\mT$ of the category $\Tri'(\rA)$ has a representative
$(\widetilde\mM, V, \theta)$ such that $V$ is maximal Cohen--Macaulay, the morphism
$V \xrightarrow{\tilde\theta^\circ} \widehat\mM^\circ$ belongs to the image of the functor $\PP_\rA$ and
the module $\mN$ given by the diagram (\ref{E:some-interm-diagr}) is maximal Cohen--Macaulay over
$\rA$. By the definition of the functor $\GG$, we have: $\mN \cong \GG(\mT)$. It remains to find
an isomorphism between the triples $\FF'(\mN)$ and $\mT$.

Let $I \widetilde\mM \xrightarrow{\delta} (I \widetilde\mM)^\dagger$ be the canonical morphism
and $\imath': I\mM \to (I \widetilde\mM)^\dagger$ be the composition of the restriction
of $\imath$ on $I\mN$ with $\delta$. Since $\imath$ is injective, it is easy to see
that the following diagram is commutative:
$$
\xymatrix
{
0 \ar[r] & I \mN \ar[r]^{\beta_\mN} \ar[d]_{\imath'}
& \mN \ar[d]_\imath \ar[r]^{\pi_\mN} & \overline\mN \ar[d]^\kappa \ar[r] & 0 \\
0 \ar[r] & (I \widetilde\mM)^\dagger \ar[r]^-{\alpha} & \mN  \ar[r]^-\pi & V \ar[r]
 & 0.
}
$$
By Lemma \ref{L:prep-in-dim-one}, the morphism $\imath'$ is  injective
and its cokernel has finite length. Since $\ker(\kappa)$ is a subobject of $\coker(\imath')$
the morphism $\PP_{\bar\rA}(\kappa): \overline\mN \to V$ is an isomorphism in
$\MM(\bar\rA)$.
Next, the morphism of maximal Cohen--Macaulay $\rA$--modules $\imath: \mN \to \widetilde\mM$ induces
a morphism of maximal Cohen--Macaulay $\rR$--modules
$\tilde\imath: \rR \boxtimes_\rA \mN \to \widetilde\mM$. Theorem \ref{T:BurbanDrozdindimOne} implies
 that $\tilde\imath_\idp: (\rR \boxtimes_\rA \mN)_\idp
 \to \widetilde\mM_\idp$ is an isomorphism of $\rA_\idp$--modules for all prime ideals
 $\idp \in \kP$. Hence, $\tilde\imath$ is an isomorphism in $\CM(\rR)$.

\vspace{1mm}
\noindent
\underline{Step 6}.
It remains to observe  that
$
\bigl(\tilde\imath, \PP_{\bar\rA}(\kappa)\bigr): \FF'(\mN) \to  (\widetilde\mM, V, \theta)
$
is an isomorphism in the category of triples $\Tri'(\rA)$. Since both
morphisms  $\tilde\imath$  and $\PP_{\bar\rA}(\kappa)$ are known to be isomorphisms, it is
sufficient to show that $\bigl(\tilde\imath, \PP_{\bar\rA}(\kappa)\bigr)$ is a morphism
in $\Tri'(\rA)$. In the notations of the commutative diagram (\ref{E:some-interm-diagr}),
this fact  follows from the commutativity of the following diagram in the category
$\rA\mathsf{-mod}$:
$$
\xymatrix
{
\bar\rR \otimes_\rA \overline\mN \ar[rd]_\can \ar[rr]^-{\theta_N} \ar[dd]_{\mathbbm{1} \otimes \kappa} & &
\bar\rR \otimes_\rR  (\rR \boxtimes_\rA \mN) \ar[dd]^{\mathbbm{1} \otimes \tilde\imath}\\
& \bar\rR \otimes_\rR (\rR \otimes_\rA \mN) \ar[ru]_{\mathbbm{1} \otimes \delta} \ar[rd]^\can & \\
\bar\rR\otimes_{\bar\rA} V \ar[r]^{\theta^\circ} & \widehat\mM^\circ &
\bar\rR \otimes_\rR \widetilde\mM, \ar[l]
}
$$
which can be verified by a simple diagram chasing. Theorem  is proven.
\end{proof}

\begin{remark} In their recent monograph \cite[Section 14.2]{LeusckeWiegand}, Leuschke and Wiegand give
a simpler  proof of Theorem
\ref{T:BurbanDrozd} in the special case when $\bar\rA$ and $\rR$ are both regular.
\end{remark}

\noindent
Observe that we have  the following practical rule to reconstruct a maximal Cohen--Macaulay
$\rA$--module $\mM$ from the corresponding triple
$\FF(\mM) \in \mathrm{Ob}\bigl(\Tri(\rA)\bigr)$.

\begin{corollary}\label{C:how-to-comp}
Let $\mT = (\widetilde\mM, V, \theta)$ be an object
 of the category of triples $\Tri(\rA)$. Then there exists a maximal Cohen--Macaulay
  $\bar\rA$--module
 $\mU$, an injective morphism of $\bar\rA$--modules
 $\varphi: \mU \to \bar\rR \otimes_\rR \widetilde\mM$
 and an isomorphism  $\psi: \rQ(\bar\rR) \otimes_{\bar\rA} \mU
 \to \rQ(\bar\rR) \otimes_{\rQ(\bar\rA)} V$ such that
 the following diagram
 $$
 \xymatrix
 {
 \rQ(\bar\rR) \otimes_{\bar\rA} \mU \ar[rr]^{\mathbbm{1} \otimes \varphi}
 \ar[dr]_\psi
 & & \rQ(\bar\rR) \otimes_{\rR} \widetilde\mM \\
  & \rQ(\bar\rR) \otimes_{\rQ(\bar\rA)} V \ar[ur]_\theta
 }
 $$
 is commutative in the category of $\rQ(\bar\rR)$--modules.
 Consider the following commutative diagram with exact rows
 in the category of $\rA$--modules:
 $$
 \xymatrix
 {
 0 \ar[r] & I\widetilde\mM \ar[r] \ar[d]_{=}
 & \mN \ar[r] \ar[d] & \mU \ar[r] \ar[d]^{\varphi} &
 0 \\
0 \ar[r] & I\widetilde\mM \ar[r] & \widetilde\mM \ar[r] &
\bar{\rR}\otimes_\rR \widetilde\mM \ar[r] &
 0.
}
 $$
 Then we have: $\mathbb{G}(\mT) \cong \mN^\dagger$. In particular, the isomorphy class
 of $\mN^\dagger$ does not depend on the choice of $\mU$ and $\varphi$.
 \end{corollary}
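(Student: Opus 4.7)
The plan is to identify $(\mU,\varphi,\psi)$ as the data defining a representative of $\EE^{-1}(\mT)$ in $\Tri'(\rA)$, and then to recognize the pullback module $\mN$ as the middle term of the kernel sequence defining $\BB$ applied to that representative. Since $\GG=\dagger\circ\BB\circ\EE^{-1}$ by Theorem \ref{P:MainProp}, the conclusion $\GG(\mT)\cong\mN^\dagger$ and the independence of the choice of $(\mU,\varphi)$ will then follow at once.

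To construct $(\mU,\varphi,\psi)$, one picks a finitely generated $\bar\rA$-submodule $V^\flat\subseteq V$ generating $V$ as a $\rQ(\bar\rA)$-module. Since $V$ is a $\rQ(\bar\rA)$-module, it is $\bar\rA$-torsion-free, so $V^\flat$ is Cohen-Macaulay over the one-dimensional Cohen-Macaulay ring $\bar\rA$. Writing $\widehat\mM:=\bar\rR\otimes_\rR\widetilde\mM$ and using the identification $\rQ(\bar\rR)\otimes_{\bar\rR}\widehat\mM=\rQ(\bar\rA)\otimes_{\bar\rA}\widehat\mM$ provided by Lemma \ref{L:prop-of-conduct}, clearing denominators for the images of the generators of $V^\flat$ under the adjoint $\tilde\theta$ yields a non-zero-divisor $s\in\bar\rA$ such that $s\tilde\theta(V^\flat)$ lies in the image of $\widehat\mM/T$ inside the localization, where $T:=\tor_{\bar\rA}(\widehat\mM)=\tor_{\bar\rR}(\widehat\mM)$. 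The module $T$ is a finitely generated torsion module over the semi-local one-dimensional ring $\bar\rR$, hence of finite length, so $\Ann_{\bar\rA}(T)$ contains a non-zero-divisor $s'\in\bar\rA$. For arbitrary lifts $\tilde w_1,\dots,\tilde w_n\in\widehat\mM$ of a generating set of $s\tilde\theta(V^\flat)\subseteq\widehat\mM/T$, the products $s'\tilde w_i$ are independent of the choice of lifts, so the assignment $v\mapsto s'\cdot(\text{lift of }s\tilde\theta(v))$ defines an $\bar\rA$-linear map $\varphi:V^\flat\to\widehat\mM$. Since $s'$ acts as a non-zero-divisor on the torsion-free quotient $\widehat\mM/T$, the map $\varphi$ is injective. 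Setting $\mU:=V^\flat$ and letting $\psi:\rQ(\bar\rR)\otimes_{\bar\rA}\mU\to\rQ(\bar\rR)\otimes_{\rQ(\bar\rA)}V$ be the canonical isomorphism induced by inverting $ss'$ completes the construction; commutativity of the triangle is the identity $1\otimes\varphi(v)=ss'\tilde\theta(v)=\theta\psi(v)$.

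Given this data, the triple $\mT^\flat:=(\widetilde\mM,\mU,\PP_{\bar\rR}(\varphi^{\mathrm{adj}}))$, with $\varphi^{\mathrm{adj}}:\bar\rR\otimes_{\bar\rA}\mU\to\widehat\mM$ the $\bar\rR$-linear adjoint of $\varphi$, is a valid object of $\Tri'(\rA)$ in the sense of Definition \ref{D:newtriples}: the epi condition modulo $\art(\bar\rR)$ holds because $\theta$ is an epimorphism and $\psi$ is an isomorphism after $\rQ(\bar\rR)\otimes_{\bar\rR}(-)$, and the mono condition modulo $\art(\bar\rA)$ is immediate from injectivity of $\varphi$. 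By construction $\EE(\mT^\flat)\cong\mT$ in $\Tri(\rA)$. The pullback $\widetilde\mM\times_{\widehat\mM}\mU$, which is the module $\mN$ of the corollary's diagram, is precisely the middle term of the kernel sequence defining $\BB(\mT^\flat)\in\MM(\rA)$, so $\BB(\mT^\flat)=\PP_\rA(\mN)$. Applying the Macaulayfication functor of Theorem \ref{T:prop-of-Serre-quot}(\ref{it4:prop-of-Serre-quot}) yields $\GG(\mT)=\dagger(\BB(\mT^\flat))=\dagger(\PP_\rA(\mN))=\mN^\dagger$, as claimed; independence of the choice of $(\mU,\varphi)$ is then automatic since the left-hand side depends only on $\mT$.

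The main technical obstacle is the construction of a Cohen-Macaulay lift $\varphi:\mU\to\widehat\mM$, which is nontrivial because $\widehat\mM$ need not be Cohen-Macaulay over $\bar\rR$ (precisely the reason Step 4 in the proof of Theorem \ref{P:MainProp} worked with $\widehat\mM^\circ=\widetilde\mM/(I\widetilde\mM)^\dagger$ rather than $\widehat\mM$). The two-step denominator-clearing trick above—using $s$ to land in the torsion-free quotient $\widehat\mM/T$ and then using $s'$ to cancel the lifting ambiguity introduced by $T$—is the technical heart of the argument.
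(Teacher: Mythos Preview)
Your proof is correct and takes a route that differs from the paper's implicit argument. The paper states the corollary without proof, the intended justification being the construction carried out in Steps~1--5 of the proof of Theorem~\ref{P:MainProp}. There one first replaces $\widehat\mM=\bar\rR\otimes_\rR\widetilde\mM$ by the Cohen--Macaulay quotient $\widehat\mM^\circ=\widetilde\mM/(I\widetilde\mM)^\dagger$ and represents the gluing datum as an honest injective $\bar\rA$-linear map $V'\hookrightarrow\widehat\mM^\circ$; the resulting pullback over $(I\widetilde\mM)^\dagger$ is already Cohen--Macaulay. Your construction instead lands directly in $\widehat\mM$ (which need not be Cohen--Macaulay over $\bar\rR$) via the two-step denominator-clearing trick, and relies on Macaulayfication only at the end. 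This is closer to the literal statement of the corollary, which asks for $\varphi:\mU\to\widehat\mM$ rather than $\widehat\mM^\circ$, and it matches how the corollary is actually applied in Section~8, where $\varphi$ is given by explicit matrices with entries in $\bar\rR$.

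One caveat on your last sentence. Independence is indeed automatic for the auxiliary choices made \emph{within your construction} (the choice of $V^\flat$, of $s$, $s'$, and of lifts), since you identify $\mN^\dagger$ with $\GG(\mT)$. But the argument does not extend to an \emph{arbitrary} triple $(\mU,\varphi,\psi)$ satisfying only the listed hypotheses: the step ``by construction $\EE(\mT^\flat)\cong\mT$'' uses that your $\varphi$ differs from a model of $\tilde\theta$ by multiplication by the element $ss'\in\bar\rA$, so that one gets a $\rQ(\bar\rA)$-linear isomorphism $\rQ(\bar\rA)\otimes_{\bar\rA}\mU\xrightarrow{\sim}V$ intertwining the two gluing maps. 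A general $\rQ(\bar\rR)$-linear $\psi$ need not descend to such a $\rQ(\bar\rA)$-linear isomorphism (for instance, in the $T_{23\infty}$ setting of Subsection~\ref{SS:degcusp1}, taking $\widetilde\mM=\rR$, $V=\rQ(\bar\rA)$, $\theta=\mathrm{id}$, and $\varphi(1)=u+v^2$ with $\psi$ multiplication by $(u,v^2)$ gives a valid $(\mU,\varphi,\psi)$ whose associated triple is not isomorphic to $\mT$). The ``in particular'' clause of the corollary should therefore be read as independence of the choices made in the construction, which is exactly what your argument establishes.
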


\noindent
As we shall see later, in some cases the module $\mN$ obtained by the recipe  from
 Corollary \ref{C:how-to-comp}, turns out to be automatically maximal Cohen--Macaulay.
This can be tested using the following useful result.

\begin{lemma}\label{L:how-to-avoid-Macaulayfication}
In the notations of this section, let $\widetilde\mM$ be a maximal Cohen--Macaulay
$\rR$--module, $\mV$ be a maximal Cohen--Macaulay $\bar\rA$--module and
$\tilde\theta: \mV \to \widehat\mM$ be an injective  morphism of $\bar\rA$--modules.
Consider the $\rA$--module $\mN$ given by the following commutative diagram:
$$
\xymatrix
{ 0 \ar[r] & I \widetilde\mM  \ar[r] \ar[d]_{=} & \mN \ar[r] \ar[d]^\imath   & \mV
\ar[d]^{\tilde{\theta}} \ar[r] & 0 \\
0 \ar[r] & I \widetilde\mM  \ar[r] & \widetilde\mM \ar[r] & \widehat\mM
\ar[r] & 0.
}
$$
Then there is the following short  exact sequence of $\rA$--modules:
\begin{equation}\label{E:cuseful-ex-sequen}
0 \lar \mN \lar \mN^\dagger \lar H^0_{\{\idm\}}\bigl(\coker(\tilde\theta)\bigr) \lar 0.
\end{equation}
In particular, $\mN$ is a maximal Cohen--Macaulay $\rA$--module if and only if
$\coker(\tilde\theta)$ is a maximal Cohen--Macaulay $\bar\rA$--module.
\end{lemma}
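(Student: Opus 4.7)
My plan is to identify the two short exact sequences naturally associated with $\mN$—one coming from the given pullback diagram and the other coming from the Macaulayfication—and to compare them via the universal property of the functor $\dagger$. First I would apply the snake lemma to the given commutative diagram: because the leftmost vertical arrow is the identity and $\tilde\theta$ is injective, the snake lemma collapses to give $\ker(\imath)=0$ and $\coker(\imath)\cong\coker(\tilde\theta)$, yielding a short exact sequence
$$0 \lar \mN \xrightarrow{\imath} \widetilde\mM \lar \coker(\tilde\theta) \lar 0.$$

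Next, since $\widetilde\mM$ is Cohen-Macaulay over $\rR$ and the non-zero-divisors of $\rA$ remain non-zero-divisors in each local factor of $\rR$, the module $\widetilde\mM$ is torsion-free as an $\rA$-module, hence so is its submodule $\mN$. Theorem \ref{P:Macaulafic} therefore furnishes an exact sequence $0 \to \mN \xrightarrow{\delta} \mN^\dagger \to T \to 0$ with $T$ of finite length. By the left-adjointness of $\dagger$, the morphism $\imath$ into the Cohen-Macaulay module $\widetilde\mM$ lifts uniquely through $\delta$ to a morphism $\jmath:\mN^\dagger\to\widetilde\mM$ with $\jmath\delta=\imath$. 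I would then observe that $\ker(\jmath)\cap\mN=\ker(\imath)=0$, so $\ker(\jmath)$ embeds into $T$ and is of finite length; but $\mN^\dagger$ has depth two and thus admits no nonzero finite-length submodule, which forces $\jmath$ to be injective.

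Writing $C:=\coker(\jmath)$ and using the containments $\mN\subseteq\mN^\dagger\subseteq\widetilde\mM$, the canonical surjection $\widetilde\mM/\mN\twoheadrightarrow\widetilde\mM/\mN^\dagger$ sits in a short exact sequence
$$0 \lar T \lar \coker(\tilde\theta) \lar C \lar 0.$$
The Depth Lemma applied to $0\to\mN^\dagger\to\widetilde\mM\to C\to 0$, whose outer terms have depth two, forces $\depth_\rA(C)\geq 1$, so $H^0_{\{\idm\}}(C)=0$. Consequently $T$ is the maximal finite-length submodule of $\coker(\tilde\theta)$, that is, $T\cong H^0_{\{\idm\}}\bigl(\coker(\tilde\theta)\bigr)$, and substituting this into the Macaulayfication sequence yields (\ref{E:cuseful-ex-sequen}).

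For the final equivalence, $\mN$ is Cohen-Macaulay if and only if $\delta$ is an isomorphism, if and only if $T=0$, if and only if $H^0_{\{\idm\}}\bigl(\coker(\tilde\theta)\bigr)=0$. Since $\coker(\tilde\theta)$ is a finitely generated module over the one-dimensional Cohen-Macaulay ring $\bar\rA$ (Lemma \ref{L:prop-of-conduct}), vanishing of the zeroth local cohomology is equivalent to having positive depth, which on a one-dimensional ring means being Cohen-Macaulay. The most delicate point I anticipate is the clean identification of $T$ with local cohomology in the third paragraph: this rests on genuinely injecting $\mN^\dagger$ into $\widetilde\mM$, which is why I first extract injectivity of $\jmath$ from the absence of finite-length submodules in $\mN^\dagger$.
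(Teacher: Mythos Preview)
Your argument is correct, but it takes a somewhat different route from the paper's. The paper's proof is shorter and goes through the intermediary $H^1_{\{\idm\}}(\mN)$: from the sequence $0\to\mN\to\widetilde\mM\to\coker(\tilde\theta)\to 0$ and $\depth_\rA(\widetilde\mM)=2$, the long exact sequence in local cohomology gives $H^0_{\{\idm\}}\bigl(\coker(\tilde\theta)\bigr)\cong H^1_{\{\idm\}}(\mN)$, and from $0\to\mN\to\mN^\dagger\to T\to 0$ with $\depth_\rA(\mN^\dagger)=2$ it likewise gives $T\cong H^0_{\{\idm\}}(T)\cong H^1_{\{\idm\}}(\mN)$. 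Your approach instead realizes the embedding $\mN^\dagger\hookrightarrow\widetilde\mM$ explicitly via the adjunction property of $\dagger$, then compares the two quotients directly using the Depth Lemma on $C=\widetilde\mM/\mN^\dagger$. This is a bit longer but more concrete: it actually identifies $T$ as a submodule of $\coker(\tilde\theta)$, which the paper's argument does not do (it only establishes an abstract isomorphism). Both arguments ultimately rely on the vanishing of low-degree local cohomology for depth-two modules; the paper packages this via the long exact sequence, while you unpack it by hand.
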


\begin{proof}
By the snake lemma, we get the short exact sequence
$$
0 \lar \mN \stackrel{\imath}\lar \widetilde\mM  \lar \coker(\tilde\theta) \lar 0.
$$
Since the module $\widetilde\mM$ is maximal Cohen--Macaulay over $\rA$, we have:
$H^0_{\{\idm\}}\bigl(\coker(\tilde\theta)\bigr) \cong H^1_{\{\idm\}}(\mN)$. On the other hand,
the module $\mN$ is torsion free and in the canonical short exact sequence
$$
0 \lar \mN \stackrel{\delta}\lar  \mN^\dagger \lar \mT \lar 0
$$
the module $\mT$ has finite length. Hence, we have:
$$
\mT \cong H^0_{\{\idm\}}(\mT) \cong H^1_{\{\idm\}}(\mN) \cong H^0_{\{\idm\}}\bigl(\coker(\tilde\theta)\bigr)
$$
yielding the desired short exact sequence (\ref{E:cuseful-ex-sequen}).
\end{proof}

\section{Maximal Cohen--Macaulay modules over $\kk\llbracket x, y, z\rrbracket/(x^2 + y^3
- xyz)$}
\label{sec5}

In this section,  we shall illustrate our method of study of maximal Cohen--Macaulay
modules overnon-isolated surface singularities, based on Theorem \ref{T:BurbanDrozd},
 on the case of the $T_{23\infty}$--singularity
$\rA = \kk\llbracket x, y, z\rrbracket/(x^3 + y^2 - xyz)$. We first have to accomplish   the following computations.

\begin{itemize}
\item Let $\rR$ be the normalization of $\rA$. Then $\rR = \kk\llbracket u, v \rrbracket$, where
$u = \frac{\displaystyle y}{\displaystyle x}$ and  $v =  \frac{\displaystyle xz - y}{\displaystyle x}$.
\item Next,  $I = (x, y)\rA = (uv)\rR$ is the conductor ideal.  Hence
$\bar\rA = \rA/I = \kk\llbracket z\rrbracket$, whereas $\bar\rR = \kk\llbracket u, v\rrbracket/(uv)$.
The map $\bar\rA \to \bar\rR$ sends  $z$ to $u +v$.
\item Let $\bD = \kk\llbracket z\rrbracket$ and $\rK = \kk\llbrace z\rrbrace$. Then we have:
$\rQ(\bar\rA) \cong \kk\llbrace z\rrbrace = \rK$ and $\rQ(\bar\rR) \cong
\kk\llbrace u\rrbrace \times
\kk\llbrace v\rrbrace \cong  \rK \times \rK$.
\end{itemize}
Let $\mT = (\widetilde\mM, V, \theta)$ be an object of $\Tri(\rA)$. Then the following results are true.
\begin{itemize}
 \item Since $\rR$ is regular,  $\widetilde\mM \cong \rR^m$ for some integer $m \ge 1$.
\item Next, $V$ is just a vector space
over the field $\rK$, hence $V \cong \rK^n$ for some  $n \ge 1$.
\item The gluing map  $\theta$ is given by a pair
of matrices of full row rank and
 the same size:
  $
  \theta = (\theta_u, \theta_v)  \in \Mat_{m\times n}(\rK) \times
  \Mat_{m\times n}(\rK).
  $
\end{itemize}
If two triples $\mT = (\widetilde\mM, V, \theta)$ and $\mT' = (\widetilde\mM', V', \theta')$
are isomorphic then $\widetilde\mM \cong \widetilde\mM'$ and $V \cong V'$. Describing the isomorphy
classes of objects in $\Tri(\rA)$,
we may without loss of generality assume that $\widetilde\mM =  \widetilde\mM'$ and $V =  V'$.
The essential information about the isomorphism class of $\mT$ is contained in the gluing data
$\theta$.
The description of isomorphism classes of objects in $\Tri(\rA)$ leads to the
following matrix problem:
\begin{equation}
(\theta_u, \theta_v) \mapsto (S_1^{-1} \theta_u T, S_2^{-1} \theta_v T) = (\theta'_u, \theta'_v),
\end{equation}
where $T \in \GL_n(\rK)$ and $S_1, S_2  \in \GL_m(\bD)$ are such that $S_1(0) = S_2(0)$.
This matrix problem corresponds to the category of representations of a very special decorated
bunch of chains, which will be treated in the full generality in the subsequent section.

\medskip
\noindent
\textbf{Fact}.
The pair
$(\theta_u, \theta_v)$ splits into a direct sum of the following indecomposable blocks,
see Subsection  \ref{Ex:CanonicalFormsDecKronecker} below.

\vspace{1mm}

\noindent
\emph{Continuous series}. Let $l, t \ge 1$ be positive integers,
$\omega = \bigl((m_1, n_1), \dots, (m_t, n_t)\bigr) \in (\mathbb{Z}_+^2)^t$
be a \emph{``non-periodic sequence''} such that
$\min(m_i, n_i)=1$ for all $1 \le i \le t$ and $\lambda \in \kk^*$. Then we have
the corresponding canonical form:

\begin{equation}\label{E:bands-matr-forbands}
\begin{array}{c}
\begin{tikzpicture}
\matrix (first) [tbl5,  name=tbl,
minimum height=25pt,
text width=25pt%
] at (0,0)
{
A_1 & 0           & 0 & \dots  & 0 \\
0           & A_2 &  0 & \dots  & 0 \\
\vdots      & \vdots      &   \ddots&  \ddots & \vdots \\
0           & 0          & \dots   &  A_{t-1}  & 0 \\
0           & 0 & \dots            & 0 & A_{t} \\
};

\sfrm{tbl}{5}{5};
\node[base left=5pt of tbl-3-1.north west, yshift = -18pt]{$\theta_u  =$};
\matrix (second) [tbl5,  name=tbl,
minimum height=25pt,
text width=25pt%
] at (6,0)
{
0           & B_2 &  0 & \dots    & 0   \\
0           &  0          &  B_3 &  0 & 0  \\
\vdots      & \vdots      & \vdots &   \ddots& \vdots \\
0           &  0          &   0 &  \dots         & B_t  \\
C          & 0    & 0        & \dots & 0 \\
};

\sfrm{tbl}{5}{5};
\node[base left=5pt of tbl-3-1.north west, yshift=-18pt]{$\theta_v  =$};
\end{tikzpicture}
\end{array}
\end{equation}

\noindent
where $A_k= z^{m_k} I,$ $B_k= z^{n_k} I$ and $C=z^{n_1}J$ with $I = I_l$ the identity $l\times l$--matrix and $J = J_l(\lambda)$ the Jordan block of size $l \times l$ with the eigenvalue $\lambda$.

\medskip
\noindent
\emph{Discrete series}. Let
$\omega = \bigl(m_0,  (m_1, n_1), \dots, (m_t, n_t), n_{t+1}\bigr)$,
where $m_0 = n_{t+1} = 1$ and $\min(m_i, n_i) = 1 $ for all $1 \le i \le t$.
Consider the matrices   $\theta_u$ and $\theta_v$ of the size $(t+1) \times (t+2)$
given as follows:

\begin{equation}\label{E:strings-matr-forstrings}
\begin{array}{c}
\begin{tikzpicture}
\matrix (first) [tbl5,  name=tbl,
minimum height=25pt,
text width=25pt%
] at (0,0)
{
z^{m_0}      & 0      & 0 & \dots & 0 \\
0      & z^{m_1}      & 0 & \dots & 0 \\
\vdots & \vdots & \ddots  & \ddots & \vdots \\
0      & 0      & \dots & z^{m_t} & 0\\
};

\sfrm{tbl}{4}{5};
\node[base left=5pt of tbl-3-1.north west, yshift=-5pt]{$\theta_u  =$};

\matrix (second) [tbl5,  name=tbl,
minimum height=25pt,
text width=25pt%
] at (6,0)
{
0      & z^{n_1}      & 0 & \dots & 0 \\
0      & 0      & z^{n_2} & \dots & 0 \\
\vdots & \vdots & \vdots   &  \ddots & 0 \\
0      & 0      & 0  &  \dots  &  z^{n_{t+1}}\\
};

\sfrm{tbl}{4}{5};
\node[base left=5pt of tbl-3-1.north west, yshift=-5pt]{$\theta_v  =$};

\end{tikzpicture}
\end{array}
\end{equation}

\noindent
In the case $t = 0$ we set $\theta_u = (1\; 0)$ and $\theta_v = (0\; 1)$.
\qed

\vspace{1mm}
\noindent
This result is a special case of the classification of indecomposable objects in the category
of representations of a decorated bunch of chains, which will be treated in the subsequent sections.
We get the following conclusion.

\begin{itemize}
\item  Let $(\omega, l, \lambda)$ be a band datum as above. Then
the triple $\bigl(\rR^{tl}, \rK^{tl}, (\theta_u, \theta_v))\bigr)$ defines
an indecomposable maximal Cohen--Macaulay module $\mM(\omega, l, \lambda)$,  which is locally free
of rank $tl$ on the punctured spectrum of $\rA$. Moreover, any indecomposable
object of $\CM^{\mathsf{lf}}(\rA)$  is described by a triple
of the above form.

\item Let $\omega$ be a string datum as above.
 Then the triple $\bigl(\rR^{t+1}, \rK^{t+1}, (\theta_u, \theta_v)\bigr)$
defines an indecomposable  maximal
Cohen--Macaulay  $\rA$--module $\mN(\omega)$ of rank $t + 1$, which is not
locally free on the punctured spectrum.
Moreover, any indecomposable
object of $\CM(\rA)$ which does not belong to $\CM^{\mathsf{lf}}(\rA)$,
 is isomorphic to some   $\mN(\omega)$.
\end{itemize}

\vspace{1mm}
\noindent
Our next goal is to describe an algorithm to construct maximal Cohen--Macaulay $\rA$--modules
corresponding to the canonical forms (\ref{E:bands-matr-forbands}) and
(\ref{E:strings-matr-forstrings}). First note the following simple result.

\begin{lemma}\label{L:onmaps-and-matric}
Let $(\theta_u, \theta_v)$ be the canonical form
given either by a band datum
$(\omega, l, \lambda)$ or by a string datum $\omega$.
Let $\theta = \theta_u(u) + \theta_v(v) \in \Hom_{\bar\rA}(\bar\rA^{p}, \bar\rR^{q})$ be the corresponding morphism of $\bar\rA$--modules, where $p = q = tl$ in the case of bands and $p= t+2$, $q = t+1$
 in the case of strings.
 Then the induced morphism
of $\rQ(\bar\rA)$--modules $\mathbbm{1} \otimes \theta :
\rQ(\bar\rA)^{p} \to  \rQ(\bar\rR)^{q}$ is given
by the original matrices $\bigl(\theta_u(u), \theta_v(v)\bigr)$, where we use the canonical isomorphism
$\rQ(\bar\rA) \otimes_{\bar\rA} \bar\rR \to \rQ(\bar\rR)$.
\end{lemma}

\noindent
As a consequence, we get a complete description of the indecomposable maximal Cohen--Macaulay modules
over the ring $\rA = \kk\llbracket x, y, z\rrbracket/(x^3 + y^2 - xyz)$.

\begin{corollary}\label{C:descr-bands-and-strings} Let $(\theta_u, \theta_v)$ be the canonical form
defined  by a band datum
$(\omega, l, \lambda)$ or by a string datum  $\omega$ and $\theta = \theta_u(u) + \theta_v(v)$ and
$p,q$ be as in Lemma \ref{L:onmaps-and-matric}.
 Consider
the matrix
$$
\bar\theta    := \bigl(xI_{q} \, | \,  y I_{q} \, | \,  \theta\bigr)
\in {\Mat}_{q \times (2q + p)}(\rR).
$$
Let $\mL \subseteq \rR^{q}$ be the $\rA$--module generated by the columns of the
matrix $\bar\theta$. Then the maximal Cohen--Macaulay $\rA$--module $\mM :=\mL^\dagger =
\mL^{\vee\vee}$ satisfies:
$$
\FF(\mM) \cong \bigl(\rR^q, \rK^{p}, (\theta_u, \theta_v) \bigr).
$$
In other words, $\mM$ is an indecomposable maximal Cohen--Macaulay $\rA$--module
corresponding to the canonical forms (\ref{E:bands-matr-forbands}) and  (\ref{E:strings-matr-forstrings}).
\end{corollary}

\noindent
Corollary \ref{C:descr-bands-and-strings} leads to  the following result.

\begin{proposition}
For the ring $\rA = \kk\llbracket x, y, z\rrbracket/(x^3  + y^2 - xyz)$ the classification
of  maximal Cohen--Macaulay $\rA$--modules of \textsl{rank one} is the following.
\begin{enumerate}
\item There exists exactly one maximal Cohen--Macaulay module $\mN = \mN\bigl(1(\,,)1\bigr)$ of rank one, which is not locally free on the punctured spectrum. We have the following $\rA$--module isomorphisms: $\mN \cong I \cong \rR$.
\item A rank one object of $\CM^{\mathsf{lf}}(\rA)$  is either regular or  has the following form:
$$
\mM\bigl((1, m), \lambda\bigr) \cong I_{m, \, \lambda} \quad \mbox{\rm and}  \quad
\mM\bigl((m, 1), 1, \lambda\bigr) \cong J_{m, \,  \lambda},
$$
where $\lambda \in \kk^*$ for $m \ge 2$ and  $\lambda \in \kk^*\setminus \{1\}$ for $m =1$,
 $I_{m, \, \lambda} = \bigl\langle x^{m+1}, y x^{m-1} +
\lambda (xz - y)^m\bigr\rangle \subset \rA$ and
$J_{m, \, \lambda} = \bigl\langle x^{m+1}, y^m  +
\lambda x^{m-1} (xz - y)\bigr\rangle \subset \rA$.
\end{enumerate}
\end{proposition}

\begin{proof}
The fact that there exists precisely one
object of $\CM(\rA)$ of rank one, which does not belong to $\CM^{\mathsf{lf}}(\rA)$,
follows from  Corollary \ref{C:descr-bands-and-strings}.
Note that both modules $I$ and $\rR$ share the property to be maximal Cohen--Macaulay of rank one,
being
not locally free on the punctured spectrum.

Let $\theta_u = z^m$, $\theta_v = \lambda z^n$ and $\theta = \theta_u(u) + \theta_v(v)$,
where $\lambda \in \kk^*$ and
$\min(m,n) = 1$. Then unless $\max(m, n)= 1$ and $\lambda = 1$, the cokernel of the
morphism of $\bar\rA$--modules $\theta: \bar\rA \to \bar\rR$ has no finite length submodules.
By Lemma \ref{L:how-to-avoid-Macaulayfication} and Corollary \ref{C:descr-bands-and-strings} we get:
$$
\mM\bigl((m,1), 1, \lambda\bigr)  = \bigl\langle x,y, u^m + \lambda v\bigr\rangle_\rA \subseteq \rR
\quad \mbox{and} \quad \mM\bigl((1,m), 1, \lambda\bigr)  = \bigl\langle x,y, u + \lambda v^m\bigr\rangle_\rA \subseteq \rR.
$$
Next, observe that $u = \frac{\displaystyle y}{\displaystyle x}$ fulfills the equation $u^2-zu+x = 0$. By induction
it is not difficult to show that for any $m \ge 2$ there exist  polynomials
$p_m(X,Z)$ and $q_m(X,Z)$ from $\kk\llbracket X,Z\rrbracket$ such that
the following equality holds in $\rR$:
$
u^m = p_m(x,z) u + q_m(x,z).
$
Using this fact it is not difficult to derive that
$$y \in \bigl\langle x, u + \lambda v^m\bigr\rangle_\rA \cong
\bigl\langle x^{m+1}, x^m y, x^{m-1}y  + \lambda (xz -y)^m\bigr\rangle_\rA.$$
In a similar way,
$y \in \bigl\langle x, u^m + \lambda v\bigr\rangle_\rA \cong
\bigl\langle x^{m+1}, x^m y, y^m   + \lambda x^{m-1} (xz -y)\bigr\rangle_\rA.$
\end{proof}

 It is very instructive to compute the  matrix factorizations
corresponding to some rank one Cohen--Macaulay $\rA$--modules. Note that the conductor ideal
$I$ corresponds to the matrix factorization
$\Bigl(\left(\begin{smallmatrix}  x &  y \\  - y &  x^2 -yz \end{smallmatrix}\right),
\left(\begin{smallmatrix} x & - y \\  y & x^2 -yz \end{smallmatrix}\right)\Bigr)$.

\vspace{1mm}

Consider now the family of modules $\mM\bigl((1,1),1,\lambda\bigr)
\cong \bigl\langle x, \frac{\displaystyle y}{\displaystyle x} + \lambda \frac{\displaystyle xz-y}{\displaystyle x}\bigr\rangle_\rA$, where
$\lambda \in \kk^*\setminus \{1\}$.  The special value
$\lambda = 1$ has to be treated separately:  in this case we have
$\mM\bigl((1,1),1,\lambda\bigr) \cong \rA$.
For $\lambda \ne 1$ we know that  $\mM\bigl((1,1),1,\lambda\bigr) =
\bigl\langle x^2, y + \frac{\lambda}{\lambda-1} xz\bigr\rangle_\rA$. The new moduli
parameter $\mu = \frac{\lambda}{\lambda-1}$ takes its values in
$\mathbb{P}^1\setminus\bigl\{(1:-1)\bigr\} = \bigl(\kk \cup \{\infty\}\bigr)\setminus
\{-1\}$. One can check that $\mM\bigl((1,1),1,\lambda\bigr)$ has  a presentation:
$$
\rA^2
\xrightarrow{
\left(
\begin{smallmatrix}
x + \mu(\mu+1) z^2 & y + \mu xz \\
y - (\mu +1) xz & -x^2
\end{smallmatrix}
\right)
}
\rA^2 \lar \mM\bigl((1,1),1,\lambda\bigr) \lar 0.
$$
Note that the forbidden value $\mu = -1$  corresponds to  the module  $\rR \cong I$,
whereas the value $\mu = \infty$ corresponds  the regular module $\rA$.  In other words,
the ``pragmatic moduli  space'' of the rank one modules $\mM\bigl((1,1),1,\lambda\bigr)$
 can be naturally compactified to the nodal cubic curve $zy^2 = x^3 + x^2 z$, where
the unique singular point corresponds to the unique rank one Cohen--Macaulay $\rA$--module,
which is not locally free on the punctured spectrum. Note that the explicit
expression for the presentation matrices of $\mM\bigl((1,1),1,\lambda\bigr)$ are consistent
with the criteria to be  locally free on the punctured spectrum from Lemma
\ref{P:CriterionLF}.
\vspace{1mm}

Next, let us compute the matrix factorization describing the family $\mM\bigl((2,1),1,\lambda\bigr)$.
By Corollary \ref{C:descr-bands-and-strings} we  have:
$$
\mM\bigl((2,1),1,\lambda\bigr) = \bigl\langle x^3, y^2 + \lambda (xz-y)x\bigr\rangle_\rA =
\bigl\langle x^3, xyz  + \lambda (xz-y)x\bigr\rangle_\rA \cong
\bigl\langle x^2, y(z-\lambda) + \lambda xz\bigr\rangle_\rA.
$$
This family has the following presentation:
$$
\rA^2
\xrightarrow{
\left(
\begin{smallmatrix}
x (z -\lambda)^2 + \lambda z^3 & y (z-\lambda) - \lambda xz\\
y (z-\lambda) - xz^2 & x^2
\end{smallmatrix}
\right)
}
\rA^2 \lar \mM\bigl((2,1),1,\lambda\bigr) \lar 0.
$$

\vspace{1mm}

Our approach can be also applied to describe maximal Cohen--Macaulay modules  of higher rank.
Consider the long exact sequence
$$
0 \lar \rA \lar \mF \lar \rA \lar \kk \lar 0,
$$
corresponding to the generator of the $\rA$--module  $\Ext^2_\rA(\kk, \rA) \cong \kk$. The module
$\mF$ is  called  \emph{fundamental module}.
By a result of Auslander \cite{Auslander}, $\mF$ is maximal Cohen--Macaulay. It plays a central role in the
theory of almost split sequences in the category $\CM(\rA)$. Let us compute a  presentation
of $\mF$ using our method.
First, with some efforts one can  show that $\mF$ corresponds to the triple
$$\Bigl(\rR^2, \rK^2, \bigl(\left(\begin{smallmatrix}  1 & 0 \\ 0 & 1 \end{smallmatrix}\right),
\left(\begin{smallmatrix}  1 & 1 \\ 0 & 1 \end{smallmatrix}\right)\bigr)\Bigr)
\cong
\Bigl(\rR^2, \rK^2, \bigl(\theta_u = \left(\begin{smallmatrix}  u & 0 \\ 0 & u \end{smallmatrix}\right), \theta_v =
\left(\begin{smallmatrix}  v & v \\ 0 & v \end{smallmatrix}\right)\bigr)\Bigr).
$$
Consider the module $\mN$ given by the diagram
$$
\xymatrix
{ 0 \ar[r] & I^2  \ar[r] \ar[d]_{=} & \mN \ar[r] \ar[d]   & \bar\rA^2
\ar[d]^{\tilde{\theta}} \ar[r] & 0 \\
0 \ar[r] & I^2  \ar[r] & \rR^2  \ar[r] & \bar\rR^2
\ar[r] & 0.
}
$$
Then we have:
$$
\mN =
\left\langle
\left(
\begin{array}{c}
x^2 \\
0
\end{array}
\right),
\left(
\begin{array}{c}
xy \\
0
\end{array}
\right),
\left(
\begin{array}{c}
0 \\
x^2
\end{array}
\right),
\left(
\begin{array}{c}
0 \\
xy
\end{array}
\right),
\left(
\begin{array}{c}
xz \\
0
\end{array}
\right),
\left(
\begin{array}{c}
-y \\
xy
\end{array}
\right)
\right\rangle_\rA \subseteq \rA^2
$$
and $\mF \cong \mN^\dagger$. Note that the element $a := \left(\begin{smallmatrix} x \\ 0 \end{smallmatrix}
\right) \in \rA^2$ does
not belong to $\mN$, however  $\idm a \in \mN$. Applying
 Lemma \ref{L:how-to-avoid-Macaulayfication} and
Lemma \ref{L:fact-on-Macaulaf} we conclude that
$$
\mF \cong \mN^\dagger =
\left\langle
\left(
\begin{array}{c}
x \\
0
\end{array}
\right),
\left(
\begin{array}{c}
0 \\
-x^2
\end{array}
\right),
\left(
\begin{array}{c}
-y \\
xy
\end{array}
\right),
\left(
\begin{array}{c}
0 \\
xy
\end{array}
\right)
\right\rangle_\rA \subseteq \rA^2.
$$
Moreover, $\mF$ has the following presentation:
$$
\rA^4
\xrightarrow{
\left(
\begin{smallmatrix}
y & z & x & 0 \\
0 & y & 0 & x \\
yz-x^2 & 0 & y & -z \\
0 & yz-x^2 & 0 & y
\end{smallmatrix}
\right)
}
\rA^4 \lar \mF \lar 0.
$$
One can check that we have an isomorphism $\mF \cong \syz^3(\kk)$, which matches with
a  result  obtained
by  Yoshino and Kawamoto \cite{YoshinoKawamoto}.

\begin{remark}
According to Kahn \cite{Kahn} as well as to
Drozd, Greuel and Kashuba \cite{DGK}, the normal surface  singularity $\rB = \kk\llbracket x, y, z\rrbracket/(x^3 + y^2 + z^p - xyz)$,
 has tame Cohen--Macaulay representation type for $p \ge 6$.
On the other hand, an explicit  description
of indecomposable maximal Cohen--Macaulay $\rB$--modules   still remains  unknown. It would be interesting
to know what  objects of $\CM(\rA)$
can be deformed to objects in $\CM(\rB)$, as well as to describe the corresponding families explicitly.
\end{remark}


\tikzset{
    table nodes/.style={
        rectangle,
        fill=blue!5,
        draw,
        thin, dashed,
        align=center,
        minimum height=40pt,
        text depth=0.5ex,
        text height=2ex,
        inner xsep=0pt,
        outer sep=0pt
    },
    table/.style={
        matrix of nodes,
        row sep=-\pgflinewidth,
        column sep=-\pgflinewidth,
        nodes={
            table nodes
        },
        execute at empty cell={\node[draw=none]{};}
    }
  }

\section{Representations of decorated bunches of chains--I}\label{bc1}

In this section we  introduce a certain type of matrix problems called
``representations of decorated bunches of chains'' and explain the combinatorics of indecomposable objects.

\subsection{Notation}  Let $\bD$ be a discrete valuation ring,
$\idm$ its maximal ideal, $t \in \idm$ an uniformizing element (i.e.~such that $(t) = \idm$), $\kk = \bD/\idm$ the residue field of $\bD$ and
$\rK$  the field of fractions of $\bD$. For an element $a \in \bD$ we denote by $\bar{a}$ its image in $\kk$. Similarly,
for a matrix $W \in \Mat_{m \times n}(\bD)$ we denote by $\bar{W} \in \Mat_{m \times n}(\kk)$ its residue modulo $\idm$.
Finally,
$\bD \times \bD \supset \widetilde\bD = \left\{(a, b) \, | \,  \bar{a} = \bar{b}\right\}$ is  the \emph{dyad} of $\bD$ with itself.

\subsection{Bimodule problems} The language of bimodule problems has been introduced by Drozd
in \cite{DrozdLOMI} as an attempt
to formalize the notion of a matrix problem. See also \cite{CB} and \cite{dr} for further elaborations.

Let $\rR$ be a commutative ring, $\kA$ be an $\rR$--linear category and $\kB$ be an $\kA$--bimodule. The last  means that
for  any pair of objects $A, B$ of $\kA$ we have an $\rR$--module $\kB(A, B)$ and for any further pair of objects
$A', B'$ there are left and right multiplication maps
$$
\kA(B, B') \times \kB(A, B) \times \kA(A', A) \lar \kB(A', B'),
$$
which are $R$--multilinear and associative.
\begin{definition}
The   \emph{bimodule category} $\el(\kA, \kB)$  (sometimes called \emph{category of elements of the $\kA$--bimodule $\kB$}) is defined as follows. Its objects are pairs $(A, W)$, where
$A$ is an object of $\kA$ and $W\in \kB(A, A)$. The morphism spaces in $\el(\kA, \kB)$ are defined as follows:
$$\el(\kA, \kB)\bigl((A, W), (A', W')\bigr) = \bigl\{F \in \kA(A, A') \, | \, F W = W' F \bigr\}.$$
The composition of morphisms in $\el(\kA, \kB)$  is the same as  in $\kA$.
\end{definition}

\begin{remark}
 The category $\el(\kA, \kB)$  is additive and idempotent complete provided $\kA$ is additive and idempotent complete. However, one typically  starts
with a category $\kA$  having the property that the endomorphism algebra of any of its objects is local
(obviously, in this case, $\kA$ can not be additive). Then one takes the \emph{additive closure}
$\kA^{\omega}$ of $\kA$ and extends $\kB$ to an $\kA^{\omega}$--bimodule $\kB^{\omega}$ by additivity. Abusing the notation,
we write  $\el(\kA, \kB)$ having actually the category $\el(\kA^{\omega}, \kB^{\omega})$ in mind.
\end{remark}

\noindent

\begin{example}\label{E:typDecBunch} Assume $\bD = \kk\llbracket t\rrbracket$. We define the  category $\kA$ and bimodule $\kB$  as follows.
\begin{itemize}
\item $\kA$ has three objects: $\Ob(\kA) = \{a, b, c\}$.
\item The non-zero morphism spaces of $\kA$ are:
\begin{itemize}
\item $\kA(a, a) = \rK 1_{a}$, $\kA(b, c) = \langle \nu_1, \nu_2\rangle_{\bD} \cong \bD^2$ and $\kA(c, b) = \langle \rho_1, \rho_2\rangle_{\bD} \cong \bD^2$.
\item
$\kA(b, b) = \kk\llbracket \beta_1, \beta_2\rrbracket/(\beta_1\beta_2)$ and
$\kA(c, c) = \kk\llbracket \gamma_1, \gamma_2\rrbracket/(\gamma_1\gamma_2)$.
\end{itemize}
\item For $\imath = 1, 2$ we have the following relations: $\rho_\imath \nu_\imath = \beta_\imath$ and
 $\nu_\imath \rho_\imath = \gamma_\imath$.
\item The bimodule $\kB$ is defined by the following rules:
\begin{itemize}
\item $\kB(a, b) = \langle \phi_1, \phi_2\rangle_{\rK} \cong \rK^2 \cong \kB(a, c) = \langle \psi_1, \psi_2\rangle_{\rK}$.
    \item For $(x, y) \notin \bigl\{(a, b), (a, c)\bigr\}$ we have: $\kB(x, y) = 0$.
\item The action of $\kA$ on $\kB$ is given by the following rules:
$$
\beta_\imath \circ \phi_\jmath = \delta_{\imath\jmath} t \cdot  \phi_\imath,
\gamma_\imath \circ \psi_\jmath = \delta_{\imath\jmath} t \cdot \psi_\imath,
\nu_\imath \circ \phi_\jmath = \delta_{\imath\jmath}  \psi_\imath,
\rho_\imath \circ \psi_\jmath = \delta_{\imath\jmath}  t \cdot\phi_\imath, \; \imath, \jmath = 1, 2 \; \mbox{and}
$$
$$
(\phi_1, \phi_2, \psi_1, \psi_2) \circ (\kappa  1_{a}) = (\kappa \cdot \phi_1, \kappa \cdot \phi_2, \kappa \cdot  \psi_1, \kappa \cdot \psi_2), \; \kappa \in \rK.
$$
\end{itemize}
\end{itemize}
\noindent
The entire  data can be visualized by the following picture.
\begin{center}
\begin{tikzpicture}[scale=0.75,
    thick,
    dot/.style={fill=blue!10,circle,draw, inner sep=1pt, minimum size=4pt}]
 \draw (0,2) node[dot] (a1){} (0,-2)node[dot](b1){} (3,0)node[dot](c1){}
       (8,2) node[dot] (a2){} (8,-2)node[dot](b2){} (5,0)node[dot](c2){}
 ;
 \draw[thick, dashed](a1)-- node [above right]{$\phi_1$}(c1)--node [below right]{$\psi_1$}(b1);
 \draw[-stealth,thick] (a1) to[bend right] node[left=0.75mm]{$\nu_1$} (b1);
 \draw[-stealth,thick] (b1) to[bend right] node[right=0.75mm]{$\rho_1$} (a1);

\draw[thick, dashed](a2)-- node [above left]{$\phi_2$}(c2)--node [below left]{$\psi_2$}(b2);
 \draw[-stealth,thick] (a2) to[bend right] node[left=0.75mm]{$\nu_2$} (b2);
 \draw[-stealth,thick] (b2) to[bend right] node[right=0.75mm]{$\rho_2$} (a2);

\tikzset{every loop/.style={min distance=10mm,in=45,out=135,looseness=10}}
\draw[-stealth,thick] (a2)to[loop ] node[above=1.5mm]{$\beta_2$}   ();
\draw[-stealth,thick] (a1)to[loop above] node[above=1.5mm]{$\beta_1$}   ();

\tikzset{every loop/.style={min distance=10mm,in=225,out=315,looseness=10}}
\draw[-stealth,thick] (b1)to[loop below] node[below=1.5mm]{$\gamma_1$}   ();
\draw[-stealth,thick] (b2)to[loop below] node[below=1.5mm]{$\gamma_2$}   ();

\draw[blue] ($(a1)-(3mm,3mm)$ ) rectangle  ($(a2)+(3mm,3mm)$ );
\draw[blue] ($(b1)-(3mm,3mm)$ ) rectangle  ($(b2)+(3mm,3mm)$ );

 \node[draw= green, ellipse,  fit=(c1) (c2)]{};

\end{tikzpicture}
\end{center}
The encircled points represent three objects of the category $\kA$, the solid arrows denote morphisms
in $\kA$ whereas the dotted ones stand for generators of the bimodule $\kB$.

Let   us now derive the matrix problem describing  the isomorphy classes of objects
of  $\el(\kA,\kB)$. For any $x \in \{a, b, c\}$ let $Z_x$ denote  the corresponding object of the category $\kA^\omega$.
Then an object of $\el(\kA,\kB)$ is a pair $(Z, W)$, where
$Z = Z_a^{n} \oplus Z_b^{m} \oplus Z_c^{p}$ and $W$ is  a matrix of the following shape:

\begin{center}
  \begin{tikzpicture}
\matrix  [tbl5,  name=table,
row 1/.style={minimum height=40pt},
row 2/.style={minimum height=20pt},
row 3/.style={minimum height=30pt},
column 1/.style={text width=40pt},
column 2/.style={text width=20pt},
column 3/.style={text width=30pt},
]
{
0  & 0 & 0 \\
 P &  0&0\\
 Q &  0&0\\
};
\sfrm{table}{3}{3};

\node[above=2pt of table-1-1]{$a$};\node[above=2pt of table-1-2]{$b$}; \node[above=2pt of table-1-3]{$c$};

\node[base right=2pt of table-1-3]{$a$};\node[base right=2pt of table-2-3]{$b$};\node[base right=2pt of table-3-3]{$c$};

\node [base left=3pt of table.west]{$W\;=$};

\foreach \x in {1,2,3}{
\foreach \y in {1,2,3}{
\hdline{table}{\x}{\y};
\vdline{table}{\x}{\y};
}}

\end{tikzpicture}
\end{center}

\noindent
Here, $P= \Phi_1\phi_1+\Phi_2\phi_2 $ and $Q= \Psi_1\psi_1+\Psi_2\psi_2, $ where
 $\Phi_\imath \in \Mat_{m \times n}(\rK)$ and $\Psi_\imath \in \Mat_{p \times n}(\rK)$ for $\imath = 1, 2$.

\medskip
\noindent
An isomorphism of $(Z, W) \lar (Z, \tilde{W})$ is given by a matrix

\begin{center}
\begin{tikzpicture}

\matrix (first) [tbl5,  name=table,
row 1/.style={minimum height=40pt}, row 2/.style={minimum height=20pt},
row 3/.style={minimum height=30pt},
column 1/.style={text width=40pt}, column 2/.style={text width=20pt},
column 3/.style={text width=30pt},
]
{
S   & 0 & 0 \\
0  &   X  &  R   \\
   0 & N  & Y     \\
};
\sfrm{table}{3}{3};

\node[above=2pt of table-1-1]{$a$};
\node[above=2pt of table-1-2]{$b$};
\node[above=2pt of table-1-3]{$c$};

\node[base right=2pt of table-1-3]{$a$};
\node[base right=2pt of table-2-3]{$b$};
\node[base right=2pt of table-3-3]{$c$};

\node [base left=3pt of table.west]{$F\;=$};

\foreach \x in {1,2,3}{
\foreach \y in {1,2,3}{
\hdline{table}{\x}{\y};
\vdline{table}{\x}{\y};
}}
\end{tikzpicture}
\end{center}
Here, $S \in \GL_n\bigl(\kA(a, a)\bigr) \cong \GL_n(\rK)$, $X \in \GL_m\bigl(\kA(b, b)\bigr)$ and
$Y \in \GL_p\bigl(\kA(c, c)\bigr)$. Next, we write  $R = R_1 \rho_1 + R_2 \rho_2$ with
$R_1, R_2 \in \Mat_{m \times p}(\bD)$ and $N = N_1 \nu_1 + N_2 \nu_2$ with
$N_1, N_2 \in \Mat_{p \times n}(\bD)$. Let $(X_1, X_2)$ (respectively $(Y_1, Y_2)$) be the image
of $X$ (respectively $Y$) under the group homomorphism $\GL_m\bigl(\kA(b, b)\bigr) \lar
\GL_m(\bD)\times \GL_m(\bD)$ (respectively $\GL_p\bigl(\kA(c, c)\bigr) \lar
\GL_p(\bD)\times \GL_p(\bD)$).

The equality $F W = \tilde{W} F$ leads to the following matrix equalities:
\begin{equation}\label{E:typicalMatrProbl}
\left\{
\begin{array}{ccc}
X_\imath \Phi_\imath + t R_\imath \Psi_\imath & = & \widetilde{\Phi}_\imath S \\
Y_\imath \Psi_\imath +  N_\imath \Phi_\imath & = & \widetilde{\Psi}_\imath S,
\end{array}
\right.
\end{equation}
where $\imath = 1, 2$.
The obtained matrix problem can be visualized by the following picture.

\begin{center}
 \begin{tikzpicture}
  \matrix[nodes={draw, thick, fill=blue!10},
    row sep=1pt, ]  {
  \node[ minimum height=30pt, minimum width=90pt](h1){$\Phi_1$}; &&\\
  \node[ minimum height=60pt, minimum width=90pt](s1){$\Psi_1$};&& \\
  }  ;

  \matrix[nodes={draw,thick, fill=blue!10},
    row sep=1pt, ]  at (7,0) {
  \node[ minimum height=30pt, minimum width=90pt](h2){$\Phi_2$}; &&\\
  \node[ minimum height=60pt, minimum width=90pt](s2){$\Psi_2$};&& \\
  }
  ;

\node (hr1) [base right=-0.25 cm of h1, inner sep=0pt, minimum size=10pt ]{};
\node (sr1) [base right=-0.25 cm of s1, inner sep=0pt, minimum size=10pt ]{};

\node (hl2) [base left=-0.25 of h2,inner sep=0pt, minimum size=10pt ]{};
\node (sl2) [base left=-0.25 cm of s2,inner sep=0pt, minimum size=10pt  ]{};

\node (hl1) [base left=0 cm of h1, inner sep=0pt, minimum size=10pt ]{};
\node (sl1) [base left=0 cm of s1, inner sep=0pt, minimum size=10pt ]{};

\node (hr2) [base right=0 cm of h2,inner sep=0pt, minimum size=10pt  ]{};
\node (sr2) [base right=0 cm of s2,inner sep=0pt, minimum size=10pt  ]{};

\draw[-stealth](hl1) to[bend right]node[midway,left]{$\bD$}  (sl1);
\draw[-stealth](sr2) to[bend right]node[midway,right]{$\idm$} (hr2) ;

\tikzset{every loop/.style={min distance=5mm,in=135,out=225,looseness=5}}
\draw[-stealth,thick] (hl2)to[loop ] node[minimum size=20pt,inner sep=10pt](hh2){}   ();
\draw[-stealth,thick] (sl2)to[loop ] node[minimum size=20pt,inner sep=10pt](ss2){}   ();

\tikzset{every loop/.style={min distance=5mm,in=45,out=315,looseness=5}}
\draw[-stealth,thick] (hr1)to[loop ] node(hh1)[minimum size=20pt,inner sep=10pt]{}   ();
\draw[-stealth,thick] (sr1)to[loop ] node(ss1)[minimum size=20pt,inner sep=10pt]{}   ();

 \draw[dashed](hh1) to node[midway,above]{$\bD/\aK$} (hh2);
 \draw[dashed](ss1) to node[midway,above]{$\bD/\aK$} (ss2);

\draw[-stealth](ss1.north) to[bend right]node[midway,right]{$\idm$} (hh1.south) ;
\draw[-stealth](hh2.south) to[bend right]node[midway,left]{$\bD$}  (ss2.north);

 \coordinate[above=10pt of h2](ha2){};
 \coordinate[above=10pt of  h1](ha1){};

 \draw[stealth-stealth, thick, dashed](h1) -- (ha1)-- (ha2)node[midway,above]{$\rK$} -- (h2);

\end{tikzpicture}
\end{center}
\noindent
The matrix problem (\ref{E:typicalMatrProbl}) can be rephrased   as follows.
\begin{itemize}
\item  We have four matrices $\Phi_1, \Phi_2, \Psi_1$ and $\Psi_2$ over $\rK$. All of them have the same number of columns.
   The matrices $\Phi_1$ and $\Phi_2$ (respectively, $\Psi_1$ and $\Psi_2$) have the same number of rows. We can perform transformations of  columns and rows of  $\Phi_1, \Phi_2, \Psi_1$ and $\Psi_2$,
which are compositions of the following elementary ones.
\item \emph{Simultaneous transformations}.  We can perform  {simultaneous}  elementary transformations
\begin{itemize}
\item   of columns of  $\Phi_1, \Phi_2, \Psi_1$ and $\Psi_2$ with coefficients in the field of fractions $\rK$.
\item   of rows  of  $\Phi_1$ and  $\Phi_2$ (respectively, $\Psi_1$ and $\Psi_2$)  with coefficients in the residue field $\kk$.
\end{itemize}
\item  \emph{Independent transformations}.
\begin{itemize}
\item We can  independently perform  (invertible)  elementary transformations of rows of matrices $\Phi_\imath$ and $\Psi_\imath$, for $\imath = 1, 2$ with coefficients in the maximal ideal $\idm$.
\item For $\imath = 1, 2$,  we can add an arbitrary $\bD$--multiple of any row of $\Phi_\imath$ to any row
of $\Psi_\imath$ and an arbitrary  $\idm$--multiple of any row of $\Psi_\imath$ to any row
of $\Phi_\imath$.
\end{itemize}
\end{itemize}
Note that this is precisely the matrix problem,   describing maximal Cohen--Macaulay modules over the degenerate cusp
$T_{2 4 \infty} = \kk\llbracket u, v, w\rrbracket/(u^2 + v^4 - uvw)$.  \qed
\end{example}

\noindent
Omitting  some details, we state now several other bimodule problems playing  a role in the study of maximal Cohen--Macaulay modules over non--isolated surface singularities.

\begin{example}[\textsl{Decorated conjugation problem}]\label{E:decJordan} Consider the following category $\kA$ and $\kA$--bimodule $\kB$:
\begin{itemize}
\item $\Ob(\kA) = \{a\}$,  $\kA(a, a) = \kk\llbracket \alpha_1, \alpha_2\rrbracket/(\alpha_1 \alpha_2) \cong \widetilde\bD$.
\item $\kB(a, a) = \langle \varphi\rangle_{\rK} \cong \rK$.
The $\kA$--bimodule structure on $\kB$ is given by the following rule: for any $\alpha \in \kA(a, a)$ we have
$\alpha \circ \varphi = \alpha(t, 0)\cdot \varphi$ and $\varphi \circ \alpha = \alpha(0, t) \cdot \varphi$.
\end{itemize}
The underlying matrix problem is the following. We have a square matrix $\Phi \in \Mat_{n\times n}(\rK)$ which can be transformed according to  the following rule:
\begin{equation}
\Phi \mapsto S_1 \Phi S_2^{-1},
\end{equation}
where $S_1, S_2 \in \GL_n(\bD)$ are such that $\bar{S}_1 = \bar{S}_2$.
\end{example}

\begin{example}[\textsl{Decorated Kronecker  problem}]\label{E:decKronecker} The category $\kA$ and $\kA$--bimodule $\kB$ are defined as follows.
\begin{itemize}
\item $\Ob(\kA) = \{a, b\}$,  $\kA(a, a) = \kk\llbracket \alpha_1, \alpha_2\rrbracket/(\alpha_1 \alpha_2) \cong \widetilde\bD$, $\kA(b, b) = \rK 1_{b}$, whereas $\kA(a, b) = 0 = \kA(b, a)$.
\item $\kB(b, a) = \langle \varphi, \psi\rangle_{\rK} \cong \rK^2$, $\kB(a, a) = \kB(b, b) = \kB(a, b) = 0$.
\item The $\kA$--bimodule structure on $\kB$ is given by the following rules.
\begin{itemize}
\item For $\alpha \in \kA(a, a)$ we have:
$\alpha \circ \varphi = \alpha(t, 0) \cdot \varphi$ and $\alpha \circ \psi = \alpha(0, t) \cdot \psi$.
\item For any $\kappa \in \rK$ we have:
$(\varphi, \psi) \circ (\kappa 1_b) = (\kappa \varphi, \kappa \psi)$.
\end{itemize}
\end{itemize}
The underlying matrix problem is the following. We have a pair of matrices
 $\Phi, \Psi  \in \Mat_{m\times n}(\rK)$ which can be transformed by the  rule:
\begin{equation}
(\Phi, \Psi)  \mapsto (S_1 \Phi T^{-1}, S_2 \Phi T^{-1}),
\end{equation}
where $T \in \GL_m(\rK)$ and $S_1, S_2 \in \GL_m(\bD)$ are such that $\bar{S}_1 = \bar{S}_2$. This is precisely the matrix problem arising in the classification of maximal Cohen--Macaulay modules over the degenerate cusp
$T_{23\infty} = \kk\llbracket u, v, w\rrbracket/(u^2 + v^3 - uvw)$, see Section \ref{sec5}.
\end{example}

\begin{example}[\textsl{Decorated chessboard}]\label{E:decChessBoard}
For any $n \ge 1$ consider the set $\Sigma = \Sigma_n = \{1, \dots, n\}$ and a permutation
 $\sigma$ of $\Sigma$. For any $\imath, \jmath \in \Sigma$ introduce
symbols $p_{\imath \jmath}$ and $q_{\imath \jmath}$. In what follows, we shall operate
 with them using  the following rules:
\begin{equation}\label{E:comprules}
p_{\imath \jmath} p_{\jmath l} = p_{\imath l}, q_{\imath \jmath} q_{\jmath l} = q_{\imath l},
p_{\imath \jmath} q_{\jmath l} = 0 \; \mbox{and} \; q_{\imath \jmath} p_{\jmath l} = 0 \;
\mbox{for all} \; \imath, \jmath, l \in \Sigma.
\end{equation}
The category $\kA$ and $\kA$--bimodule $\kB$ are defined as follows.
\begin{itemize}
\item $\Ob(\kA) = \Sigma$. For $1 \le \imath < \jmath \le n$ we pose:
$$
\kA(\imath, \jmath) = \bD p_{\jmath \imath} \oplus \idm q_{\jmath \imath} \; \mbox{and}\,
\kA(\jmath, \imath) = \bD q_{\imath \jmath} \oplus \idm p_{\imath \jmath}.
$$
\item For any $\imath \in \Sigma$ we put
$
\kA(\imath, \imath) = \bD 1_{\imath} \oplus \idm p_{\imath \imath} \oplus \idm q_{\imath \imath}/I_\imath,
$
where $I_\imath$ is the $\bD$--module generated by $t \cdot 1_\imath - t \cdot p_{\imath \imath} - t \cdot q_{\sigma(\imath) \sigma(\imath)}$.
\item The composition of morphisms in $\kA$ is defined by $\bD$--bilinearity and the multiplication rules (\ref{E:comprules}).
    \item For any $\imath, \jmath \in \Sigma$ we put $\kB(\imath, \jmath) = \rK \cdot \phi_{\jmath \imath}$.
    \item The action of $\kA$ on $\kB$ is given by  the following rules:  for any $\imath, \jmath, l \in \Sigma$ we have
\begin{equation*}
p_{\imath \jmath} \phi_{\jmath l} = \phi_{\imath l}, \phi_{\imath \jmath} q_{\jmath l} = \phi_{\imath l},
\phi_{\imath \jmath} p_{\jmath l} = 0 \; \mbox{and} \; q_{\imath \jmath} \phi_{\jmath l} = 0.
\end{equation*}
\end{itemize}
The description of isomorphy classes of objects in $\el(\kA, \kB)$ leads to the following matrix
problem.
Let $\sd_1, \dots, \sd_n \in \mathbb{Z}_{\ge 0}$ and $\sd := \sd_1 + \dots + \sd_n$.
 An object of $\el(\kA, \kB)$ is given by a matrix $W \in \Mat_{\sd \times \sd}(\rK)$,
whose rows  and columns  are divided into $n$ stripes labeled by elements $x_1, \dots, x_n$ (respectively, $y_1, \dots, y_n$)
so that the  $x_\imath$-th horizontal  stripe and $y_{\sigma(\imath)}$-th vertical stripe have width
$\sd_\imath$.  One can transform $W$ by the  rule:
$
W \mapsto S W T^{-1},
$
where $S, T \in \GL_{\sd}(\bD)$  satisfy the following additional constraints. Consider the division of $S$ and $T$ into $n$ horizontal and vertical stripes, the same as for $W$. For
any $\imath, \jmath \in \Sigma$ let $S_{\imath \jmath}$ (respectively, $T_{\imath \jmath}$) be the corresponding block of size
$\sd_\imath \times \sd_\jmath$. Then
\begin{itemize}
\item for any $1 \le \imath < \jmath \le n$ we have: $S_{\imath \jmath} \in \Mat_{\sd_\imath \times \sd_\jmath}(\idm)$ and
$T_{\jmath \imath} \in \Mat_{\sd_\jmath \times \sd_\imath}(\idm)$,
\item for any $\imath \in \Sigma$ we have: $\bar{S}_{\imath \imath} = \bar{T}_{\imath \imath}$.
\end{itemize}
\begin{center}
\begin{tikzpicture}
[ dot/.style={fill=blue!10,circle,draw, inner sep=1pt, minimum size=1pt},
str/.style={inner sep=1pt, minimum size=0pt}
]

\matrix (first) [tbl5,  name=tbl,
row 1/.style={minimum height=30pt},
row 2/.style={minimum height=40pt},
row 3/.style={minimum height=30pt},
row 4/.style={minimum height=50pt},
column 1/.style={text width=30pt},
column 2/.style={text width=40pt},
column 3/.style={text width=30pt},
column 4/.style={text width=50pt},
] at (0,0)
{
~ & ~ & ~ & ~                \\
~ & ~ & ~ & ~                \\~ & ~ & ~ & ~                \\~ & ~ & ~ & ~                \\%
};
\sfrm{tbl}{4}{4};

\foreach \x in {1,..., 4}{\foreach \y in {1,..., 4}{
\hdline{tbl}{\x}{\y}; \vdline{tbl}{\x}{\y};
}

\ifnum\x =3
\node[above=1pt of tbl-1-\x, outer sep=3pt, text depth= -0.5ex, text height=0pt](a\x){$\dots$};
\node[below=1pt of tbl-4-\x, outer sep=3pt, text depth= -0.5ex, text height=0pt](b\x){$\dots$};
\node[left=0pt of tbl-\x-1, outer sep=3pt, text depth= -0.5ex, text height=7pt](l\x){$\vdots$};
\node[right =0pt of tbl-\x-4, outer sep=3pt,text depth= -0.5ex,text height=7pt](r\x){$\vdots$};
 \else
{
\node[above=2pt of tbl-1-\x, dot, outer sep=3pt](a\x){};
\node[below=2pt of tbl-4-\x, outer sep=3pt, dot](b\x){};
\node[base left=3pt of tbl-\x-1, outer sep=3pt, dot](l\x){};
\node[base right =3pt of tbl-\x-4, outer sep=3pt, dot](r\x){};
}
\fi


\ifnum\x=1
\else
 \pgfmathsetmacro\i{\x-1};
 \draw[thick] (a\i)[ ->] to node[above =2pt]{$_\bD$}  (a\x);
 \draw[thick, <-] (b\i) to node[below =2pt]{$_\idm$} (b\x);
  \fi
}

\node[below= 1pt of tbl-1-1.north,red ]{$_{y_1}$};
\node[below= 1pt of tbl-1-2.north,red ]{$_{y_2}$};
\node[below= 1pt of tbl-1-4.north,red ]{$_{y_n}$};
\node[base right=1pt of tbl-1-1.west,red]{$_{x_1}$};
\node[base right=1pt of tbl-2-1.west,red]{$_{x_2}$};
\node[base right=1pt of tbl-4-1.west,red]{$_{x_n}$};

\draw[-stealth,thick] (l1)to[loop left]  node{} ();
\draw[-stealth,thick] (a1)to[loop above]  node{} ();
\draw[-stealth,thick] (l2)to[loop left]  node{} ();
\draw[-stealth,thick] (a2)to[loop above]  node{} ();
\draw[-stealth,thick] (l4)to[loop left]  node{} ();
\draw[-stealth,thick] (a4)to[loop above]  node{} ();

\draw[thick,->] (l1)--node[left=2pt]{$_\bD$} (l2); \draw[thick,->] (l2)--node[left=2pt]{$_\bD$}(l3);
\draw[thick,->] (l3)--node[left=2pt]{$_\bD$}(l4);
\draw[thick,->] (r4)--node[right=2pt]{$_\idm$} (r3); \draw[thick,->] (r3)--node[right=2pt]{$_\idm$}(r2);
\draw[thick,->] (r2)--node[right=2pt]{$_\idm$}(r1);

\end{tikzpicture}
\end{center}
In other words, the matrix problem we obtain is the following.
\begin{itemize}
\item For any $1 \le \imath \le n$ one can perform arbitrary elementary transformations of  rows of the $x_\imath$-th stripe and columns of the $y_{\sigma(\imath)}$-th stripe of $W$ with coefficients in
     $\bD$ such that modulo $\idm$ they are \emph{inverse} to each other.
\item For any $1 \le \imath < \jmath \le n$ one can add any $\bD$--multiple of any row of $x_\imath$-th stripe to any row
of $x_\jmath$-th stripe. Similarly, one can add any $\bD$--multiple of any column of $y_\imath$-th stripe to any  column
of $y_\jmath$-th stripe.
\item One can perform arbitrary elementary transformations of rows and columns of $W$ with coefficients in $\idm$.
\end{itemize}
\end{example}

\begin{remark}
All bimodule problems from this subsection belong to the class of representations of decorated bunches of chains, which will be introduced in the next subsection. Other (more general) examples of bimodule problems, occurring in the representation theory of finite dimensional algebras and their applications, can be found in  \cite{DrozdLOMI, CB, dr} as well as
\cite{bd,  vb, DrozdGreuelBundles}.
\end{remark}

\subsection{Definition of a decorated bunch of chains} We start with   the following combinatorial data.
\begin{itemize}
\item Let $I$ be a set (usually finite or countable).
\item For any $\imath \in I$ we have a pair of totally ordered sets (chains) $\dE_\imath$ and $\dF_\imath$. All these sets are disjoint:  $\dE_\imath \cap \dE_\jmath = \dF_\imath \cap \dF_\jmath = \emptyset$
     for all $\imath \ne \jmath$ and $\dE_\imath \cap \dF_\jmath = \emptyset$ for all
     $\imath, \jmath \in I$.
\item We denote $\dE = \cup_{\imath \in I} \dE_\imath$, $\dF = \cup_{\imath \in I} \dF_\imath$ and $\dX = \dE \cup \dF$. Is this way, $\dX$ becomes a partially ordered set. We use the notation $x < y$ for the partial order in  $\dX$.
    If $x, y \in \dX$ are such that $x \in \dE_\imath$ and $y \in \dF_\imath$ (or vice versa) for some $\imath \in I$ then we write  $x - y$ and say that $x$ and $y$ are ``$-$'' related. Elements of $\dE$ (respectively $\dF$) are called \emph{row elements} (respectively \emph{column elements}).
\item Next, we have a relation $\sim$  on $\dX$ such that for any $x \in \dX$ there exists at most one $x' \in \dX$ such that $x \sim x'$. Here, we only consider irreflexive relations, i.e.~$z \not\sim z$ for any $z \in \dX$. An element $x$ admitting an equivalent element is called \emph{tied}.
\item Finally, we have a suborder $\trianglelefteq$ of $\le$ on $\dX$ which fulfils the following two conditions.
\begin{itemize}
\item If $ x \le y \le z$ in $\dX$ and $x \unlhd z$ then $x \unlhd y$ and $y \unlhd z$.
\item If $x \unlhd x$ (such element is called \emph{decorated}) and $x\sim y$ then $y \unlhd y$.
\end{itemize}
\end{itemize}

\begin{definition}
The entire data $\dX = \bigl(I, \{\dE_\imath\}_{\imath \in I}, \{\dF_\imath\}_{\imath \in I},\unlhd, \sim\bigr)$ is called
\emph{decorated bunch of chains}. In absence of decorated elements,  $\dX$ is a usual bunch of chains in the sense
of  \cite{bo, bo1}, see also \cite{bd, vb, nar}.
\end{definition}

\begin{definition}\label{bunch}
Let $\dX$ be a decorated bunch of chains. Then it defines a category $\kA =\kA(\dX, \bD)$ and an $\kA$--bimodule $\kB = \kB(\dX, \bD)$ in the following way.
\begin{itemize}
\item For any $x < y$ as well as $x \trianglerighteq y$ introduce the symbol $p_{yx}$.
\item Next, for $x, y \in \dX$ we introduce the following $\bD$--module $\kA_{xy}$:
\begin{equation*}
\kA_{xy} = \left\{
\begin{array}{ccl}
\rK p_{yx} & \mbox{if} & x < y \; \mbox{and} \; x \not\vartriangleleft y \\
\bD p_{yx} & \mbox{if} & x \dec y  \\
\idm p_{yx} & \mbox{if} & y \unlhd x  \\
0 & \mbox{otherwise}.
\end{array}
\right.
\end{equation*}
\item Now  we pose $\Ob(\kA) = \tilde\dX : = \dX/\sim$.
\item  The sets of morphisms in $\kA$ are the following $\bD$--modules:
$$
 \kA(a,b)= \left\{
 \begin{array}{lcl}
 \bop_{x \in a, y\in b} \kA_{xy} &\text{if }&  a\ne b,\\
 \rK 1_a\+\bop_{x,y\in a} \kA_{xy} &\text{if } & a= b,\, a \text{ is not decorated}\\
 (\bD 1_a\+\bop_{x,y\in a} \kA_{xy})/t(1_a-{\sum_{x\in a}}p_{xx}) &\text{if } & a= b,\, a \text{ is decorated}.
 \end{array}
\right.
$$
\item   The composition of morphisms  in $\kA$ is defined by the rule $p_{xy}p_{yz}=p_{xz}$ for any $x, y, z \in \dX$, all
other products \lb if defined\rb\ are zero.

\item
For any $\imath \in I$, $x \in \dE_\imath$ and
$y \in \dF_\imath$ introduce the symbol $\phi_{xy}$ and put
\[
 \kB(a,b)=\bop_{\substack{y\in a\cap\dF,\\x\in b\cap\dE,\\ x-y}} \rK \phi_{xy}.
\]

\item  The action of $\kA$ on $\kB$ is given by the rules
$$
\left\{
\begin{array}{ccc}
 p_{xy} \phi_{yz} &= & \phi_{xz},\\
\phi_{xy} p_{yz} &= & \phi_{xz},
\end{array}
\right.
$$
all other products (if defined) are zero.
\end{itemize}
In what follows, we shall use the notation $\Rep(\dX) = \el\bigl(\kA(\dX, \bD), \kB(\dX, \bD)\bigr)$. \qed
\end{definition}

\begin{example}\label{E:typicalEx}
Consider the decorated bunch of chains given by the following data.
\begin{itemize}
\item The index set $I = \{1, 2\}$. For $\imath \in I$ we have: $\dF_\imath = \{a_\imath\}$, $\dE_\imath =
\{b_\imath, c_\imath\}$.
\item We have $b_\imath  \dec c_\imath$ for $\imath \in I$.   In particular, $b_1, b_2, c_1, c_2$ are decorated. The elements $a_1$ and $a_2$ are not decorated.
\item We have the following equivalence relations: $a_1 \sim a_2$, $b_1 \sim b_2$ and $c_1 \sim c_2$.
\end{itemize}
This  decorated bunch of chains $\dX$  can be visualized by the following picture:

\begin{center}

\begin{tikzpicture}

\node[ve](y1) at (0,0){};
\node[dv](a1) at (0,-2){};
\node[dv](b1) at (0,-4){};

\node[ve](y2) at (4,0){};
\node[dv](a2) at (4,-2){};
\node[dv](b2) at (4,-4){};

  \draw[deco](a1) --(b1);
  \draw[deco](a2) --(b2);

 \node[left = 3pt of y1]{$a_1$};
 \node[ left = 3pt of a1]{$b_1$};
 \node[left = 3pt of b1]{$c_1$};

 \node[base right = 3pt of y2]{$a_2$};
 \node[right = 3pt of a2]{$b_2$};
 \node[right = 3pt of b2]{$c_2$};

 \draw[conj](y1) to (y2);
 \draw[conj](a1) to (a2);
 \draw[conj](b1) to (b2);

 \draw(-1,-4.5)rectangle (1,0.5);
 \draw(3,-4.5)rectangle (5,0.5);
 \end{tikzpicture}
\end{center}
Up to an automorphism, the pair $(\kA, \kB)$ is the one we have considered in Example \ref{E:typDecBunch}. \qed
\end{example}

\begin{example}\label{E:DecorConjugation}
Let $I = \{\ast\}$, $\dE_\ast = \{e\}$, $\dF_\ast = \{f\}$, $e \sim f$ and $e, f$ are both decorated. Then $\Rep(\dX)$ is the bimodule category
described in Example \ref{E:decJordan} (decorated conjugation problem). \qed
\end{example}

\begin{example}\label{E:bunchKronecker}
Let $I = \{1, 2\}$, $\dE_\imath = \{x_\imath\}$ and $\dF_\imath = \{y_\imath\}$ for $\imath \in I$. Let $x_1, x_2$ be decorated and $y_1, y_2$ not decorated. We have: $x_1 \sim x_2$ and $y_1 \sim y_2$. Then the  corresponding bimodule category $\Rep(\dX)$ is the one considered in Example \ref{E:decKronecker} (decorated Kronecker problem). \qed
\end{example}

\begin{example}\label{E:DecorChessBoard}
Let $I = \{\ast\}$, $\dE_\ast = \{x_1 \dec  \dots \dec x_n \dec \dots \}$, $\dF_\ast = \{y_1 \ced \dots \ced y_n \ced \dots\}$, $x_\imath  \sim y_\imath$ for all $\imath \in \mathbb{N}$. Then $\Rep(\dX)$ is the bimodule category
described in Example \ref{E:decChessBoard} (decorated chessboard). \qed
\end{example}

\noindent
We conclude this subsection stating the Krull--Schmidt property of $\Rep(\dX)$ (which is actually true for
much more general class of bimodule problems).

\subsection{Matrix description of the category $\Rep(\dX)$}\label{SS:mpConcrete} Let $\dX = \bigl(I, \{\dE_\imath\}_{\imath \in I}, \{\dF_\imath\}_{\imath \in I},\unlhd, \sim\bigr)$  be a  {decorated bunch of chains}. Then the bimodule category $\Rep(\dX)$
admits the following ``concrete'' description.

\begin{itemize}
\item First, we take a function $\sd: \dX \lar \mathbb{Z}_{\ge 0}, \, x \mapsto \sd_x$, which has  finite support and factors through
the canonical projection $\dX \lar \tilde\dX$ (i.e.~$\sd_x = \sd_y$ if   $x \sim y$).
\item For any $\imath \in I$, $x \in \dE_\imath$ and $y \in \dF_\imath$  we take a matrix
$W_{xy}^{(\imath)} \in \Mat_{\sd_x \times \sd_y}(\rK)$.
\end{itemize}
Then the data $\Bigl(\sd, \bigl\{W_{xy}^{(\imath)}\bigr\}_{\imath \in I,  (x, y) \in \dE_\imath \times \dF_\imath}\Bigr)$ uniquely determine an object $(Z, W)$ of $\Rep(\dX)$:
\begin{itemize}
\item $Z = Z_{a_1}^{\sd_1} \oplus \dots \oplus Z_{a_n}^{\sd_n}$, where $\tilde\dX  \supseteq \{a_1, \dots, a_n\} = \mathsf{supp}(\sd)$. Here, $Z_a$ denotes  the object of $\kA^\omega$ corresponding
    to the element $a \in \tilde\dX$
     and $\sd_l := \sd_{a_l}$ for $1 \le l \le n$.
\item Assume that $x \in a_p\cap \dE_\imath$ and $y \in a_q\cap \dF_\imath$ for $\imath \in I$ and $1 \le p, q \le n$. The
$\bD$--module $\kB(Z, Z)$ has a direct summand $\kB\bigl(Z_{a_p}^{\sd_p}, Z_{a_q}^{\sd_q}\bigr)$  and  $W_{xy}^{(\imath)} p_{xy}$ is the  corresponding entry of the element $W \in \kB(Z, Z)$.
\end{itemize}
In these notations, the \emph{total dimension} of $(Z, W)$ is set to be
$
\dim\bigl((Z, W)\bigr) = \sum\limits_{x \in \dX} \sd_x.$

\begin{proposition}\label{P:morphismAsMatrix}
Let $(Z, W)$ and $(\check{Z}, \check{W})$ be two objects of $\Rep(\dX)$ given by the matrix data
$\Bigl(\sd, \bigl\{W_{xy}^{(\imath)}\bigr\}\Bigr)$ and  $\Bigl(\check\sd, \bigl\{\check{W}_{xy}^{(\imath)}\bigr\}\Bigr)$ respectively.
Then a morphism $(Z, W) \stackrel{h}\lar (\check{Z}, \check{W})$ in $\Rep(\dX)$ is given by a collection  of matrices
$\bigl\{F_{xu}^{(\imath)}\bigr\}_{\imath \in I,  x, u \in \dE_\imath}$, $\bigl\{G_{vy}^{(\imath)}\bigr\}_{\imath \in I,  v, y \in \dF_\imath}$
such that
\begin{itemize}
\item $F_{xu}^{(\imath)} \in \Mat_{\check{\sd}_x \times \sd_u}(A_{xu})$ and $G_{vy}^{(\imath)} \in \Mat_{\check{\sd}_v \times \sd_y}(A_{vy})$, where
    $$
    A_{xu} = \left\{
    \begin{array}{ccl}
    \rK & \text{\rm if} & u \le x \; \text{\rm and}\; u \not\trianglelefteq x \\
    \bD & \text{\rm if} & u \unlhd x \\
    \idm & \text{\rm if} & x \dec u \\
    0 &\text{\rm otherwise} &
    \end{array}
    \right.
    $$
 and
 $$
 B_{vy} = \left\{
    \begin{array}{ccl}
    \rK & \text{\rm if} & y \le v \; \mbox{\rm and}\; y \not\trianglelefteq v \\
    \bD & \text{\rm if} & y \unlhd v \\
    \idm & \text{\rm if} & v \dec y \\
    0 &\text{\rm otherwise}. &
    \end{array}
    \right.
    $$
\item $F_{xx} = F_{x' x'}$ (respectively $F_{xx} = G_{yy}$) if $x \sim x'$ (respectively $x \sim y$) and  $x$ is not decorated and $\bar{F}_{xx} = \bar{F}_{x' x'}$ (respectively $\bar{F}_{xx} = \bar{G}_{yy}$) if $x \sim x'$ (respectively $x \sim y$) and $x$ is  decorated;
\end{itemize}
and such that
for any $\imath \in I$ and $(x, y) \in \dE_\imath \times \dF_\imath$ the following equality is true:
\begin{equation}
\sum\limits_{u} F_{xu}^{(\imath)} W_{uy}^{(\imath)}  =
\sum\limits_{v} \check{W}_{xv}^{(\imath)} G^{(\imath)}_{vy}.
\end{equation}
The matrices $\Bigl(\bigl\{\check{F}_{xu}^{(\imath)}\bigr\}, \bigl\{\check{G}_{vy}^{(\imath)}\bigr\}\Bigr)$ corresponding to the composition $\tilde{h} \circ h$ of  $h = \Bigl(\bigl\{F_{xu}^{(\imath)}\bigr\}$, $\bigl\{G_{vy}^{(\imath)}\bigr\}\Bigr)$ and $\tilde{h} = \Bigl(\bigl\{\tilde{F}_{xu}^{(\imath)}\bigr\}$, $\bigl\{\tilde{G}_{vy}^{(\imath)}\bigr\}\Bigr)$  are  given by the usual matrix product:
$$
\check{F}^{(\imath)}_{xu} = \sum_{c \in \dE_\imath} \tilde{F}_{xc}^{(\imath)} F_{c u}^{(\imath)} \quad
\mbox{\rm and} \quad
\check{G}^{(\imath)}_{vy} = \sum_{c \in \dF_\imath} \tilde{G}_{vc}^{(\imath)} G_{c y}^{(\imath)}.
$$
\end{proposition}

\begin{proof}
It is a straightforward computation, analogous to the one made in Example \ref{E:typDecBunch}.
\end{proof}

\begin{remark}
``Directedness'' of  $\kA$ implies that a morphism  $h = \Bigl(\bigl\{F_{xu}^{(\imath)}\bigr\}$, $\bigl\{G_{vy}^{(\imath)}\bigr\}\Bigr)$ is an isomorphism if and only if all diagonal blocks $F_{xx}^{(\imath)}$ and
$G_{yy}^{(\imath)}$ are invertible.
\end{remark}

\begin{definition}
Let $X = (Z, W)$ and $\tilde{X} = (\tilde{Z}, \tilde{W})$ be two objects of $\Rep(\dX)$. Consider the following $\bD$--module:
$
\kI(X, \tilde{X}) := \Rep(\dX)\bigl(X, \tilde{X}) \cap \rad\bigl(\kA(Z, \tilde{Z})\bigr).
$
Then $\kI$ is an ideal in the category $\Rep(\dX)$. The quotient  category
$\underline{\Rep}(\dX) := \Rep(\dX)/\kI$ is called \emph{stabilized bimodule category}.
\end{definition}

\begin{remark} Since $\kI(X, \tilde{X}) \subseteq \rad\bigl(\Rep(\dX)\bigl(X, \tilde{X})\bigr)$,
the projection  functor
$$\Rep(\dX) \stackrel{\Pi}\lar  \underline{\Rep}(\dX)$$ preserves indecomposability and isomorphy classes of objects.
Let $h, \hat{h} \in \Hom_{\dX}(X, \tilde{X})$ be given by
$h = \Bigl(\bigl\{F_{xu}^{(\imath)}\bigr\}, \bigl\{G_{vy}^{(\imath)}\bigr\}\Bigr)$ and $\hat{h} = \Bigl(\bigl\{\hat{F}_{xu}^{(\imath)}\bigr\}, \bigl\{\hat{G}_{vy}^{(\imath)}\bigr\}\Bigr)$. Then
 $h - \hat{h} \in \kI(X, \tilde{X})$ if and only if $F_{xx}^{(\imath)} \simeq \hat{F}_{xx}^{(\imath)}$
and $G_{yy}^{(\imath)} \simeq \hat{G}_{yy}^{(\imath)}$ for all $\imath \in I$, $x \in \dE_\imath$ and
$y \in \dF_\imath$, where $\simeq$ means equality if $x$ (respectively $y$) is not decorated and equality modulo $\idm$ if $x$ (respectively $y$) is  decorated.
\end{remark}

 \subsection{Strings and Bands}\label{SS:StringsBands} Let $\dX$ be a decorated bunch of chains.
 To present a description of the isomorphism classes of indecomposable objects of  $\rep(\dX)$, we use
the combinatorics of \emph{strings and bands}, just as for ``usual'' (non--decorated)
bunches of chains \cite{bo1}.

\medskip
\noindent
1.~We define an $\dX$\emph{--word}  as a sequence
$w=x_1r_1\dots
x_{l-1}r_{l-1}x_l$, where $x_i\in\dX$, $r_i\in \bigl\{\sim,-\bigr\}$ and the following conditions
hold:
 \begin{itemize}
\item   $x_ir_ix_{i+1}$ in $\dX$ for each $i\in \bigl\{1,2,\dots,l-1\bigr\}$.

\item   $r_i\ne r_{i+1}$ for each $i\in \bigl\{1,2,\dots,l-2\bigr\}$.

\item  If $x_1$ is tied, then $r_1=\sim$, and if $x_l$ is tied, then $r_{l-1}=\sim$.
\end{itemize}
 We call $l$ the \emph{length} of the word $w$ and denote it by $l(w)$. We also denote
$\tau(w)=\bigl\{\,i \mid 1\le i<l,\,r_i=\!-\bigr\}$. The word $w$ is said to be \emph{decorable} if
at least one of the letters $\lst xl$ is decorated.

\medskip
\noindent
2.~A \emph{decoration} of a \textsl{decorable} word $w$ is a function $\rho:\tau(w)\to\mZ$. A (unique) decoration
of a \textsl{non--decorable} word is, by definition, the constant function $\rho:\tau(w)\to \{0\}$.

\medskip
\noindent
3.~Two decorations $\rho,\rho':\tau(w)\to\mZ$ of a decorable word $w$ are said to be \emph{neighbour} if there is
an index $i\in\tau(w)$ and an integer $k$ such that  $x_i\not\sim x_{i+1}$ and
\begin{itemize}
\item  either $x_i$ is not decorated, $\rho'(i)=\rho(i)+k$ and, if $i>2$, also
$\rho'(i-2)=\rho(i-2)-(-1)^{\sigma(x_{i},x_{i-1})}k$, where $\sigma(x,y)=1$ if both $x,y$
are either row or column labels and $\sigma(x,y)=0$
if one of them is a row label and the other is a column label.

\item  or $x_{i+1}$ is not decorated, $\rho'(i)=\rho(i)+k$, and, if $i<n-2$, also
$ \rho'(i+2)=\rho(i+2)-(-1)^{\sigma(x_{i+1},x_{i+2})}k$.
\end{itemize}
 Two decorations $\rho,\rho'$ are said to be \emph{equivalent} if there is a sequence of
decorations $\rho=\rho_1,\rho_2,\dots,\rho_r=\rho'$ such that $\rho_i$ and $\rho_{i+1}$
are neighbour for $1\le i<r$.

\medskip
\noindent
4.~We denote by $w^*$ the \emph{inverse word} to $w$, i.e. the word
$w^*=x_lr_{l-1}x_{l-1}\dots r_2x_2r_1x_1$. If $\rho$ is a decoration of $w$, we define
the decoration $\rho^*$ of $w^*$ setting $\rho^*(i)=\rho(l-i)$.

\medskip
\noindent 5.~An $\dX$--word $w$ of length $l$ is called \emph{cyclic} if $r_1=r_{l-1}=\sim$ and
$x_l-x_1$ in $\dX$.
 For such a cyclic word we set $r_l=\!-$ and define $x_i,r_i$ for all $i\in\mZ$ setting
$x_{i+ql}=x_i,\,r_{i+ql}=r_i$ for any $q\in\mZ$. In particular, $x_0=x_l$ and $r_0=\!-$.
Note that the length of a cyclic word is always even. We also set
$\tau^+(w)=\tau(w)\cup\{l\}$.

\medskip
\noindent
 5.~A \emph{cyclic decoration} of a \textsl{decorable} cyclic word $w$ is a function
$\rho:\tau^+(w)\to\mZ$.  For such a function we
set $\rho(i+ql)=\rho(i)$ for any $q\in\mZ$. A (unique) cyclic decoration of a
\textsl{non--decorable} cyclic word $w$ is, by definition, the constant function
$\rho:\tau^+(w)\to\{0\}$.

\medskip
\noindent
6.~If $w$ is a cyclic word, we define its \emph{$k$--shift} as the word
\begin{equation*}
w^{(k)}=x_{2k+1}r_{2k+1}r_{2k+2}\dots x_{2k-1}r_{2k-1}x_{2k}
\end{equation*} and write
\begin{equation}\label{sigma}
  \sigma(k,w)=\sum_{j=1}^k\sigma(x_{2j-1},x_{2j}).
\end{equation}
 If $\rho$ is a cyclic decoration of $w$, we define the
cyclic decoration $\rho^{(k)}$ of $w^{(k)}$ setting $\rho^{(k)}(i)=\rho(i-2k)$.

\medskip
\noindent
7.~We call a pair $(w,\rho)$, where $w$ is a cyclic word of length $l$ and
$\rho$ is its cyclic decoration, \emph{periodic} if $w^{(k)}=w$ and $\rho^{(k)}=\rho$ for
some $k<l/2$.

\medskip
\noindent
8.~Two cyclic decorations $\rho,\rho':\tau^+(w)\to\mZ$ of a decorated cyclic word are said
to be \emph{neighbour} if there is an index $i\in\tau^+(w)$ and an integer $k$ such that
$x_i\not\sim x_{i+1}$ and
\begin{itemize}
\item  either $x_i$ is not decorated, $$\rho'(i)=\rho(i)+k \; \mbox{and} \;
\rho'(i-2)=\rho(i-2)-(-1)^{\sigma(x_{i},x_{i-1})}k,$$
\item  or $x_{i+1}$ is not decorated, $$\rho'(i)=\rho(i)+k  \; \mbox{and} \;
 \rho'(i+2)=\rho(i+2)-(-1)^{\sigma(x_{i+1},x_{i+2})}k.$$
\end{itemize}
Two cyclic decorations $\rho,\rho'$ are said to be \emph{equivalent} if there is a
sequence of cyclic decorations $\rho=\rho_1,\rho_2,\dots,\rho_r=\rho'$ such that $\rho_i$
and $\rho_{i+1}$ are neighbour for $1\le i<r$.

\medskip
\noindent
9.~A pair $(w, \rho)$, where $w$ is a (cyclic) word and $\rho$ is its (cyclic) decoration, is called
a \emph{decorated (cyclic) word}. Two decorated cyclic words $(w, \rho)$ and $(w', \rho')$ are said to be \emph{equivalent}
if $w = w'$ and the decorations $\rho$ and $\rho'$ are equivalent.

\medskip
\noindent 10.~A \emph{$\sim$subword} of a word $w$ is its subword $v$ of the form $x\sim y$ or
of the form $x$ if $x\nsim y$ for any $y\ne x$. We denote by $|x|$ the  class of
$x$ with respect to $\sim$
and by $[w]$ the set of all $\sim$subwords of $w$. Note that if $w$ is cyclic,
every its $\sim$subword is of the form $x\sim y$. Thus any word has the form
$v_1-v_2-\dots-v_n$, where $\lst vn$ are its $\sim$subwords. If it is cyclic, then
$w^{(k)}=v_{k+1}-v_{k+2}-\dots-v_{k-1}-v_k$. A decoration of the word $w$ is given by the
sequence $\bnu=\row \nu{n-1}$ of its values and written as
$v_1\str{\nu_1}-v_2\str{\nu_2}-\dots\str{\nu_{n-1}}-v_n$. A cyclic decoration is given by the
sequence of its values $\bnu=\row \nu n$ and written as
${}\lha v_1\str{\nu_1}-v_2\str{\nu_2}-\dots
\str{\nu_{n-1}}-v_n\str{\nu_n}{\rha}$. \qed

\medskip
\noindent
 Now we introduce  the following objects of the bimodule category $\Rep(\dX)$.

\begin{definition}[\sf Strings]\label{string}
 Let $w$ be an $\dX$--word, $\rho:\tau(w)\to\mZ$ be its decoration.
 The \emph{string representation} $S(w,\rho) = (Z, S)$ is defined as follows:
\begin{itemize}
\item  $Z=\bop_{v\in[w]} Z_{|v|}$  and $S\in\kB(Z, Z)$.

\item  Suppose that the decorated word $(w, \rho)$ has a subword
$v_i\str{\nu_i}-v_{i+1}$ for some
$v_i,v_{i+1}\in[w]$.
Then $\kB(Z, Z)$ has a direct summand $\kB\bigl(Z_{|v_{i+1}|},Z_{|v_i|}\bigr)
\oplus \kB\bigl(Z_{|v_{i}|}, Z_{|v_{i+1}|}\bigr)$ and we
define the corresponding component of $S$ as
\begin{itemize}
\item $t^{\nu_i} \phi_{v_{i+1} v_i} \in \kB\bigl(Z_{|v_{i}|}, Z_{|v_{i+1}|}\bigr)$ if $v_{i+1} \in \dE$
and $
v_i \in \dF$,
\item
$t^{\nu_i} \phi_{v_{i} v_{i+1}} \in \kB\bigl(Z_{|v_{i+1}|}, Z_{|v_{i}|}\bigr)$ if $v_{i} \in \dE$
and $
v_{i+1} \in \dF$.
\end{itemize}
\item  All other components of $S$ are set to be zero.
\end{itemize}
\end{definition}

\begin{definition}[\sf Bands]\label{band}
 Let $w$ be a cyclic $\dX$--word of length $l=2n$, $\rho:\tau^+(w)\to\mZ$ be its
cyclic decoration, such that the decorated word $(w,\rho)$ is not equivalent to any periodic
one, $m\in\NN$ and $\pi = \pi(\xi)\ne \xi$ be an irreducible polynomial of degree $d$  from $\aK[\xi]$ if $w$
is decorable and over $\rK[\xi]$ if $w$ is not decorable. If $w$ is
not decorable, denote by $F = F(\pi^m) \in \Mat_{dm\xx dm}(\rK)$ the Frobenius block corresponding to the
polynomial $\pi^m$. If $w$ is decorable, denote by $F$ a matrix from
$\Mat_{dm\xx dm}(\bD)$ such that its image in $\Mat_{dm\xx dm}(\aK)$ is the Frobenius block
corresponding to  $\pi^m$. The \emph{band representation}
$B\bigl((w,\rho),m,\pi\bigr) = (Z, B)$ is defined as follows:
\begin{itemize}
\item  $Z=\bop_{v\in[w]} Z_{|v|}^{\oplus dm}$ and  $B\in\kB(Z,Z)$.

\item  Suppose that the decorated cyclic word $(w, \rho)$ has a subword
$v_i\str{\nu_i}-v_{i+1}$, where $v_i,v_{i+1}\in[w],\,1\le i<n$. Then $\kB(Z, Z)$ has a
direct summand $$\kB\bigl(Z_{|v_{i+1}|}^{\oplus dm}, Z_{|v_i|}^{\oplus dm}\bigr) \oplus
\kB\bigl(Z_{|v_{i}|}^{\oplus dm}, Z_{|v_{i+1}|}^{\oplus dm}\bigr)$$ and we define the
corresponding component of $B$ as follows:
\begin{itemize}
\item $
t^{\nu_i} \phi_{v_{i+1} v_i} I \in \kB\bigl(Z_{|v_{i}|}^{\oplus dm}, Z_{|v_{i+1}|}^{\oplus dm}\bigr)$
if $ v_{i} \in \dF$ and $
v_{i+1} \in \dE$,
\item
$
t^{\nu_i} \phi_{v_{i} v_{i+1}} I\in \kB\bigl(Z_{|v_{i+1}|}^{\oplus dm}, Z_{|v_{i}|}^{\oplus dm}\bigr)$ if $ v_{i+1} \in \dF$ and $
v_{i} \in \dE$,
\end{itemize}
 where $I$ is the identity $dm \times dm$ matrix.

\item  The component of $B$ corresponding to the direct summand
$$\kB\bigl(Z_{|v_1|}^{\oplus dm}, Z_{|v_n|}^{\oplus dm}\bigr) \oplus
\kB\bigl(Z_{|v_n|}^{\oplus dm}, Z_{|v_1|}^{\oplus dm}\bigr)
$$ of $\kB(Z, Z)$
 is defined as
 \begin{itemize}
\item $
t^{\nu_n} \phi_{v_{1} v_n} F \in \kB\bigl(Z_{|v_{n}|}^{\oplus dm}, Z_{|v_{1}|}^{\oplus dm}\bigr)$  if $ v_{n} \in \dF$ and $
v_{1} \in \dE$,
\item
$
t^{\nu_n} \phi_{v_{n} v_{1}} F \in \kB\bigl(Z_{|v_{1}|}^{\oplus dm}, Z_{|v_{n}|}^{\oplus dm}\bigr)$ if $ v_{1} \in \dF$ and $
v_{n} \in \dE$.
\end{itemize}

\item  All other components of $B$ are zero.
\end{itemize}
\end{definition}

\begin{example} Consider the decorated bunch of chains $\dX$ introduced in Example \ref{E:typicalEx}.
Let $a \in \tilde\dX$ (respectively $b, c \in \tilde\dX$) be the equivalences class of $a_1, a_2 \in \dX$ (respectively,
$b_1, b_2; c_1, c_2 \in \dX$). Consider the following decorated cyclic word:
$$
(w, \rho) := {}\lha a_1 \sim a_2 \str{l_1}- b_2 \sim b_1 \str{l_2}- a_1 \sim a_2 \str{l_3}- c_2 \sim c_1 \str{l_4}-a_1
\sim a_2 \str{l_5}- b_2 \sim b_1 \str{l_6}\rha
$$
Let $m \in \mathbb{N}$ and $\xi \ne \pi \in \kk[\xi]$ an irreducible polynomial of degree $d$.  Then the band object $B\bigl((w, \rho), m, \pi\bigr)$ is given by the canonical form
\begin{center}
  \begin{tikzpicture}

\matrix (first) [tbl5,text width=40pt, minimum height=40pt,  name=table]
{
0   &  t^{l_2}I  & 0 \\
 t^{l_6} F    &  0 & 0  \\[0.5ex]
   0 & 0 & t^{l_4} I      \\
};
\sfrm{table}{2}{3}; \node[draw, thick, fit=(table-3-1)(table-3-3), inner sep=0pt]{};

\node[above=3pt of table-1-2]{$a_1$};
\draw(table-1-3) to node[midway,base right=23pt]{$b_1$}  (table-2-3);
\node[base right=3pt of table-3-3]{$c_1$};

\draw(table-1-1) to node[midway,base left=30pt]{$\Phi_1\;=$}  (table-2-1);
\node[base left=10pt of table-3-1]{$\Psi_1\;=$};
\foreach \x in {1,2,3}{
\foreach \y in {1,2,3}{
\hdline{table}{\x}{\y};
\vdline{table}{\x}{\y};
}}

\matrix (second) [tbl5,text width=40pt, minimum height=40pt,  name=table]
at(6,0)
{
 t^{l_1}I  & 0&0 \\
 0 & 0 & t^{l_2}I    \\[0.5ex]
   0 &  t^{l_3}I  &0   \\
};
\sfrm{table}{2}{3};
\node[draw, thick, fit=(table-3-1)(table-3-3), inner sep=0pt]{};

\node[above=3pt of table-1-2]{$a_2$};
\draw(table-1-1) to node[midway,base left=23pt]{$b_2$}  (table-2-1);
\node[base left=3pt of table-3-1]{$c_2$};

\draw(table-1-3) to node[midway,base right=30pt, text depth=5pt,]{$=\;\Phi_2$}  (table-2-3);
\node[base right=10pt of table-3-3]{$=\;\Psi_2$};

\foreach \x in {1,2,3}{
\foreach \y in {1,2,3}{
\hdline{table}{\x}{\y};
\vdline{table}{\x}{\y};
}}
\end{tikzpicture}
\end{center}
In the language of bimodule problems, $B\bigl((w, \rho), m, \pi\bigr)$ is given by the pair  $(Z, B)$, where
$$Z = Z_{a}^{\oplus dm} \oplus Z_b^{\oplus dm} \oplus Z_{a}^{\oplus dm} \oplus Z_{c}^{\oplus dm} \oplus Z_{a}^{\oplus dm} \oplus
Z_{b}^{\oplus dm}$$
and $B \in \kB(Z, Z)$ is given by the following matrix

\begin{center}
\begin{tikzpicture}
\matrix (first) [table,text width=40pt,  name=table]
{
0               & 0          &0             &0       & 0                &0\\
$t^{l_2}\phi_2I$& 0       &$t^{l_2}\phi_1I$ &0       &0                 &0\\
0               & 0          &0             &0       & 0                &0\\
0               &0          &$t^{l_3}\psi_2I$& 0       &$t^{l_4}\psi_1I$ &0       \\
0               & 0          &0             &0       & 0                &0\\
$t^{l_6}\phi_1 F$& 0        &0              &0    &$t^{l_5}\phi_2I$      &0\\
};

\node[draw,solid, thick, fit=(table-1-1)(table-6-6), inner sep=0pt]{};

\node[above=3pt of table-1-1]{$a$};
\node[above=3pt of table-1-2]{$b$};
\node[above=3pt of table-1-3]{$a$};
\node[above=3pt of table-1-4]{$c$};
\node[above=3pt of table-1-5]{$a$};
\node[above=3pt of table-1-6]{$b$};


\node[base left=3pt of table-1-1]{$a$};
\node[base left=3pt of table-2-1]{$b$};
\node[base left=3pt of table-3-1]{$a$};
\node[base left=3pt of table-4-1]{$c$};
\node[base left=3pt of table-5-1]{$a$};
\node[base left=3pt of table-6-1]{$b$};

\end{tikzpicture}
\end{center}

\noindent
where $I$ is  the identity matrix of size $dm \times dm$ and $F$ is the Frobenius block of $\pi^m$.
\end{example}


\section{Representations of decorated bunches of chains--II}

\noindent
The goal of this sections is to prove
 the following result.

\begin{theorem}\label{list} Let $\dX$ be a decorated bunch of chains.
Then the  description of indecomposable objects of $\Rep(\dX)$ is the following.
 \begin{itemize}
\item   Every string or band representation  is indecomposable and every indecomposable
object  of $\Rep(\dX)$  is isomorphic to some string or band representation.

\item  Any string representation is not isomorphic to any band representation.

\item  Two string representations $S(w,\rho)$ and $S(w',\rho')$ are isomorphic \iff
either $w=w'$ and $\rho$ and $\rho'$ are equivalent, or $w'=w^*$ and the functions
$\rho^*$ and $\rho'$ are equivalent.

\item
 The isomorphism class of a band representation $B\bigl((w,\rho),m,\pi\bigr)$ with decorable $w$
does not depend on the choice of the matrix $F(\pi^m)$.

\item  Two band representations $B\bigl((w,\rho),m,\pi\bigr)$ and
$B\bigl((w',\rho'),m',\pi'\bigr)$ are isomorphic
\iff either $w'=w^{(k)},\,m'=m,\,\pi'= {\check\pi}_{k,w}$ and $\rho'$ is equivalent to $\rho^{(k)}$
for some $k$, or $w'={w^{(k)}}^*,\,m'=m,\, \pi'= \check{\pi}_{k,w}$ and $\rho'$ is equivalent to
${\rho^{(k)}}^*$ for some $k$, where
\[
 \check{\pi}_{k,w}(\xi)=\begin{cases}
 \pi(\xi) &\text{\em if } \sigma(k,w) \text{ \em is even},\\
 \xi^{\deg(\pi)} \pi(1/\xi) &\text{\em if } \sigma(k,w) \text{ \em is odd},
\end{cases}
\]
see  \eqref{sigma} for the definition of the function  $\sigma(k,w)$ above.
\end{itemize}
\end{theorem}

\noindent
A special case of this result (decorated chessboard problem) is treated  in details in Section \ref{S:DecoratedConjugation}. This problem is notationally easier to handle, being at the same time
an important ingredient of the proof of general result. So an interested reader is advised to look at
the proof of Theorem \ref{T:JordanKronecker} first.

\begin{remark} It is obvious that  any
 $Y \in \Ob\bigl(\Rep(\dX)\bigr)$ admits a  direct sum decomposition
$$
Y \cong Y_1^{\oplus m_1} \oplus \dots \oplus Y_t^{\oplus m_t}
$$
with $Y_1, \dots, Y_t$ indecomposable and pairwise non--isomorphic.
It follows from the proof of Theorem \ref{list} that the endomorphism ring
of an indecomposable object of $\Rep(\dX)$ is local. According to \cite[Chapter I.3.6]{ba},
the category $\Rep(\dX)$ is has Krull-Schmidt property, i.e.~the set $\bigl\{(Y_1, m_1) \dots, (Y_t, m_t)\bigr\}$
is uniquely determined by $Y$ (up to  isomorphisms  of indecomposable summands).
\end{remark}

\subsection{Idea of the proof} In  what follows we use
the notation of Subsection \ref{SS:mpConcrete}.
Let $(Z, W) = \bigl(\sd, \{W^{(\imath)}\}_{uv}\bigr)$ be an object of $\Rep(\dX)$.
Replacing if necessary, $\dX$ by $\dX \cap \mathsf{supp}(\sd)$, we may  without loss
of generality assume $\dX$ is a finite set. We follow the following convention: if
$\dE_\imath = \bigl\{a_1 < \dots < a_r\bigr\}$ and $\dF_\imath = \bigl\{b_1 > \dots > b_s\bigr\}$ then we
write the matrix $W^{(\imath)}$ as follows:

\begin{center}
\begin{tikzpicture}
\matrix (first) [tbl5,  name=table,
row 1/.style={minimum height=20pt},
row 2/.style={minimum height=30pt},
row 3/.style={minimum height=20pt},
row 4/.style={minimum height=40pt},
column 1/.style={text width=30pt},
column 2/.style={text width=20pt},
column 3/.style={text width=40pt},
column 4/.style={text width=20pt},
] at (0,0)
{
~&~& ~&   ~  &        \\
~&~& ~&   ~  &        \\
~&~& ~&   ~  &        \\
~&~& ~&   ~  &        \\
};

\node[draw,solid, very thick, fit=(table-1-1)(table-4-4), inner sep=0pt]{};

\foreach \x in {1,..., 4}{
\foreach \y in {1,..., 4}{
\hsline{table}{\x}{\y};
\vsline{table}{\x}{\y};
}}

\node[above=2pt of table-1-1]{$b_1$};
\node[above=2pt of table-1-2]{$b_2$};
\node[above=2pt of table-1-3]{$\dots$};
\node[above=2pt of table-1-4]{$b_s$};

\node[base left=3pt of table-1-1]{$a_1$};
\node[base left=3pt of table-2-1]{$a_2$};
\node[base left=3pt of table-3-1]{$\vdots$};
\node[base left=3pt of table-4-1]{$a_r$};

\node[base right=3pt of table-1-4](r1){};
\node[base right=3pt of table-4-4](r2){};

\node[below=2pt of table-4-1](b1){};
\node[below=3pt of table-4-4](b2){};

\draw[very thick, -stealth] (r1) to (r2);
\draw[very thick, -stealth] (b1) to (b2);

\end{tikzpicture}
\end{center}
For $\imath \in I$ we say
that $\scX = \left\{x_1,\dots, x_m \right\} \subseteq \dE_\imath$ (respectively,
$\scY = \left\{y_1,\dots, y_m \right\} \subseteq \dF_\imath$) is a \emph{maximal elementary  subchain}
if either $\scX = \{x\}$ and $x$ is not decorated or $x_1 \lhd \dots \lhd x_m$ and $\scX$ is maximal
with respect to this property. In the latter case we say $\scX$ is decorated. Maximal elementary  subchains inherit $<$ ordering from $\dX$, thus
both  sets $\dE_\imath$ and $\dF_\imath$ split into a union of such subchains.
On the set of pairs
$$
 \kB_\imath := \left\{(\scX, \scY)\,\big|\, \scX \;\mbox{and}\;  \scY \;\mbox{are maximal elementary  subchains in}\, \dE_\imath,\; \mbox{respectively in}\; \dF_\imath\right\}
$$
we introduce the following total ordering:
$
(\scU, \scV) < (\scX, \scY)$ if either $\scU < \scX$ or $\scU = \scX$ and $\scV > \scY$. In the next,
we shall use the notation $\bigl(\sd, \{W^{(\imath)}\}_{\scX\scY}\bigr)$ for an object
$(Z, W)$ of  $\Rep(\dX)$. A morphism  $(Z, W) \stackrel{h}\lar (\check{Z}, \check{W})$ in $\Rep(\dX)$
is given by a collection  of matrices
$F = \bigl\{F_{\scU\scX}^{(\imath)}\bigr\}$ and
 $G =\bigl\{G_{\scV\scY}^{(\imath)}\bigr\}$ satisfying
the equality
\begin{equation}\label{E:keyeqnew}
\sum\limits_{\scU \subseteq \dE_\imath} F_{\scX\scU}^{(\imath)} W_{\scU\scY}^{(\imath)}  =
\sum\limits_{\scV \subseteq \dF_\imath} \check{W}_{\scX\scV}^{(\imath)} G^{(\imath)}_{\scV\scY}
\end{equation}
for any $\imath \in I$ and $(\scX, \scY) \in \kB_\imath$, as well as some additional constraints on diagonal blocks described
in Proposition \ref{P:morphismAsMatrix}.
Note that $F_{\scX\scU}^{(\imath)} = 0$ (respectively $G^{(\imath)}_{\scV\scY} = 0$) for all
$\scX < \scU$ (respectively $\scV < \scY$). In particular, equation (\ref{E:keyeqnew})
takes  the form
\begin{equation}\label{E:keyeqkey}
\sum\limits_{\scU < \scX} F_{\scX\scU}^{(\imath)} W_{\scU\scY}^{(\imath)}  +
F_{\scX\scX}^{(\imath)} W_{\scX\scY}^{(\imath)}
 =
\sum\limits_{\scV < \scY } \check{W}_{\scX\scV}^{(\imath)} G^{(\imath)}_{\scV\scY} +
\check{W}_{\scX\scY}^{(\imath)} G^{(\imath)}_{\scX\scY}.
\end{equation}
Next, observe that for  $\scX = \{x_1 \lhd \dots \lhd x_m\} \subseteq \dE_\imath$ the diagonal
block  $F_{\scX\scX}^{(\imath)}$ of $F^{(\imath)}$ splits further into subblocks $F_{x_p x_q}^{(\imath)}$ of size
$\sd_{x_p} \times \sd_{x_q}$ with coefficients in $\bD$ for $1 \le q \le p \le m$ and $\idm$ for $1 \le p<q \le m$. A similar statement holds for the diagonal blocks $G_{\scY\scY}$ of the matrix
$G^{(\imath)}$ for any decorated maximal elementary subchain  $\scY \subseteq \dF_\imath$.

\medskip
\noindent
For $(\scX, \scY) \in \kB_\imath$ consider the following full subcategory
of $\Rep(\dX)$:
\begin{equation}\label{E:StrataInRep(X)}
\mathrm{Ob}\bigl(\Rep^{\le(\scX, \scY)}(\dX)\bigr)  := \left\{(Z, W) \,\Big|\, W_{\scU\scV}^{(\imath)} = 0 \;\mbox{for}\; (\scU, \scV) < (\scX, \scY)\right\}.
\end{equation}
In a similar way, we define
$
\mathrm{Ob}\bigl(\Rep^{<(\scX, \scY)}(\dX)\bigr)  := \left\{(Z, W) \,\big|\, W_{\scU\scV}^{(\imath)} = 0 \;\mbox{for}\; (\scU, \scV) \le (\scX, \scY)\right\}.
$
If $(Z, W)$ and $(\check{Z}, \check{W})$ both belong to  $\Rep^{\le(\scX, \scY)}(\dX)$ and
$(Z, W) \stackrel{(F, G)}\lar (\check{Z}, \check{W})$ is a morphism then (\ref{E:keyeqkey}) implies that
\begin{equation}\label{E:keyeqkuku}
F_{\scX\scX}^{(\imath)} W_{\scX\scY}^{(\imath)}
 =
\check{W}_{\scX\scY}^{(\imath)} G^{(\imath)}_{\scY\scY}.
\end{equation}
Let $\dX_{(\scX, \scY)}$ be the decorated bunch of chains obtained by restriction of $\dX$ on $(\scX, \scY)$. Equality (\ref{E:keyeqkuku}) implies that  we have
the forgetful functor
\begin{equation}\label{E:RestrOnBlock}
\Rep^{\le(\scX, \scY)}(\dX) \lar \Rep(\dX_{(\scX, \scY)}), \quad (Z, W) \mapsto
W_{\scX\scY}^{(\imath)}.
\end{equation}

\medskip
\noindent
Now we are ready to present the main steps
 of the proof of Theorem \ref{list}.
\begin{itemize}
\item
Any object $(Z, W)$  of $\rep(\dX)$ belongs to some
subcategory $\rep^{\le (\scX, \scY)}(\dX)$ such that the component $W_{\scX \scY}^{(\imath)}$ is not zero for some
$\imath \in I$, $\scX \subseteq \dE_\imath$ and $\scY \subseteq \dF_\imath$.
\begin{center}
\begin{tikzpicture}
\matrix (first) [tbl5,  name=table,
row 1/.style={minimum height=20pt},
row 2/.style={minimum height=20pt},
row 3/.style={minimum height=20pt},
row 4/.style={minimum height=20pt},
column 1/.style={text width=20pt},
column 2/.style={text width=20pt},
column 3/.style={text width=20pt},
column 4/.style={text width=20pt},
column 5/.style={text width=20pt},
] at (0,0)
{
0&0& 0&   0  & 0       \\
0&~& ~&   ~  &        \\
0&~& ~&   ~  &        \\
~&~& ~&   ~  &        \\
};

\node[draw, solid, very thick,  fill, color=red!30  , fit=(table-2-2)(table-3-4), inner sep=0pt]{};

\node[draw,solid, very thick, fit=(table-1-1)(table-4-5), inner sep=0pt]{};

\hsline{table}{1}{5};
\hdline{table}{2}{5};
\hsline{table}{3}{5};

\vsline{table}{4}{1};
\vdline{table}{4}{2};
\vdline{table}{4}{3};
\vsline{table}{4}{4};

\node[above=2pt of table-1-3]{$\scY$};
\node[base left=3pt of table-2-1.south west]{$\scX$};
\end{tikzpicture}
\end{center}
For simplicity we assume here that $\scX$ or $\scY$ is not decorated, otherwise the treatment requires additional notations.

\item We bring  the matrix
$W_{\scX\scY}^{(\imath)}$, viewed as object of $\Rep(\dX_{(\scX, \scY)})$, into
a \emph{normal form}. Then we transform the entire object $(Z, W)$ into a \emph{standard form}.
If  $\rep^{\le (\scX, \scY)}_{\st}(\dX)$  is the full subcategory
 of \emph{standard objects} (i.e.~objects in the standard form) then
the embedding $\rep^{\le (\scX, \scY)}_{\st}(\dX) \hookrightarrow \rep^{\le (\scX, \scY)}(\dX)$ is
an equivalence of categories.

\item In some cases (e.g.~if all elements of $\scX$ and $\scY$ are untied, or if $\scX = \{x\}$,
$\scY = \{y\}$ and $x \sim y$), certain  direct summands of $W_{\scX\scY}^{(\imath)}$ viewed as objects
of  $\Rep(\dX_{(\scX, \scY)})$ split up globally as direct summands of $(Z, W)$ in $\rep(\dX)$.
Denoting $\rep^{\le (\scX, \scY), \circ}_{\st}(\dX)$ the full subcategory of $\rep^{\le (\scX, \scY)}_{\st}(\dX)$ consisting of standard objects without such direct summands, we
construct a \emph{new bunch of chains} $\dX^{[\scX,\scY]}$ and a \emph{reduction functor} between the stabilized categories
\begin{equation}\label{E:ReductionCategorielle}
 R^{\scX\scY}: \underline{\rep}^{\le (\scX, \scY), \circ}_{\st}(\dX) \lar \underline{\rep}^{<(\scX, \scY)}(\dX^{[\scX, \scY]}).
\end{equation}
The  new bunch of chains $\dX^{[\scX,\scY]}$ is constructed from $\dX$  using an explicit computation
of the automorphism group of  a ``general'' object  in the category $\rep(\dX_{(\scX,\scY)})$.

\item
 The entire sequence of categories and functors introduced above
 can be  summarized  as follows:
\begin{equation*}
\underline{\Rep}(\dX) \hookleftarrow  \underline{\rep}^{\le (\scX, \scY)}(\dX)  \hookleftarrow  \underline{\rep}^{\le (\scX, \scY), \circ}(\dX) \stackrel{\simeq}\hookleftarrow \underline{\rep}^{\le (\scX, \scY), \circ}_{\st}(\dX)
\stackrel{R^{\scX\scY}}\lar \underline{\Rep}^{<(\scX, \scY)}(\dX^{[\scX,\scY]}).
\end{equation*}
The  reduction functor  $R^{\scX\scY}$  is a \emph{representation equivalence} of categories, i.e.
\begin{itemize}
 \item  $R^{\scX\scY}$ is essentially surjective,
 \item $R^{\scX\scY}\bigl((Z, W)\bigr) \cong R^{\scX\scY}\bigl((Z', W')\bigr)$ if and only if $(Z, W) \cong (Z', W')$.
 \end{itemize}
   These  two properties imply  that $R^{\scX\scY}$
maps
indecomposable objects to indecomposable ones.
Moreover, $R^{\scX\scY}$ reduces the total dimension of objects, what
 allows to use induction arguments.
 \item For any band datum $\bigl((w, \rho), m, \pi\bigr)$ and string datum $(v, \nu)$ in $\dX$ we have isomorphisms:
\begin{equation}\label{E:RuleToRewrite}
R^{\scX\scY}\Bigl(B\bigl((w, \rho), m, \pi\bigr)\Bigr) \cong B\bigl((\hat{w}, \hat\rho), m, \pi) \quad
\mbox{and} \quad
R^{\scX\scY}\bigl(S(v, \nu)\bigr) \cong S(\hat{v}, \hat\nu)
\end{equation}
for appropriate decorated (cyclic) words $(\hat{w}, \hat{\rho})$ and $(\hat{v}, \hat\nu)$. Moreover, the pair
$(w, \rho)$ (respectively  $(v, \nu)$) can be uniquely recovered from
 $(\hat{w}, \hat{\rho})$ (respectively  $(\hat{v}, \hat{\nu})$).
\end{itemize}

\subsection{Reduction Cases} In this subsection we give a proof of Theorem \ref{list}.
To simplify the notation, we  keep the  index $\imath \in I$ fixed, so it is no longer mentioned
when referring  for blocks of the matrix $W^{(\imath)}$ of an object
$(Z, W)$ of the category $\Rep^{\le (\scX, \scY)}(\dX)$ for  $\scX \times \scY \subseteq \dE_\imath \times \dF_\imath$.

\medskip
\noindent
\underline{\textbf{Case 1}}. We start with the case $\scX = \{x\}$ and $\scY = \{y\}$, where  both  $x$ and $y$
are not decorated. Note that $\dX_{(\scX, \scY)}$ is a usual (non--decorated) bunch of chains
over the field $\rK$.

\medskip
\noindent
\underline{Case 1a}. Assume first that $x \not\sim y$. Let $(Z, W)$ be an object of
${\rep}^{\le (\scX, \scY)}(\dX)$.  In the category   $\Rep(\dX_{(\scX, \scY)})$ we have an isomorphism:
$$
W_{\scX\scY} \cong U :=
\left(
\begin{array}{cc}
0 & 0 \\
0 & I
\end{array}
\right).
$$
If both $x$ and $y$ are untied, then any string $S(x-y)$ splits as a direct summand of $(Z, W)$. The complement
of such strings  belongs to the subcategory ${\rep}^{< (\scX, \scY)}(\dX)$. So, from now on we assume
that at least one element of $\{x, y\}$ is tied. A straightforward  computation
shows that
$$
\End_{\dX_{(\scX,\scY)}}(U) = \left\{
\left(
\begin{array}{cc}
A & 0 \\
B & C
\end{array}
\right),
\left(
\begin{array}{cc}
D & F \\
0 & C
\end{array}
\right)
 \right\},
$$
where $A, B, C, D, F$ are arbitrary matrices of appropriate size (determined by $U$) with coefficients in the field $\rK$.
 The new decorated bunch of chains
$\dX^{[\scX, \scY]}$ is defined as follows.
\begin{itemize}
\item If  $x \sim \tilde{x}$ for some $\tilde{x} \in \dX_\jmath, \jmath \in I$ then we add a new element $\tilde{x}_y$ to  $\dX_\jmath$. We have
$a < \tilde{x}_y < b$ in $\dX_\jmath^{[\scX, \scY]}$, whenever $a < \tilde{x} < b$ in $\dX_\jmath$. Moreover, $\tilde{x} < \tilde{x}_y$.
\item Similarly, if $y \sim \tilde{y}$ for some $\tilde{y} \in \dX_\sigma$  then we add to $\dX_\sigma$ a new element  $\tilde{y}_x$.
As above, $\tilde{y}_x$ inherits all order relations from its parent element $\tilde{y}$.
 Moreover,  $\tilde{y} > \tilde{y}_x$.
\item If both $x$ and $y$ are tied then we additionally impose: $\tilde{x}_y \sim  \tilde{y}_x$.
\end{itemize}
The admissible transformations of $(Z, W)$ (i.e.~those automorphisms which preserve the reduced form of $W_{\scX \scY}$) are
shown in the following picture:
\begin{center}
\begin{tikzpicture}
\matrix (first) [tbl5,  name=table,
row 1/.style={minimum height=30pt},
row 2/.style={minimum height=20pt},
row 3/.style={minimum height=20pt},
column 1/.style={text width=40pt},
column 2/.style={text width=20pt},
column 3/.style={text width=20pt},
] at (0,0)
{
0&0& *\\
0&I&0 \\
*&0&*\\
};

\matrix  [tbl5,  name=rt,
row 1/.style={minimum height=30pt},
row 2/.style={minimum height=20pt},
row 3/.style={minimum height=20pt},
column 1/.style={text width=40pt},
] at (5,0)
{
~\\
~\\
~\\
};

\draw[very thick] (rt-1-1.north west) -- (rt-3-1.south west);
\draw[very thick] (rt-1-1.north west) -- (rt-1-1.north east);
\hdline{rt}{1}{1};
\hsline{rt}{2}{1};
\draw[ thick, ->, bend left=50, ] ([xshift= 5pt] rt-1-1.east) to ( [xshift=5pt] rt-2-1.east);
\node[base left=3pt of rt-1-1]{$\tilde{x}$};
\node[base left=3pt of rt-2-1]{$\tilde{x}_y$};

\matrix  [tbl5,  name=bt,
row 1/.style={minimum height=40pt},
column 1/.style={text width=40pt},
column 2/.style={text width=20pt},
column 3/.style={text width=20pt},
] at (0,-3)
{
~&~&~\\
};

\draw[very thick] (bt-1-1.north west) -- (bt-1-1.south west);
\draw[very thick] (bt-1-1.north west) -- (bt-1-3.north east);
\vdline{bt}{1}{1};
\vsline{bt}{1}{2};
\node[above=2pt of bt-1-1]{$\tilde{y}$};
\node[above=2pt of bt-1-2]{$\tilde{y}_x$};
\draw[ thick, ->, bend right=50, ] ([xshift= 5pt] bt-1-1.south) to ( [xshift=5pt] bt-1-2.south);

\node[draw,  fill, color=red!30  , fit=(table-1-1)(table-2-2), inner sep=0pt]{};

\node at (table-1-1){0};
\node at (table-1-2){0};
\node at (table-2-1){0};
\node at (table-2-2){I};


\hdline{table}{1}{3};
\hsline{table}{2}{3};

\vdline{table}{3}{1};
\vsline{table}{3}{2};

\draw[very thick] (table-1-1.north west) -- (table-1-3.north east);
\draw[very thick] (table-1-1.north west) -- (table-3-1.south west);

\node[above=2pt of table-1-1]{$y$};
\node[base left=2pt of table-1-1.south west]{$x$};

\node[above=4pt of table-1-2](a){};
\node[below=4pt of table-3-2](b){};
\node[base left=4pt of table-2-1](c){};
\node[base right=4pt of table-2-3](d){};

\draw[red] (a) to (b);
\draw[red] (c) to (d);
\end{tikzpicture}
\end{center}
\noindent
A more formal way to explain the reduction procedure is the following.
Crossing--Out Lemma \ref{L:crossingout} implies that  we have a  representation equivalence (\ref{E:ReductionCategorielle}). Note that in this case ${\rep}^{\le (\scX, \scY)}(\dX)
= {\rep}^{\le (\scX, \scY), \circ}(\dX)$.
The correspondence (\ref{E:RuleToRewrite})  is given by the rule
$$
 u \stackrel{\alpha}-\tilde{x}_y \sim \tilde{y}_x \stackrel{\beta}- v \; \mapsto \; u \stackrel{\alpha}- \tilde{x} \sim x \stackrel{0}- y \sim \tilde{y} \stackrel{\beta}-v.
$$
If only one
element (e.g.~$x$) is tied then the translation rule is analogous: $$\tilde{x}_y \stackrel{\gamma}- z \, \mapsto\,
y \stackrel{0}- x \sim \tilde{x}  \stackrel{\gamma}- z.$$

\medskip
\noindent
\underline{Case 1b}. Assume now that $x \sim y$. First observe that in $\Rep(\dX_{(\scX, \scY)})$ we have:
$$
W_{\scX\scY} \cong
\left(
\begin{array}{cc}
F & 0 \\
0 & N
\end{array}
\right),
$$
where $F$ is an invertible matrix and $N$ is a nilpotent matrix.
It is then easy to see that $F$ splits as a direct summand of $(Z, W)$ in $\Rep(\dX)$. Moreover,
$F$ decomposes into a direct sum of bands $B(w, m, \pi)$, where $w = (\lha x\sim y \rha)$, $m \in \mathbb{N}$ and $\xi \ne \pi \in \rK[\xi]$ is
a monic irreducible polynomial (note that $w$ is not decorable).

Next, we consider the full subcategory $\Rep^{\le (\scX, \scY), \circ}(\dX)$ of
$\Rep^{\le (\scX, \scY)}(\dX)$ consisting of those objects for which the block $W_{\scX\scY}$ is nilpotent.
We bring  $W_{\scX\scY}$ into its modified  Jordan normal form, see Lemma \ref{L:FormedeJordan}.
Then we reduce the entire matrix $W^{(\imath)}$ into a standard form by killing with any unit entry $\varpi$  of $W_{\scX\scY}$
all entries of matrices $W_{\scX\scU}$, where $\scY \ne \scU \subseteq \dF_\imath$ (respectively $W_{\scV\scY}$, where
$\scX \ne \scV \subseteq \dE_\imath$), standing with $\varpi$ in  the same row (respectively column).
The  new  decorated bunch of chains $\dX^{[\scX,\scY]}$ is defined as follows.
\begin{itemize}
\item  For any $l \in \NN_{\ge 2}$ we introduce new elements $x_l \in \dE_\imath$ and
$y_l \in \dF_\imath$. It is convenient to pose $x_1 = x$ and $y_1 = y$.
\item  We have:     $x_l \sim y_l$ for all $l \in \NN$ and $a < x_l < b$ (respectively $c < y_l < d$) whenever
$a < x < b$ (respectively $c < y < d$).
\item Finally, we put $\dots < x_3 < x_2 < x_1$ and $\dots > y_3 > y_2 > y_1$.
\end{itemize}
Again, Crossing--Out Lemma \ref{L:crossingout} yields a  representation equivalence (\ref{E:ReductionCategorielle}) and the translation rule (\ref{E:RuleToRewrite}) is given by the formula
$$
u \stackrel{\alpha}- y_l \sim x_l \stackrel{\beta}- v  \; \mapsto \;
u \stackrel{\alpha}-  \underbrace{y \sim x
 \stackrel{0}-   \dots   \stackrel{0}- y \sim x}_{l \; \scriptsize\mbox{times}} \stackrel{\beta}- v.
$$
The reduction procedure can be illustrated by the following picture:
\begin{center}
\begin{tikzpicture}
[ dot/.style={fill=blue!10,circle,draw, inner sep=1pt, minimum size=1pt},
str/.style={inner sep=1pt, minimum size=0pt}
]

\matrix (first) [tbl5,  name=tbl,
row 1/.style={minimum height=30pt},
row 2/.style={minimum height=20pt},
row 3/.style={minimum height=20pt},
row 4/.style={minimum height=20pt},
row 5/.style={minimum height=20pt},
row 6/.style={minimum height=20pt},
row 7/.style={minimum height=20pt},
row 8/.style={minimum height=40pt},
%
column 1/.style={text width=40pt},
column 2/.style={text width=20pt},
column 3/.style={text width=20pt},
column 4/.style={text width=20pt},
column 5/.style={text width=20pt},
column 6/.style={text width=20pt},
column 7/.style={text width=20pt},
column 8/.style={text width=50pt},
] at (0,0)
{
0& 0&0&0&     0&0&0&     0 \\
0& 0&0&0&     0&0&0&     0 \\
0& 0&0&0&     0&0&0&     0 \\
0& 0&0&0&     0&0&0&     * \\
0& 0&0&0&     0&0&0&     0 \\
0& 0&0&0&     0&0&0&     * \\
0& 0&0&0&     0&0&0&     * \\
*& *&0&0&     *&0&*&     * \\
};

\node[draw,  fill, color=red!30  , fit=(tbl-2-2)(tbl-7-7), inner sep=0pt]{};
\foreach \x in {2,...,7}{\foreach \y in {2,...,7}{
\node at (tbl-\y-\x){0};
}}
\node[draw,  fill, color=red!30, inner sep=0pt] at (tbl-2-3){$I_3$};
\node at (tbl-2-3) {$I_3$};
\node[draw,  fill, color=red!30, inner sep=0pt] at (tbl-3-4){$I_3$};
\node at (tbl-3-4) {$I_3$};
\node[draw,  fill, color=red!30, inner sep=0pt] at (tbl-5-6){$I_2$};
\node at (tbl-5-6) {$I_2$};

\sfrm{tbl}{8}{8};

\hsline{tbl}{1}{8};
\hdline{tbl}{4}{8};
\hdline{tbl}{6}{8};
\hsline{tbl}{7}{8};

\draw [dotted](tbl-2-1.south west)-- (tbl-2-8.south east);
\draw [dotted](tbl-5-1.south west)-- (tbl-5-8.south east);
\draw [dotted](tbl-3-1.south west)-- (tbl-3-8.south east);

\vsline{tbl}{8}{1};
\vdline{tbl}{8}{4};
\vdline{tbl}{8}{6};
\vsline{tbl}{8}{7};
\draw [dotted](tbl-1-2.north east)-- (tbl-8-2.south east);
\draw [dotted](tbl-1-3.north east)-- (tbl-8-3.south east);
\draw [dotted](tbl-1-5.north east)-- (tbl-8-5.south east);

\node[base left=3pt of tbl-4-1.south west]{$x$};
\node[above =3pt of tbl-1-4.north east]{$y$};

\node at (tbl-4-8.east){$\equiv$}{};
\node[base right=3pt of tbl-4-8.east]{$x_3$};
\node[dot,base right=25pt of tbl-4-8.east](a3){};

\node at (tbl-6-8.east){$=$}{};
\node[base right=3pt of tbl-6-8.east]{$x_2$};
\node[dot, base right=25pt of tbl-6-8.east](a2){};

\node at (tbl-7-8.east){$-$}{};
\node[base right=3pt of tbl-7-8.east]{$x_1$};
\node[dot, base right=25pt of tbl-7-8.east](a1){};

\draw[ very thick, -stealth ] (a3) to node [right]{$\bK$} (a2) to node [right] {$\bK$}(a1);

\node at (tbl-8-2.south){$|||$}{};
\node[below=3pt of tbl-8-2.south]{$y_3$};
\node[dot,below=25pt of tbl-8-2.south](b3){};

\node at (tbl-8-5.south){$||$}{};
\node[below=3pt of tbl-8-5.south]{$y_2$};
\node[dot, below=25pt of tbl-8-5.south](b2){};

\node at (tbl-8-7.south){$|$}{};
\node[below=3pt of tbl-8-7.south]{$y_1$};
\node[dot, below=25pt of tbl-8-7.south](b1){};

\draw[ very thick, -stealth ] (b3) to node [below]{$\bK$} (b2) to node [below] {$\bK$}(b1);

\draw[red] ([xshift=-4pt] tbl-2-1.west) to ([xshift=4pt] tbl-2-8.east);
\draw[red] ([xshift=-4pt] tbl-3-1.west) to ([xshift=4pt] tbl-3-8.east);
\draw[red] ([xshift=-4pt] tbl-5-1.west) to ([xshift=4pt] tbl-5-8.east);

\draw[red] ([yshift=4pt] tbl-1-3.north) to ([yshift=-4pt] tbl-8-3.south);
\draw[red] ([yshift=4pt] tbl-1-4.north) to ([yshift=-4pt] tbl-8-4.south);
\draw[red] ([yshift=4pt] tbl-1-6.north) to ([yshift=-4pt] tbl-8-6.south);
\end{tikzpicture}
\end{center}

\medskip
\noindent
\underline{\textbf{Case 2}}. Now assume that both maximal elementary subchains
$\scX$ and $\scY$ are decorated. The decorated chessboard problem from Example \ref{E:DecorChessBoard}, treated in details  in Section \ref{S:DecoratedConjugation},  can occur as a special case of such
decorated bunch of chains $\dX_{(\scX,\scY)}$.

\begin{definition}
Let $m, n \in \NN$ and $M \in \Mat_{m \times n}(\rK)$. The \emph{valuation} of $M$ is the largest
integer $\nu = \val(M)$ such that there exists $M_\diamond \in \Mat_{m \times n}(\bD)$ satisfying
$M = t^\nu M_{\diamond}$. In particular,   $\val(M) = \infty$ for $M = 0$.
\end{definition}

\noindent
On the set of pairs
$
\kB_{(\scX,\scY)} = \left\{(u, v) \,\big|\, u \in \scX, v \in \scY\right\}
$
consider the following total ordering: $(u, v) < (x, y)$ if either $u < x$ or $u = x$ and $v > y$. Fix the following notations:

\begin{itemize}
\item $\Rep^{\le (\scX,\scY), \nu}(\dX)$ is the full subcategory of $\Rep^{\le (\scX,\scY)}(\dX)$
consisting of those objects for which $\val(W_{\scX\scY}) \ge \nu$.
\item For $(x, y) \in \kB_{(\scX,\scY)}$ define  $\Rep^{\le (x, y), \nu}(\dX)$ to be
 the full subcategory of $\Rep^{\le (\scX,\scY), \nu}(\dX)$
consisting of those objects for which $\val(W_{uv}) > \val(W_{xy})$ for all $(u, v) < (x, y)$.
\item Finally, $\Rep^{<(x,y), \nu}(\dX)$ is the full subcategory of $\Rep^{\le (x,y), \nu}(\dX)$ consisting of those objects for which $\val(W_{xy}) > \nu$.
\end{itemize}

\noindent
Let $\dX_{(x,y)}$ be the decorated bunch of chains obtained by restricting $\dX$ on $(x, y)$ and
$(Z, W)$ be an object of the category $\Rep^{\le (x, y), \nu}(\dX)$, where $(x, y) \in \scX \times \scY \subseteq \dE_\imath \times \dF_\imath$ and $\nu \in \mathbb{Z}$.

\medskip
\noindent
\underline{Case 2a}. Assume that $x \not\sim y$. First note that in $\Rep(\dX_{(x,y)})$ we have:
$$
W_{xy} = t^\nu \bigl(W_{xy}\bigr)_\diamond \cong t^\nu
\left(
\begin{array}{cc}
t \Psi &   0 \\
0      &  I
\end{array}
\right)
 =: t^\nu U,
$$
where $I$ is the identity matrix of size
$\mathsf{rk}_{\kk}\bigl(\overline{\bigl(W_{xy}\bigr)_{\diamond}}\bigr)$ and $\Psi$ has coefficients in $\bD$.  If both elements
$x$ and $y$ are untied then any  string  $S(x \stackrel{\nu}- y)$ splits up as direct summands
of $(Z, W)$ in $\Rep(\dX)$, allowing to
proceed to the next subcategory $\Rep^{<(x,y), \nu}(\dX)$.
Hence,  we may without loss of generality assume that at least one element of $\{x, y\}$ is tied.
Similarly to Case 1a, we have:
$$
\End_{\dX_{(x,y)}}(U) = \left\{
\left(
\begin{array}{cc}
A & C \\
B & D
\end{array}
\right),
\left(
\begin{array}{cc}
E & F \\
G & D
\end{array}
\right)
 \right\},
$$
where $A, B, C, D, F, G$ are matrices over $\bD$ of appropriate size (determined by $U$)   satisfying the constraints
$A\Psi = \Psi E$, $G = t B\Psi$ and $C = t \Psi F$.

\medskip
\noindent
The new decorated bunch of chains $\dX^{[x,y]}$ is defined as follows.
\begin{itemize}
\item If  $x \sim \tilde{x}$ for some $\tilde{x} \in \dX_\jmath, \jmath \in I$ then we add a new element $\tilde{x}_y$ to  $\dX_\jmath$. We have: $\tilde{x} \lhd \tilde{x}_y$
and $a < \tilde{x}_y < b$ provided $a < \tilde{x} < b$ and $a \lhd \tilde{x}_y \lhd b$ if $a \lhd \tilde{x} \lhd b$.
\item Similarly, if $y \sim  \tilde{y}$ for some $\tilde{y} \in \dX_\sigma$ then we add to $\dX_\sigma$ a new element  $\tilde{y}_x$.
We have $c < \tilde{y}_x < d$ whenever  $c < \tilde{y} < d$ and $c \lhd \tilde{y}_x \lhd d$ if   $c \lhd\tilde{y} \lhd d$. Moreover,  $\tilde{y} \rhd \tilde{y}_x$.
\item If both $x$ and $y$ are tied then we additionally impose: $\tilde{x}_y \sim  \tilde{y}_x$.
\end{itemize}

\begin{center}
\begin{tikzpicture}
[ dot/.style={fill=blue!10,circle,draw, inner sep=1pt, minimum size=1pt},
dv/.style={star,star points=5,
star point ratio=2, draw, thick, fill=green!20, inner sep=1pt,outer sep=2pt,minimum size=7pt}
]

\matrix (first) [tbl5,  name=tbl,
row 1/.style={minimum height=20pt},
row 2/.style={minimum height=20pt},
row 3/.style={minimum height=20pt},
row 4/.style={minimum height=20pt},
row 5/.style={minimum height=20pt},
row 6/.style={minimum height=30pt},
column 1/.style={text width=40pt},
column 2/.style={text width=30pt},
column 3/.style={text width=30pt},
column 4/.style={text width=20pt},
column 5/.style={text width=30pt},
column 6/.style={text width=20pt},
] at (0,0)
{
0&  0&0&     0&0&0 \\
0& &&   && \\
0& &&   && \\
0& &&   && \\
0& &&   && \\
& &&   && \\
};

\node[draw,  fill, color=red!30  , fit=(tbl-3-3)(tbl-4-4), inner sep=0pt]{};

\foreach \x in {2,...,6}
{
\node at (tbl-4-\x){0};
\node at (tbl-\x-4){0};
}

\node[draw,  fill, color=red!30, inner sep=0pt] at (tbl-3-3){$t\Psi$};
\node at (tbl-3-3) {$t\Psi$};
\node[draw,  fill, color=red!30, inner sep=0pt] at (tbl-3-4){$0$};
\node at (tbl-3-4) {0};
\node[draw,  fill, color=red!30, inner sep=0pt] at (tbl-4-3){$0$};
\node at (tbl-4-3) {$0$};
\node[draw,  fill, color=red!30, inner sep=0pt] at (tbl-4-4){$I$};
\node at (tbl-4-4) {$I$};

\sfrm{tbl}{6}{6};

\hsline{tbl}{1}{6};
\hdline{tbl}{2}{6};
\hdline{tbl}{4}{6};
\hsline{tbl}{5}{6};

\draw [dotted](tbl-3-1.south west)-- (tbl-3-6.south east);

\vsline{tbl}{6}{1};
\vdline{tbl}{6}{2};
\vdline{tbl}{6}{4};
\vsline{tbl}{6}{5};
\draw [dotted](tbl-1-3.north east)-- (tbl-6-3.south east);

\node[base left=6pt of tbl-3-1.south west]{$\scX$};
\node[above =6pt of tbl-1-3.north east]{$\scY$};

\node[base right=3pt of tbl-3-6.south east]{$x$};
\node[below =3pt of tbl-6-3.south east]{$y$};

\draw[red] ([xshift=-2pt] tbl-4-1.west) to ([xshift=2pt] tbl-4-6.east);
\draw[red] ([yshift=2pt] tbl-1-4.north) to ([yshift=-2pt] tbl-6-4.south);

\matrix  [tbl5,  name=rt,
row 1/.style={minimum height=40pt},
row 2/.style={minimum height=20pt},
row 3/.style={minimum height=20pt},
row 4/.style={minimum height=50pt},
column 1/.style={text width=40pt},
] at (5,0)
{
\\
\\
\\
\\
};

\node[fill, color=white!00, fit=(rt-1-1), inner sep=0pt]{};
\node[fill, color=white!00, fit=(rt-4-1), inner sep=0pt]{};

\draw[very thick] ([yshift=4pt] rt-1-1.south west) -- ([yshift=-4pt] rt-4-1.north west);
\hsline{rt}{1}{1};
\hsline{rt}{3}{1};
\draw [dotted](rt-2-1.south west)-- (rt-2-1.south east);

\node[dv] (a) at ([xshift= 20pt] rt-2-1.east){};
\node[dv] (b) at ([xshift= 20pt] rt-3-1.east){};

\draw[ thick, ->, bend left=50, ]
 (a)
to node[base right]{$_\bD$} (b) ;
\draw[ thick, ->, bend left=50, ] (b) to node[base left]{$_\idm$} (a);

\node[base left=3pt of rt-2-1]{$\tilde{x}$};
\node[base left=3pt of rt-3-1]{$\tilde{x}_y$};
\node at (rt-3-1.west){$-$};

\draw
[decorate,decoration={brace},thick]
([yshift=4pt]tbl-1-1.north east)--([yshift=4pt]tbl-1-5.north east);
\draw
[decorate,decoration={brace},thick]
([xshift=-4pt]tbl-5-1.south west)--([xshift=-4pt]tbl-2-1.north west) ;

\matrix  [tbl5,  name=bt,
row 1/.style={minimum height=40pt},
column 1/.style={text width=70pt},
column 2/.style={text width=30pt},
column 3/.style={text width=20pt},
column 4/.style={text width=50pt},
] at (0,-4)
{
~&~&~&~~\\
};

\node[fill, color=white!00, fit=(bt-1-1), inner sep=0pt]{};
\node[fill, color=white!00, fit=(bt-1-4), inner sep=0pt]{};

\draw[very thick] ([xshift=-10pt]bt-1-1.north east) -- ([xshift=10pt]bt-1-3.north east);
\vsline{bt}{1}{1};
\vsline{bt}{1}{3};
\draw[dotted] (bt-1-2.north east) -- (bt-1-2.south east);

\node[above=2pt of bt-1-2]{$\tilde{y}$};
\node[above=2pt of bt-1-3]{$\tilde{y}_x$};
\node at (bt-1-3.north){$|$};
\node[dv] (d) at ([xshift= -3pt,yshift= -17pt] bt-1-2.south){};
\node[dv] (c) at( [xshift= 3pt,yshift= -17pt] bt-1-3.south){};
\draw[ thick, ->, bend right=30, ] (c) to node[above]{$_\idm$} (d);
\draw[ thick, ->, bend right=30, ] (d) to node[below]{$_\bD$} (c);

\end{tikzpicture}
\end{center}

\noindent
Analogously to (\ref{E:ReductionCategorielle}), we get a  representation equivalence
$$
R^{xy}_\nu: \underline{\rep}^{\le (x, y), \nu}_{\st}(\dX) \lar \underline{\rep}^{< (x, y), \nu}(\dX^{[x,y]}).
$$
The correspondence (\ref{E:RuleToRewrite})
between strings and bands in both categories  is given by
$$
 u \stackrel{\alpha}-\tilde{x}_y \sim \tilde{y}_x \stackrel{\beta}- v \; \mapsto \; u \stackrel{\alpha}- \tilde{x} \sim x \stackrel{\nu}- y \sim \tilde{y} \stackrel{\beta}-v.
$$
The decorations of decorable words are transferred   in a straightforward way. If only one
element (e.g.~$x$) is tied then the translation rule is analogous: $\tilde{x}_y \stackrel{\gamma}- z \, \mapsto\,
y \stackrel{\nu}- x \sim \tilde{x}  \stackrel{\gamma}- z$.

\medskip
\noindent
\underline{Case 2b}. Now assume that $x \sim y$. In the category $\Rep(\dX_{(x,y)})$ we have:
$$
W_{xy} \cong
t^\nu \left(
\begin{array}{cc}
F & 0 \\
0 & N
\end{array}
\right),
$$
 where $F$ is  invertible over $\bD$ and $N$ is nilpotent modulo $\idm$.
As in Case 1b, it is  easy to see that $t^\nu F$ splits as a direct summand of $(Z, W)$ in $\Rep(\dX)$, decomposing  further into a direct sum of bands $B\bigl((w,\rho), m, \pi\bigr)$, where
$(w, \rho) = \bigl(\lha x\sim y \stackrel{\nu}\rha\bigr)$, $m \in \mathbb{N}$ and $\xi \ne \pi \in \kk[\xi]$ is
a monic irreducible polynomial.

Next, consider the full subcategory $\Rep^{\le (x,y), \nu, \circ}(\dX)$ of
$\Rep^{\le (x, y), \nu}(\dX)$ consisting of objects for which the matrix $\overline{(W_{xy})_\diamond}$ is nilpotent. Let
$(Z, W)$ be an object of $\Rep^{\le (x,y), \nu, \circ}(\dX)$.
As in  Lemma \ref{L:FormedeJordan}, we reduce  the block $W_{xy}$ into a  normal form.
Then we bring the entire matrix $W^{(\imath)}$ into a standard form: if $W_{xy}$ contains  an entry $\varpi$ with valuation
$\nu$, then for all $(u, v) \in \dE_\imath \times \dF_\imath \setminus \{(x, y)\}$ we kill with it all elements of all matrices  $W_{xu}$ (respectively $W_{vy}$)  standing in the same row (respectively column) with $\varpi$.

\medskip
\noindent
The  new  decorated bunch of chains $\dX^{[\scX,\scY]}$ is defined as follows.
\begin{itemize}
\item  For any $l \in \NN_{\ge 2}$ we introduce new decorated elements $x_l \in \dE_\imath$ and
$y_l \in \dF_\imath$. It is convenient to write   $x_1 = x$ and $y_1 = y$.
\item  For all $l \in \NN$ we have:     $x_l \sim y_l$
 and $a < x_l < b$ (respectively $c < y_l < d$) whenever
$a < x < b$ (respectively $c < y < d$). Similarly, $a \lhd x_l \lhd b$ (respectively $c \lhd y_l \lhd d$) provided
$a \lhd x \lhd b$ (respectively $c \lhd y \lhd d$).
\item Finally, the ordering between new elements is the following: $\dots \lhd x_3 \lhd x_2 \lhd x_1$ and $\dots \rhd y_3 \rhd y_2 \rhd y_1$.
\end{itemize}

\begin{center}
\begin{tikzpicture}
\matrix (first) [tbl5,  name=tbl,
row 1/.style={minimum height=20pt},
row 2/.style={minimum height=20pt},
row 3/.style={minimum height=20pt},
row 4/.style={minimum height=20pt},
row 5/.style={minimum height=20pt},
row 6/.style={minimum height=20pt},
row 7/.style={minimum height=30pt},
column 1/.style={text width=40pt},
column 2/.style={text width=30pt},
column 3/.style={text width=20pt},
column 4/.style={text width=20pt},
column 5/.style={text width=20pt},
column 6/.style={text width=30pt},
column 7/.style={text width=20pt},
] at (0,0)
{
0&  0&0&     0&0&0 &0\\
0& &&   && & \\
0& &&   && &\\
0& &&   && &\\
0& &&   && &\\
0& &&   && &\\
& &&   && &\\
};

\node[draw,  fill, color=red!30  , fit=(tbl-3-3)(tbl-5-5), inner sep=0pt]{};

\foreach \x in {2,...,7}
{
\node at (tbl-\x-4){0};
}

\node[draw,  fill, color=red!30, inner sep=0pt] at (tbl-3-3){$0$};
\node at (tbl-3-2) {$0$};
\node at (tbl-3-3) {$0$};
\node[draw,  fill, color=red!30, inner sep=0pt] at (tbl-3-4){$I$};
\node at (tbl-3-4) {$I$};
\node[draw,  fill, color=red!30, inner sep=0pt] at (tbl-3-5){$0$};
\node at (tbl-3-5) {$0$};
\node at (tbl-3-6) {$0$};
\node at (tbl-3-7) {$0$};
\node[draw,  fill, color=red!30, inner sep=0pt] at (tbl-5-3){$tC$};
\node at (tbl-5-3) {$tC$};
\node[draw,  fill, color=red!30, inner sep=0pt] at (tbl-5-4){$0$};
\node at (tbl-5-4) {$0$};
\node[draw,  fill, color=red!30, inner sep=0pt] at (tbl-5-5){$tD$};
\node at (tbl-5-5) {$tD$};
\node[draw,  fill, color=red!30, inner sep=0pt] at (tbl-4-3){$tA$};
\node at (tbl-4-3) {$tA$};
\node[draw,  fill, color=red!30, inner sep=0pt] at (tbl-4-4){$0$};
\node at (tbl-4-4) {$0$};
\node[draw,  fill, color=red!30, inner sep=0pt] at (tbl-4-5){$tB$};
\node at (tbl-4-5) {$tB$};

\sfrm{tbl}{7}{7};

\hsline{tbl}{1}{7};
\hdline{tbl}{2}{7};
\draw [dotted](tbl-3-1.south west)-- (tbl-3-7.south east);
\draw [dotted](tbl-4-1.south west)-- (tbl-4-7.south east);
\hdline{tbl}{5}{7};
\hsline{tbl}{6}{7};

\vsline{tbl}{7}{1};
\vdline{tbl}{7}{2};
\draw [dotted](tbl-1-3.north east)-- (tbl-7-3.south east);
\draw [dotted](tbl-1-4.north east)-- (tbl-7-4.south east);
\vdline{tbl}{7}{5};
\vsline{tbl}{7}{6};

\node[base left=7pt of tbl-4-1.south west]{$\scX$};
\node[above =7pt of tbl-1-4.north east]{$\scY$};

\draw[red] ([xshift=-4pt] tbl-3-1.west) to ([xshift=4pt] tbl-3-7.east);
\draw[red] ([yshift=4pt] tbl-1-4.north) to ([yshift=-4pt] tbl-7-4.south);

\node[below=4pt of tbl-7-3.south]{$y_2$};
\node[below=4pt of tbl-7-5.south]{$y_1$};
\node at (tbl-7-3.south)(c){$||$};
\node at (tbl-7-5.south)(d){$|$};
\node[dv] (d) at ([xshift= -3pt,yshift= -35pt]tbl-7-3.south){};
\node[dv] (c) at( [xshift= 3pt,yshift= -35pt] tbl-7-5.south){};
\draw[ thick, ->, bend right=30, ] (c) to node[above]{$_\idm$} (d);
\draw[ thick, ->, bend right=30, ] (d) to node[below]{$_\bD$} (c);

\node[base right =4pt of tbl-4-7.east]{$x_2$};
\node[base right =4pt of tbl-5-7.east]{$x_1$};
\node at (tbl-4-7.east)(c){$=$};
\node at (tbl-5-7.east)(d){$-$};
\node[dv] (b) at ([yshift= 3pt,xshift= 40pt]tbl-4-7.east){};
\node[dv] (a) at( [yshift= -3pt,xshift= 40pt] tbl-5-7.east){};
\draw[ thick, ->, bend right=30, ] (a) to node[base right]{$_\idm$} (b);
\draw[ thick, ->, bend right=30, ] (b) to node[base left=1pt]{$_\bD$} (a);

\draw
[decorate,decoration={brace},thick]
([yshift=4pt]tbl-1-1.north east)--([yshift=4pt]tbl-1-6.north east);
\draw
[decorate,decoration={brace},thick]
([xshift=-4pt]tbl-7-1.north west)--([xshift=-4pt]tbl-1-1.south west) ;

\end{tikzpicture}
\end{center}

\medskip
\noindent
Analogously to (\ref{E:ReductionCategorielle}), we get  a  representation equivalence.
$$
R^{xy}_\nu: \underline{\rep}^{\le (x,y),\nu, \circ}_{\st}(\dX) \lar \underline{\rep}^{< (x,y), \nu}(\dX^{[x,y]}).
$$
The translation rule (\ref{E:RuleToRewrite}) is given by
$$
u \stackrel{\alpha}- y_l \sim x_l \stackrel{\beta}- v  \; \mapsto \;
u \stackrel{\alpha}-  \underbrace{y \sim x
 \stackrel{\nu}-   \dots   \stackrel{\nu}- y \sim x}_{l \; \scriptsize\mbox{times}} \stackrel{\beta}- v.
$$

\medskip
\noindent
\underline{\bf Case 3}. Assume that $\scX = \{x\}$ is not decorated and
$\scY = \{y_1 \rhd y_2 \rhd \dots \rhd y_n\}$ is decorated, where $\scX \times \scY \subseteq \dE_\imath \times \dF_\imath$.  This is the most tricky case in the whole reduction procedure. Let $(Z, W)$ be an object of the category $\Rep^{\le (\scX, \scY)}(\dX)$.

\medskip
\noindent
\underline{Case 3a}. Assume first that $n=2$, $\scY = \{y \rhd z\}$ and $y \not\sim z$. As the first step, observe that  any object of $\Rep(\dX_{(\scX, \scY)})$
is isomorphic to $V$ given by
\begin{center}
\begin{tikzpicture}

\matrix (first) [tbl5,  name=tbl,
row 1/.style={minimum height=20pt},
row 2/.style={minimum height=30pt},
row 3/.style={minimum height=20pt},
column 1/.style={text width=40pt},
column 2/.style={text width=20pt},
column 3/.style={text width=20pt},
column 4/.style={text width=20pt},
] at (0,0)
{
0& 0&0&0\\
0& 0&0&I_2\\
0&I_1&0&0\\
};

\sfrm{tbl}{3}{4};

\hdline{tbl}{1}{4};
\hdline{tbl}{2}{4};

\vdline{tbl}{3}{1};
\vsline{tbl}{3}{2};
\vdline{tbl}{3}{3};

\node[base left=3pt of tbl-2-1.west]{$x$};
\node[above =3pt of tbl-1-1.north]{$y$};
\node[above =3pt of tbl-1-3.north]{$z$};
\end{tikzpicture}
\end{center}
If all elements $x, y, z$ are untied, then we can split up from any object of $\Rep^{\le (\scX, \scY)}(\dX)$ all direct summands isomorphic
to $S(x-y)$ and $S(x-z)$ and proceed to the next subcategory  $\Rep^{< (\scX, \scY)}(\dX)$. So, assume that
at least one element of  $\{x, y, z\}$ is tied.
A direct computation shows that  $\End_{\dX_{(\scX, \scY)}}(V)$ is the $\bD$--module  of all pairs $(F, G)$ of matrices the form
\begin{equation}\label{E:MatricesDeTransform}
\begin{array}{c}
\begin{tikzpicture}

\matrix (first) [tbl5,  name=tbl,
minimum height=20pt,
text width=20pt%
] at (0,0)
{
\bullet & 0 & 0 \\
\bullet & \ast_1 & \odot_3 \\
\bullet & \ast_4 & \ast_2\\
};

\sfrm{tbl}{3}{3};
\node[base left=5pt of tbl-2-1.west, yshift=-5pt]{$F  =$};
\matrix (second) [tbl5,  name=tbl,
minimum height=20pt,
text width=20pt%
] at (5,0)
{
\ast  & \ast & \ast & \ast \\
0     & \ast_2 &  0   & \ast_4 \\
\odot & \odot & \ast & \ast \\
0 & \odot_3 & 0 & \ast_1\\
};

\sfrm{tbl}{4}{4};
\node[base left=5pt of tbl-3-1.north west, yshift=-5pt]{$G  =$};

\hsline{tbl}{2}{4};
\vsline{tbl}{4}{2};
\end{tikzpicture}
\end{array}
\end{equation}

Here $\bu$ means that coefficients of the corresponding block belong to
$\rK$, $*$ stands for  $\bD$ and $\od$ for
$\idm$, equal indices $*_i$ mean that these blocks are equal. The key observation is the following:

\begin{itemize}
\item Both  non--reduced stripes  of the matrix $W^{(\imath)}$ of the vertical blocks $y$ and $z$ can be  transformed
by the matrices of the form
$\left(
\begin{array}{cc}
G_{11} & G_{13} \\
G_{31} & G_{33}
\end{array}\right) = \left(
\begin{array}{cc}
\ast & \ast \\
\odot & \ast
\end{array}\right).
$
\item If $x \sim \tilde{x}$ then the admissible transformations of  $\tilde{x}$--block
are given by matrices
\begin{equation}
\begin{array}{c}
\begin{tikzpicture}

\matrix (first) [tbl5,  name=tbl,
minimum height=20pt,
text width=20pt%
] at (0,0)
{
\bullet & 0 & 0 \\
\bullet & \ast_1 & \odot \\
\bullet & \ast_4 & \ast_2\\
};

\sfrm{tbl}{3}{3};
\node[base left=5pt of tbl-2-1.west, yshift=-5pt]{$\tilde{F} = F =$};
\end{tikzpicture}
\end{array}
\end{equation}

where $F$  is the matrix from   (\ref{E:MatricesDeTransform}).
\item If $y \sim \tilde{y}$ then the admissible transformations of   $\tilde{y}$--block
are given by matrices
$T = \left(
\begin{array}{cc}
T_{11} & T_{12} \\
T_{21} & T_{22}
\end{array}\right),
$
such that $T_{lt} \equiv G_{lt} \,\mod\, \idm$ for all $l, t \in \{1,2\}$. In particular,
$T_{21} \equiv 0 \, \mod\, \idm$, $T_{22} \equiv F_{33} \, \mod\, \idm$ and
$T_{11} \equiv G_{11} \, \mod\, \idm$, i.e.~
$T = \left(
\begin{array}{cc}
\ast & \ast \\
\odot & \ast_{2'}
\end{array}\right),
$ where $\ast_i, \ast_{i'}$ means that the corresponding blocks of $\tilde{F}$ and $T$ are equal modulo $\idm$.
\item If $z \sim \tilde{z}$ then the admissible transformations of  the  $\tilde{z}$--block
are given by matrices
$S = \left(
\begin{array}{cc}
S_{33} & S_{34} \\
S_{43} & S_{44}
\end{array}\right),
$
where $S_{lt} \equiv G_{lt} \,\mod\, \idm$ for all $l, t \in \{3,4\}$.
In particular, $S_{43} \equiv 0 \, \mod\, \idm$, $S_{44} \equiv F_{22} \, \mod\, \idm$ and
$S_{33} \equiv G_{33} \, \mod\, \idm$, i.e.~$S = \left(
\begin{array}{cc}
\ast & \ast \\
\odot & \ast_{1'}
\end{array}\right).
$
\end{itemize}
The new decorated bunch of chains $\dX^{[\scX, \scY]}$ is defined as follows.
\begin{itemize}
\item If $y \sim \tilde{y}$ for some $\tilde{y} \in \dX_\jmath$ then we add a new decorated element $\tilde{y}_x$ to
$\dX_\jmath$. If $z \sim \tilde{z}$ for some $\tilde{z} \in \dX_\kappa$ then we add
a new decorated element $\tilde{z}_x$ to $\dX_\kappa$. Finally, if $x \sim \tilde{x}$ for some $\tilde{z} \in \dX_\tau$ then we add two \emph{decorated}
elements $\tilde{x}_y$ and $\tilde{x}_z$ to $\dX_\tau$ (note that $\tilde{x}$ itself is \emph{not decorated})!
\item The new elements inherit all orderings from their  parent elements and parent chains.
For example, if $a < \tilde{y} < b$ in $\dX_\jmath$ then also $a < \tilde{y}_x < b$ in $\dX^{[\scX, \scY]}_\jmath$ etc.
\item We have: $\tilde{y} \rhd \tilde{y}_x$, $\tilde{z} \rhd \tilde{z}_x$ and $\tilde{x} < \tilde{x}_z \lhd \tilde{x}_y$.
\item Finally, we have impose equivalences $\tilde{x}_y \sim \tilde{y}_x$ and
$\tilde{x}_z \sim \tilde{z}_x$.
\end{itemize}
\begin{center}
\begin{tikzpicture}

\matrix (first) [tbl5,  name=tbl,
row 1/.style={minimum height=20pt},
row 2/.style={minimum height=20pt},
row 3/.style={minimum height=20pt},
row 4/.style={minimum height=20pt},
column 1/.style={text width=20pt},
column 2/.style={text width=20pt},
column 3/.style={text width=20pt},
column 4/.style={text width=20pt},
column 5/.style={text width=20pt},
] at (0,0)
{
0&0&0&0& *\\
0&0&0&I& 0\\
0&I&0&0& 0\\
*&0&*&0& *\\
};

\node[fill, color=red!30, fit=(tbl-1-1)(tbl-3-4), inner sep=0pt]{};
\foreach \x in {1,...,3}{
\foreach \y in {1,...,4}{
\node at (tbl-\x-\y){0};
}};
\node[fill=red!30] at (tbl-3-2){$I$};
\node[fill=red!30] at (tbl-2-4){$I$};

\sfrm{tbl}{4}{5};

\foreach \x in {1,...,4}{
\foreach \y in {1,...,5}{
\hdline{tbl}{\x}{\y};
\vdline{tbl}{\x}{\y};
}};

\hsline{tbl}{3}{5};
\vsline{tbl}{4}{2};
\vsline{tbl}{4}{4};

\node[base left=3pt of tbl-2-1.west]{$x$};
\node[above =2pt of tbl-1-1.north east]{$y$};
\node[above =2pt of tbl-1-3.north east]{$z$};

\matrix  [tbl5,  name=rt,
row 1/.style={minimum height=20pt},
row 2/.style={minimum height=20pt},
row 3/.style={minimum height=20pt},
row 4/.style={minimum height=20pt},
column 1/.style={text width=40pt},
] at (3.5,0)
{
\\
\\
\\
\\
};
\node[fill, color=white!00, fit=(rt-4-1), inner sep=0pt]{};

\draw[very thick] ([yshift=4pt] rt-1-1.north west) -- ([yshift=-4pt] rt-4-1.north west);
\draw[very thick] (rt-1-1.north west) -- (rt-1-1.north east);
\hdline{rt}{1}{1};
\hdline{rt}{2}{1};
\hsline{rt}{3}{1};

\node[dot] (t) at ([xshift= 20pt] rt-1-1.east){};
\node[dv] (a) at ([xshift= 20pt] rt-2-1.east){};
\node[dv] (b) at ([xshift= 20pt] rt-3-1.east){};

\draw[thick,->](t) to node[base right]{$_\bK$}(a);
\draw[ thick, ->, bend left=50, ]
 (a)
to node[base right]{$_\bD$} (b) ;
\draw[ thick, ->, bend left=50, ] (b) to node[base left]{$_\idm$} (a);

\node[base left=3pt of rt-1-1]{$\tilde{x}$};
\node[base left=3pt of rt-2-1]{$\tilde{x}_z$};
\node[base left=3pt of rt-3-1]{$\tilde{x}_y$};

\node at (rt-2-1.west){$=$};
\node at (rt-3-1.west){$-$};

\draw[red] ([xshift=-4pt] tbl-2-1.west) to ([xshift=4pt] tbl-2-5.east);
\draw[red] ([xshift=-4pt] tbl-3-1.west) to ([xshift=4pt] tbl-3-5.east);

\draw[red] ([yshift=4pt] tbl-1-2.north) to ([yshift=-4pt] tbl-4-2.south);
\draw[red] ([yshift=4pt] tbl-1-4.north) to ([yshift=-4pt] tbl-4-4.south);

\matrix  [tbl5,  name=bt,
row 1/.style={minimum height=20pt},
column 1/.style={text width=40pt},
column 2/.style={text width=20pt},
column 3/.style={text width=20pt},
column 4/.style={text width=20pt},
] at (0,-3)
{
~&~&~&~~\\
};
\node[fill, color=white!00, fit=(bt-1-1), inner sep=0pt]{};
\node[fill, color=white!00, fit=(bt-1-4), inner sep=0pt]{};

\draw[very thick] ([xshift=-3pt]bt-1-1.north east) -- ([xshift=3pt]bt-1-3.north east);
\vsline{bt}{1}{1};
\vsline{bt}{1}{3};
\draw[dashed] (bt-1-2.north east) -- (bt-1-2.south east);

\node[above=6pt of bt-1-2]{$\tilde{z}$};
\node[above=4pt of bt-1-3]{$\tilde{z}_x$};
\node at (bt-1-3.north){$||$};
\node[dv] (d) at ([xshift= -3pt,yshift= -17pt] bt-1-2.south){};
\node[dv] (c) at( [xshift= 3pt,yshift= -17pt] bt-1-3.south){};
\draw[ thick, ->, bend right=30, ] (c) to node[above]{$_\idm$} (d);
\draw[ thick, ->, bend right=30, ] (d) to node[below]{$_\bD$} (c);

\matrix  [tbl5,  name=abt,
row 1/.style={minimum height=20pt},
column 1/.style={text width=20pt},
column 2/.style={text width=20pt},
column 3/.style={text width=60pt},
] at (0,3)
{
~&~&~\\
};
\node[fill, color=white!00, fit=(abt-1-3), inner sep=0pt]{};

\draw[very thick] ([xshift=-3pt]abt-1-1.south west) -- ([xshift=3pt]abt-1-2.south east);
\vsline{abt}{1}{2};
\draw[dashed] (abt-1-1.north east) -- (abt-1-1.south east);
\draw[very thick] (abt-1-1.north west) -- (abt-1-1.south west);

\node[below=4pt of abt-1-1]{$\tilde{y}$};
\node[below=4pt of abt-1-2]{$\tilde{y}_x$};
\node at (abt-1-2.south){$|$};
\node[dv] (d) at ([xshift= -3pt,yshift= 17pt] abt-1-1.north){};
\node[dv] (c) at( [xshift= 3pt,yshift= 17pt] abt-1-2.north){};
\draw[ thick, ->, bend right=30, ] (c) to node[above]{$_\idm$} (d);
\draw[ thick, ->, bend right=30, ] (d) to node[below]{$_\bD$} (c);

\end{tikzpicture}
\end{center}
The above computations and  Crossing--Out Lemma \ref{L:crossingout} yield a  representation equivalence
(\ref{E:ReductionCategorielle}). Note that in this case $\Rep^{\le (\scX, \scY)}(\dX) = \Rep^{\le (\scX, \scY), \circ}(\dX)$.
The translation rule (\ref{E:RuleToRewrite}) is given by the formulae
$$
\tilde{x}_z \sim \tilde{z}_x \, \mapsto \,
\tilde{x} \sim x \stackrel{0}- y \sim \tilde{y} \quad \mbox{and} \quad
\tilde{y}_x \sim \tilde{x}_y \, \mapsto \,
\tilde{y} \sim y \stackrel{0}- x \sim \tilde{x}.
$$

\medskip
\noindent
\underline{Case 3b}. Now assume  that $n=2$, $\scY = \{y \rhd z\}$ but  this time $y \sim z$.
Note  that  any object of $\Rep(\dX_{(\scX, \scY)})$
is isomorphic to some $U$ given by

\begin{center}
\begin{tikzpicture}

\matrix (first) [tbl5,  name=tbl,
row 1/.style={minimum height=20pt},
row 2/.style={minimum height=20pt},
row 3/.style={minimum height=20pt},
row 4/.style={minimum height=20pt},
row 5/.style={minimum height=20pt},
column 1/.style={text width=20pt},
column 2/.style={text width=20pt},
column 3/.style={text width=20pt},
column 4/.style={text width=20pt},
column 5/.style={text width=20pt},
column 6/.style={text width=20pt},
column 7/.style={text width=20pt},
column 8/.style={text width=20pt},
] at (0,0)
{
0&0&0&0&       0&0&0&0\\
0&0&0&0&       0&0&0& I_2 \\
0&0&0&0&       0& I_3 &0&0\\
0&0&0& I_2 &   0&0&0&0\\
0&0& I_1 &0&   0&0&0&0\\
};

\sfrm{tbl}{5}{8};

\foreach \x in {1,...,5}{
\foreach \y in {1,...,8}{
\hdline{tbl}{\x}{\y};
\vdline{tbl}{\x}{\y};
}};
\vsline{tbl}{5}{4};

\node[base left=3pt of tbl-3-1.west]{$x$};
\node[above =3pt of tbl-1-2.north east]{$y$};
\node[above =3pt of tbl-1-6.north east]{$z$};
\end{tikzpicture}
\end{center}
If all elements $x, y, z$ are untied, then we can split up all direct summands of $(Z, W)$ isomorphic
to the strings $S(x-y\sim z)$, $S(x-z\sim y)$ and $S(x-z\sim y-x)$.  In this case  we just proceed
to the next category $\Rep^{<(\scX, \scY)}(\dX)$. Hence,  we might assume that at least
one element of $\{x, y, z\}$ is tied. A direct computation shows that $\End_{\dX_{(\scX, \scY)}}(U)$ is
the $\bD$--module of all pairs of matrices $(F, G)$ of the form
\begin{equation}\label{E:MatricesDeTransform2}
\begin{array}{c}
\begin{tikzpicture}

\matrix (first) [tbl5,  name=tbl,
minimum height=20pt,
text width=20pt%
] at (0,0)
{
\bullet & 0 & 0 & 0 & 0 \\
\bullet & \ast_1 & \odot_5 & \odot_6 & \odot_7 \\
\bullet & \ast_\beta & \ast_2 & \odot_8 & \odot_9 \\
\bullet & \ast_\gamma & \ast_\delta & \ast_{1'} & \odot_\alpha \\
\bullet & \ast_\varphi & \ast_\psi & \ast_\theta & \ast_3\\
};

\sfrm{tbl}{5}{5};
\node[base left=5pt of tbl-3-1.west, yshift=-5pt]{$F  =$};
\matrix (second) [tbl5,  name=tbl,
minimum height=20pt,
text width=20pt%
] at (6,0)
{
\ast_4 & \ast & \ast & \ast & \ast & \ast & \ast & \ast \\
\odot & \ast_{2'} & \odot & \ast & \ast & \ast & \ast & \ast \\
0 & 0 & \ast_3 & \ast_\theta & 0 & \ast_\psi & 0 & \ast_\varphi \\
0 & 0 & \odot_\alpha & \ast_{1'} & 0 & \ast_\delta & 0 & \ast_\gamma \\
\odot & \odot & \odot & \odot & \ast_{4'} & \ast & \ast & \ast \\
\odot & \odot & \odot_9 & \odot_8 & 0 & \ast_2 & 0 & \ast_\beta \\
\odot & \odot & \odot & \odot & \odot & \odot & \ast_{3'} & \ast \\
\odot & \odot & \odot_7 & \odot_6 & 0 & \odot_5 & 0 & \ast_1 \\
};

\sfrm{tbl}{8}{8};
\node[base left=5pt of tbl-5-1.north west, yshift=-5pt]{$G  =$};

\hsline{tbl}{4}{8};
\vsline{tbl}{8}{4};
\end{tikzpicture}
\end{array}
\end{equation}

We follow here the same conventions as in Case 3a. In particular, $(\ast_{i}, \ast_{i'})$  means that
the corresponding blocks are equal modulo $\idm$. By notational reasons,  the congruences
$G_{tl} \equiv G_{t+4\,  l+4} \,\mod\, \idm$ for  $1 \le t, l \le 4$ are not reflected in (\ref{E:MatricesDeTransform2}).

\begin{itemize}
\item The admissible transformations of the  non-reduced part of $y$--stripe are given by matrices of the form
$
\left(
\begin{array}{cc}
G_{11} & G_{12} \\
G_{21} & G_{22}
\end{array}
\right)
=
\left(
\begin{array}{cc}
\ast_4 & \ast \\
\odot & \ast_{2'}
\end{array}
\right).
$
\item The admissible transformations of the  non-reduced part of $z$--stripe are given by matrices of the form
$
\left(
\begin{array}{cc}
G_{55} & G_{57} \\
G_{75} & G_{77}
\end{array}
\right)
=
\left(
\begin{array}{cc}
\ast_{4'} & \ast \\
\odot & \ast_{3'}
\end{array}
\right).
$
\item If there exists $x \sim \tilde{x}$ then  $\tilde{x}$--stripe is transformed by matrices

\begin{equation}
\begin{array}{c}
\begin{tikzpicture}

\matrix (first) [tbl5,  name=tbl,
minimum height=20pt,
text width=20pt%
] at (0,0)
{
\bullet_0 & 0 & 0 & 0 & 0 \\
\bullet & \ast_1 & \odot & \odot & \odot \\
\bullet & \ast & \ast_2 & \odot & \odot \\
\bullet & \ast & \ast & \ast_{1'} & \odot \\
\bullet & \ast & \ast & \ast & \ast_3\\
};

\sfrm{tbl}{5}{5};
\node[base left=5pt of tbl-3-1.west, yshift=-5pt]{$\tilde{F} = F =$};
\end{tikzpicture}
\end{array}
\end{equation}

whereas the non--reduced part of $x$--stripe is transformed by the matrix $(\bullet_0)$.
\end{itemize}
The new decorated bunch of chains $\dX^{[\scX, \scY]}$ is defined as follows.
\begin{itemize}
\item We add to $\dF_\imath$ two decorated elements $y_z$ and $z_y$ which satisfy
$y \rhd y_z$, $z \rhd z_y$ and inherit from their parent elements $y$ and $z$ all order relations.
\item If $x \sim \tilde{x}$ for some  $\tilde{x}\in \dX_\jmath$ then we add to $\dX_\jmath$ new \emph{decorated} elements
$\tilde{x}_{zy}, \tilde{x}_{yz}, \tilde{x}_y$ and $\tilde{x}_z$. They inherit all order relations
from their parent element $\tilde{x}$ and satisfy $\tilde{x} < \tilde{x}_{zy} \lhd
\tilde{x}_z \lhd \tilde{x}_{yz} \lhd
\tilde{x}_y$.
\item Finally, we impose new equivalence relations
$y_{z} \sim \tilde{x}_z$, $z_{y} \sim \tilde{x}_y$ and $\tilde{x}_{yz} \sim \tilde{x}_{zy}$.
\end{itemize}
\begin{center}
\begin{tikzpicture}
\matrix (first) [tbl5,  name=tbl,
row 1/.style={minimum height=20pt},
row 2/.style={minimum height=20pt},
row 3/.style={minimum height=20pt},
row 4/.style={minimum height=20pt},
row 5/.style={minimum height=20pt},
row 6/.style={minimum height=20pt},
column 1/.style={text width=20pt},
column 2/.style={text width=20pt},
column 3/.style={text width=20pt},
column 4/.style={text width=20pt},
column 5/.style={text width=20pt},
column 6/.style={text width=20pt},
column 7/.style={text width=20pt},
column 8/.style={text width=20pt},
column 9/.style={text width=20pt},
] at (0,0)
{
~&~&~&~&        ~&~&~&~&     *\\
~&~&~&~&        ~&~&~&~&     0\\
~&~&~&~&        ~&~&~&~&     0\\
~&~&~&~&        ~&~&~&~&     0\\
~&~&~&~&        ~&~&~&~&     0\\
*&*&0&0&        *&0&*&0&     *\\
};

\node[fill, color=red!30, fit=(tbl-1-1)(tbl-5-8), inner sep=0pt]{};
\foreach \x in {1,...,5}{
\foreach \y in {1,...,8}{
\node at (tbl-\x-\y){0};
}};

\node[fill=red!30] at (tbl-2-8){$I$};
\node[fill=red!30] at (tbl-3-6){$I$};
\node[fill=red!30] at (tbl-4-4){$I$};
\node[fill=red!30] at (tbl-5-3){$I$};


\foreach \x in {1,...,6}{
\foreach \y in {1,...,9}{
\hdline{tbl}{\x}{\y};
\vdline{tbl}{\x}{\y};
}};

\hsline{tbl}{5}{9};

\vsline{tbl}{6}{4};
\vsline{tbl}{6}{8};
\draw[very thick, solid](tbl-1-1.north west)-- (tbl-1-9.north east);
\draw[very thick, solid](tbl-1-1.north west)-- (tbl-6-1.south west);

\draw[red] ([xshift=-4pt] tbl-2-1.west) to ([xshift=4pt] tbl-2-9.east);
\draw[red] ([xshift=-4pt] tbl-3-1.west) to ([xshift=4pt] tbl-3-9.east);
\draw[red] ([xshift=-4pt] tbl-4-1.west) to ([xshift=4pt] tbl-4-9.east);
\draw[red] ([xshift=-4pt] tbl-5-1.west) to ([xshift=4pt] tbl-5-9.east);

\draw[red] ([yshift=4pt] tbl-1-3.north) to ([yshift=-4pt] tbl-6-3.south);
\draw[red] ([yshift=4pt] tbl-1-4.north) to ([yshift=-4pt] tbl-6-4.south);
\draw[red] ([yshift=4pt] tbl-1-6.north) to ([yshift=-4pt] tbl-6-6.south);
\draw[red] ([yshift=4pt] tbl-1-8.north) to ([yshift=-4pt] tbl-6-8.south);

\node[base left=3pt of tbl-3-1.west]{$x$};
\node[above =2pt of tbl-1-2.north east]{$y$};
\node[above =2pt of tbl-1-6.north east]{$z$};

\matrix  [tbl5,  name=rt,
row 1/.style={minimum height=20pt},
row 2/.style={minimum height=20pt},
row 3/.style={minimum height=20pt},
row 4/.style={minimum height=20pt},
row 5/.style={minimum height=20pt},
row 6/.style={minimum height=20pt},
column 1/.style={text width=40pt},
] at (5,0)
{
~\\
~\\
~\\
~\\
~\\
~\\
};
\node[fill, color=white!00, fit=(rt-6-1), inner sep=0pt]{};

\draw[very thick] ([yshift=4pt] rt-1-1.north west) -- ([yshift=-4pt] rt-6-1.north west);
\draw[very thick] (rt-1-1.north west) -- (rt-1-1.north east);
\draw[very thick] (rt-6-1.north west) -- (rt-6-1.north east);

\hdline{rt}{1}{1};
\hdline{rt}{2}{1};
\hdline{rt}{3}{1};
\hdline{rt}{4}{1};

\node[dot] (a1) at ([xshift= 25pt] rt-1-1.east){};
\node[dv] (a2) at ([xshift= 25pt] rt-2-1.east){};
\node[dv] (a3) at ([xshift= 25pt] rt-3-1.east){};
\node[dv] (a4) at ([xshift= 25pt] rt-4-1.east){};
\node[dv] (a5) at ([xshift= 25pt] rt-5-1.east){};

\draw[thick,-stealth](a1) to node[base right]{$_\bK$}(a2);

\draw[ thick, -stealth, bend left=50, ] (a2)to node[base right]{$_\bD$} (a3) ;
\draw[ thick, -stealth, bend left=50, ] (a3) to node[base left]{$_\idm$} (a2);

\draw[ thick, -stealth, bend left=50, ] (a3) to node[base right]{$_\bD$} (a4) ;
\draw[ thick, -stealth, bend left=50, ] (a4) to node[base left]{$_\idm$} (a3);

\draw[ thick, -stealth, bend left=50, ] (a4) to node[base right]{$_\bD$} (a5) ;
\draw[ thick, -stealth, bend left=50, ] (a5) to node[base left]{$_\idm$} (a4);

\node[base left=3pt of rt-1-1]{$\tilde{x}$};
\node[base left=3pt of rt-2-1]{$\tilde{x}_{zy}$};
\node[base left=3pt of rt-3-1]{$\tilde{x}_z$};
\node[base left=3pt of rt-4-1]{$\tilde{x}_{yz}$};
\node[base left=3pt of rt-5-1]{$\tilde{x}_y$};

\node at (rt-3-1.west){$-$};
\node at (rt-2-1.west){$=$};
\node at (rt-4-1.west){$=$};
\node at (rt-5-1.west){$\equiv$};

\node[below=4pt of tbl-6-1]{${y}$};
\node[below=4pt of tbl-6-2]{${y}_z$};
\node at (tbl-6-2.south){$|$};

\node[below=4pt of tbl-6-5]{${z}$};
\node[below=4pt of tbl-6-7]{$z_y$};
\node at (tbl-6-7.south){$|||$};

\node[dv] (d) at ([xshift= -3pt,yshift= -40pt] tbl-6-1.south){};
\node[dv] (c) at( [xshift= 3pt,yshift= -40pt] tbl-6-2.south){};
\draw[ thick, ->, bend right=30, ] (c) to node[above]{$_\idm$} (d);
\draw[ thick, ->, bend right=30, ] (d) to node[below]{$_\bD$} (c);

\node[dv] (d2) at ([xshift= -3pt,yshift= -40pt] tbl-6-5.south){};
\node[dv] (c2) at( [xshift= 3pt,yshift= -40pt] tbl-6-7.south){};
\draw[ thick, ->, bend right=30, ] (c2) to node[above]{$_\idm$} (d2);
\draw[ thick, ->, bend right=30, ] (d2) to node[below]{$_\bD$} (c2);

\end{tikzpicture}
\end{center}
Again, we have a representation equivalence (\ref{E:ReductionCategorielle}).
 The translation rule (\ref{E:RuleToRewrite}) is as follows:
$$
\tilde{x}_{yz} \sim \tilde{x}_{zy} \, \mapsto \, \tilde{x} \sim x \stackrel{0}- y \sim z \stackrel{0}- x \sim \tilde{x}, \;
z_y \sim \tilde{x}_y \, \mapsto \, z \sim y \stackrel{0}- x \sim \tilde{x}$$
and $
y_z \sim \tilde{x}_z \, \mapsto \, y \sim z \stackrel{0}- x \sim \tilde{x}.
$

\medskip
\noindent
\underline{Case 3c}. Consider now the general case when
$\scX = \{x\}$ with $x$ not decorated and $\scY = \{y_1 \rhd \dots \rhd y_n\}$ with all
$y_1, \dots, y_n$ decorated.
First one shows  that any indecomposable object of the restricted decorated bunch of chains $\dX_{(\scX, \scY)}$ is isomorphic either to $S(x)$, or to $S(a)$ for some untied  $a \in \scY$, or to
$S(b \sim c)$ where $b, c \in \scY$, or to  $S(x-a)$, $S(x-b\sim c)$, $S(x-b\sim c -x)$, where $a, b, c \in \scY$ are as above.
The new decorated bunch of chains $\dX^{[\scX, \scY]}$ is defined as follows.
\begin{itemize}
\item For any pair $b \sim c$ in $\scY$ we add to $\dF_\imath$ new decorated elements $b_c$ and
$c_b$, which satisfy $b \rhd b_c$ and $c \rhd c_b$ and
inherit all order relations from their parent elements $b$ and $c$.
\item For any pair $a \sim \tilde{a}$  with $a \in \scY$ such that  $\tilde{a}\in \dX_\jmath$ and $\tilde{a} \notin \scY$,
we add to $\dX_\jmath$ a new decorated element $\tilde{a}_x$ which inherits  all order relations
from its parent $\tilde{a}$ and satisfies  $\tilde{a} \rhd \tilde{a}_x$.
\item If there exists $x \sim \tilde{x}$ for some $\tilde{x} \in  \dX_\sigma$ then for any $a \in \scY$ we add to $\dX_\sigma$ a new \emph{decorated} element $\tilde{x}_a$. Moreover, for any pair $b, c \in \scY$ such that $b \sim c$ we add to $\dX_\sigma$ a pair of decorated elements
$\tilde{x}_{bc}$ and $\tilde{x}_{cb}$.  All these new elements inherit all orderings from their parent
$\tilde{x}$.
\item
Assume that  $\tilde{x} \lVert x$ and $\{a \lhd b \lhd f \lhd c \lhd d\} \subseteq \scY$ with  $b \sim c$.
Then we have $$
\tilde{x} < \tilde{x}_d \lhd \tilde{x}_{cb} \lhd \tilde{x}_c \lhd \tilde{x}_f \lhd \tilde{x}_{bd} \lhd \tilde{x}_b \lhd \tilde{x}_a.
$$
\item In the above notations we impose the following equivalence relations.
\begin{itemize}
\item If $b, c \in \scY$ are such that $b \sim c$ then $\tilde{x}_b \sim c_b$, $\tilde{x}_c \sim b_c$ and
$\tilde{x}_{bc} \sim \tilde{x}_{cb}$.
\item If for $a \in \scY$ there exists $\tilde{a} \notin \scY$ such that $a\sim \tilde{a}$ then $\tilde{x}_a \sim \tilde{a}_x$.
\end{itemize}
\end{itemize}
Then we have a representation equivalence (\ref{E:ReductionCategorielle})  with translation rules (\ref{E:RuleToRewrite}) for strings and bands given by
$$
\tilde{x}_b \sim c_b \, \mapsto \,  \tilde{x} \sim x \stackrel{0}- b \sim c, \;
\tilde{x}_c \sim b_c \, \mapsto \,  \tilde{x} \sim x \stackrel{0}- c \sim b, \;
\tilde{x}_{bc} \sim \tilde{x}_{cb} \,  \mapsto \, \tilde{x} \sim x \stackrel{0}- b \sim c \stackrel{0}- x \sim \tilde{x}
$$
and
$\tilde{x}_a \sim \tilde{a}_x \, \mapsto \,  \tilde{x} \sim x \stackrel{0}- a \sim \tilde{a}$.
 The case when $\scX$ is decorated and $\scY$ not decorated is completely analogous. \qed

\subsection{Decorated Kronecker problem}\label{Ex:CanonicalFormsDecKronecker}
\setcounter{case}0
  Consider now the decorated bunch of chains $\dX$ given
     in Example \ref{E:bunchKronecker}, arising in the classification of maximal Cohen--Macaulay modules
     over $T_{23\infty}$. Note that all elements of $\dX$ are tied. This implies  that without loss of generality, we
     can begin any word $w$ (cyclic or not) with a column element. This convention has another advantage as it reduces
     the variety of non--equal but isomorphic canonical forms. According to Theorem \ref{list},
     there are four
 types of indecomposable objects in $\rep(\dX)$, namely:

\begin{case}
  Bands $B\bigl((w,\rho),m,\pi\bigr)$, where
  $$
  (w, \rho) = {}\lha y_2 \sim y_1 \stackrel{\mu_1}- x_1 \sim x_2 \stackrel{\nu_1}- y_2 \sim y_1
  \stackrel{\mu_2}- x_1 \sim x_2 \stackrel{\nu_2}- \dots - y_2 \sim y_1 \stackrel{\mu_n}- x_1 \sim x_2 \str{\nu_n}{\rha},
  $$
  $m$ is any natural number and $\pi \ne \xi$ is any irreducible polynomial.
  We may without loss of generality assume that $\mu_1 = \dots = \mu_n = 1$. In this case, the
  only condition on the decoration is that the sequence of integers $\bnu=\row\nu n$ is non--periodic.
We set  $M_1 = I_{dmn}$, where $d = \deg(\pi)$
 while
 \begin{center}
  \begin{tikzpicture}
\matrix [tbl5,text width=30pt, minimum height=30pt,  name=table] at(6,0)
{
0& t^{\nu_1}I &0& \dots &0 \\
 0&0& t^{\nu_2}I  & \dots &0 \\
  \vdots  &  \vdots  &  \ddots  &  \ddots  &   \vdots   \\
0&0&0& \ddots & t^{\nu_{n-1}}I \\
t^{\nu_{n}} F&0&0&\dots&0\\
};
\sfrm{table}{5}{5}
\node[base left=2pt of table.west]{$M_2 =$};

\end{tikzpicture}
\end{center}

\noindent
 where  $I$ is the identity  $dm\xx dm$ matrix and $F$ is the Frobenius block of $\pi^m$.
\end{case}

\begin{case} The ``degenerate band'' corresponding to the ``forbidden'' polynomial
$\pi = x$ is given by the string
$S(w, \rho)$, where
$$
  (w, \rho) =  y_2 \sim y_1 \stackrel{\mu_1}- x_1 \sim x_2 \stackrel{\nu_1}- y_2 \sim y_1
  \stackrel{\mu_2}- x_1 \sim x_2 - \dots \stackrel{\nu_{n-1}}- y_2 \sim y_1 \stackrel{\mu_n}- x_1 \sim x_2.
  $$
Again, the decoration $\rho$ can be chosen in such a way that
$\mu_1 = \dots = \mu_n = 1$.
 Then the matrix $M_1 = I_{n+1}$, while
\begin{center}
  \begin{tikzpicture}
\matrix [tbl5,text width=22pt, minimum height=22pt,  name=table] at(6,0)
{
0& t^{\nu_1} &0& \dots &0 \\
 0&0& t^{\nu_2} & \dots &0 \\
  \vdots  &  \vdots  &  \ddots  &  \ddots  &   \vdots   \\
0&0&0& \ddots & t^{\nu_n} \\
0&0&0&\dots&0\\
};
\sfrm{table}{5}{5}
\node[base left=2pt of table.west]{$M_2 =$};

\end{tikzpicture}
\end{center}
Of course,
there is also the symmetric object, obtained by permuting indices  $1$ and $2$.
\end{case}

\begin{case} There exists a family of  ``non--square'' indecomposable representations $S(w, \rho)$:
\begin{center}
  \begin{tikzpicture}

\matrix (first) [tbl5,text width=22pt, minimum height=22pt, name=table] at (0,0)
{
1&0& \dots &0&0\\
0&1& \dots &0&0 \\
 \vdots  &  \vdots  & \vdots &  \ddots  &  \vdots   \\
0&0& \dots &1&0\\
};
\sfrm{table}{4}{5}
\node[base left=2pt of table.west]{$M_1=$};
\matrix (second) [tbl5,text width=22pt, minimum height=22pt,  name=table] at(6,0)
{
0& t^{\nu_1} &0& \dots &0 \\
 0&0& t^{\nu_2} & \dots &0 \\
  \vdots  &  \vdots  &  \ddots  &  \vdots  &   \vdots   \\
0&0&0& \dots & t^{\nu_n} \\
};
\sfrm{table}{4}{5}
\node[base right=2pt of table.east]{$= M_2$};

\end{tikzpicture}
\end{center}
given by the decorated word
$$
  (w, \rho) =  y_2 \sim y_1 \stackrel{\mu_1}- x_1 \sim x_2 \stackrel{\nu_1}- y_2 \sim y_1
  \stackrel{\mu_2}- x_1 \sim x_2 \stackrel{\mu_2}- \dots - y_2 \sim y_1
  \stackrel{\mu_n}- x_1 \sim x_2 \stackrel{\nu_n}- y_2 \sim y_1.
  $$
  As before, we have posed  $\mu_1 = \dots = \mu_n = 1$. Moreover, we may additionally (and without loss of generality) assume that $\nu_n = 1$.
\end{case}

\begin{case} Finally, we have   the ``dual object'' to the previous string object:
\begin{center}
  \begin{tikzpicture}
\matrix (first) [tbl5,text width=22pt, minimum height=22pt, name=table]
at (0,0) {
1&0& \dots &0\\
0&1& \dots &0\\
 \vdots  & \vdots &  \ddots  &  \vdots   \\
0&0& \dots &1\\
0&0& \dots &0\\
};
\sfrm{table}{5}{4};
\node[base left=2pt of table.west]{$M_1=$};
\matrix [tbl5,text width=22pt,  minimum height=22pt, name=table]
at(6,0) {
0&0& \dots &0\\
 t^{\nu_1} &0& \dots &0 \\
0& t^{\nu_2} & \dots &0\\
  \vdots  &  \vdots  &  \ddots  &   \vdots    \\
0&0& \dots & t^{\nu_n} \\
};
\sfrm{table}{5}{4}; \node[base right=2pt of table.east]{$= M_2$};
\end{tikzpicture}
\end{center}

\noindent
given by the decorated word
$$
  (w, \rho) =   x_2 \sim x_1 \stackrel{\mu_1}- y_1 \sim y_2
  \stackrel{\nu_1}- x_2 \sim x_1 \stackrel{\mu_2}- \dots \stackrel{\mu_n}- y_1 \sim y_2 \stackrel{\nu_n}- x_2 \sim x_1.
  $$
  As above, we  have posed  $\mu_1 = \dots = \mu_n = 1$.
\end{case}

\noindent
The isomorphism classes of strings in all Cases 2--4 are uniquely determined by the corresponding sequences $\bnu = (\nu_1, \dots, \nu_n)$.
Two bands $B\bigl(\bnu, m, \pi)$ and  $B\bigl(\bnu', m', \pi')$ are isomorphic if and only if $m = m'$, $\pi = \pi'$ and
$\bnu'$ is a rotation of $\bnu$.

\section{Maximal Cohen--Macaulay modules over degenerate cusps--I}\label{sec7}

\noindent
In this section we consider in details three other
important examples  of degenerate cusps:
 $\kk\llbracket x, y, z\rrbracket/(xyz)$,
$\kk\llbracket x, y, z, w\rrbracket/(xy, zw)$ and $\kk\llbracket x, y, z, u, v\rrbracket/(xz, xu, yu, yv, zv)$.

\subsection{Maximal Cohen--Macaulay modules over  $\kk\llbracket x,y,z\rrbracket/(xyz)$}\label{SS:degcusp2}

Consider a $T_{\infty\infty\infty}$--singularity
$\rA = \kk\llbracket x,y,z\rrbracket/(xyz)$. Let  $$\pi: \rA \lar \rR =  \rR_1 \times \rR_2 \times \rR_3 =
\kk\llbracket x_1, y_2\rrbracket \times \kk\llbracket y_1, z_2\rrbracket \times \kk\llbracket z_1, x_2\rrbracket$$ be its normalization, where
$\pi(x) = x_1 + x_2$, $\pi(y) = y_1 + y_2$ and $\pi(z) = z_1 + z_2$. For the conductor ideal  $I =
\Ann_\rA(\rR/\rA)$ we have:
$$
I = \bigl\langle xy, xz, yz\bigl\rangle_\rA =
\bigl\langle x_1 y_2, y_1 z_2, z_1 x_2\bigl\rangle_\rR.
$$
Next, we have:
$
\bar\rA = \rA/I = \kk\llbracket x,y,z\rrbracket/(xy, xz, yz)
$ and
$
\bar\rR = \rR/I = \bar\rR_1 \times \bar\rR_2 \times \bar\rR_3 =
\kk\llbracket x_1, y_2\rrbracket/(x_1 y_2)
 \times \kk\llbracket y_1, z_2\rrbracket/(y_1 z_2)
 \times \kk\llbracket z_1, x_2\rrbracket/(z_1 x_2).
$
It is convenient to introduce the ring $\widetilde\rA = \kk\llbracket x\rrbracket
\times \kk\llbracket y\rrbracket
\times \kk\llbracket z\rrbracket.
$
Note that  the canonical map $\bar\rA \to \bar\rR$ factorizes through $\widetilde\rA$.
It is convenient to visualize an object of the category $\Rep(\dX_A)$ as a representation of the
``decorated'' quiver
(\ref{E:affinequiv}) over the field $\rK = \kk\llbrace t\rrbrace$:
\begin{equation}\label{E:MPTinfty}
\begin{xy}
(0,0)*+{\bK^{m_1}}="A";
(10,17)*+{\bK^{n_x}}="B";
(28,17)*+{\bK^{m_3}}="C";
(38,0)*+{\bK^{n_z}}="D";
(10,-17)*+{\bK^{n_y}}="E";
(28,-17)*+{\bK^{m_2}}="F";
{\ar@{->}_-{\Theta_1^x} "B";"A"};
{\ar@{->}^-{\Theta_3^x} "B";"C"};
{\ar@{->}_-{\Theta_3^z} "D";"C"};
{\ar@{->}^-{\Theta_2^z} "D";"F"};
{\ar@{->}^-{\Theta_1^y} "E";"A"};
{\ar@{->}^-{\Theta_2^y} "E";"F"};
\end{xy}
\end{equation}
The isomorphy classes of objects in $\Rep(\dX_A)$ correspond to the transformation rule
\begin{equation}\label{E:ruleTinfty}
\Theta_\imath^a \mapsto S_{\imath a} \Theta_\imath^a T_a^{-1}, \quad
\imath \in \bigl\{1, 2, 3\bigr\}  \; \mathrm{and} \;  a \in \bigl\{x, y, z\bigr\},
\end{equation}
where  $T_a \in \GL_{n_a}(\rK)$ and
$S_{\imath a} \in \GL_{m_\imath}(\bD)$ are such that   $S_{1 x}(0) =  S_{1 y}(0), S_{2y}(0) =  S_{2 z}(0)$ and $S_{3 x}(0) = S_{3 z}(0)$.
{Our goal} is to describe the indecomposable objects of
$\CM^{\mathsf{lf}}(\rA)$.

\begin{definition}\label{D:bands-over-xyz}
Consider the following \emph{band datum} $(\omega, l, \lambda)$, where:
\begin{itemize}
\item  $\omega = \bigl((a_1, b_1, c_1, d_1, e_1, f_2),
(a_2, b_2, c_2, d_2, e_2, f_3), \dots, (a_t, b_t, c_t, d_t, e_t, f_1)\bigr) \in \mathbb{Z}^{6t}$
for some $t \ge 1$
such that $\min(a_i, f_i) = \min(b_i, c_i) = \min(d_i, e_i) = 1$ for all $1 \le i \le t$.
\item $l \in \mathbb{Z}_{>0}$ and
$\lambda \in \kk^*$.
\end{itemize}
\end{definition}

\noindent
Then we attach to  the data $(\omega, l, \lambda)$ the following matrices:
\begin{equation*}
\begin{array}{c}
\begin{tikzpicture}
\matrix (m1) [tbl5,  name=tbl,
minimum height=20pt,
text width=20pt%
] at (0,0)
{
A_1 & 0              & \dots  & 0 \\
0           & A_2    & \dots  & 0 \\
\vdots      & \vdots & \ddots & \vdots \\
0           & 0      & \dots  & A_{t} \\
};

\sfrm{tbl}{4}{4};
\node[base left=5pt of tbl-3-1.north west, yshift=-5pt]{$\Theta_1^x  =$};

\matrix (T1y) [tbl5,  name=tbl,
minimum height=20pt,
text width=20pt%
] at (5,0)
{
B_1 & 0              & \dots  & 0 \\
0           & B_2    & \dots  & 0 \\
\vdots      & \vdots & \ddots & \vdots \\
0           & 0      & \dots  & B_{t} \\
};

\sfrm{tbl}{4}{4};
\node[base left=5pt of tbl-3-1.north west, yshift=-5pt]{$\Theta_1^y  =$};
\matrix (T2y) [tbl5,  name=tbl,
minimum height=20pt,
text width=20pt%
] at (5,-3.5)
{
C_1 & 0              & \dots  & 0 \\
0           & C_2    & \dots  & 0 \\
\vdots      & \vdots & \ddots & \vdots \\
0           & 0      & \dots  & C_{t} \\
};

\sfrm{tbl}{4}{4};
\node[base left=5pt of tbl-3-1.north west, yshift=-5pt]{$\Theta_2^y  =$};
\matrix (T2y) [tbl5,  name=tbl,
minimum height=20pt,
text width=20pt%
] at (10,-3.5)
{
D_1 & 0              & \dots  & 0 \\
0           & D_2    & \dots  & 0 \\
\vdots      & \vdots & \ddots & \vdots \\
0           & 0      & \dots  & D_{t} \\
};

\sfrm{tbl}{4}{4};
\node[base left=5pt of tbl-3-1.north west, yshift=-5pt]{$\Theta_2^z  =$};
\matrix (T3x) [tbl5,  name=tbl,
minimum height=20pt,
text width=20pt%
] at (0,-7)
{
0           & F_2              & \dots  & 0 \\
\vdots      & \vdots & \ddots & \vdots \\
0           & 0    & \dots  &  F_{t}\\
H           & 0      & \dots  & 0 \\
};

\sfrm{tbl}{4}{4};
\node[base left=5pt of tbl-3-1.north west, yshift=-5pt]{$\Theta_3^x  =$};
\matrix (T3z) [tbl5,  name=tbl,
minimum height=20pt,
text width=20pt%
] at (10,-7)
{
E_1 & 0              & \dots  & 0 \\
0           & E_2    & \dots  & 0 \\
\vdots      & \vdots & \ddots & \vdots \\
0           & 0      & \dots  & E_{t} \\
};

\sfrm{tbl}{4}{4};
\node[base left=5pt of tbl-3-1.north west, yshift=-5pt]{$\Theta_3^z  =$};
\end{tikzpicture}
\end{array}
\end{equation*}

\noindent
where $A_k= t^{a_k} I,$
$B_k= t^{b_k} I,$
$C_k= t^{c_k} I,$
$D_k= t^{d_k} I,$
$F_k= t^{f_k} I,$
$E_k= t^{e_k} I,$
and $H=t^{f_1}J$ with
 $I = I_l$  the identity $l \times l$ matrix and $J = J_l(\lambda)$ the Jordan block
of size $l \times l$ with the eigenvalue $\lambda$.  Denote
$
\Theta^x = \Theta_1^x(x_1) + \Theta_3^x(x_2)$,
$\Theta^y = \Theta_1^y(y_2) + \Theta_2^y(y_1)$ and  $
\Theta^z = \Theta_2^z(z_2) + \Theta_3^z(z_1).$
The indecomposable maximal  Cohen--Macaulay $\rA$--module $\mM = \mM(\omega, l, \lambda)$
attached to the band datum $(\omega, l, \lambda)$ is constructed
by the following recipe:

\begin{itemize}
\item Consider the $\rA$--linear morphism
$\bar\Theta = \bigl[\Theta^x| \Theta^y| \Theta^z\bigr]:
\widetilde\rA^{lt} \rightarrow \bar\rR^{lt}$, where we write
$\widetilde\rA^{lt} = \kk\llbracket x\rrbracket^{lt} \oplus \kk\llbracket y\rrbracket^{lt} \oplus
\kk\llbracket z \rrbracket^{lt}$ and use the isomorphisms $\Hom_{\bar\rA}\bigl(\kk\llbracket x\rrbracket, \bar\rR) \cong  \kk\llbracket x\rrbracket$.
\item Consider the torsion free $\rA$--module
$\mL = \mL(\omega, l, \lambda)$ given by the following commutative diagram with exact rows:
$$
\xymatrix
{
0 \ar[r] & I^{lt} \ar[r] \ar[d]_= & \mL \ar[r] \ar[d] &  \widetilde\rA^{lt} \ar[r] \ar[d]^{\bar\Theta}
& 0 \\
0 \ar[r] & I^{lt} \ar[r] & \rR^{lt} \ar[r] &  \bar\rR^{lt} \ar[r]
& 0.
}
$$
\item From the above description it follows that $\mL$ is the
$\rA$--submodule of $\rR^{lt}$ generated by the columns of the matrix
$\Bigl(x_1 y_2 I \,| \, y_1 z_2 I \,| \,  z_1 x_2 I \, | \,
\Theta^x  \, | \,  \Theta^y  \, | \,  \Theta^z  \Bigr)
\in {\Mat}_{lt \times 6 lt}(\rR).$
\item Finally, we have:
$\mM(\omega, l, \lambda) = \mL(\omega, l,\lambda)^{\vee\vee}$. Moreover, the
following sequence is exact:
$
0 \rightarrow \mL(\omega, l, \lambda) \rightarrow \mM(\omega, l, \lambda) \rightarrow H^0_{\{\idm\}}\bigl(\mathsf{coker}(\bar\Theta)\bigr) \rightarrow  0.
$
\end{itemize}
The ring $\rR = \rR_1 \times \rR_2 \times \rR_3$ is a subring of the total ring
of fractions $\rQ(\rA)$. The units of  $\rR_i$ ($1 \le i \le 3$) are the idempotents
$
e_1 =
\frac{xy}{xy + yz + xz}$,
$e_2 =
\frac{yz}{xy + yz + xz}$ and $
e_3 =
\frac{xz}{xy + yz + xz}.
$
It is easy to see that $e_1 + e_2 + e_3 = 1$ and $e_i e_j = \delta_{ij} e_i$ for
all $1 \le i, j \le 3$.
In these notations we can write the elements $x_1, x_2, y_1, y_2, z_1, z_2$ of the normalization
$\rR$
as elements of the total ring of fractions $\rQ(\rA)$ in the following way:
$
x_1 = e_1 x$, $y_2 = e_1 y$, $y_1 = e_2 y$, $z_2 = e_2 z$, $z_1 = e_3 z$
and $x_2 = e_3 x$.
Next, the element $xy + yz + zx \in \rA$ is not a zero divisor. Since the module
$\mL$ is torsion free, we have:
$
\mL \cong (xy + yz + zx) \cdot \mL \subseteq \rA^{lt}.
$
Consider the  matrices:
\begin{equation*}
\begin{tikzpicture}
\matrix (m1) [tbl5,  name=tbl,
minimum height=20pt,
text width=20pt%
] at (0,0)
{
\widetilde{A}_1 & 0              & \dots  & 0 \\
0           & \widetilde{A}_2    & \dots  & 0 \\
\vdots      & \vdots & \ddots & \vdots \\
0           & 0      & \dots  & \widetilde{A}_{t} \\
};

\sfrm{tbl}{4}{4};
\node[base left=5pt of tbl-3-1.north west, yshift=-5pt]{$\widetilde{\Theta}_1^x  =$};

\matrix (T1y) [tbl5,  name=tbl,
minimum height=20pt,
text width=20pt%
] at (5,0)
{
\widetilde{B}_1 & 0              & \dots  & 0 \\
0           & \widetilde{B}_2    & \dots  & 0 \\
\vdots      & \vdots & \ddots & \vdots \\
0           & 0      & \dots  & \widetilde{B}_{t} \\
};

\sfrm{tbl}{4}{4};
\node[base left=5pt of tbl-3-1.north west, yshift=-5pt]{$\widetilde{\Theta}_1^y  =$};
\matrix (T2y) [tbl5,  name=tbl,
minimum height=20pt,
text width=20pt%
] at (5,-3.5)
{
\widetilde{C}_1 & 0              & \dots  & 0 \\
0           & \widetilde{C}_2    & \dots  & 0 \\
\vdots      & \vdots & \ddots & \vdots \\
0           & 0      & \dots  & \widetilde{C}_{t} \\
};

\sfrm{tbl}{4}{4};
\node[base left=5pt of tbl-3-1.north west, yshift=-5pt]{$\widetilde{\Theta}_2^y  =$};
\matrix (T2y) [tbl5,  name=tbl,
minimum height=20pt,
text width=20pt%
] at (10,-3.5)
{
\widetilde{D}_1 & 0              & \dots  & 0 \\
0           & \widetilde{D}_2    & \dots  & 0 \\
\vdots      & \vdots & \ddots & \vdots \\
0           & 0      & \dots  & \widetilde{D}_{t} \\
};
h
\sfrm{tbl}{4}{4};
\node[base left=5pt of tbl-3-1.north west, yshift=-5pt]{$\widetilde{\Theta}_2^z  =$};
\matrix (T3x) [tbl5,  name=tbl,
minimum height=20pt,
text width=20pt%
] at (0,-7)
{
0           & \widetilde{F}_2              & \dots  & 0 \\
\vdots      & \vdots & \ddots & \vdots \\
0           & 0    & \dots  &  \widetilde{F}_{t}\\
\widetilde{H}           & 0      & \dots  & 0 \\
};

\sfrm{tbl}{4}{4};
\node[base left=5pt of tbl-3-1.north west, yshift=-5pt]{$\widetilde{\Theta}_3^x  =$};
\matrix (T3z) [tbl5,  name=tbl,
minimum height=20pt,
text width=20pt%
] at (10,-7)
{
\widetilde{E}_1 & 0              & \dots  & 0 \\
0           & \widetilde{E}_2    & \dots  & 0 \\
\vdots      & \vdots & \ddots & \vdots \\
0           & 0      & \dots  & \widetilde{E}_{t} \\
};

\sfrm{tbl}{4}{4};
\node[base left=5pt of tbl-3-1.north west, yshift=-5pt]{$\widetilde{\Theta}_3^z  =$};
\end{tikzpicture}
\end{equation*}
where
$\widetilde{A}^k=x^{a_k+1}yI,$
$\widetilde{B}^k=y^{b_k+1}xI,$
$\widetilde{C}^k=y^{c_k+1}zI,$
$\widetilde{D}^k=z^{d_k+1}yI,$
$\widetilde{F}^k=x^{a_k+1}zI,$
$\widetilde{E}^k=z^{a_k+1}xI$
and $\widetilde{H}=x^{f_1+1}zJ$
with  $I = I_l$  the identity $l \times l$ matrix and $J = J_l(\lambda)$  the Jordan block
of size $l \times l$ with the eigenvalue $\lambda$.  Denote
$
\widetilde\Theta^x = \widetilde\Theta_1^x + \widetilde\Theta_3^x$,
$\widetilde\Theta^y = \widetilde\Theta_1^y + \widetilde\Theta_2^y$,
$\widetilde\Theta^z = \widetilde\Theta_2^z + \widetilde\Theta_3^z$
and consider the $\rA$--module $\mL'(\omega, l, \lambda)$ generated by the columns
of the matrix $\bigl((xy)^2I \,| \, (yz)^2 I \,| \,  (xz)^2 I \, | \,
\widetilde\Theta^x  \, | \,  \widetilde\Theta^y  \, | \,  \widetilde\Theta^z  \bigr)
\in {\Mat}_{lt \times 6 lt}(\rA).$
\begin{theorem}\label{T:xyz}
Let $\rA = \kk\llbracket x,y,z\rrbracket/(xyz)$,  $(\omega, l, \lambda)$ be a band datum
as in Definition \ref{D:bands-over-xyz} and $\mL'(\omega, l, \lambda)$ the torsion free $\rA$--module defined above.  Then we have:
\begin{itemize}
\item The module
$\mM(\omega, l, \lambda) := \mL'(\omega, l,\lambda)^{\vee\vee}$ is an indecomposable
maximal  Cohen--Macaulay  module over $\rA$, locally free of rank $lt$ on the punctured spectrum.
\item Any indecomposable object of $\CM^{\mathsf{lf}}(\rA)$
is  isomorphic to some module $\mM(\omega, l, \lambda)$.
\item $\mM(\omega, l, \lambda) \cong \mM(\omega', l', \lambda')$ if and only
if $l = l'$, $\lambda = \lambda'$ and $\omega'$ is obtained from $\omega$ by a cyclic shift.
\end{itemize}
\end{theorem}

\begin{remark}
The indecomposable maximal  Cohen--Macaulay $\rA$--modules which are not locally free
on the punctured spectrum, correspond to the string data. They can be described
along similar lines  as above, but there are more cases one needs to consider.
Therefore we leave it to an interested  reader as an exercise.
\end{remark}

\begin{corollary}\label{C:rank-one-on-xyz}
Let $\mM$ be a rank one object of $\CM^{\mathsf{lf}}(\rA)$.
 Then we have:
$$
\mM \cong \mM(\omega, \lambda) :=
\bigl\langle (xy)^2, (yz)^2, (xz)^2,
x^{m_1+1} y + \lambda x^{m_2+1} z,
y^{n_1+1} z + y^{n_2+1} x,
z^{l_1+1} x + z^{l_2+1} y
\bigr\rangle_\rA^{\vee\vee}
$$
for some $\omega = \bigl((m_1, m_2), (n_1, n_2), (l_1, l_2)\bigr) \in \mathbb{Z}^6$ and
$\lambda \in \kk^*$, where
$$\min(m_1, m_2) = \min(n_1, n_2) = \min(l_1, l_2) = 1.$$ Moreover,
$\mM(\omega, \lambda) \cong \mM(\omega', \lambda')$ if and only if $\omega = \omega'$ and
$\lambda = \lambda'$.
\end{corollary}

\begin{remark}
It is very instructive to consider the case $m_1 = m_2 = n_1 = n_2 = l_1 = l_2 = 1$ and
$\lambda = 1$. First note that
$$
\mL:=
\bigl\langle (xy)^2, (yz)^2, (xz)^2,
x^{2} y + x^{2} z,
y^{2} z + y^{2} x,
z^{2} x + z^{2} y
\bigr\rangle = \bigl\langle
x^{2} y + x^{2} z,
y^{2} z + y^{2} x,
z^{2} x + z^{2} y
\bigr\rangle.
$$
Let $r = xy + yz + xz \in \rA$. Then we have: $\idm \cdot r \subset \mL$. From Lemma \ref{L:fact-on-Macaulaf}
it follows that $\mL^{\vee\vee} = (r) \cong \rA$. Hence,
$
\mM\Bigl(\bigl((1,1), (1, 1), (1, 1)\bigr), 1\Bigr) \cong  \rA,
$
what of course matches with the general theory.
\end{remark}

\noindent
The classification of rank one objects of $\CM^{\mathsf{lf}}(\rA)$  obtained in Corollary \ref{C:rank-one-on-xyz} can be elaborated one step further.

\begin{proposition}\label{P:xyz}
Let $\mM$ be a non--regular rank one object of $\CM^{\mathsf{lf}}(\rA)$.
 Then its minimal number of generators is equal to  two or  three.

\medskip
\noindent
1. Assume  $\mM$ is generated by two elements.  Then there exists
 a bijection $\{u, v, w\} \rightarrow \{x, y, z\}$, a pair of integers
$p, \, q \ge 1$ and $\lambda \in \kk^*$
such that $\mM = \mathsf{coker}(\rA^2 \xrightarrow{\theta} \rA^2)$ with
$\theta = \theta_i\bigl((p,q), \lambda)\bigr)$, $1 \le i \le 3$, where
$$
\theta_1\bigl((p,q), \lambda)\bigr) =
\left(
\begin{array}{cc}
u & 0 \\
v^p + \lambda w^q & vw
\end{array}
\right), \quad \theta_2\bigl((p,q), \lambda)\bigr) =
\left(
\begin{array}{cc}
\lambda u + v^p w^q & w^{q+1} \\
u^{q+1} & vw
\end{array}
\right)
$$
and
$
\theta_3\bigl((p,q), \lambda)\bigr) = \theta_1\bigl((p,q), \lambda)\bigr)^{\mathsf{tr}}.
$

\medskip
\noindent
2. Assume $\mM$ is generated by three elements. Then there exists
 a bijection $\{u, v, w\} \rightarrow \{x, y, z\}$, a triple  of integers
$m, \,  n, \,  l \ge 1$ and $\lambda \in \kk^*$
such that $\mM = \mathsf{coker}(\rA^3 \xrightarrow{\theta} \rA^3)$ with
$\theta = \theta_i\bigl((m, n, l), \lambda\bigr)$, $4 \le i \le 7$, where
$$
\theta_4\bigl((m, n, l), \lambda\bigr) =
\left(
\begin{array}{ccc}
u & w^l & 0 \\
0 & v & u^m \\
\lambda v^n & 0 & w
\end{array}
\right),  \quad
\theta_5\bigl((m, n, l), \lambda\bigr) =
\left(
\begin{array}{ccc}
u & w^l &  \lambda v^n\\
0 & v & u^m \\
0 & 0 & w
\end{array}
\right),
$$
$
\theta_6\bigl((m, n, l), \lambda\bigr) =
\theta_4\bigl((m, n, l), \lambda\bigr)^{\mathsf{tr}}
\quad \mbox{\rm and}
\quad
\theta_7\bigl((m, n, l), \lambda\bigr) =
\theta_5\bigl((m, n, l), \lambda\bigr)^{\mathsf{tr}}.
$
\end{proposition}

\begin{proof} By  Corollary \ref{C:rank-one-on-xyz} we know that
$\mM \cong \mM(\omega, \lambda) = \mL(\omega, \lambda)^{\vee\vee}$
for some $\omega = $  $ \bigl((m_1, m_2), $ $  (n_1, n_2), (l_1, l_2)\bigr) \in \mathbb{Z}^6$ and
$\lambda \in \kk^*$, where
$\min(m_1, m_2) = \min(n_1, n_2) = \min(l_1, l_2) = 1$.

\medskip
\noindent
\underline{Case 1a}. Assume $m_1 = n_1 = l_1 = 1$. Denote for simplicity
$m_2 = m$, $n_2 = n$, $l_2 = l$ and
$$
u_1 = x^2 y + \lambda x^{m+1}z, \;
u_2 = y^2 z + y^{n+1}x
\; \mbox{\rm and} \;
u_3 = z^2 x + z^{l+1} y.
$$
Then we have:
$
(xy)^2 = x u_1, \; (yz)^2 = y u_2$ and $(xz)^2 = z u_3.
$
Hence, $\mL = \langle u_1, u_2, u_3\rangle_\rA$. Moreover,
$\mL$ is already maximal  Cohen--Macaulay and we have an exact sequence
$$
\rA^3
\xrightarrow{
\left(
\begin{smallmatrix}
z & - y^n & 0 \\
0 & x & -z^l \\
-\lambda x^m & 0 & y
\end{smallmatrix}
\right)
}
\rA^3 \lar \mL \lar 0.
$$

\medskip
\noindent
\underline{Case 1b}. Assume $m_2 = n_2 = l_2 = 1$. Again, we denote
$m_1 = m$, $n_1 = n$, $l_1 = l$ and
$$
v_1 = x^{m+1} y + \lambda x^2 z, \;
v_2  = y^{n+1} z + y^{2} x
\; \mbox{\rm and} \;
v_3 = z^{l+1} x + z^2  y.
$$
Then we have:
$
(xy)^2 = z v_1, \; (yz)^2 = y v_3$ and $\lambda  (xz)^2 = z v_1.
$
Hence, $\mL = \langle v_1, v_2, v_3\rangle_\rA$. As in the previous case,
$\mL$ is maximal  Cohen--Macaulay and we have an exact sequence
$$
\rA^3
\xrightarrow{
\left(
\begin{smallmatrix}
\lambda x  &  0 &  - y^n \\
- z^l  & y & 0  \\
0 & -x^m  & z
\end{smallmatrix}
\right)
}
\rA^3 \lar \mL \lar 0.
$$

\medskip
\noindent
\underline{Case 2}. Assume $m_1 = l_1 = n_2 = 1$. Denote
$m_2 = m$, $n_1 = n$ and $l_2 = l$. The  case $n =1$  has been  already treated  in
Case 1a. Hence, without  loss of generality we may assume that $n \ge 2$.
Denote
$$
w_1 = x^2 y + \lambda x^{m+1} z, \;
w_2 = y^{n+1} z + y^2 x
\; \,
\mbox{\rm and}
\; \,
w_3 = z^2 x + z^{l+1} y.
$$
We have: $(xy)^2 = x w_1$ and $(xz)^2 = x w_3$. Hence, $\mL =
\bigl\langle (yz)^2, w_1, w_2, w_3\bigr\rangle_\rA$. Note that the case $m = l = 1$ has been
already considered in Case 1b.

\medskip
\noindent
\underline{Case 2a}. Assume $m = 1$ and $l \ge 2$. Consider the element
$r = xy + y^n z + \lambda(xz + z^l y) \in \rA$ and the
module $\widetilde\mL = \langle \mL, r\rangle_\rA  \subseteq  \rA$.
First note that
$$
x r = w_1, \;
y r = w_2 + \lambda z^{l-2} (yz)^2\;
\mbox{\rm and} \;
z r = y^{n-2} (yz)^2 + \lambda w_3.
$$
By Lemma \ref{L:fact-on-Macaulaf}, the Macaulayfications  of the modules $\mL$ and
$\widetilde\mL$ are isomorphic. Moreover,
$\widetilde\mL = \bigl\langle r, (yz)^2\bigr\rangle_\rA$ is maximal  Cohen--Macaulay and
we have an exact sequence
$$
\rA^2
\xrightarrow{
\left(
\begin{smallmatrix}
x & y^{n-1} + \lambda z^{l-1} \\
0 & - yz
\end{smallmatrix}
\right)
}
\rA^2 \lar \widetilde\mL \lar 0.
$$

\medskip
\noindent
\underline{Case 2b}. Assume $m \ge 2$.  The case $l = 1$ reduces  to the
Case 2a. Hence, we may without  loss of generality assume that $ l \ge 2$.
Consider the element $r = xy + y^n z + \lambda x^m z
\in \rA$ and $\widetilde\mL = \langle \mL, r\rangle_\rA \subseteq \rA$. Then we have:
$
x r = w_1, \;
yr = w_2$ and $
zr = y^{n-2} (yz)^2 + \lambda x^{m-1} w_3.
$
Hence, the Macaulayfications  of $\mL$ and $\widetilde\mL$ are isomorphic.
Next, $\widetilde\mL = \bigl\langle (yz)^2, w_3, r \bigr\rangle_\rA$ is maximal  Cohen--Macaulay
and it has a presentation
$$
\rA^3
\xrightarrow{
\left(
\begin{smallmatrix}
x & - z^{l-1} & - y^{n-2} \\
0 &  y & - \lambda x^{m-1} \\
0 & 0 & z
\end{smallmatrix}
\right)
}
\rA^3 \lar \widetilde\mL \lar 0.
$$
Note that the case $n = 2$ has to be treated separately because in that case
the presentation is not minimal.
Indeed,  $(yz)^2 = z (xy + y^2 z + \lambda x^m z)
= z r$. Hence, $$\widetilde\mL =
\bigl\langle xz^2 + y z^{l+1}, xy + y^2 z + \lambda x^m z \bigr\rangle_\rA \subseteq \rA$$
is generated by two elements. It is easy to see that $\widetilde\mL$ has a presentation
$$
\rA^2
\xrightarrow{
\left(
\begin{smallmatrix}
\lambda y + x^{m-1} z^{l-1} & \; z^l \\
x^m  & \; xz
\end{smallmatrix}
\right)
}
\rA^2 \lar \widetilde\mL \lar 0.
$$
It remains to observe that
the remaining cases reduce to the ones considered above.
\end{proof}

\begin{remark}\label{R:gradability}
Contrary to the case of simple elliptic singularities \cite[Proposition 5.23]{KahnDiss}, not all
indecomposable maximal Cohen--Macaulay modules over $\rA$ are gradable.
For example,
$
\mathsf{coker}
\left(
\begin{array}{cc}
x & y^p + \lambda z^q \\
0 & yz
\end{array}
\right)
$
is not gradable for $(p, q) \ne (2, 2)$ and $\lambda \in \kk^*$,
since  its first Fitting ideal $(x, yz, y^p + \lambda z^q)$ is not graded.
\end{remark}

\begin{remark}\label{R:HMS} In recent works of  Sheridan \cite[Theorem 1.2]{Sheridan} and Abouzaid et al.~\cite[Section 7.3]{AAEKO}
a version of the homological mirror symmetry for the category
$\underline{\CM}^{\mathsf{lf}}(\rA)$  was established.
We hope that our result will contribute to a better understanding of the Fukaya side of this correspondence. In particular, it would be interesting to describe explicitly the symplectic images of rank one matrix factorizations obtained in this subsection.
\end{remark}

\subsection{Maximal  Cohen--Macaulay modules over $\kk\llbracket x,y,u,v\rrbracket/(xy, uv)$}\label{SS:degcusp3}
It seems that the only
concrete examples of families of indecomposable maximal Cohen--Macaulay modules
over surface singularities, which have  been constructed so far,
deal with  the case of  hypersurface singularities.
From this perspective it is particularly interesting  to consider
the case of the  degenerate cusp $\rA = \kk\llbracket x,y,u,v\rrbracket/(xy, uv)$.
Indecomposable maximal Cohen--Macaulay modules over $\rA$ can be described using
essentially the same technique as in  Subsection
\ref{SS:degcusp2}. Since the computations do not contain any new phenomena, we omit them
and only state the final  result.

\begin{proposition}
Denote  $J = \bigl\langle
(xu)^2,  (xv)^2, (yu)^2, (yv)^2\bigr\rangle_\rA$. Let
$\mM$ be a rank one object in  $\CM^{\mathsf{lf}}(\rA)$.  Then
$
\mM \cong \mM(\omega, \lambda) := $
$$
\Bigl\langle J,
 x^{m_1+1} u + \lambda x^{m_2+1} v,
u^{n_1+1} y  + u^{n_2+1} x,
y^{p_1+1} v  +  y^{p_2+1} u,
v^{q_1+1} x  +  v^{q_2+1} y
\Bigr\rangle_\rA^{\vee\vee} \subseteq \rA
$$
for some $\omega = \bigl((m_1, m_2), (n_1, n_2), (p_1, p_2), (q_1, q_2)\bigr) \in
\mathbb{Z}^8$ and
$\lambda \in \kk^*$, where
$$\min(m_1, m_2) = \min(n_1, n_2) = \min(p_1, p_2) = \min(q_1, q_2) = 1.$$ Moreover,
$\mM(\omega, \lambda) \cong \mM(\omega', \lambda')$ if and only if $\omega = \omega'$ and
$\lambda = \lambda'$.
\end{proposition}

\begin{remark}
Let $m_1 = n_1 = p_1 = q_1 = 1$ and
$m_2 = m, n_2 = n, p_2 = p $, $q_2 = q$.
\begin{itemize}
\item If $m = n = p = q = 1$ and $\lambda = 1$ then $\mM(\omega, \lambda) \cong \rA$.
\item Otherwise, we have:
$$ \mM(\omega, \lambda) \cong
\bigl\langle x^{2} u + \lambda x^{m+1} v,
u^{2} y  + u^{n+1} x,
y^{2} v  +  y^{p+1} u,
v^{2} x  +  v^{q+1} y
\bigr\rangle_\rA \subseteq \rA.
$$
\item
Moreover, $\mM(\omega, \lambda)$ has a presentation
$$
\rA^8
\xrightarrow{
\left(
\begin{smallmatrix}
y & 0 & 0 & 0  &  v & u^n & 0 & 0 \\
0 & v & 0 & 0  &  0  & x & y^p & 0 \\
0 & 0 & x & 0  &  0 & 0 & u  & v^q \\
0 & 0 & 0 & u  &  \lambda x^m  & 0 & 0 & y
\end{smallmatrix}
\right)
}
\rA^4 \lar \mM(\omega, \lambda) \lar 0.
$$
\qed
\end{itemize}
\end{remark}

\subsection{Maximal  Cohen--Macaulay modules over $\kk\llbracket x,y,z,u,v\rrbracket/(xz, xu, yu, yv, zv)$}\label{SS:degcusp4} It seems that  even less is known about a  concrete description of maximal Cohen--Macaulay modules over Gorenstein surface singularities which are not complete intersections.
In this subsection we elaborate the classification of rank one objects
of $\CM^{\mathsf{lf}}(\rA)$ for $\rA = \kk\llbracket x,y,z,u,v\rrbracket/(xz, xu, yu, yv, zv)$.
Again, all necessary computations are completely parallel to the ones done in Subsection
\ref{SS:degcusp2}. Hence, we omit them and state the final result.
\begin{proposition}
Denote  $J = \bigl\langle
 (xy)^2, (yz)^2, (zu)^2, (uv)^2, (vx)^2 \bigr\rangle_\rA$. Let
$\mM$ be a rank one object in  $\CM^{\mathsf{lf}}(\rA)$.  Then
$
\mM \cong \mM(\omega, \lambda) := $
$$
\Bigl\langle J,
 v^{m_1+1} x + \lambda v^{m_2+1} u,
 x^{n_1+1} y  + x^{n_2+1} v,
y^{p_1+1} z  +  y^{p_2+1} x,
z^{q_1+1} u  +  z^{q_2+1} y,
u^{t_1+1} v +  u^{t_2+1} z
\Bigr\rangle_\rA^{\vee\vee}
$$
for some $\omega = \bigl((m_1, m_2), (n_1, n_2), (p_1, p_2), (q_1, q_2), (t_1, t_2)\bigr) \in
\mathbb{Z}^{10}$ and
$\lambda \in \kk^*$, where
$$\min(m_1, m_2) = \min(n_1, n_2) = \min(p_1, p_2) = \min(q_1, q_2) = \min(t_1, t_2) = 1.
$$ Moreover,
$\mM(\omega, \lambda) \cong \mM(\omega', \lambda')$ if and only if $\omega = \omega'$ and
$\lambda = \lambda'$.
\end{proposition}

\begin{remark}
Let $m_1 = n_1 = p_1 = q_1 = t_1 = 1$, whereas
$m_2 = m, n_2 = n, p_2 = p $, $q_2 = q$ and  $t_2 = t$.
If $m = n = p = q = t = 1$ and $\lambda = 1$ then $\mM(\omega, \lambda) \cong \rA$.
Otherwise, we have:
$$ \mM(\omega, \lambda) \cong
\bigl\langle
v^{2} x + \lambda v^{m+1} u,
x^{2} y  + x^{n+1} z,
y^{2} z  +  y^{p+1} x,
z^{2} u  +  z^{q+1} y,
u^{2} v  +  u^{t+1} z
\bigr\rangle_\rA \subseteq \rA.
$$
\qed
\end{remark}

\section{Singularities obtained by gluing  cyclic quotient singularities}\label{S:degenarate-cusps}

In this section we recall the definition and some basic properties of an important class
of non--isolated surface singularities called ``degenerate cusps'', see \cite{ShepherdBarron,Stevens2}.

\subsection{Non--isolated surface singularities obtained by gluing  normal rings}

Let $\kk$ be an algebraically closed field and
$(\rR_1, \idn_1),  (\rR_2, \idn_2),  \dots,$ $ (\rR_t, \idn_t)$ be complete local
 normal $\kk$--algebras of Krull dimension
two, where $t\ge 1$. For any $1 \le i \le t$ let
$\pi_i: \rR_i \lar \bar\rR_i \cong  \kk\llbracket \bar{u}_i, \bar{v}_i\rrbracket/(\bar{u}_i \bar{v}_i)$
be a surjective ring homomorphism, $u_i, v_i \in \rR_i$ be some preimages
of $\bar{u}_i$ and $\bar{v}_i$ respectively  and $I_i = \ker(\pi_i)$. Consider the ring
homomorphism
$$\tilde\pi_i: = (\tilde\pi_i^{(1)}, \tilde\pi_i^{(2)}):  \rR_i \lar
\rD := \kk\llbracket {u} \rrbracket \times \kk\llbracket {v}\rrbracket$$
obtained by composing $\pi_i$ with the normalization
map  $\kk\llbracket \bar{u}_i, \bar{v}_i\rrbracket/(\bar{u}_i \bar{v}_i) \to \kk\llbracket {u}\rrbracket \times \kk\llbracket {v}\rrbracket$.

\begin{definition}\label{D:gluing-of-rings}
In the above notations, consider the subring $\rA$ of the ring
$\rR:= \rR_1 \times \rR_2 \times \dots \times \rR_t$ defined as
\begin{equation}
\rA := \Bigl\{(r_1, r_2, \dots, r_t) \in \rR \; \big| \;  \tilde\pi_i^{(2)}(r_i) =
\tilde\pi_{i+1}^{(1)}(r_{i+1}), \;  1 \le i \le t\Bigr\},
\end{equation}
where we identify  $\rR_{t+1}$ with   $\rR_1$ and  $\pi_{t+1}$ with  $\pi_1$.
\end{definition}

\begin{proposition}\label{P:glue-of-rings}
In the above notations we have:
\begin{enumerate}
\item\label{i:propGLofringsIt1} The ring $\rA$ is local and reduced. The ring extension $\rA \subseteq \rR$ is finite.
\item\label{i:propGLofringsIt2} Let
$
I:= \bigl\{(r_1, r_2, \dots, r_t) \in \rR \; \big| \;  \pi_i(r_i) = 0 ,  \;  1 \le i \le t\bigr\}.
$
Then $I = \Hom_\rA(\rR, \rA)$. In other words, $I$  is the conductor ideal.
\item\label{i:propGLofringsIt3} The ring $\rA$ is Noetherian and complete and  $\rR$ is its
normalization.
 In particular, $\rA$ has Krull dimension two.
\item\label{i:propGLofringsIt4}
We have:
 $
 \bar\rA := \rA/I \cong
 \kk\llbracket \bar{w}_1, \bar{w}_2, \dots, \bar{w}_t\rrbracket/(\bar{w}_i \bar{w}_j |
  1 \le i <  j \le t)
 $
  and $\bar\rR = \rR/I = \kk\llbracket \bar{u}_1, \bar{v}_1\rrbracket/(\bar{u}_1 \bar{v}_1)
  \times \dots \times \kk\llbracket \bar{u}_t,  \bar{v}_t\rrbracket/(\bar{u}_t \bar{v}_t)$.
  The canonical morphism $\bar\rA \to \bar\rR$ maps $\bar{w}_i$ to $\bar{v}_i + \bar{u}_{i+1}$
  for all $1 \le i \le t$, where $\bar{u}_{t+1} = \bar{u}_{1}$.
 \item\label{i:propGLofringsIt5} The ring $\rA$ is Cohen-Macaulay.
\end{enumerate}
\end{proposition}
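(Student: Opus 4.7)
The five assertions share a common strategy: exploit the fiber-product presentation of $\rA$ inside $\rR$ to reduce everything to statements about the individual normal components $\rR_i$, their quotients $\bar{\rR}_i$, and the cyclic gluing pattern. Parts (\ref{i:propGLofringsIt1})--(\ref{i:propGLofringsIt4}) are essentially bookkeeping; the real work lies in the Cohen--Macaulayness (\ref{i:propGLofringsIt5}), which I will deduce from the Depth Lemma applied to the defining short exact sequence $0\to I\to \rA\to \bar{\rA}\to 0$.

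For parts (\ref{i:propGLofringsIt1}) and (\ref{i:propGLofringsIt2}), I would first observe that $\rA$ is reduced as a subring of the reduced product $\rR$, and local because $\idm:=\rA\cap(\idn_1\times\dots\times\idn_t)$ is the unique maximal ideal: any tuple $(r_1,\dots,r_t)\in\rA$ with all $r_i$ units in $\rR_i$ is a unit in $\rA$, since inversion preserves the gluing relations. The ideal $I$ clearly annihilates $\rR/\rA$ and is the largest ideal of $\rR$ contained in $\rA$, so $I=\Hom_\rA(\rR,\rA)$. For part (\ref{i:propGLofringsIt3}), completeness of $\rA$ follows because $\rA$ is closed in the $\idn$-adic topology of $\rR$ (the gluing conditions are closed). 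Lifting finitely many generators of $\bar{\rR}$ over $\bar{\rA}$ to elements of $\rR$, together with $I\subseteq\rA$, shows $\rR$ is a finite $\rA$-module, whence $\rA$ is Noetherian by Eakin--Nagata. That $\rR$ is the normalization then follows because $\rR$ is normal, finite (hence integral) over $\rA$, and any $r\in\rR$ satisfies $ar\in\rA$ for a non-zerodivisor $a\in I$, giving $r\in\rQ(\rA)$. Krull dimension two is preserved by this finite integral extension.

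Part (\ref{i:propGLofringsIt4}) is a direct computation. The map $\rA\twoheadrightarrow\bar{\rA}\hookrightarrow\bar{\rR}$ realises $\bar{\rA}$ as the tuples $(\bar{r}_i)\in\prod\bar{\rR}_i$ satisfying the gluing conditions, which under the normalisation $\bar{\rR}_i\hookrightarrow\kk\llbracket u_i\rrbracket\times\kk\llbracket v_i\rrbracket$ say precisely that the $\bar{v}_i$-branch of the $i$-th component equals the $\bar{u}_{i+1}$-branch of the $(i{+}1)$-th component (cyclically). The elements $\bar{w}_i:=\bar{v}_i+\bar{u}_{i+1}$ lie in the image, satisfy $\bar{w}_i\bar{w}_j=0$ for $i\ne j$ (disjoint supports for $|i-j|\ge 2$, and $\bar{u}_{i+1}\bar{v}_{i+1}=0$ when the supports overlap), and together with $1$ they generate the image topologically; a comparison of Hilbert series of the presented algebra with that of $\bar\rA$ rules out additional relations.

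The crux is part (\ref{i:propGLofringsIt5}). Applying the Depth Lemma to $0\to I\to\rA\to\bar{\rA}\to 0$, it suffices to show $\depth_\rA(I)\ge 2$ and $\depth_\rA(\bar{\rA})\ge 1$. The latter is immediate because $\bar{\rA}$ is one-dimensional and reduced, hence Cohen--Macaulay. For the former, note $I=I_1\times\dots\times I_t$ with $I_i=\ker(\rR_i\to\bar{\rR}_i)$; applying the Depth Lemma in each factor to $0\to I_i\to \rR_i\to\bar{\rR}_i\to 0$ with $\depth_{\rR_i}(\rR_i)=2$ (normality plus Theorem \ref{T:Serre}) and $\depth_{\rR_i}(\bar{\rR}_i)=1$ yields $\depth_{\rR_i}(I_i)\ge 2$, hence $=2$. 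The depth of $I$ as an $\rA$-module equals its depth as an $\rR$-module because $\rA\subseteq\rR$ is a finite extension in which $\sqrt{\idm\rR}$ coincides with the Jacobson radical of the semilocal ring $\rR$, so $\idm$-regular sequences on $I$ correspond, up to radical, to regular sequences on each $I_i$. Combining, $\depth_\rA(\rA)\ge\min\bigl(\depth_\rA(I),\,\depth_\rA(\bar{\rA})+1\bigr)=2$, and $\rA$ is Cohen--Macaulay. The main obstacle I expect is precisely this depth comparison across the finite local-to-semilocal extension, which I will handle by reducing to the factors $\rR_i$ via the decomposition $\rR=\prod\rR_i$.
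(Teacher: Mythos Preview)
Your argument for part~(\ref{i:propGLofringsIt5}) contains a genuine error in the application of the Depth Lemma. For a short exact sequence $0\to M'\to M\to M''\to 0$, the inequality $\depth M\ge\min(\depth M',\depth M''+1)$ that you invoke holds for the \emph{left} term $M'$, not the middle term. What the Depth Lemma gives for the middle term is only $\depth M\ge\min(\depth M',\depth M'')$. Applied to $0\to I\to\rA\to\bar\rA\to 0$ with $\depth_\rA(I)=2$ and $\depth_\rA(\bar\rA)=1$, you obtain merely $\depth_\rA(\rA)\ge 1$, which is useless. Equivalently, in local cohomology the long exact sequence yields $H^1_{\idm}(\rA)\hookrightarrow H^1_{\idm}(\bar\rA)$, and since $H^1_{\idm}(\bar\rA)\ne 0$ you cannot conclude vanishing.

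The paper avoids this by using the \emph{other} natural short exact sequence $0\to\rA\to\rR\to\rR/\rA\to 0$, in which $\rA$ sits as the kernel. One checks $\rR/\rA\cong\bar\rR/\bar\rA$ and then verifies by hand that this $\bar\rA$-module has no finite-length submodules, i.e.\ $\depth_{\bar\rA}(\bar\rR/\bar\rA)=1$; combined with $\depth_\rA(\rR)=2$, the Depth Lemma for the kernel position gives $\depth_\rA(\rA)\ge\min(2,1+1)=2$. Your computation of $\depth_\rA(I)=2$ is correct and can be salvaged, but you would need to feed it into $0\to I\to\rR\to\bar\rR\to 0$ (where $\rR$ is the middle term) and then pass back to $\rA$ via $0\to\rA\to\rR\to\rR/\rA\to 0$ anyway, so the paper's route is the direct one.
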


\begin{proof}
(\ref{i:propGLofringsIt1})
Let $r = (r_1, r_2, \dots, r_t) \in \rR$ be an element of $\rA$. Then
$r_1(0) = r_2(0) = \dots = r_t(0) \in \kk$ and $r$ is invertible if and only if $r_1(0) \ne 0$.
This shows that $\rA$ is local. Since $\rA$ is a subring of a reduced ring, it is reduced, too.
To show that the ring extension $\rA \subseteq \rR$ is finite, we consider separately the following
 two cases.

\vspace{1mm}
\noindent
\underline{Case 1}. Let $t \ge 2$ and $e_1, e_2, \dots, e_t$ be the idempotent elements
of $\rR$ corresponding to the units  of the rings $\rR_1, \rR_2, \dots, \rR_t$. Then we have:
$
\rR = \bigl\langle e_1, e_2, \dots, e_t\bigr\rangle_\rA.
$
Indeed, let $r = (r_1, r_2, \dots, r_t) \in \rR$ be an arbitrary element. By the definition, we have:
$r_i = e_i \cdot  r$ and $r = r_1 + r_2 + \dots + r_t$. Hence, it is sufficient to show
that for any $1 \le i \le t$ and any $r_i \in \rR_i$ we have:
$r_i \in \langle e_1, e_2, \dots, e_t\rangle_\rA$.

To prove this, it is sufficient to show the following statement: given
an element $r_i \in \rR_i$ there exist such elements $r_j \in \rR_j$, $j \ne i$ that
$r = (r_1, r_2, \dots, r_t) \in \rA$.  Note that without loss of generality we may assume
$r_i \in \idn_i$. Then $\tilde\pi_i(r_i) \in \rD$ belongs to the radical $\bar\idn$ of the ring $\rD$.
Since the ring homomorphism $\tilde\pi_i: \rR_i \to \rD$ induces a surjective map
$\idn_i \to \bar\idn$, there exist elements $r_{j} \in \idn_j, j \in \{i-1, i+1\}$ such that
$\tilde\pi_i^{(1)}(r_i) = \tilde\pi_{i-1}^{(2)}(r_{i-1})$ and
$\tilde\pi_i^{(2)}(r_i) = \tilde\pi_{i+1}^{(1)}(r_{i+1})$. Proceeding by induction, we get
an element $r = (r_1, r_2, \dots, r_t) \in \rA$ we are looking for.

\vspace{1mm}
\noindent
\underline{Case 2}. Let $t = 1$ and $u = u_1, v= v_1 \in \rR$ be some preimages of the elements
 $\bar{u} = \bar{u}_1$ and $\bar{v} =  \bar{v} \in \bar\rR$ under the map $\bar\pi = \bar{\pi}_1$.
 Then we have: $\rR = \langle u, v\rangle_\rA$.

 Indeed, any element $r \in \rR$ can be written as $r = c + up(u) + vq(v) + r'$ for some
 power series
 $p, q \in \kk\llbracket T \rrbracket$,  a scalar $c \in \kk$  and
 $r' \in I = \ker(\pi) \subseteq \rA$.
 Let $\bar{w} = \bar{u} + \bar{v} \in \bar\rR$ and $w \in \rR$ be its preimage in $\rA \subseteq \rR$.
 Then the element $r'' := r - up(w) - vq(w)$ belongs to $I$, hence to $\rA$, and
  the result follows.

\medskip
\noindent
(\ref{i:propGLofringsIt2})
Since $\pi$ is a ring homomorphism, it is clear that $I$ is an ideal in $\rR$ and $\rA$. We need
to show that  $I = J := \Ann_{\rA}(\rR/\rA) = \bigl\{a \in \rA \, | \, a\rR \subseteq \rA\bigr\}$.
The inclusion $I \subseteq J$ is obvious. Hence, we have to show that
$J \subseteq I$.

Since $J$ is an ideal in $\rR$, we have an isomorphism
of $\rR$--modules
$J = J_1 \oplus J_2 \oplus \dots \oplus J_t$, where $J_i$ is an ideal in $\rR_i$ for all
$1 \le i \le t$.  Again, we distinguish  two cases.

 \vspace{1mm}
\noindent
\underline{Case 1}. Let $t \ge 2$ and $a = (a_1, a_2, \dots, a_t) \in J$, where
$a_i = e_i \cdot a, 1 \le i \le t$. In order to show that $a \in I$ it is sufficient to prove that
$\pi_i(a_i) = 0$ for all $1 \le i \le t$.

By the definition, the element
$
a_i = e_i \cdot a = (0, \dots, 0, a_i, 0, \dots, 0)
$
belongs to $J \subseteq \rA$ for all $1 \le i \le t$.  It implies that $\tilde{\pi}_i^{(k)}(a_i) = 0$ for $k = 1, 2$, thus  $\pi_i(a_i) = 0$ for all $1 \le i \le t$ as wanted.

 \vspace{1mm}
\noindent
\underline{Case 2}. Let $t =  1$. Then any element $a \in \rA$ can be written
in the form
$a = p(w) + a'$, where $p(T) \in \kk\llbracket T\rrbracket$ is a formal power series and
$a' \in I$. If $a$ belongs to $J$ then we have:
$$
\tilde\pi^{(1)}(ua) = u p(u) = \tilde\pi^{(2)}(ua) = 0.
$$
 Hence, $p(T) = 0$ and $a = a' \in I$.

\medskip
\noindent
(\ref{i:propGLofringsIt3})
 Since $\rR$ is a Noetherian
ring and $I$ is an ideal in $\rR$, $I$ is a finitely generated $\rR$--module.
Next, $\rR$ is a finite module over $\rA$, hence there exist elements
$c_1, c_2, \dots, c_l \in I$ such that $I = \bigl\langle c_1, c_2, \dots, c_l\bigr\rangle_\rA$.
For any $1 \le i \le t$ let $w_i =  v_i + u_{i+1} \in \rA$. Then any element
$a \in \rA$ can be written in the form
$
a = \sum\limits_{i = 1}^t p_i(w_i) + c,
$
where $c \in I$ and $p_i(T) \in \kk\llbracket T\rrbracket$, $1 \le i \le t$ are some power series.
Consider the ring homomorphism
$
\rS := \kk\llbracket x_1, \dots, x_t; z_1, \dots, z_l\rrbracket
\stackrel{\varphi}\lar
\rR
$
defined by the rule: $\varphi(x_i) = w_i$ for all  $1 \le i \le t$ and
$\varphi(z_j) = c_j$ for all $1 \le j \le l$. It is clear that the image of
$\varphi$ belongs to $\rA$. In oder to show
$\rA$ is Noetherian and complete, it is sufficient to prove that the ring homomorphism
$\varphi: \rS \to \rA$ is surjective.

For any integer $q \ge 1$, let $\check\rA_q$ be the image of $\rA$ under the
canonical morphism $\rR \to \rR/\idn^q$. To show $\varphi$ is surjective it suffices to prove
that the ring homomorphism $\rS \to  \check\rA_q$ is surjective for all $q \ge 1$.
Any element $a \in \rA$ can be written as
$$
a = \sum\limits_{i=1}^t p_i(w_i) + \sum\limits_{j=1}^l b'_j c_j +
\sum\limits_{j=1}^l b''_j c_j
$$
where $p_1, \dots, p_t \in \kk\llbracket T \rrbracket$,
$b'_1, \dots, b'_l \in \kk$ and $b''_1, \dots, b''_l \in I$. Then
$$a^{(1)} := \sum\limits_{i=1}^t p_i(w_i) + \sum\limits_{j=1}^l b'_j c_j \in \mathrm{Im}(\varphi)
\quad \mathrm{and} \quad
a - a^{(1)} \in  \idn^2.$$
 Writing similar expansions
for $b''_1, \dots, b''_l \in I \subseteq \rA$, we end up with
a sequence of  elements $\bigl\{a^{(n)}\bigr\}_{n\ge 1}$ such that $a^{(n)} \in  \mathrm{Im}(\varphi)$  and $a - a^{(n)} \in \idn^{n+1}$ for
all $n \ge 1$. This shows the surjectivity of $\varphi$.

We have shown that  the ring $\rA$ is Noetherian and the ring extension
$\rA \subseteq \rR$ is finite. Hence, $\rA$ has Krull dimension two.
Moreover, the total rings of fractions of $\rA$ and $\rR$ are equal. Indeed, if
$t \ge 2$ then
$\rR = \bigl\langle e_1, \dots e_t\bigr\rangle_\rA$, where all elements $e_i \in \rR$ satisfy the equation
$e_i^2 = e_i$, hence $e_i \in \rQ(\rA)$. For  $t = 1$ we have the equality
$u^2 - uw + uv = 0$, where $w \in \rA$ and $uv \in I \subseteq \rA$. Hence, $u \in \rQ(\rA)$.
In a similar way, $v \in \rQ(\rA)$.  Thus, in both cases we have
$\rQ(\rA) = \rQ(\rR)$, hence $\rR$ is the normalization of $\rA$.

\medskip
\noindent
(\ref{i:propGLofringsIt4})
Recall that we have:
$
\bar\rR = \rR/I = \kk\llbracket \bar{u}_1, \bar{v}_1\rrbracket/(\bar{u}_1 \bar{v_1}) \times
\dots \times \kk\llbracket \bar{u}_t, \bar{v}_t\rrbracket/(\bar{u}_t \bar{v_t}).
$
By the definition, the ring
$\bar\rA \subseteq \rR$ is generated by power series in the elements
$\bar{w}_i = \bar{v}_{i} + \bar{u}_{i+1}$, where $1 \le i \le t$.

In $\bar\rR$
we have the relations $\bar{w}_i \bar{w}_j = 0$ for all $i \ne j$.
Assume there exists an  additional relation in $\rA$ between
$\bar{w}_1, \dots, \bar{w}_t$. Then it has necessarily the form:
$\bar{w}_i^n = 0$ for some $1 \le i \le t$ and $n \ge 1$. But this implies that we have
the relations $\bar{v}_i^n = 0 = \bar{u}_{i+1}^n$. Contradiction.
 Hence, we have:
 $
 \bar\rA \cong
 \kk\llbracket \bar{w}_1, \bar{w}_2, \dots, \bar{w}_t\rrbracket/(\bar{w}_i \bar{w}_j \; | \;
  1 \le i <  j \le t)
 $
and the canonical imbedding $\bar\rA \to \bar\rR$ is given by the rule
$\bar{w}_i \mapsto \bar{v}_i + \bar{u}_{i+1}$.

\medskip
\noindent
(\ref{i:propGLofringsIt5})
We have a short exact sequence of Noetherian $\rA$--modules
$
0 \rightarrow  \rA \rightarrow  \rR \rightarrow  \rR/\rA \rightarrow  0.
$
Since $\rR$ is a normal surface singularity, it is Cohen-Macaulay. In particular,
$$\depth_\rR(\rR) = 2 = \depth_\rA(\rR).$$
 Next, the $\rA$--module $\rR/\rA$ is annihilated by
$I$, hence it is an $\bar\rA$--module. Moreover, we have an isomorphism
of $\bar\rA$--modules $\rR/\rA \cong \bar\rR/\bar\rA$. Our goal is
to show that
$\depth_{\bar{\rA}}(\bar\rR/\bar\rA) = 1$. It is equivalent to the statement that
the $\bar\rA$--module $\bar{\rR}/\bar{\rA}$ has no finite length submodules.
Let $\bar\idm$ be the maximal ideal of $\bar\rA$. It suffices to show that there
is no element $r \in \bar\rR \setminus \bar\rA$ such that $\bar\idm \cdot r \in \bar\rA$.

Let $p_i, q_i \in \kk\llbracket T\rrbracket$ be power series such that
 $p_i(0) = q_i(0)$ for all  $1 \le i \le t$ and
$r = \Bigl(\bigl(p_1(\bar{u}_1), q_1(\bar{v}_1)\bigr), \dots,
\bigl(p_t(\bar{u}_t), q_t(\bar{v}_t)\bigr)\Bigr)
\in \bar\rR$ be an element satisfying
$\bar{w}_i \cdot r \in \rA$. But this implies that for all $1 \le i \le t$ we
have equalities of the power series $T q_i(T) = T p_{i+1}(T)$. Hence,
$q_i = p_{i+1}$ for all $1 \le i \le t$ and $r \in \bar\rA$ as wanted.
Applying the Depth Lemma we get: $\depth_\rA(\rA) = 2$, hence $\rA$ is Cohen-Macaulay.
\end{proof}

\noindent
\textbf{Summary}.
In the notations of Proposition \ref{P:glue-of-rings} we have:
the ring $\rA$ is Noetherian, local, reduced, complete and Cohen-Macaulay. The ring
$\rR$ is the normalization of $\rA$ and $I$ is the conductor ideal. The canonical
commutative diagram
$$
\xymatrix
{
\rA \ar[r] \ar[d] & \bar\rA \ar[d] \\
\rR \ar[r] & \bar\rR
}
$$
is a pull-back diagram in the category of Noetherian rings.  Moreover, we have:
$$\rQ(\bar\rA) \cong \kk\llbrace \bar{w}_1\rrbrace \times \dots \times  \kk\llbrace \bar{w}_t\rrbrace \;
\mbox{\textrm{and}} \; \rQ(\bar\rR) \cong  \bigl(\kk\llbrace \bar{u}_1\rrbrace  \times \kk\llbrace \bar{v}_1\rrbrace\bigr) \times
\dots  \bigl(\kk\llbrace \bar{u}_t\rrbrace \times \kk\llbrace \bar{v}_t\rrbrace\bigr)$$ and the canonical
ring homomorphism $\rQ(\bar\rA) \to \rQ(\bar\rR)$ sends the element
$\bar{w}_i$ to
$\bar{v}_i + \bar{u}_{i+1}$.

\begin{lemma}
In the notations of Proposition \ref{P:glue-of-rings} we have: for any
prime ideal $\idp \in \kP$ the localization  $\rA_\idp$ is either regular or
$\widehat{\rA}_\idp \cong \kk(\idp)\llbracket u,v\rrbracket/(uv)$, where
$\kk(\idp)$ is the residue field of $\rA_\idp$. In particular, the ring  $\rA$ is Gorenstein
in codimension
one.
\end{lemma}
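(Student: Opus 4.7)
The plan is to proceed by case analysis on $\idp$, exploiting the pullback description of $\rA$ from Proposition~\ref{P:glue-of-rings} together with Proposition~\ref{P:prop-of-conduct}. If $\idp$ has height zero, then since $\rA$ is reduced (Proposition~\ref{P:glue-of-rings}(1)), $\rA_\idp$ is a field, hence regular. If $\idp$ has height one and $\idp \notin a(I)$, then Proposition~\ref{P:prop-of-conduct}(2) yields $(\rR/\rA)_\idp = 0$, so $\rA_\idp = \rR_\idp$, which is a DVR since $\rR$ is a finite product of two-dimensional normal local rings; thus $\rA_\idp$ is again regular.

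The main case is $\idp \in a(I)$. By Proposition~\ref{P:prop-of-conduct}(4), $\rR_\idp$ is the normalization of $\rA_\idp$ and $I_\idp$ is the conductor. Tracing through the explicit descriptions of $\rQ(\bar\rA)$ and $\rQ(\bar\rR)$ from Proposition~\ref{P:glue-of-rings}, I would verify that if $\idp$ corresponds to the minimal prime $\bar\idp_i$ of $\bar\rA$, then $\bar\rA_\idp = \kk(\idp) \cong \kk((\bar{w}_i))$ is already a field (so $I_\idp$ equals the maximal ideal of $\rA_\idp$) and $\bar\rR_\idp \cong \kk(\idp) \times \kk(\idp)$, with the map $\bar\rA_\idp \to \bar\rR_\idp$ being the diagonal embedding. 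In particular, exactly two primes $\idq_1,\idq_2$ of $\rR$ lie over $\idp$; each $\rR_{\idq_j}$ is a DVR; and the residue-field extensions $\kk(\idq_j)/\kk(\idp)$ are trivial, since $\bar{w}_i$ maps to the respective uniformizers $\bar{v}_i$ and $\bar{u}_{i+1}$ of the two branches.

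Localizing the pullback square of Proposition~\ref{P:glue-of-rings} at $\idp$ and then completing along the maximal ideal, I would conclude (using that completion is flat and exact on finitely generated modules, and that $\bar\rA_\idp$, $\bar\rR_\idp$ are already Artinian, hence complete) that
\[
\widehat{\rA_\idp} \;\cong\; \widehat{\rR_\idp} \times_{\bar\rR_\idp} \bar\rA_\idp
\;\cong\; \bigl(\kk(\idp)\llbracket u \rrbracket \times \kk(\idp)\llbracket v \rrbracket\bigr) \times_{\kk(\idp) \times \kk(\idp)} \kk(\idp),
\]
where the identification $\widehat{\rR_\idp} \cong \kk(\idp)\llbracket u \rrbracket \times \kk(\idp)\llbracket v \rrbracket$ comes from the Cohen structure theorem applied to each complete DVR factor combined with the triviality of the residue extensions, and the right-hand map is the diagonal. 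A direct computation identifies the resulting fibre product with $\kk(\idp)\llbracket u,v \rrbracket/(uv)$, establishing the first assertion. Gorensteiness in codimension one is then immediate: a DVR is Gorenstein, $\kk(\idp)\llbracket u,v \rrbracket/(uv)$ is a hypersurface and hence Gorenstein, and Gorensteiness descends along the faithfully flat completion map $\rA_\idp \to \widehat{\rA_\idp}$. The principal technical hurdle is checking the triviality of the residue-field extensions $\kk(\idq_j)/\kk(\idp)$ and the preservation of the pullback structure under completion; both reduce to a careful bookkeeping of the gluing data $\bar{w}_i \mapsto \bar{v}_i + \bar{u}_{i+1}$ set up in Proposition~\ref{P:glue-of-rings}.
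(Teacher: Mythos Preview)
Your proposal is correct and follows essentially the same route as the paper: split into the cases $\idp\notin a(I)$ (where $\rA_\idp=\rR_\idp$ is regular) and $\idp\in a(I)$, then localize the pullback square of Proposition~\ref{P:glue-of-rings} at $\idp$, identify $\bar\rA_{\bar\idp}\cong\kk((\bar w_i))$ and $\bar\rR_{\bar\idp}\cong\kk((\bar v_i))\times\kk((\bar u_{i+1}))$ with the diagonal embedding, and complete to recognize the node $\kk(\idp)\llbracket u,v\rrbracket/(uv)$. The paper's argument is a bit terser about the completion step and does not separately treat height-zero primes (since $\kP$ denotes height-one primes here), but the substance is identical.
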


\begin{proof} Let $\idp \in \kP$. By Proposition \ref{P:prop-of-conduct}, the local
ring $\rA_\idp$ is regular unless $\idp$ belongs to the associator of the conductor
ideal $I$. Let $\bar\idp$ be the image of $\idp$ in $\bar\rA$. Then $\bar\idp$ is
a prime ideal in $\bar\rA$ of height zero, $\rR_\idp$ is the normalization
of $\rA_\idp$, $I_\idp$ is the conductor ideal of $\rA_\idp$ and we have
a pull--back diagram
$$
\xymatrix
{
\rA_\idp \ar[r] \ar[d] & \bar{\rA}_{\bar\idp} \ar[d] \\
\rR_\idp \ar[r] & \bar\rR_{\bar\idp}
}
$$
in the category of Noetherian rings.
Since we know that $
 \bar\rA \cong
 \kk\llbracket \bar{w}_1, \bar{w}_2, \dots, \bar{w}_t\rrbracket/(\bar{w}_i \bar{w}_j \; | \;
  1 \le i <  j \le t),
 $
 we have:
 $\bar\idp = \langle \bar{w}_1, \dots, \bar{w}_{i-1}, \bar{w}_{i+1},
 \dots, \bar{w}_t\rangle$ for some $1 \le i \le t$. We get:
 $
 \bar\rA_{\bar\idp} = \kk\llbrace \bar{w}_i\rrbrace  \quad
 \mbox{and} \quad \bar\rR_{\bar\idp} = \kk\llbrace \bar{v}_i\rrbrace  \times \kk\llbrace u_{i+1}\rrbrace.
 $
 Hence, the conductor ideal of the local one--dimensional ring $\rA_{\idp}$
 is its maximal ideal. Next, $\rA_{\idp}$  contains its residue field $\kk\llbrace t\rrbrace$ and there exists
 a pull--back diagram
 $$
 \xymatrix
 { \rA_\idp \ar[r] \ar[d] & \kk\llbrace t\rrbrace \ar[d] \\
 \rR_{\idp} \ar[r] & \kk\llbrace t\rrbrace \times \kk\llbrace t\rrbrace.
 }
 $$
 Hence, the completion of $\rA_\idp$ is isomorphic to $\kk\llbrace t\rrbrace\llbracket u, v\rrbracket/(uv)$, as wanted.
\end{proof}

\subsection{Generalities about cyclic quotient singularities}
Let $\rS = \kk\llbracket u, v\rrbracket$ and $G = C_{n,m} \subset \GL_2(\kk)$ be a small cyclic subgroup  of order $n$. Then without  loss of generality we may assume that $G$ is
generated by the matrix
$g = \left(
\begin{array}{cc}
\xi & 0 \\
0 & \xi^m
\end{array}
\right)
$, where $\xi$ is a primitive $n$-th root of unity, and $0 \le  m <n$ is  such that
 $\gcd(m,n) = 1$. The group $G$ acts on $\rS$ by the rule $u \stackrel{g}\mapsto \xi u$,
$v \stackrel{g}\mapsto \xi^m v$.
Let  $\rR= \gR(n,m) = \rS^{G}$ be the corresponding ring of invariants and
$\Pi = \Pi(n, m) = \{0, 1, \dots, n-1\}$. For an element $l \in \Pi$ we denote
by $\bar{l}$ the unique element in $\Pi$ such that $l =  \bar{l} m \; \mod \; n$.
  The following result is due to
 Riemenschneider \cite{Riemenschneider}

\begin{theorem}\label{T:Riemenschneider}
In the above notations we have:
\begin{enumerate}
\item $\rR = \kk\llbracket u^n, u^{n-1} v^{\bar{1}}, \dots, u v^{\overline{n-1}}, v^n
\rrbracket \subset \rS = \kk\llbracket u, v \rrbracket$.
\item More precisely, let
 $n/(n-m) =
a_1 - 1/(a_2 - \dots - 1/a_e)
$
be the expansion of $n/(n-m)$ into a continuous fraction,
where $a_i \ge 2$ for all $1 \le i \le e$. Define the positive integers $c_i$ and $d_i$ by
the following recurrent formulae:
$$
c_0 = n, c_1 = n-m, c_{i+1} = a_i c_i - c_{i-1}, \quad
d_0 = 0, d_1 = 1, d_{i+1} = a_i d_i - d_{i-1}.
$$
Then
$
\rR = \kk\llbracket x_0, x_1, \dots, x_e, x_{e+1}\rrbracket/L,
$
where
 $x_i = u^{c_i} v^{d_i} (i = 0,1,\dots, e+1)$ and the
 ideal $L  \subset \kk\llbracket x_0, x_1, \dots, x_e, x_{e+1} \rrbracket$ is generated
by the relations
\begin{equation}\label{E:equat-of-quotient}
x_{i-1}x_{j+1} = x_i x_j \prod\limits_{k = i}^j x_k^{a_k -2}  \qquad  1 \le i \le j \le e.
\end{equation}
\item Let $\gJ$ be the ideal in $\gR$ generated by $(x_1, x_2, \dots, x_e)$.
Then $\gJ = (u^{n-1} v^{\bar{1}}, \dots, u v^{\overline{n-1}})_\rR$ and
$
\bar\rR:= \gR/\gJ = \kk\llbracket x_0, x_{e+1}\rrbracket/(x_0 \, x_{e+1})
\cong \gD =  \kk\llbracket x,y\rrbracket/(xy).
$
\item
The closed subset $V(\gJ) \subset \Spec(\rR)$ is the image of the union of the
coordinate axes
$V(uv) \subset \Spec\bigl(\rS)$ under the map
$\Spec\bigl(\rS) \to  \Spec\bigl(\gR\bigr)$.
\end{enumerate}
\end{theorem}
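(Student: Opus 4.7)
The plan is to interpret $\rR$ via the semigroup of $G$-invariant monomials. Since $g$ acts on $u^iv^j$ by multiplication by $\varepsilon^{i+jm}$, the invariant subring is $\kk$-spanned by those monomials with $i+jm \equiv 0 \pmod{n}$; call this sub-semigroup $\Sigma \subseteq \mathbb{Z}_{\ge 0}^2$. For part (1), the $n+1$ elements $(n-l,\bar l)$ for $l \in \Pi$ together with $(0,n)$ generate $\Sigma$ as a monoid: given $(i,j) \in \Sigma$, locate the unique $l \in \Pi$ with $j \equiv \bar{l} \pmod{n}$, subtract $(n-l,\bar l)$, and eliminate the surplus powers of $u$ and $v$ using $(n,0)$ and $(0,n)$.

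For part (2), I would show by induction on $e$ that the pairs $(c_i, d_i)$ defined by the Hirzebruch--Jung recursion form the Hilbert basis of $\Sigma$, which is the standard combinatorial content of the continued fraction expansion of $n/(n-m)$. For the relations (\ref{E:equat-of-quotient}), direct computation gives
\[
x_{i-1} x_{j+1} = u^{c_{i-1}+c_{j+1}}\, v^{d_{i-1}+d_{j+1}},
\]
and telescoping the recursion $c_{k-1} + c_{k+1} = a_k c_k$ (and the analogous identity for the $d$'s) produces
\[
c_{i-1}+c_{j+1} = c_i+c_j + \sum_{k=i}^{j}(a_k-2)c_k,
\]
and similarly for the $d$'s, so the asserted monomial identity holds. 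Completeness of these relations is the main obstacle; the cleanest argument identifies $\rR$ with the toric coordinate ring of the dual cone to the one generated by $(1,0)$ and $(m,n)$ in $\mathbb{Z}^2$ and observes that the Hirzebruch--Jung continued fraction of $n/(n-m)$ produces a unimodular fan subdivision whose adjacent-edge relations give exactly (\ref{E:equat-of-quotient}).

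Part (3) then falls out: modulo $\gJ = (x_1, \ldots, x_e)$ the ring $\rR$ is generated by $x_0$ and $x_{e+1}$, and for every pair $1 \le i \le j \le e$ the right-hand side of (\ref{E:equat-of-quotient}) lies in $\gJ^2 \subseteq \gJ$; the extremal case $i=1$, $j=e$ yields $x_0 x_{e+1} \in \gJ$, while the remaining relations become trivial in $\rR/\gJ$. Hence $\rR/\gJ$ is a quotient of $\kk\llbracket x_0, x_{e+1}\rrbracket/(x_0 x_{e+1})$, and equality is verified by comparing $\kk$-bases: the monomials $x_0^a$ and $x_{e+1}^b$ with $a,b\ge 0$ manifestly project to a $\kk$-basis of $\rR/\gJ$.

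Finally, for part (4), the quotient morphism $\pi \colon \Spec(\rS) \to \Spec(\rR)$ evaluates each generator $u^{c_i}v^{d_i}$ at $(u_0,v_0)$. The recursion together with $a_k \ge 2$ forces $c_i, d_i \ge 1$ for $1 \le i \le e$, so every $x_i$ with such $i$ vanishes on $V(uv)$ and $V(uv) \subseteq \pi^{-1}(V(\gJ))$. Conversely, the invariant monomial $u^{n-1}v^{\bar 1}$ lies in $\gJ$, so for any $(u_0,v_0) \in \pi^{-1}(V(\gJ))$ we have $u_0^{n-1} v_0^{\bar{1}} = 0$, whence $u_0 v_0 = 0$. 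As $\pi$ is finite and surjective, $\pi(V(uv)) = V(\gJ)$ as claimed. The principal difficulty throughout is establishing the completeness of the relation set in part (2); for the other parts the semigroup picture reduces everything to elementary combinatorial manipulations with the recursions.
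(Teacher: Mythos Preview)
The paper does not prove this theorem; it attributes the result to Riemenschneider and cites \cite{Riemenschneider} without giving any argument. Your semigroup/toric sketch is essentially the standard route to such statements and is sound in outline.

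One small gap worth closing: in part~(3) the theorem asserts not only that $\rR/\gJ \cong \kk\llbracket x,y\rrbracket/(xy)$ but also the equality of ideals $\gJ = (u^{n-1}v^{\bar 1},\dots,u\,v^{\overline{n-1}})_\rR$, and your write-up does not address this explicitly. It follows from what you already have: each $x_i=u^{c_i}v^{d_i}$ with $1\le i\le e$ is one of the monomials $u^{n-l}v^{\bar l}$ (since the Hilbert basis in part~(2) is a subset of the generating set in part~(1)), giving $\gJ\subseteq (u^{n-1}v^{\bar 1},\dots,u\,v^{\overline{n-1}})$; conversely, any $u^{n-l}v^{\bar l}$ with $1\le l\le n-1$ has both exponents in $\{1,\dots,n-1\}$, so in its expression as a semigroup word in $x_0,\dots,x_{e+1}$ no factor $x_0=u^n$ or $x_{e+1}=v^n$ can occur, forcing it into $(x_1,\dots,x_e)=\gJ$. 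With this added, your argument for parts~(1), (3), (4) is complete, and you correctly flag that the completeness of the relation ideal in part~(2) is where the real work lies.
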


\noindent In what follows we shall also need the following result of Brieskorn
\cite[Satz 2.10]{Brieskorn} about the exceptional divisor
of a minimal resolution of singularities of $\Spec(\rR)$.

\begin{theorem}\label{T:Brieskorn}
Let $X = \Spec(\rR)$, $o = \{\idm\} \in X$ be its unique closed point  and $\widetilde{X} \stackrel{\pi}\rightarrow X$ be a minimal
resolution of singularities. Then the exceptional divisor $E = \pi^{-1}(o)$ is
a tree of projective lines. More precisely, the dual graph of $E$ is
$$
\xymatrix
{\stackrel{b_1}\bullet \ar@{-}[r] & \stackrel{b_2}\bullet \ar@{-}[r] &  \dots
 \ar@{-}[r] & \stackrel{b_f}\bullet
}
$$
where   $
n/m  = b_1 - 1/(b_2  - \dots - 1/b_f)
$
is the  expansion of $n/m$ into a continuous fraction  such that
and  $b_i \ge 2$ for all $1 \le i \le f$.
\end{theorem}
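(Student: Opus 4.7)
The plan is to derive this classical Hirzebruch--Jung statement from the toric description of the cyclic quotient singularity $\Spec(\rR)$. First I would identify $X = \Spec(\rR)$ with the affine toric surface attached to the cone
$\sigma = \langle\, e_1,\; -m\, e_1 + n\, e_2 \,\rangle \subset N_{\mathbb{R}} = \mathbb{R}^2$, where $N = \mathbb{Z}^2$ and $\{e_1, e_2\}$ is the standard basis. Indeed, the action of the cyclic group $C_{n,m} = \langle g\rangle$ on $\kk\llbracket u,v\rrbracket$ dualizes to an inclusion of character lattices, and the $C_{n,m}$-invariant monomials $u^a v^b$ are exactly those for which $(a,b)$ lies in the dual cone $\sigma^\vee \cap M$, giving $\rR$ as the completion of the semigroup algebra of $\sigma^\vee \cap M$.

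Next, I would construct the minimal toric resolution by inserting primitive lattice vectors subdividing $\sigma$ into regular (unimodular) subcones. Starting from $v_0 = e_1$ and $v_{f+1} = -m\, e_1 + n\, e_2$, I would produce inductively a sequence of primitive vectors $v_0, v_1, \ldots, v_{f+1}$ characterized by the relations $v_{i-1} + v_{i+1} = b_i v_i$ with integers $b_i \ge 2$, where the triple $(v_{i-1}, v_i)$ forms a $\mathbb{Z}$-basis of $N$ at each stage. Running this Euclidean-type algorithm and keeping track of the $b_i$ gives exactly the continued fraction expansion $n/m = b_1 - 1/(b_2 - \cdots - 1/b_f)$; this is the combinatorial heart of the argument, and would be established by induction on $n$, comparing the subdivision step with one iteration of the continued fraction algorithm.

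Each inserted ray $\mathbb{R}_{\ge 0}\, v_i$ corresponds to a toric exceptional divisor $E_i \cong \mathbb{P}^1$ on $\widetilde{X}$, and two adjacent rays generate a regular two-dimensional cone, so the intersections $E_i \cap E_{i+1}$ are transverse single points while $E_i \cap E_j = \emptyset$ for $|i-j| \ge 2$. This gives the linear dual graph as claimed. The self-intersection numbers then drop out of the toric adjunction-type relation: the identity $v_{i-1} + v_{i+1} = b_i v_i$ translates to $E_i \cdot E_i = -b_i$ on the toric surface. Minimality of the resolution is automatic from $b_i \ge 2$, as this rules out $(-1)$-curves that could be contracted by Castelnuovo's criterion.

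The main obstacle I anticipate is the bookkeeping around the continued fraction: one has to verify carefully that the recursive subdivision of $\sigma$ is driven by the expansion of $n/m$ (governing the resolution) and not by $n/(n-m)$ (which by Theorem \ref{T:Riemenschneider} controls the equations of $\rR$). Keeping these two dual Hirzebruch--Jung expansions distinct, and checking that the indexing and orientation of the chain agree with the ordering $b_1, \ldots, b_f$, is the delicate bit; once that is set up correctly, the remaining verifications (regularity of each inserted cone, computation of $E_i\cdot E_i$, minimality) are routine toric computations.
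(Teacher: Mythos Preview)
The paper does not actually prove this theorem: it is stated as a classical result (the label \texttt{T:Brieskorn} and its placement immediately before a new subsection indicate it is quoted without proof, as is the preceding Theorem~\ref{T:Riemenschneider} of Riemenschneider). Your toric/Hirzebruch--Jung argument is the standard and correct proof of this statement, so there is nothing to compare against in the paper itself.
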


\subsection{Degenerate cusps and their basic properties}

In this subsection we recall the definition of an important class of non-isolated
surface singularities called \emph{degenerate cusps}.

\begin{definition}
Let $t \ge 1$ and $\underline{w} = \bigl((n_1, m_1), (n_2, m_2),\dots, (n_t, m_t)\bigr) \in (\mathbb{Z}^2)^t$ be a collection
of integers such that $0 \le m_i < n_i$  and $\gcd(m_i, n_i) = 1$ for all $1 \le i \le t$.
Let $\rR_i = \rR(n_i, m_i) =  \kk\llbracket u_i, v_i\rrbracket^{C_{n_i, \, m_i}}  \subseteq
 \kk\llbracket u_i, v_i \rrbracket$ be the corresponding cyclic quotient singularity
(in our convention, $\rR_i = \kk\llbracket u_i, v_i\rrbracket$ if $(n_i, m_i) = (1, 0)$),
$J_i \subseteq \rR_i$ the ideal defined in Theorem \ref{T:Riemenschneider}
and
$
\rA = \widetilde\rA(\underline{w}) \subseteq
\rR_1 \times \rR_2 \times \dots \times \rR_t =:\rR
$
be the ring obtained by the construction of Definition
\ref{D:gluing-of-rings}. Then $\rA$ is called degenerate cusp of \emph{type} $\underline{w}$.
\end{definition}

\begin{lemma}
Given a local complete $\kk$--algebra $\rA$, which is a degenerate cusp.
Then its type $\underline{w}$ is uniquely determined up
to an action of the dihedral group $D_t$ (i.e.~ up to a shift and reflection).
\end{lemma}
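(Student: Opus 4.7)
The plan is to recover the combinatorial data of the degenerate cusp directly from intrinsic invariants of $\rA$ and to show that the only freedom is the dihedral action.

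First, I would show that the decomposition type $(n_1,\dots,n_t)$ (up to cyclic shift) is determined. The normalization $\rR$ is intrinsic to $\rA$, and the decomposition $\rR\cong \rR_1\times\cdots\times \rR_t$ into local factors is canonical. By Proposition \ref{P:glue-of-rings}(\ref{i:propGLofringsIt3}), each $\rR_i$ is a cyclic quotient singularity of type $(n_i,m_i)$. It is a classical fact (due to Brieskorn, and visible from the dual graph of the minimal resolution recalled in Theorem \ref{T:Brieskorn}) that two cyclic quotient singularities $\rR(n,m)$ and $\rR(n',m')$ are isomorphic as local $\kk$--algebras if and only if $n=n'$ and either $m'\equiv m$ or $mm'\equiv 1 \pmod n$. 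Consequently, from the isomorphism class of $\rR_i$ alone we recover $n_i$ and the \emph{unordered} pair $\{m_i, m_i^{-1} \bmod n_i\}$.

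Next, I would recover the cyclic adjacency. The conductor $I=\Ann_\rA(\rR/\rA)$ is intrinsic, and by Proposition \ref{P:glue-of-rings}(\ref{i:propGLofringsIt2},\ref{i:propGLofringsIt4}) we have canonical identifications
\[
\bar\rA\cong \kk\llbracket \bar w_1,\dots,\bar w_t\rrbracket/\langle \bar w_i\bar w_j:i\ne j\rangle,\qquad \bar\rR\cong\prod_{i=1}^t \kk\llbracket \bar u_i,\bar v_i\rrbracket/(\bar u_i\bar v_i),
\]
with $\bar w_i\mapsto \bar v_i+\bar u_{i+1}$. Thus $\bar\rA$ has exactly $t$ minimal primes $\bar\idp_1,\dots,\bar\idp_t$, and $\bar\rR$ has $2t$ minimal primes naturally partitioned into $t$ pairs (one pair $\{\bar v_i\text{-branch},\bar u_i\text{-branch}\}$ for each component $\rR_i$). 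Each $\bar\idp_i$ lies under exactly two minimal primes of $\bar\rR$, one from $\rR_i$ and one from $\rR_{i+1}$. Interpreting the local factors of $\rR$ as vertices and the minimal primes of $\bar\rA$ as edges, one obtains a graph which is $2$-regular and connected (the latter because $\rA$ is local), hence a single cycle of length $t$. This cycle is canonically attached to $\rA$ and is well-defined up to rotation and reflection, i.e.\ up to the dihedral group $D_t$.

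Finally, I would match the labels $m_i$ to this combinatorial cycle. For each $i$, the cyclic quotient singularity $\rR_i$ carries two distinguished branches of $V(u_iv_i)$, determined by the two edges of the cycle incident to the vertex $\rR_i$. Choosing a direction of traversal along the cycle selects which incident branch is the ``incoming'' one (the $u_i$-branch) and which is the ``outgoing'' one (the $v_i$-branch); this unambiguously fixes the integer $m_i\in\{0,\dots,n_i-1\}$ with $\gcd(m_i,n_i)=1$ via the $C_{n_i,m_i}$-action that yields $\rR_i$ together with this branch labelling. A cyclic shift of the traversal amounts to a cyclic shift of the sequence $((n_i,m_i))$, while reversing the traversal swaps the two branches in every $\rR_i$ simultaneously, replacing each $m_i$ by $m_i^{-1}\bmod n_i$ and reversing the sequence. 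These are precisely the two generators of $D_t$.

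The main obstacle is the second step, namely justifying that each $m_i$ is pinned down modulo the swap $m_i\leftrightarrow m_i^{-1}$ by intrinsic data and that the swap is globally coupled: the choice of branch labelling in one $\rR_i$ forces the choice in all others because the gluing is cyclic. Once this rigidity of the cyclic ``orientation'' is established, combining the three steps yields the uniqueness of the type up to the $D_t$-action.
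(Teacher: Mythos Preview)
Your argument is correct and takes a genuinely different route from the paper's proof. The paper appeals to the \emph{improvement} $Y\to X$ of the degenerate cusp (in the sense of Shepherd-Barron, Stevens, and van Straten): the exceptional fibre is a cycle of projective lines that breaks into $t$ chains, one for each $\rR_i$, whose self-intersection numbers recover $(n_i,m_i)$ via Theorem~\ref{T:Brieskorn}; the dihedral ambiguity is then just the symmetry of an unoriented cycle of curves. Your proof stays entirely on the algebraic side, using only the normalization, the conductor, and the combinatorics of minimal primes of $\bar\rA$ and $\bar\rR$ already established in Proposition~\ref{P:glue-of-rings}. This makes your argument more self-contained relative to the paper (which does not develop the theory of improvements), at the cost of the extra bookkeeping in your third step. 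Note that the two approaches match on the meaning of ``reflection'': reading the resolution chain of $\rR(n,m)$ backwards replaces $m$ by $m^{-1}\bmod n$, exactly as your branch-swap does.

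The point you flag as the main obstacle is not really one. Each minimal prime of $\bar\rA$ singles out one branch in $\bar\rR_i$ and one in $\bar\rR_{i+1}$, so once a direction of traversal is chosen at a single edge it propagates uniquely around the cycle; this is the global coupling you need. The only place to be slightly more careful is the degenerate case $t=1$, where the graph is a single vertex with a loop and $D_1\cong\mathbb{Z}/2$ acts by $m_1\mapsto m_1^{-1}$; your description still applies, but it is worth saying explicitly.
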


\noindent
\emph{Sketch of a proof}.
Let $X = \Spec(\rA)$ and $Y \stackrel{\pi}\rightarrow X$ be its \emph{improvement}, see
\cite{ShepherdBarron} and  \cite{Stevens2,vanStraten}.
Let $o = \{\idm\} \in X$ be the unique closed point of $X$ and $Z = \pi^{-1}(o)$ be
the exceptional divisor. Then $Z$ is a cycle of projective lines. Moreover, $Z$ is a union
of trees of projective lines: $Z = Z_1 \cup Z_2 \cup \dots \cup Z_t$, where each
$Z_i$ is isomorphic to the exceptional divisor $E_i$ of a minimal resolution $\widetilde{X}_i$
of
the  cyclic quotient singularity $\Spec\bigl(\rR(n_i, m_i)\bigr)$. The irreducible components
of each tree $Z_i$ have the same intersection multiplicities in $Y$ as the
intersection multiplicities of the corresponding irreducible components of
$E_i$ in $\widetilde{X}_i$.
These components $E_1, E_2, \dots, E_t$ intersect precisely at those points of $E$, where
the variety $Y$ is not smooth. Thus,  Theorem \ref{T:Brieskorn} allows to reconstruct
the parameters $(n_1, m_1), (n_2, m_2),\dots, (n_t, m_t)$ as well as  the order of gluing
of the corresponding cyclic quotient singularities.
\qed

\medskip
\noindent
The following important result is due to Shepherd--Barron \cite[Lemma 1.1]{ShepherdBarron}.
\begin{theorem}
Let $\rA$ be a degenerate cusp. Then $\rA$ is Gorenstein.
\end{theorem}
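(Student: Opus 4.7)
The plan is to show that the canonical module $\mK_\rA$ of $\rA$ is free of rank one. Together with the Cohen-Macaulayness of $\rA$ (Proposition \ref{P:glue-of-rings}(\ref{i:propGLofringsIt5})) this is equivalent to the Gorenstein property. Since $\rA$ is Gorenstein in codimension one by the preceding lemma, $\mK_\rA$ is a rank-one reflexive, and hence Cohen-Macaulay, $\rA$-module, so the task is to exhibit an explicit free generator.

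The starting point is the adjunction isomorphism for the finite ring extension $\rA \subseteq \rR$:
$$
\mK_{\rR} \;\cong\; \Hom_\rA(\rR, \mK_\rA),
$$
which, once both sides are realized as fractional ideals inside $\rQ(\rR) = \rQ(\rA)$, identifies $\mK_\rA$ with the colon ideal $(\mK_\rR : \rR) = \{x \in \rQ(\rR) \mid x\rR \subseteq \mK_\rR\}$. Since $\rR = \rR_1 \times \dots \times \rR_t$, the computation of $\mK_\rR$ reduces to the individual cyclic quotient singularities; for each $\rR_i = \rR(n_i, m_i)$, the classical description of $\mK_{\rR_i}$ as the fractional $\rR_i$-ideal of $C_{n_i,m_i}$-semi-invariants of the determinant character (compatible with the combinatorics of Theorem \ref{T:Riemenschneider}) produces explicit monomial generators inside $\rS_i = \kk\llbracket u_i, v_i\rrbracket$.

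The main step is to combine these local computations with the gluing data. A clean way to do this is geometric: following Shepherd-Barron \cite{ShepherdBarron}, one forms the improvement $Y \stackrel{\pi}{\to} X = \Spec(\rA)$, whose exceptional fibre $Z = \pi^{-1}(o)$ is a cycle of projective lines, i.e.~a connected nodal curve of arithmetic genus one with trivial dualizing sheaf $\omega_Z \cong \kO_Z$. The adjunction formula $(K_Y + Z)|_Z \sim K_Z$ then gives $(K_Y+Z)|_Z \sim 0$, so the divisor $K_Y + Z$ has zero intersection number with every irreducible component of $Z$ and is therefore $\pi$-numerically trivial. Pushing forward yields that $K_X$ is Cartier, which is precisely the statement that $\mK_\rA$ is invertible.

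The main obstacle is that for $t \ge 3$ neither $\rR$ (a product of typically non-Gorenstein cyclic quotients) nor $\bar\rA$ (an ordinary $t$-fold curve point) is Gorenstein on its own, so the proof has to encode precisely how their failures of Gorenstein-ness cancel through the gluing along $\bar\rR$. Equivalently, in the purely algebraic approach using the pull-back exact sequence
$$
0 \lar \rA \lar \rR \oplus \bar\rA \lar \bar\rR \lar 0,
$$
one has to exhibit a concrete rational section $\omega \in \rQ(\rR)$ which is regular on each $\rR_i$ up to exactly the discrepancy prescribed by the description of $\mK_{\rR_i}$, and whose two restrictions match along each node $\bar{v}_i = \bar{u}_{i+1}$ of the conductor cycle. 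This combinatorial accounting is where the hypotheses of the gluing construction really enter.
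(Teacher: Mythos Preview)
The paper does not give its own proof of this theorem; it simply attributes the result to Shepherd-Barron \cite{ShepherdBarron} and moves on. So there is no ``paper's proof'' to compare against, and your geometric sketch via the improvement is indeed in the spirit of Shepherd-Barron's original argument.

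That said, your sketch has a genuine gap at the crucial step. From $(K_Y+Z)|_Z\sim 0$ you correctly deduce that $K_Y+Z$ is $\pi$-\emph{numerically} trivial, but then you write ``Pushing forward yields that $K_X$ is Cartier.'' Numerical triviality over $X$ is not enough for this conclusion: you need $K_Y+Z$ to be $\pi$-\emph{linearly} trivial, i.e.\ $\omega_Y(Z)\cong\kO_Y$ in a neighbourhood of $Z$, and only then does $\omega_X\cong\pi_*\omega_Y(Z)\cong\pi_*\kO_Y\cong\kO_X$ give invertibility. Passing from numerical to linear triviality is exactly the nontrivial content here, since $X$ is not rational (so one cannot invoke Artin-type contractibility or vanishing of $R^1\pi_*\kO_Y$). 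In Shepherd-Barron's treatment this is handled by explicitly producing a nowhere-vanishing section of $\omega_Y(Z)$, using that the generating section of $\omega_Z$ (the cycle of $\mathbb P^1$'s has $\omega_Z\cong\kO_Z$) lifts through the exact sequence $0\to\omega_Y\to\omega_Y(Z)\to\omega_Z\to0$; the obstruction lives in $H^1(\omega_Y)$ and one checks it vanishes. You should also note that $Y$ is not smooth: as the paper itself remarks, the components of $Z$ meet at points where $Y$ is singular (ordinary nodes), so before using adjunction you need that $Y$ is Gorenstein along $Z$, which is true but deserves a sentence.

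Your alternative algebraic outline via the conductor square is a reasonable programme, and you correctly identify the difficulty (neither $\rR$ nor $\bar\rA$ is Gorenstein for $t\ge3$, so one must see a cancellation), but as written it is only a statement of what one would have to check, not an argument.
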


\subsection{Irreducible degenerate cusps} In this subsection we write down
equations of irreducible degenerate cusps.
Let $\rR  = \kk\llbracket x_0, x_1, \dots, x_e, x_{e+1}\rrbracket/L$ be a cyclic quotient singularity
(\ref{E:equat-of-quotient}) and $J \subset \rR$ be the ideal defined by $(x_1, \dots,
x_e)$.
Then $\bar\rR := \rR/J = \kk\llbracket x_0, x_{e+1}\rrbracket/x_0 x_{e+1}$ and we define
the ring $\rA$ via the pull-back diagram in the category of commutative rings
\begin{equation}\label{E:pull-back-rings}
\begin{array}{c}
\xymatrix
{
\rA \ar[r] \ar[d] & \bar\rA \ar[d]^{\gamma} \\
\rR \ar[r] & \bar\rR,
}
\end{array}
\end{equation}
where $\bar\rA = \kk\llbracket \bar{z}\rrbracket$ and $\gamma: \bar\rA \to \bar\rR$ maps
$\bar{z}$ to $\bar{x}_0 + \bar{x}_{e+1}$. Our goal is to write explicitly a list of generators and relations of the ring
$\rA$.

\medskip

\noindent
\underline{Case 1}.
Consider first case when the cyclic group is trivial and
 $\rR = \kk\llbracket u,v\rrbracket$. Then the ring $\rA$ is generated by the power
 series in $x = u+v$, $y = uv$ and $z = u^2 v$.
 In the quotient ring $\rQ(\rR)$ we have the equalities
 $$
 u = \frac{z}{y} \quad \mbox{and} \quad v = \frac{xy-z}{y}.
 $$
 The equality $y = uv$ implies the
 relation: $
 y^3 + z^2 - xyz = 0.
 $
We have a ring homomorphism $\pi: \kk\llbracket x,y,z\rrbracket \to \rR$ defined by the formulae
$x \mapsto u+v$, $y \mapsto uv$ and  $z \mapsto u^2 v$, whose image is the ring $\rA$.
Moreover, $y^3 + z^2 -xyz$ belongs to $\ker(\pi)$. If $\ker(\pi)$ has further generators then
$\kk\llbracket x,y,z\rrbracket/\ker(\pi)$ has Krull dimension which is not bigger than one.
Contradiction. Hence, $\rA$ is a hypersurface singularity
\begin{equation}
\rA = \kk\llbracket x,y,z\rrbracket/(y^3 + z^2 -xyz).
\end{equation}

\medskip

\noindent
\underline{Case 2}. Let $e = 1$, $n \ge 2$ and $m = n-1$. Then
$\rR = \kk\llbracket u,v,w\rrbracket/(v^n - uw)$ and $J = (v)$. As in the previous case,
one can
show that $\rA$ is generated by the power series in $x = v, y = uv$ and $z = u+ w$.
In the quotient ring $\rQ(\rR)$ we have the equalities
$$
u = \frac{y}{x} \quad \mbox{and} \quad w = \frac{xz -y}{x}.
$$
Hence, the  relation $uv = v^{n}$ reads as $x^{n+2} + y^2 = xyz$. As in the previous
case, we get:\begin{equation}
\rA = \kk\llbracket x,y,z\rrbracket/(x^{n+2} + y^2 - xyz).
\end{equation}
Since $x^{n+2} + y^2 - xyz = x^{n+2} + (y - \frac{1}{2} xz)^2 +
\frac{1}{4} x^2 z^2$, the equation
of $\rA$ can be rewritten in the form
$
\rA = \kk\llbracket x,y,z\rrbracket/\bigl(x^{2}(x^n + z^2)  + y^2\bigr).
$

\medskip

\noindent
\underline{Case 3}. Let $e = 2$ and
$n/(n-m) = p - 1/q = (pq-1)/q$,
where $p, q \ge 2$.
The ring $\rR = \kk\llbracket x_0, x_1, x_2, x_3\rrbracket/L$ is given by Riemenschneider's relations
$$
x_0 x_2 = x_1^p, \; x_1 x_3 = x_2^q \quad \mbox{and} \quad  x_0 x_3 = x_1^{p-1} x_2^{q-1}.
$$
The subring $\rA \subset \rR$ is generated by the elements $x = x_1$, $y = x_2$ and $z = x_0 + x_3$.
We have the following equality in $\rR$:
$
x_1^{p+1} + x_2^{q+1} = x_1 x_2 (x_0 + x_3).
$
Hence, we get:
\begin{equation}
\rA = \kk\llbracket x,y,z\rrbracket/(x^{p+1} + y^{q+1} - xyz), \; p,q \ge 2.
\end{equation}
Summing up, in  all considered cases we get the singularities
$$T_{pq\infty} = \kk\llbracket x,y,z\rrbracket/(x^p + y^q - xyz),$$
where $p \ge q \ge 2$ and $(p,q) \ne (2,2)$.

\medskip

\noindent
\underline{Case 4}. Let $e = 3$ and
$n/(n-m) = p - 1/(q-1/r)$,
where $p,\;q,\;r \ge 2.$
Then $\rR = \kk\llbracket u,x,y,z,v\rrbracket/L$, where the Riemenschneider's relations
are:
\begin{equation}
u y = x^p, \; xz = y^q, \;  y v = z^r, \; u z = x^{p-1} y^{q-1}, \; xv = y^{q-1} z^{r-1},\;
uv = x^{p-1} y^{q-2} z^{r-1}.
\end{equation}
The ring $\rA$ is generated by the power series in the elements $w = u+v$, $x, y$ and $z$.
Next, we have the following equality in $\rR$:
$
w y^q = w xz = (u +v) xz = x^p y^{q-1} + y^{q-1} z^r,
$
implying the equality $wy = x^p + z^r$. One can show that in this case we have:
$$
\rA = \kk\llbracket x,y,z,w\rrbracket/(xz - y^q, yw - x^p - z^r)$$
for $p,\,q, \,r \ge 2$.
In other words, $\rA$ is a $T_{prq\infty}$--singularity.

\medskip

\noindent
\underline{Case 5}. For  $e \ge 4$ the ring $\rA$ is no longer a complete intersection. For  the
sake of completeness, we write  its presentation via generators and relations as well.
Let $ 0 < m < n$ be coprime integers and
 $$
\frac{n}{\displaystyle n-m} = a_1 - 1/(a_2 - \dots - 1/a_e),
$$
the expansion of $n/(n-m)$ into a continuous fraction,
where  $a_i \ge 2$ for all  $1 \le i \le e$. Then one can show that
$
\rA = \kk\llbracket x_1, x_2, \dots, x_e, z\rrbracket/L,
$
where the ideal $L$ is generated by
\begin{equation}
\left\{
\begin{array}{ccl}
z x_2     & = &  x_1^{a_1} + x_3 \bigl(\prod\limits_{l=3}^e x_l^{a_l-2}\bigr) x_e, \\
z x_{e-1} & = &  x_1 \bigl(\prod\limits_{l=1}^{e-2} x_l^{a_l-2}\bigl)  x_{e-2},  \\
z x_i & = & x_1 \bigl(\prod\limits_{l=1}^{i-1} x_l^{a_l-2}\bigl)  x_{i-1} + x_{i+1}
\bigl(\prod\limits_{l=i+1}^{e} x_l^{a_l-2}\bigl)  x_{e}, \quad 2 < i < e-2, \\
x_{i-1}x_{j+1} & = & x_i \bigl(\prod\limits_{k = i}^j x_k^{a_k -2}\bigl)  x_j,  \quad
2 \le i \le j \le e-1.
\end{array}
\right.
\end{equation}

\subsection{Other cases of degenerate cusps which are complete intersections} In this
subsection we describe all other cases of  degenerate cusps which are
complete intersections.

\medskip

\noindent
\underline{Case 6}. Let $\rR = \kk\llbracket x_1,  x_2\rrbracket \times
\kk\llbracket y_1, y_2\rrbracket$. Then
$\rA$ is generated by the power
 series in the elements $x = (x_1, y_2)$, $y = (x_2, y_1)$ and $z = (x_1 x_2, 0)$.
  The following relation is obviously satisfied: $xyz = z^2$. Hence,   we have:
  \begin{equation}
 \rA = \kk\llbracket x,y,z\rrbracket/(xyz - z^2) \cong \kk\llbracket x,y,z\rrbracket/(x^2 y^2  + z^2).
 \end{equation}

\noindent
\underline{Case 7}. Let $\rR = \kk\llbracket x_1,  x_2\rrbracket \times
\kk\llbracket y_0, y_1, y_2\rrbracket/(y_0 y_2 - y_1^p)$, $p \ge 2$. Then
$\rA$ is generated by the power
 series in the elements $x = (x_1, y_0)$, $y = (x_2, y_2)$ and $z = (0, y_1)$. Moreover, we have:
 $
  x y z = z^{p+1}
 $
 and
 \begin{equation}
 \rA = \kk\llbracket x,y,z\rrbracket/(xyz - z^{p+1}), \; p \ge 2.
 \end{equation}
 In other words, Case 6 and Case 7 yield the class of $T_{p\infty\infty}$--singularities.

 \medskip

\noindent
\underline{Case 8}. Let $\rR = \kk\llbracket x_1, x_2\rrbracket \times \kk\llbracket y_1, y_2\rrbracket \times \kk\llbracket z_1, z_2\rrbracket$. Then the ring $\rA$ is generated
by the power series in the elements
$
x = (x_2, y_1, 0), \; y = (0, y_2, z_1) \quad \mbox{and} \quad
z = (x_1, 0, z_2).
$
They satisfy the relation $xyz = 0$ and we have:
\begin{equation}
\rA = \kk\llbracket x, y, z\rrbracket/(xyz).
\end{equation}
This is a singularity of type $T_{\infty\infty\infty}$.

 \medskip

\noindent
\underline{Case 9}. Let
$\rR = \kk\llbracket x_0, x_1, x_2\rrbracket/(x_0 x_2 - x_1^p)
 \times \kk\llbracket y_0, y_1, y_2\rrbracket/(y_0 y_2 - y_1^q), \; p, q \ge 2.$
 Then the degenerate cusp $\rA$ is generated by the power series in the
 elements
 $$
 x = (x_0, y_2), \;  y = (x_2, y_0), \; z = (x_1, 0) \quad \mbox{and}
 \quad w = (0, y_1).
 $$
 We have the following equalities in $\rR$:
 $
 xy = z^p + w^q \quad \mbox{and} \quad zw = 0.
 $
 One can show that
 \begin{equation}
 \rA = \kk\llbracket x, y, z, w\rrbracket/(z^p + w^q - xy, wz),
 \end{equation}
 i.e.~$\rA$ is a singularity of type $T_{pq\infty\infty}$.
  \medskip

\noindent
\underline{Case 10}. Let
$\rR = \rR_1 \times \rR_2$, where $\rR_1 = \kk\llbracket x_1, x_2\rrbracket$
and $\rR_2 = \kk\llbracket y_0, y_1, y_2, y_3\rrbracket/(y_0 y_2 - y_1^p, y_1 y_3 - y_2^q,
y_0 y_3 - y_1^{p-1} y_2^{q-1})$, where $p, q \ge 2$.   Then $\rA$ is generated by the power series
in the elements
$
x = (x_1, y_3), \; y = (x_2, y_0), \; z = (0, y_1), \; w = (0, y_2)
$
 and we have:
 \begin{equation}
 \rA = \kk\llbracket x, y, z, w\rrbracket/(yw - z^p, xz - w^q)
 \end{equation}
is a singularity of type $T_{p\infty q\infty}$.
  \medskip

\noindent
\underline{Case 11}. Let
$\rR = \kk \llbracket x_1, x_2\rrbracket \times  \kk \llbracket y_1, y_2\rrbracket \times
 \kk \llbracket z_0, z_1, z_2\rrbracket/(z_0 z_2 - z_1^p)$, where $p\ge 2$. Then
 the ring $\rA$ is generated by the power series in the elements
 $$
 x = (x_2, y_1, 0), y = (0, y_2, z_0), z = (x_1, 0, z_2), w = (0, 0, z_1).
 $$
 The following relations are satisfied:
 $
 xw = 0 \quad \mbox{and} \quad yz = w^p
 $
 and we get
 \begin{equation}
 \rA = \kk \llbracket x, y, z, w\rrbracket/(xw, yz - w^p).
 \end{equation}
This is a $T_{p\infty\infty\infty}$--singularity.

  \medskip

\noindent
\underline{Case 12}. Let
$\rR = \kk \llbracket x_1, x_2\rrbracket \times  \kk \llbracket y_1, y_2\rrbracket
 \times \kk \llbracket z_1, z_2\rrbracket \times  \kk \llbracket w_1, w_2\rrbracket$.
 Then $\rA$ is generated by the power series in the elements
 $$
 x = (x_2, y_1, 0, 0), \; y = (0, y_2, z_1, 0), \; z = (0, 0, z_2, w_1)
 \quad \mbox{and} \quad w = (x_1, 0, 0, w_2)
 $$
and we have:
\begin{equation}
\rA = \kk \llbracket x, y, z, w\rrbracket/(xz, yw)
\end{equation}
is a singularity of type $T_{\infty\infty\infty\infty}$.

\section{Maximal Cohen--Macaulay modules over degenerate cusps--II}
\label{sec10}
The goal of this section is to deduce   the  matrix problem describing maximal
Cohen--Macaulay modules over the  degenerate cusp $\rA = \widetilde\rA(\underline{w})$, where
$\underline{w} = \bigl((n_1, m_1), \dots, (n_t, m_t)\bigr)$ with $0 \le m_i < n_i$ and
$\gcd(n_i, m_i) = 1$ for $1 \le i \le t$.  Recall
that the normalization of $\rA$ is $\rR = \rR_1 \times \rR_2 \times \dots \times \rR_t$,
where each component $\rR_i = \rR(n_i, m_i)$, $1 \le i \le t$ is a cyclic quotient singularity
of type $(n_i, m_i)$.
  As a first step, we recall a description
of indecomposable maximal Cohen--Macaulay modules over  cyclic quotient singularities.

\subsection{Maximal Cohen--Macaulay modules on cyclic quotient surface singularities}
Let $\kk$ be an algebraically closed field of characteristic zero,
$\rS = \kk\llbracket u, v\rrbracket$ and $G = C_{n,m} \subset \GL_2(\kk)$ be a  cyclic subgroup group of order $n$
generated by the matrix
$g = \left(
\begin{array}{cc}
\xi & 0 \\
0 & \xi^m
\end{array}
\right)
$, where $\xi$ is a primitive $n$-th root of unity, and $0 \le  m <n$ is  such that
 $\gcd(m, n) = 1$.
Let  $\rR= \gR(n,m) = \rS^{G}$ be the corresponding ring of invariants,
$\Lambda = \rS*G$ the skew group ring and  $\Pi =  \{0, 1, \dots, n-1\}$.
For  $l \in \Pi$ we denote
by $\bar{l}$ the unique element in $\Pi$ such that $l =  \bar{l} m \; \mod \; n$.
 Recall that
$$\rR = \kk\llbracket u^n, u^{n-1} v^{\bar{1}}, \dots, u v^{\overline{n-1}}, v^n
\rrbracket \subset \rS = \kk\llbracket u, v \rrbracket \subset \Lambda = \kk\llbracket u, v \rrbracket
*G
$$
is the center of $\Lambda$, $\rad(\Lambda) = (u, v)*G$ and
$\Lambda/\rad(\Lambda) \cong  \kk[G]$. The following result is due to
Auslander \cite{Auslander}, see also \cite{Wunram, Yoshino}.

\begin{theorem}\label{T:Auslander}
Let $\mathsf{Pro}(\Lambda)$ be the category of finitely generated projective left
$\Lambda$--modules. Then the functor (of taking invariants)
$
\mathsf{Pro}(\Lambda) \lar \CM(\rR),
$
assigning  to a projective module $\mP$ its invariant part
$\mP^G = \bigl\{x \in \mP \,| g \cdot x = x \; \mbox{\rm for all} \;  g \in G \bigr\}$ and to a morphism
$\mP \stackrel{f}\rightarrow \mQ$ its restriction $f\bigl|_{\mP^G}\bigr.$, is an equivalence of categories.
\end{theorem}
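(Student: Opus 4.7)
The plan is to recognize the functor $(-)^{G}$ as a representable functor and then deduce the equivalence from a Morita-type argument adapted to the skew group algebra $\Lambda = \rS * G$. View $\rS$ as a left $\Lambda$-module via $[g]\cdot s = g(s)$ and $s'\cdot s = s's$. Then for any $\mP \in \mathsf{Pro}(\Lambda)$ evaluation at $1$ gives a natural $\rR$-isomorphism
$$\Hom_\Lambda(\rS, \mP) \stackrel{\cong}\lar \mP^G,\qquad \varphi \mapsto \varphi(1),$$
so our functor is $\Hom_\Lambda(\rS, -)$, and in particular $\End_\Lambda(\rS)\cong \rS^G = \rR$.

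Next I would show that $\rS$ is a finitely generated projective left $\Lambda$-module. Since $\chr(\kk)=0$ and $|G| = n$, the element $e = \frac{1}{|G|}\sum_{g\in G}[g]\in\Lambda$ is an idempotent. A direct computation in $\Lambda$ yields $[h]e = e$ for every $h\in G$, hence $\Lambda e = \rS e$, and the assignment $s \mapsto s\cdot e$ is an isomorphism $\rS \stackrel{\cong}\lar \Lambda e$ of left $\Lambda$-modules. The same kind of computation gives $e\Lambda e = \rR\cdot e \cong \rR$, recovering Step $1$.

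The heart of the argument is the claim that $\rS$ is a \emph{progenerator}: equivalently, that $\Lambda e\Lambda = \Lambda$, equivalently, that the canonical $\rR$-algebra homomorphism
$$\Lambda \lar \End_\rR(\rS),\qquad s[g] \mapsto \bigl(x\mapsto s\cdot g(x)\bigr),$$
is an isomorphism. Injectivity is immediate from the $\rS$-linear independence of $G$, and surjectivity is the classical theorem of Auslander requiring $G\subset \GL_2(\kk)$ to contain no pseudo-reflections. For $G = C_{n,m}$ with $\gcd(n,m) = 1$ this is automatic, since the only fixed points of a nontrivial $g\in G$ on $\Spec(\rS)$ form the origin. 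Granting this, Morita theory applied to the bimodule ${}_\Lambda \rS_\rR$ yields a fully faithful functor $\Hom_\Lambda(\rS, -) : \mathsf{Pro}(\Lambda) \to \rR\mathsf{-mod}$ whose essential image is $\add_\rR(\rS)$, the category of $\rR$-module summands of $\rS^n$, with quasi-inverse $\rS\otimes_\rR -$.

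It remains to identify $\add_\rR(\rS)$ with $\CM(\rR)$. The inclusion $\add_\rR(\rS) \subseteq \CM(\rR)$ follows from Lemma \ref{L:CMdim2} (or directly because $\rS$ is regular of Krull dimension two and the extension $\rR \subseteq \rS$ is finite of the same dimension, so $\depth_\rR(\rS) = \depth_\rS(\rS)= 2$). For the opposite inclusion, given $M \in \CM(\rR)$ I would consider $\rS \otimes_\rR M$, which is Cohen-Macaulay over the regular ring $\rS$ and therefore free, say of rank $n$. Since $|G|$ is invertible, the Reynolds idempotent $\frac{1}{|G|}\sum_g g$ exhibits $M\cong (\rS\otimes_\rR M)^G$ as an $\rR$-module summand of $\rS \otimes_\rR M \cong \rS^n$, so $M\in \add_\rR(\rS)$. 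The main obstacle in this plan is Step $3$ — Auslander's identification $\Lambda\cong\End_\rR(\rS)$ — which is the non-formal input that genuinely uses the small/reflection-free geometry of the $G$-action; everything else is either a direct calculation with the idempotent $e$ or a standard application of Morita theory and the McKay-type reduction via the averaging operator.
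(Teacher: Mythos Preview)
The paper does not give its own proof of this statement --- it is quoted as a result of Auslander with references to \cite{Auslander}, \cite{Yoshino} and \cite{Wunram}. Your outline is essentially the standard skew--group--ring argument, and Steps~1 and~2 are correct. There are, however, two genuine problems.

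\textbf{Step 3 is mis-stated.} The condition $\Lambda e\Lambda=\Lambda$ is \emph{false} whenever $G$ is non-trivial: reducing modulo $\rad(\Lambda)=\idm_\rS\Lambda$, the two-sided ideal $\Lambda e\Lambda$ maps to $\kk[G]\,e\,\kk[G]=\kk e\subsetneq \kk[G]$, since $e$ is a central idempotent of $\kk[G]$. Hence $\rS\cong\Lambda e$ is \emph{not} a progenerator of $\Lambda$-mod and there is no Morita equivalence $\Lambda\text{-mod}\simeq\rR\text{-mod}$ (indeed $\rS$ is not projective over the singular ring $\rR$). What Auslander actually proves is only the double--centraliser statement $\Lambda\cong\End_\rR(\rS)$. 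From this, the desired equivalence $\mathsf{Pro}(\Lambda)\simeq\add_\rR(\rS)$ follows by the elementary ``projectivisation'' argument $\add_\rR(\rS)\simeq\mathsf{Pro}\bigl(\End_\rR(\rS)\bigr)$ via $\Hom_\rR(\rS,-)$, not by Morita theory in the progenerator sense. So your conclusion in Step~4 survives, but its justification must be rewritten.

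\textbf{Step 5 has a real gap.} The module $\rS\otimes_\rR M$ is \emph{not} Cohen--Macaulay over $\rS$ in general. Already for the $A_1$--singularity $\rR=\kk\llbracket u^2,uv,v^2\rrbracket\subset\rS=\kk\llbracket u,v\rrbracket$ and $M=(u^2,uv)$ one finds $\Tor_1^\rR(\rS,M)\cong\kk$, so $\rS\otimes_\rR M$ has a nonzero finite-length submodule and hence is not even torsion free. The standard repair is to replace the tensor product by $N:=\Hom_\rR(\rS,M)$: this is Cohen--Macaulay over $\rR$ by Lemma~\ref{L:CMdim2}, hence of depth two over $\rS$, hence free over the regular ring $\rS$. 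Since the Reynolds operator splits $\rR\hookrightarrow\rS$ as $\rR$-modules, one has $N\cong M\oplus\Hom_\rR(\rS/\rR,M)$, exhibiting $M$ as an $\rR$-summand of a free $\rS$-module and therefore $M\in\add_\rR(\rS)$.
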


\noindent
Let $\mU$ be an irreducible representation of $G$. Then it defines a left $\Lambda$--module
$\mP_\mU := \rS \otimes_\kk U$, where an element $p[g] \in \Lambda$ acts on a simple tensor
$r \otimes v$ by the rule $$p[g] \cdot (r \otimes v) = p g(r) \otimes g(v).$$
It is easy to see that $\mP_\mU$ is projective and
indecomposable and the top of $\mP_\mU$  is
$\mU$ viewed as a $\Lambda$--module.

Since $G$ is a finite cyclic group, all its irreducible representations are one-dimensional.
For any $l \in \Pi$ let $V_l = \kk$ be the representation of $G$ determined
by the condition $g \cdot 1 = \xi^{-l}$. From Theorem \ref{T:Auslander} we obtain the following
description of indecomposable maximal Cohen--Macaulay modules over $\rR$.

\begin{corollary}\label{C:descr-indecCM-over-cycl} There exist precisely $n$ indecomposable maximal Cohen--Macaulay
 $\rR$--modules. For any $l \in \Pi$ the corresponding maximal Cohen--Macaulay $\rR$--module
$I_l$ is
\begin{equation}\label{E:descr-indecCM-over-cycl}
I_l = (S \otimes_\kk V_l)^G \cong
\left\{\sum\limits_{i, j = 0}^\infty a_{ij} u^i v^j \Bigl| i + mj = l \; \mod \; n, a_{ij} \in \kk
\Bigr.\right\} \subset \rS.
\end{equation}
In other words, $I_l = \bigl\langle u^l, u^{l-1} v^{\bar{1}}, \dots, u v^{\overline{l-1}}, v^{\bar{l}}\bigr\rangle_\rR
\subset \rS$.
\end{corollary}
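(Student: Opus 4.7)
The plan is to reduce the corollary to Auslander's equivalence (Theorem \ref{T:Auslander}) and then classify the indecomposable projective $\Lambda$-modules explicitly, taking invariants at the end.

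First I would observe that $\Lambda = \rS \ast G$ is a module-finite algebra over the complete local Noetherian ring $\rR$, hence $\Lambda$ is semiperfect. Moreover we are told $\Lambda/\rad(\Lambda) \cong \kk[G]$. Since $G = C_{n,m}$ is cyclic of order $n$ and $\chr(\kk) = 0$, the group algebra $\kk[G]$ is semisimple with exactly $n$ pairwise non-isomorphic simple modules $V_0, V_1, \dots, V_{n-1}$, where $g$ acts on $V_l = \kk$ by $\xi^{-l}$. By standard idempotent lifting in the semiperfect algebra $\Lambda$, indecomposable finitely generated projective $\Lambda$-modules are in bijection with simple $\Lambda$-modules, so there are exactly $n$ indecomposable objects in $\mathsf{Pro}(\Lambda)$.

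Next I would verify that for each $l \in \Pi$ the module $\mP_l := \rS \otimes_\kk V_l$ described in the paragraph preceding the corollary is the projective cover of the simple $\Lambda$-module $V_l$. This is immediate: $\mP_l$ is a free $\rS$-module of rank one, hence a direct summand of $\Lambda$ as a left $\Lambda$-module (via an idempotent in $\kk[G] \subseteq \Lambda$ lifting the projection onto $V_l$), and its top is $\mP_l/(u,v)\mP_l = V_l$. Combining this with Theorem \ref{T:Auslander}, the indecomposable Cohen-Macaulay $\rR$-modules are precisely the modules $I_l := \mP_l^G$ for $l \in \Pi$, and they are pairwise non-isomorphic.

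It remains to compute $I_l$ explicitly. Using the $\kk$-basis $\{u^i v^j \otimes 1 \mid i,j \ge 0\}$ of $\rS \otimes_\kk V_l$, the generator $g$ acts by
\[
g \cdot (u^i v^j \otimes 1) = (\xi^i u^i)(\xi^{mj} v^j) \otimes \xi^{-l} \cdot 1 = \xi^{\,i+mj-l}\, u^i v^j \otimes 1,
\]
so invariance is equivalent to $i + mj \equiv l \pmod n$. This yields the description (\ref{E:descr-indecCM-over-cycl}) once we identify $\rS \otimes_\kk V_l$ with $\rS$ as $\rR$-modules. For the generating set, observe that for each $0 \le k \le n-1$ the element $u^{l-k} v^{\bar k}$ (interpreting $u^{l-k}$ as a suitable monomial when $k > l$, which is absorbed into $\rR$-multiplication) satisfies $(l-k) + m\bar k \equiv l - k + k \equiv l \pmod n$ by the definition of $\bar k$; and any monomial $u^i v^j$ with $i + mj \equiv l \pmod n$ can be written as a product of one of these with an $\rR$-monomial $u^{an} (u^{n-1}v^{\bar 1})^{b_1}\cdots v^{an}$, establishing the displayed list of generators.

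The only genuinely non-routine point is the passage from Auslander's equivalence to the count of indecomposables, which I expect to be the main subtlety: one must check that $\Lambda$ is indeed semiperfect and that the top-of-projective bijection applies — this rests on the completeness of $\rR$ combined with the finiteness of $\Lambda$ over $\rR$, and on the semisimplicity of $\kk[G]$ in characteristic zero. Once this Krull--Schmidt-type bookkeeping is in place, the rest is a direct calculation with monomial exponents modulo $n$.
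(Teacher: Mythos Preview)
Your proposal is correct and follows exactly the route the paper intends: the paper states this as an immediate corollary of Theorem~\ref{T:Auslander} together with the preceding paragraph, which already records that each $\mP_U = \rS\otimes_\kk U$ is indecomposable projective with top $U$ and that all irreducible $G$-representations are one-dimensional. Your argument simply spells out the implicit steps (semiperfectness of $\Lambda$, the count via $\Lambda/\rad(\Lambda)\cong\kk[G]$, and the monomial computation of invariants), and nothing substantive differs from the paper's approach.
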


\noindent
Our next goal is to describe the morphisms between the indecomposable maximal Cohen--Macaulay
$\rR$--modules.

\begin{lemma}\label{L:comp-CM-oncycl-sing}
For any $p \in \Pi$,  let $\mP_p = \rS \otimes_\kk V_p$ be the corresponding projective left
$\Lambda$--module. Next, for any $p, q \in \Pi$ we set:
$$
S_{p, q} = \left\{\sum\limits_{i, j = 0}^\infty a_{ij} u^i v^j \Bigl| i + mj  =  q-p \;
\mod \; n, a_{ij} \in \kk\Bigr.\right\} \subset \rS.
$$
Then we have:
\begin{itemize}
\item For any pair $p, q \in \Pi$ we have an isomorphism of $\rR$--modules
$
S_{p, q} \rightarrow  \Hom_{\Lambda}(\mP_p, \mP_q)
$
assigning to a power series  $a \in S_{p,q}$ the map $r \otimes 1 \mapsto ar \otimes 1$.
\item Moreover, these isomorphisms are compatible with compositions of morphisms: for
any triple $p, q, t \in \Pi$ the diagram
$$
\xymatrix{
\rS_{q, t} \times \rS_{p, q} \ar[rr] \ar[d]_{\mathsf{mult}} & & \Hom_{\Lambda}(\mP_q, \mP_t) \times \Hom_{\Lambda}(\mP_p, \mP_q)  \ar[d]^{\circ} \\
\rS_{p, t} \ar[rr] & & \Hom_{\Lambda}(\mP_p, \mP_t)
}
$$
is commutative.
\item In particular, for any $p \in \Pi$ we have an isomorphism of rings
$$
\rR = \rS_{p, p} \lar \End_\Lambda(\mP_p).
$$
\end{itemize}
\end{lemma}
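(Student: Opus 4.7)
The plan is to verify each of the three claims by direct computation, using that $\mP_p = \rS \otimes_\kk V_p$ is generated as an $\rS$-module (equivalently as a $\Lambda$-module) by the single element $e_p := 1 \otimes 1_p$, and that the $G$-action on $\mP_p$ is determined by $g \cdot (r \otimes 1_p) = \varepsilon^{-p}\, g(r) \otimes 1_p$.

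For part (1), I would argue as follows. A $\Lambda$-linear map $\varphi : \mP_p \to \mP_q$ is in particular $\rS$-linear, so it is completely determined by $\varphi(e_p) = a \otimes 1_q$ for some $a \in \rS$. Conversely, any $a \in \rS$ gives a well-defined $\rS$-linear map $\varphi_a : r \otimes 1_p \mapsto ar \otimes 1_q$. Such a $\varphi_a$ is $\Lambda$-linear if and only if it commutes with the $G$-action, which reduces (by $\rS$-linearity) to commutativity on $e_p$. Computing both sides,
$$
g \cdot \varphi_a(e_p) = g \cdot (a \otimes 1_q) = \varepsilon^{-q} g(a) \otimes 1_q,
\qquad
\varphi_a(g \cdot e_p) = \varphi_a(\varepsilon^{-p} e_p) = \varepsilon^{-p} a \otimes 1_q,
$$
so the condition becomes $g(a) = \varepsilon^{q-p} a$. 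Writing $a = \sum a_{ij} u^i v^j$ and using $g(u^i v^j) = \varepsilon^{i+mj} u^i v^j$, this is equivalent to $a_{ij} = 0$ whenever $i + mj \not\equiv q - p \pmod n$, i.e.~to $a \in S_{p,q}$. The map $a \mapsto \varphi_a$ is visibly $\rR$-linear and, from the above, a bijection onto $\Hom_\Lambda(\mP_p, \mP_q)$.

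For part (2), given $a \in S_{p,q}$ and $b \in S_{q,t}$ the composition $\varphi_b \circ \varphi_a$ sends $e_p \mapsto a \otimes 1_q \mapsto ba \otimes 1_t$; since the congruence conditions add ($(q-p) + (t-q) \equiv t-p \pmod n$), the product $ba$ lies in $S_{p,t}$ and equals $\varphi_{ba}(e_p)$. Hence the isomorphisms of (1) intertwine composition in $\Hom_\Lambda$ with multiplication in the graded pieces $S_{*,*}$, which is exactly the commutativity of the displayed diagram.

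For part (3), the case $p = q$ specializes the isomorphism of (1) to a $\kk$-linear bijection $S_{p,p} \to \End_\Lambda(\mP_p)$, and (2) upgrades this to a ring isomorphism (note the unit $1 \in S_{p,p}$ corresponds to $\mathbbm{1}_{\mP_p}$). Finally, $S_{p,p}$ consists of those power series $\sum a_{ij} u^i v^j$ with $i + mj \equiv 0 \pmod n$, which is precisely the monomial description of the invariant ring $\rS^G = \rR$ given by Theorem \ref{T:Riemenschneider}(i). There is no real obstacle in this proof: it is a routine unwinding of the skew group ring structure, and the only point requiring care is the sign conventions in the $G$-action on $V_p$ versus on $u, v$.
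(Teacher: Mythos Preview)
Your proof is correct and follows essentially the same approach as the paper's: both identify $\Hom_\Lambda(\mP_p,\mP_q)$ inside $\Hom_\rS(\mP_p,\mP_q)\cong\rS$ and then impose $G$-equivariance, arriving at the condition $g(a)=\varepsilon^{q-p}a$, hence $a\in S_{p,q}$. The paper checks this on a general element $b\otimes 1$ while you reduce to the generator $e_p$ via semilinearity, and you spell out parts (2) and (3) more explicitly, but the argument is the same.
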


\begin{proof}
We only give a  proof of the first statement of this lemma, since
the remaining two follow from the first one. For any pair $p, q \in \Pi$ we certainly have:
$$
\Hom_{\Lambda}(\mP_p, \mP_q) \subseteq \Hom_{\rS}(\mP_p, \mP_q) =
\Hom_{\rS}(\rS \otimes_\kk V_p, \rS \otimes_\kk V_q) \cong
\Hom_{\rS}(\rS, \rS) \cong \rS.
$$
Hence, any $\Lambda$--linear morphism $\varphi$ from $\mP_p$ to $\mP_q$ is given by the multiplication
with a certain power series $a \in \rS$. In other words,  $\varphi = \varphi_a$, where
$\varphi_a(b \otimes 1) = ab \otimes 1$ for  $b \in \rS$. Since $\varphi_a$ has to be  $\Lambda$--linear,
 we have:
$$
\varphi_a\bigl(g \cdot(b \otimes 1)\bigr) = a g(b) \otimes \xi^{-p} =
g \cdot \varphi_a(b \otimes 1) = g(a) g(b) \otimes \xi^{-q},
$$
where $g$ is a generator of $G$.
It implies  that  $g(a) = \xi^{q-p} \, a$, hence  $a \in \rS_{q-p}$ as stated.
\end{proof}

\begin{corollary}\label{C:endo-proj-mod}
Let $\mP$ be a projective left $\Lambda$--module. Then $\mP \cong
\oplus_{p \in \Pi} \mP_p^{m_p}$ for some uniquely determined multiplicities
$m_p \in \mathbb{Z}_+$.
Moreover, any endomorphism $\varphi \in \End_\Lambda(\mP)$ can be written in
the  matrix form $\varphi = (\varphi_{q, p})$,
where $\varphi_{q, p} \in \Mat_{m_p \times m_q}(\rS_{q, p})$ for all $p, q \in \Pi$. Moreover,
$\varphi$ is an isomorphism if and only if the matrices $\varphi_{p, p}(0) \in \Mat_{m_p \times m_p}(\kk)$
are invertible for all  $p \in \Pi$.
\end{corollary}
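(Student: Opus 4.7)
The plan is to combine Krull--Schmidt for projective $\La$--modules with the $\rR$--module identification of $\Hom$--spaces furnished by Lemma \ref{L:comp-CM-oncycl-sing}, and then deduce the invertibility criterion by reducing modulo the Jacobson radical $\rad(\La) = (u,v){*}G$.

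First, I would establish the decomposition $\mP \cong \bigoplus_{p \in \Pi} \mP_p^{m_p}$. Since $\La$ is module--finite over the complete local ring $\rR$, it is semi--perfect (indeed Henselian), so Krull--Schmidt holds in $\Pro(\La)$. Moreover $\La/\rad(\La) \cong \kk[G] \cong \prod_{p \in \Pi}\kk$, so the simple $\La$--modules are exactly $\{V_p\}_{p \in \Pi}$, and the indecomposable projective covers are the modules $\mP_p = \rS \otimes_\kk V_p$ with $\mathsf{top}(\mP_p) = V_p$. Since the $V_p$ are pairwise non-isomorphic, so are the $\mP_p$, and the multiplicities $m_p$ are uniquely determined by Krull--Schmidt (alternatively, $m_p = \dim_\kk \Hom_\La(\mP_p, \mP/\rad(\La)\mP)$).

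Next, once the decomposition is fixed, any endomorphism of $\mP$ can be written as a matrix of block components $\varphi_{q,p}: \mP_p^{m_p} \to \mP_q^{m_q}$. By Lemma \ref{L:comp-CM-oncycl-sing} we have
\[
\Hom_\La(\mP_p^{m_p}, \mP_q^{m_q}) \cong \Mat_{m_p \times m_q}(\rS_{q-p}),
\]
so $\varphi$ acquires exactly the stated matrix form, and the compatibility in the Lemma guarantees that composition of endomorphisms corresponds to ordinary matrix multiplication (with entrywise multiplication in $\rS$ -- noting that $\rS_{q-p} \cdot \rS_{r-q} \subseteq \rS_{r-p}$).

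For the invertibility criterion, I would use that $\La$ is Noetherian and complete, hence the functor of reduction modulo $\rad(\La)$ reflects isomorphisms of finitely generated projective modules: $\varphi \in \End_\La(\mP)$ is an isomorphism if and only if its reduction $\bar{\varphi} \in \End_{\La/\rad(\La)}(\mP/\rad(\La)\mP)$ is an isomorphism. The key observation is that a monomial $u^i v^j$ with $i+mj \equiv q-p \pmod n$ has $i=j=0$ only when $q-p \equiv 0 \pmod n$, i.e.\ when $p=q$. Hence $\rS_{q-p} \subseteq (u,v)$ whenever $p \ne q$, so the off--diagonal blocks of $\varphi$ vanish modulo $\rad(\La)$. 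The reduced matrix $\bar\varphi$ is therefore block--diagonal with blocks $\varphi_{p,p}(0) \in \Mat_{m_p\times m_p}(\kk)$, which is invertible exactly when each $\varphi_{p,p}(0)$ is. The main mild technicality is the ``reflection of isomorphisms'' step, but this is standard for finitely generated modules over a complete (or any semi-perfect) ring: surjectivity lifts by Nakayama, and surjective endomorphisms of a finitely generated module over a Noetherian ring are automatically injective, so $\bar\varphi$ an isomorphism forces $\varphi$ to be one as well.
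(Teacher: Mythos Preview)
Your proof is correct. In the paper this statement is recorded as an immediate corollary of Lemma~\ref{L:comp-CM-oncycl-sing} (together with the facts, already noted just before Theorem~\ref{T:Auslander}, that $\rad(\Lambda)=(u,v)*G$ and $\Lambda/\rad(\Lambda)\cong\kk[G]$), and no separate proof is given; your argument is precisely the expected expansion of those details.
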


\begin{lemma}\label{P:quot-sing-mp}
Let $J = \bigl\langle  u^{n-1} v^{\bar{1}}, \dots, u v^{\overline{n-1}}\bigr\rangle_\rR$ be the ideal
introduced in Theorem \ref{T:Riemenschneider} and $\bar\rR = \rR/J$. Then the following
statements are true.
\begin{itemize}
\item We have a ring isomorphism $\psi = \psi_0: D = \kk\llbracket  x,y\rrbracket/(xy) \to
\bar\rR$ given by the formula $\psi(x) = u^n$ and $\psi(y) = v^n$.
\item For any $p \in \Pi\setminus\{0\}$ there exists  an isomorphism of $D$--modules
$$
\psi_p:
\kk\llbracket x\rrbracket \oplus \kk\llbracket y\rrbracket \lar
\bar\rR \otimes_\rR I_p/\tor_{\bar\rR}(\bar\rR \otimes_\rR I_p)
$$
given by $\psi_p(1_x) = [1 \otimes u^p]$ and $\psi_p(1_y) = [1 \otimes v^{\bar{p}}]$.
\end{itemize}
\end{lemma}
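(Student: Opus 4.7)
The proof splits into the two stated parts.

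For (1), my plan is to first check that $\psi_0$ is well-defined as a ring homomorphism, which reduces to verifying $u^n v^n \in J$. I would factor $u^n v^n = (u v^{\overline{n-1}}) \cdot (u^{n-1} v^{n - \overline{n-1}})$: the first factor is one of the listed generators of $J$, while the second factor is $G$-invariant since its weight $(n-1) + m(n - \overline{n-1})$ reduces to $0$ modulo $n$ and hence lies in $R$. Surjectivity of $\psi_0$ then follows from Theorem~\ref{T:Riemenschneider}, since the ``middle'' generators of $R$ all lie in $J$. For injectivity I would compare monomial bases: modulo $J$, the only surviving monomials of $R$ are $u^{nk}$ $(k \geq 0)$ and $v^{nk}$ $(k \geq 1)$, and these are in bijection with the standard basis of $D$.

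For (2), my strategy is to realize the inverse of $\psi_p$ explicitly as the ``evaluation'' map
$$
\tilde\phi \colon \bar R \otimes_R I_p \lar \bar S := S/(uv), \qquad r \otimes a \mapsto r \cdot a,
$$
which is well-defined because each listed generator $u^{n-c} v^{\bar c}$ of $J$ lies in $(uv)S$ and thus $J \cdot I_p \subseteq (uv)S$. Among the generators $u^{p-c} v^{\bar c}$ $(c = 0, 1, \dots, p)$ of $I_p$, all except $u^p$ and $v^{\bar p}$ lie in $(uv)S$ and hence vanish under $\tilde\phi$, so $\im(\tilde\phi) = \bar R \cdot \bar u^p + \bar R \cdot \bar v^{\bar p}$. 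A direct computation with $x = u^n$, $y = v^n$ shows $y \cdot \bar u^p = 0$ and $x \cdot \bar v^{\bar p} = 0$ in $\bar S$, so the image is canonically identified as a $D$-module with $\kk\llbracket x\rrbracket \oplus \kk\llbracket y\rrbracket$ via $1_x \mapsto \bar u^p$, $1_y \mapsto \bar v^{\bar p}$. In particular, $\im(\tilde\phi)$ is torsion-free over $D$.

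It will then remain to show that $\ker(\tilde\phi)$ coincides with the $D$-torsion of $\bar R \otimes_R I_p$. The inclusion $\ker(\tilde\phi) \supseteq \tor$ is automatic. For the reverse, the kernel is the finitely generated $\bar R$-submodule $\bigl((uv)S \cap I_p\bigr)/JI_p$, generated by classes of monomials $u^e v^f$ with $e, f \geq 1$ and $e + mf \equiv p$ modulo $n$. The element $z := x + y = u^n + v^n$ is regular in $\bar R$ (its images in $\bar R/(x) \cong \kk\llbracket y\rrbracket$ and $\bar R/(y) \cong \kk\llbracket x\rrbracket$ are both nonzero), and I plan to show that $z$ annihilates each such class. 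Writing $z \cdot u^e v^f = u^{e+n} v^f + u^e v^{f+n}$, I would argue each summand lies in $JI_p$: if $1 \leq f \leq n-1$, set $c := mf \bmod n \in \{1, \dots, n-1\}$ so that $\bar c = f$, and write $u^{e+n} v^f = (u^{n-c} v^{\bar c}) \cdot u^{e+c}$, with the first factor a generator of $J$ and $u^{e+c} \in I_p$ because $e + c \equiv e + mf \equiv p$ modulo $n$; if $f \geq n$, write $u^{e+n} v^f = (u^n v^n) \cdot u^e v^{f-n}$, using part~(1). The other summand $u^e v^{f+n}$ is treated symmetrically by swapping the roles of $u$ and $v$.

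The main obstacle is this last combinatorial verification with its case split on the sizes of $e$ and $f$; the rest of the argument is formal. Once it is in place, $\tilde\phi$ descends to an injection $\bar R \otimes_R I_p / \tor \hookrightarrow \bar S$ whose image is $\kk\llbracket x\rrbracket \oplus \kk\llbracket y\rrbracket$, and the inverse of the resulting isomorphism sends $1_x$ to $[1 \otimes u^p]$ and $1_y$ to $[1 \otimes v^{\bar p}]$, as required.
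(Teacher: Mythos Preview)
Your argument is correct, but it takes a different route from the paper's for part~(2). The paper first observes that, since $\rR$ is a domain and $I_p$ is Cohen--Macaulay of rank one, the quotient $\bar\rR \otimes_\rR I_p/\tor$ is automatically Cohen--Macaulay over $D$ of multi-rank $(1,1)$. It then only needs to check that $\psi_p$ is well-defined (i.e.\ $x\cdot[1\otimes v^{\bar p}]=0=y\cdot[1\otimes u^p]$) and surjective (i.e.\ the ``mixed'' generators $1\otimes u^iv^j$ with $1\le i,j<n$ lie in the torsion, which they do because both $x$ and $y$ annihilate them in $\bar\rR\otimes_\rR I_p$ itself). Injectivity then comes for free by comparing multi-ranks of torsion-free $D$-modules. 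Your approach instead builds the inverse map $\tilde\phi$ into $\bar S=S/(uv)$ and identifies $\ker(\tilde\phi)$ with the torsion by exhibiting a single regular element $z=x+y$ that annihilates it; this is more hands-on and avoids invoking the Cohen--Macaulay property of the quotient, at the cost of the combinatorial case analysis you flagged. One small point: the phrase ``treated symmetrically by swapping the roles of $u$ and $v$'' is not literally a symmetry of the setup (the weights involve $m$ asymmetrically), so you should spell out the second summand as you did the first---choose the generator $u^e v^{\overline{n-e}}\in J$ when $1\le e\le n-1$, and use $u^nv^n\in J$ when $e\ge n$; the verification goes through just as in your first case.
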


\begin{proof}
The first statement  of this lemma is a part of Theorem \ref{T:Riemenschneider}.
We only give a proof of the second statement.

Since $\rR$ is a domain and $I_p$ is maximal Cohen--Macaulay over $\rR$  of rank one,
the module $\bar\rR \otimes_\rR I_p/\tor_{\bar\rR}(\bar\rR \otimes_\rR I_p)$ is
maximal Cohen-Macaulay of multi--rank $(1, 1)$ over the ring $D$. Hence, it is sufficient to show
that $\psi_p$ is well--defined and  is an epimorphism.
In order to show $\psi_p$ is well-defined, it is sufficient to check that
$
x \cdot \psi_p(1_y) = 0 = y \cdot \psi_p(1_x).
$
The first equality follows from the fact that
$$
x \cdot \psi_p(1_y) = \bigl[1 \otimes u^n v^{\bar{p}}\bigr] =
\bigl[\overline{u^{n-m} v} \otimes u^m v^{\bar{p}-1}\bigr] = 0
$$
in $\bar\rR \otimes_\rR I_p/\tor_{\bar\rR}(\bar\rR \otimes_\rR I_p)$ because
$u^{n-m} v \in J$ and $u^m v^{\bar{p}-1} \in I_p$ (note that  $\bar{p}-1 \ge 0$). The second equality $y \cdot \psi_p(1_x) = 0$
can be proven in the same way.

In oder to show $\psi_p$ is surjective, recall that $I_p = \bigl\langle u^p, u^{p-1} v^{\bar{1}}, \dots, u v^{\overline{p-1}}, v^{\bar{p}}\bigr\rangle_\rR$. Hence, it is sufficient to prove
that for any pair $1 \le i, j <n$ such that $i + mj = p \; \mod \; n$ we have:
$$1 \otimes u^i v^j \in \tor_{\bar\rR}(\bar\rR \otimes_\rR I_p).
$$
To show  this,  it is sufficient to observe that
$$
x \cdot (1 \otimes u^i v^j) = 1 \otimes u^{n+i} v^j = \overline{u^{n-m} v} \otimes u^{i+m} v^{j-1} = 0
$$
in $\bar\rR \otimes_\rR I_p$ because $u^{n-m} v \in J$ and $u^{i+m} v^{j-1} \in I_p$.
In a similar way, we have the equality $y  \cdot (1 \otimes u^i v^j) = 0$ in $\bar\rR \otimes_\rR I_p$.
\end{proof}

\begin{definition}\label{D:poset-for-mp}
On the set $\Pi = \bigl\{0, 1, \dots, n-1\bigr\}$ there are two orderings $\le_x$ and $\le_y$ defined
as follows:
\begin{itemize}
\item $p  \le_x q$ if and only if $p \le q$,  where $p$ and $q$ are regarded as natural numbers.
\item $p  \le_y q$ if and only if $\bar{p} \le \bar{q}$, where $\bar{p}$ and $\bar{q}$ are regarded as natural numbers.
\end{itemize}
\end{definition}

\begin{proposition}\label{P:descr-of-allow-transf}
Let $\mM = \oplus_{p \in \Pi} I_p^{m_p}$ be a maximal Cohen--Macaulay module over  $\rR$.
\begin{itemize}
\item If $m = \sum_{p \in \Pi} m_p$ then we have an isomorphism
\begin{equation}\label{E:choice-of-basis}
\psi_\mM: \bar{\rQ}(\mM) := \rQ(\bar\rR) \otimes_{\bar\rR} \bigl(\bar\rR \otimes_\rR M/ \tor(\bar\rR \otimes_\rR M)\bigr)
\lar \kk((x))^m \oplus \kk((y))^m,
\end{equation}
induced by the isomorphisms $\psi_p$ ($p \in \Pi$) from Lemma  \ref{P:quot-sing-mp}.
\item Let
$\varphi \in \End_\rR(\mM)$ be an automorphism of $\mM$ and $\bar\varphi$ be the  induced automorphism
of $\bar{\rQ}(\mM)$. Taking the  basis of $\bar\rQ(\mM)$ induced  by $\psi_\mM$,
the endomorphism
$\bar\varphi$ can be written as a pair  of matrices
$\bar\varphi^x  = (\bar\varphi^x_{q, p}) \in \GL_m\bigl(\kk\llbracket x\rrbracket\bigr)$ and
$\bar\varphi^y  = (\bar\varphi^x_{q, p}) \in \GL_m\bigl(\kk\llbracket y\rrbracket\bigr)$, where
\begin{itemize}
\item\label{Lab:cond1} $\bar\varphi^x_{q, p} \in \Mat_{m_q \times m_p}\bigl(\kk\llbracket x\rrbracket\bigr)$ if
$p \le_x q$ and $\bar\varphi^x_{q, p} \in \Mat_{m_q \times m_p}\bigl(x\kk\llbracket x\rrbracket\bigr)$
if $p >_x q$.
\item\label{Lab:cond2} $\bar\varphi^y_{q, p} \in \Mat_{m_q \times m_p}\bigl(\kk\llbracket y\rrbracket\bigr)$ if
$p \le_y q$ and $\bar\varphi^y_{q, p} \in \Mat_{m_q \times m_p}\bigl(y\kk\llbracket y\rrbracket
\bigr)$
if $p >_y q$.
\item\label{Lab:cond3} For any $p \in \Pi$ we have: $\bar\varphi^x_{p, p}(0) = \bar\varphi^y_{p, p}(0) \in
\GL_{m_p}(\kk)$.
\end{itemize}
\item Any pair of matrices $(\bar\varphi^x, \bar\varphi^y)  \in
\GL_m\bigl(\kk\llbracket x\rrbracket\bigr) \times \GL_m\bigl(\kk\llbracket y\rrbracket\bigr)$  having a decomposition
into blocks as above and satisfying the above conditions, is induced by an automorphism
$\varphi \in \End_\rR(\mM)$.
\end{itemize}
\end{proposition}

\begin{proof}
It  is a corollary of Theorem \ref{T:Auslander}, Lemma \ref{L:comp-CM-oncycl-sing},
Corollary \ref{C:endo-proj-mod} and
Lemma \ref{P:quot-sing-mp}.
\end{proof}

\subsection{Matrix problem for  degenerate cusps}\label{SS:mat-prob-deg-cusp}
 Let $\rA = \widetilde\rA(\underline{w})$ be the degenerate cusp of type
$\underline{w} =
\bigl((n_1, m_1), (n_2, m_2),
\dots, (n_t, m_t)\bigr)$.
By Theorem \ref{T:BurbanDrozd}, we have an equivalence of categories
$
\CM(\rA) \stackrel{\FF}\lar \Tri(\rA).
$
Hence,
the classification of indecomposable
maximal Cohen--Macaulay modules over $\rA$ reduces to a description  of indecomposable
objects of the
category of triples $\Tri(\rA)$. The latter problem turns out to be more accessible, because
it can be reformulated as a certain matrix problem.  To see this, recall that
\begin{itemize}
\item The normalization $\rR$ of  the ring $\rA$  splits into the product,
$$
\rR = \rR_1 \times \rR_2 \times \dots \times \rR_t,
$$
where $\rR_i = \rR(n_i, m_i)$ is the cyclic quotient singularity of type $(n_i, m_i)$.
\item If $I = \Ann_\rA(\rR/\rA)$ is the conductor ideal and
$\bar\rA = \rA/I$ and $\bar\rR = \rR/I$ then
$$
\bar\rA \cong \kk\llbracket z_1, z_2, \dots, z_t\rrbracket/L,
$$
where the ideal $L$ is generated by the monomials $z_i z_j$, $1 \le i \ne j \le t$ and
$$
\bar\rR = \kk\llbracket x_1, y_1 \rrbracket/(x_1 y_1) \times \kk\llbracket x_2, y_2 \rrbracket/(x_2 y_2)
\times \dots \times \kk\llbracket x_t, y_t \rrbracket/(x_t y_t).
$$
\item Under the canonical morphism $\bar\rA \to \bar\rR$,  the element $z_i$ ($1 \le i \le t$)
 is mapped to
$x_{i} + y_{i-1}$, where $y_{0} =  y_t$.
\item Let $\rK = \kk\llbrace z\rrbrace$. Then  we have:
$$
\rQ(\bar\rA) \cong \rK \times \dots \times \rK \quad \mbox{and} \quad
\rQ(\bar\rR) \cong (\rK \times \rK)  \times \dots \times (\rK \times \rK),
$$
where the product is taken $t$ times.
\end{itemize}

\noindent
Let $T = (\widetilde\mM, V, \theta)$ be an object of the category $\Tri(\rA)$. Then the following is true.

\begin{itemize}
\item The $\rR$--module $\widetilde\mM$ decomposes into a direct sum
\begin{equation}\label{E:Decomp-TildeM}
\widetilde\mM \cong  \bigoplus\limits_{i = 1}^t \bigoplus\limits_{p = 0}^{n_i - 1} I_{i, p}^{d_{i,p}}
\end{equation}
for some uniquely determined multiplicities $d_{i,p} \in \mathbb{Z}_{\ge 0}$, where
$I_{i, p}$ is the rank one maximal Cohen--Macaulay $\rR_i$--module defined by
(\ref{E:descr-indecCM-over-cycl}).

\item  The second term of the triple $T$  is a module $V$ over the semi-simple ring
$\rQ(\bar\rA) \cong  \rK \times \dots \times \rK =
\rK_1 \times \dots \times \rK_t$.
 Hence,
\begin{equation}\label{E:writingV}
V \cong  \rK_1^{l_1} \oplus  \dots \oplus \rK_t^{l_t}
\end{equation}
for some uniquely determined multiplicities $l_1, l_2, \dots, l_t \in \mathbb{Z}_{\ge 0}$.

\item Applying  the isomorphism $\psi_{\widetilde\mM}$ from (\ref{E:choice-of-basis}),
we get:
\begin{equation}\label{E:indeced-decomp}
\rQ(\bar\rR) \otimes_\rR \widetilde\mM \cong
\bigl(\rK \oplus \rK\bigr)^{d_1} \oplus
\bigl(\rK \oplus \rK\bigr)^{d_2} \oplus \dots \oplus
\bigl(\rK \oplus \rK\bigr)^{d_t},
\end{equation}
where $d_i = \sum_{p = 0}^{n_i-1} d_{i, p}$ for every  $1 \le i \le t$.

\item  The morphism $\theta: \rQ(\bar\rR) \otimes_{\rQ(\bar\rA)} V \to
\rQ(\bar\rR) \otimes_\rR \widetilde \mM$ is given by a collection of matrices
$\bigl((\theta_1^x, \theta_1^y), (\theta_2^x, \theta_2^y), \dots, (\theta_t^x, \theta_t^y)\bigr)$,
where $\theta_i^x \in
\Mat_{d_i \times l_i}(\rK)$
and $\theta_i^y \in \Mat_{d_i \times l_{i+1}}(\rK)$.
For any $1 \le i \le t$ these matrices satisfy the following conditions:
\begin{equation}\label{E:gluing-constraints}
\left\{
\begin{tabular}{cc}
  $\theta_i^x, \theta_i^y$ & have  both full row rank. \\
  $\left(\begin{smallmatrix}  \theta_{i-1}^y \\ \hline  \theta_{i}^x \end{smallmatrix}\right)$ &
has  full column rank.
\end{tabular}
\right.
\end{equation}
\end{itemize}

\begin{definition}\label{D:bunch-deg-cusp} Consider the decorated bunch of chains
$\mathfrak{X}_\rA = \mathfrak{X}(\underline{w})$ defined as follows.
\begin{itemize}
\item The index set $I = \{1, \dots, t\} \times \{x, y\}$. We identify two integers
$p, q$ modulo $t+1$, when talking about elements of $I$.
\item For any $(i, u) \in I$ we set:
 $\mathfrak{F}_{(i, u)} = \bigl\{f_{(i, u)}\bigr\}$ and $\mathfrak{E}_{(i, u)} = \bigl\{e_{(i, u)}^{(0)}, \dots, e_{(i, u)}^{(n_i-1)}\bigr\}$.
\item All elements of $\mathfrak{E}_{(i, u)}$ are decorated, whereas the (unique) element of  $\mathfrak{F}_{(i, u)}$ is  not decorated. Moreover, we have the following ordering on the elements of $\mathfrak{E}_{(i, u)}$:
\begin{equation*}
e_{(i, x)}^{(0)} \dec e_{(i, x)}^{(1)} \dec \dots \dec e_{(i, x)}^{(n_i-1)}
\quad \mbox{and} \quad
e_{(i, y)}^{(0)} \dec e_{(i, y)}^{(\bar{1})} \dec \dots \dec e_{(i, y)}^{(\overline{n_i-1})}.
\end{equation*}
Here, for any $1 \le p \le n_i$ we denote by $\bar{p}$ the unique element of
$\bigl\{1, \dots, n_i-1\bigr\}$ such that $p =  \bar{p} m_i \, \mod \,  n_i$.
\item Finally, for any $1 \le i \le t$ and $0 \le p \le n_i - 1$ we have the following equivalence relations:
$$
e_{(i, x)}^{(p)} \sim e_{(i, y)}^{(p)} \quad \mbox{and} \quad
f_{(i, y)} \sim f_{((i+1), x)}.
$$
\end{itemize}
\end{definition}

Let $\mT = (\widetilde\mM, \mV, \theta)$ be an object of $\Tri(\rA)$, where
$\widetilde\mM$ is given
by (\ref{E:Decomp-TildeM}) and  $\mV$  by (\ref{E:writingV}). The isomorphism
(\ref{E:indeced-decomp}) allows to express the gluing morphism $\theta$ via
a collection of matrices $\bigl((\theta_1^x, \theta_1^y), \dots, (\theta_t^x, \theta_t^y)\bigr)$.
Note, that the direct sum decomposition (\ref{E:Decomp-TildeM}) induces a division of these
matrices into horizontal blocks, endowing their rows with certain  ``weights'', indicating their
 origin from a direct summand of $\widetilde\mM$. Concretely,
for any $1 \le i \le t$ and $0 \le p \le n_i-1$,  we have $d_{i,p}$ rows of weight
$e_{(i, x)}^{(p)}$ in  the matrix $\theta_i^x$ (respectively $d_{i,p}$ rows of weight
$e_{(i, y)}^{(p)}$ in   the matrix $\theta_i^y$). The weight of any columns of $\theta_i^x$
(respectively $\theta_i^y$) is $f_{i-1,x}$ (respectively $f_{i,y}$).

\begin{proposition}\label{P:passage-to-MP} The assignment $\mT \mapsto \bigl((\theta_1^x, \theta_1^y), \dots, (\theta_t^x, \theta_t^y)\bigr)$ extends to the  functor
\begin{equation}\label{E:FunctorH}
\Tri(\rA) \stackrel{\HH}\lar \Rep(\mathfrak{X}_\rA),
\end{equation}
satisfying the following two properties:
\begin{itemize}
\item $\mT \in \Ob\bigl(\Tri(\rA)\bigr)$ is indecomposable if and only if $\HH(\mT)$ is indecomposable;
\item
$\mT', \mT'' \in \Ob\bigl(\Tri(\rA)\bigr)$
 are isomorphic if and only if $\HH(\mT')$ and
$\HH(\mT'')$ are isomorphic.
\end{itemize}
\end{proposition}

\begin{proof} It is a consequence of Proposition \ref{P:descr-of-allow-transf}
and Definition \ref{D:bunch-deg-cusp}.
\end{proof}

\noindent
Hence, we obtain the following result, which is one the main achievements of this work.
\begin{theorem}\label{T:cusp-are-tame}
Let $\rA$ be a degenerate cusp. Then  the following is true.
\begin{itemize}
\item
The category $\CM(\rA)$ is representation--tame.
\item
Moreover, the essential image of the category  $\CM^{\mathsf{lf}}(\rA)$ under
$\HH \circ \FF$ is the category $\Rep^{\mathsf{bd}}(\mathfrak{X}_\rA)$, which
is the additive closure of the category of band objects of $\Rep(\mathfrak{X}_\rA)$.
\end{itemize}
\end{theorem}
\begin{proof} According to  Theorem \ref{T:BurbanDrozd},
we have an equivalence of categories  $\CM(\rA) \stackrel{\FF}\lar \Tri(\rA)$. By Proposition
\ref{P:passage-to-MP}, the  functor  $\Tri(\rA) \stackrel{\HH}\lar \Rep(\mathfrak{X}_A)$ preserves  indecomposability and isomorphism
classes of objects. Hence, tameness of $ \Rep(\mathfrak{X}_A)$ implies tameness of
$\CM(\rA)$.

Next, let $\mM$ be an indecomposable object of $\CM(\rA)$ and $\FF(\mM):= \mT = (\widetilde\mM, \mV,\theta)$ the corresponding triple. By Theorem \ref{T:BurbanDrozd-app-on-lfree},
 $\mM$ is locally free on the punctured spectrum if and only
if  $\theta$ is an isomorphism. The classification of indecomposable objects
of $\Rep(\mathfrak{X}_\rA)$ implies that $\theta$ is an isomorphism if and only if
$\HH(\mT)$ is a band object.
\end{proof}

\begin{remark}
Rephrasing Theorem \ref{T:cusp-are-tame} in different terms, we get the following result.
\begin{itemize}
\item The indecomposable maximal Cohen--Macaulay $\rA$--modules, which are locally free on the punctured
spectrum,  correspond to the band objects of $\Rep(\mathfrak{X}_\rA)$.
\item Those  indecomposable maximal Cohen--Macaulay $\rA$--modules, which are not
locally free on the punctured
spectrum,  correspond to the  string objects of $\Rep(\mathfrak{X}_\rA)$,
satisfying the additional  constraint (\ref{E:gluing-constraints}).
\end{itemize}
\end{remark}

\section{Schreyer's question}
\label{sec11}

According to Buchweitz, Greuel and Schreyer  \cite{BGS}, the
hypersurface singularities
$A_\infty = \kk\llbracket x, y, z\rrbracket/(xy)$ and
$D_\infty = \kk\llbracket x, y, z\rrbracket/(x^2 y - z^2$)
have only countably many indecomposable maximal Cohen--Macaulay modules.
We gave a different proof of this result
in \cite[Chapter 5]{SurvOnCM}, removing the assumption
$\mathsf{char}(\kk) \ne 2$ required in \cite{BGS}.
 In 1987, Schreyer posed the following question
\cite[Section 7.2.2]{Schreyer}.

\vspace{1mm}
\noindent
\textbf{Question}. Let $\kk$ be an uncountable algebraically closed field of characteristic zero
and  $\rA$ be a Cohen--Macaulay surface singularity having only countably many
indecomposable maximal Cohen--Macaulay modules.
Is it true that $\rA \cong \rB^G$, where $\rB$ is a singularity of type $A_\infty$ or $D_\infty$ and
$G$ is a finite group of automorphisms of $\rB$?

In this section, we show that the answer on Schreyer's question is negative. In fact, there
exists a wide class of Cohen--Macaulay surface singularities of discrete Cohen--Macaulay
representation type. Note that $\Spec(\rA)$ has at most two irreducible components
for a ring $\rA$ of type $\rB^G$ as above. As we shall see below, this need not be the case for an arbitrary Cohen--Macaulay surface singularity  of discrete Cohen--Macaulay
representation type.

\begin{definition}\label{D:discrete-type} Let $t \ge 2$ and
$\underline{w} = \bigl((n_1, m_1), \dots, (n_t, m_t)\bigr) \in \mathbbm{Z}^{2t}$ be any sequence such
 that  $0 \le m_i < n_i$ and
$\gcd(n_i, m_i) = 1$ for $1 \le i \le t$. Set
$
\rR = \rR_1 \times \dots \times \rR_t$, where $
\rR_i = \rR(n_i, m_i) =  \kk\llbracket u_i, v_i\rrbracket^{C_{n_i, \, m_i}}  \subseteq
 \kk\llbracket u_i, v_i \rrbracket
$ is the corresponding cyclic quotient singularity. As usual, for $(n_i, m_i) = (1, 0)$ we set
$\rR_i = \kk\llbracket u_i, v_i \rrbracket.
$
Let
$J_i \subseteq \rR_i$ the ideal defined in Theorem \ref{T:Riemenschneider},
$\bar\rR_i = \rR_i/J_i \cong \kk\llbracket x_i, y_i\rrbracket/(x_i y_i)$
and $$
\rC   := \kk\llbracket z_0, z_1,  \dots, z_t\rrbracket/(z_i z_j \; | \;  0 \le i < j \le t).$$
Let
$
\rA := \rA(\underline{w}) \subseteq  \rR
$
 be the ring defined through  the following pull--back diagram in the category of commutative rings:
$$
\xymatrix{
\rA \ar[rr] \ar[d] & & \rC  \ar[d]^\gamma \\
\rR \ar[rr]^-\pi & & \bar\rR_1 \times \dots \times \bar\rR_t,
}
$$
where $\pi$ is the canonical projection
and $\gamma$ is given by the following rule:
$\gamma(z_0) = y_1$, $\gamma(z_i) = x_i + y_{i+1}$ for $1 \le i \le t-1$ and
$\gamma(z_{t}) = x_t$. \qed
\end{definition}

\begin{proposition}\label{P:series-of-Atypesingul} The following results are true.

\begin{itemize}
\item  The ring $\rA$ is a complete reduced Cohen-Macaulay surface singularity and
$\rR$ is the normalization of $\rA$.

\item  Let $I = \Ann_\rA(\rR/\rA)$ be the conductor ideal,
$\bar\rA = \rA/I$ and $\bar\rR = \rR/I$.
Then we have the following isomorphisms:
$$
\bar\rA   \cong
\kk\llbracket z_1,  \dots, z_{t-1}\rrbracket/(z_i z_j \; | \;  1  \le i < j \le t-1)
$$
and
$
\bar\rR \cong \kk\llbracket x_1 \rrbracket \times
\kk\llbracket x_2, y_2 \rrbracket/(x_2 y_2)  \times \dots \times
\kk\llbracket x_{t-1}, y_{t-1} \rrbracket/(x_{t-1} y_{t-1}) \times \kk\llbracket y_t \rrbracket.
$
The canonical morphism  $\bar\rA \rightarrow \bar\rR$ sends
 $z_i$ to $x_i + y_{i+1}$ for all
$1 \le i \le t-1$.
\end{itemize}
\end{proposition}

\noindent
The proof of this proposition is the same as of Proposition \ref{P:glue-of-rings}
and is therefore omitted.
\qed

\begin{remark}
By Proposition \ref{P:series-of-Atypesingul}, the
 ring $\rA(\underline{w})$
has the same total ring of fractions as
$\rR = \rR(n_1, m_1) \times \dots \times \rR(n_t, m_t)$. Hence, $\Spec(\rA)$
has $t$ irreducible
components. Note that $\rA(1,0) \cong \kk\llbracket u, v, w\rrbracket/(uv)$ is
the $A_\infty$--singularity.
\end{remark}

\begin{definition}\label{D:bunch-schreyer} Consider the decorated bunch of chains
$\check{\mathfrak{X}} = \check{\mathfrak{X}}(\underline{w})$ defined as follows.
\begin{itemize}
\item The index set $I = \{(1, y)\}\cup \bigl(\{2, \dots, t-1\} \times \{x, y\}\bigr) \cup \{(t, x)\}$.
\item For any $(i, u) \in I$ we put:
 $\mathfrak{F}_{(i, u)} = \bigl\{f_{(i, u)}\bigr\}$ and $\mathfrak{E}_{(i, u)} = \bigl\{e_{(i, u)}^{(0)}, \dots, e_{(i, u)}^{(n_i-1)}\bigr\}$.
\item All elements of $\mathfrak{E}_{(i, u)}$ are decorated, whereas the (unique) element of  $\mathfrak{F}_{(i, u)}$ is  not decorated. Moreover, we have the following ordering on the elements of $\mathfrak{E}_{(i, u)}$:
\begin{equation*}
e_{(i, x)}^{(0)} \dec e_{(i, x)}^{(1)} \dec \dots \dec e_{(i, x)}^{(n_i-1)}
\quad \mbox{and} \quad
e_{(i, y)}^{(0)} \dec e_{(i, y)}^{(\bar{1})} \dec \dots \dec e_{(i, y)}^{(\overline{n_i-1})}.
\end{equation*}
Here, for any $1 \le p \le n_i$ we denote by $\bar{p}$ the unique element of
$\bigl\{1, \dots, n_i-1\bigr\}$ such that $p =  \bar{p} m_i \, \mod \,  n_i$.
\item
We have the following equivalence relations.
\begin{itemize}
\item $e_{(i, x)}^{(p)} \sim e_{(i, y)}^{(p)}$ for any $2 \le i \le t-1$ and $0 \le p \le n_i - 1$.
\item $f_{(i, y)} \sim f_{((i+1), x)}$ for any $1 \le i \le t-1$.
\end{itemize}
\end{itemize}
\end{definition}

\noindent
Making the same choices as those  preceding Proposition \ref{P:passage-to-MP}, we get
the following result.
\begin{proposition} We have a functor $
\Tri(\rA) \stackrel{\HH}\lar \Rep(\check{\mathfrak{X}}),
$
satisfying:
\begin{itemize}
\item $\mT \in \Ob\bigl(\Tri(\rA)\bigr)$ is indecomposable if and only if $\HH(\mT)$ is indecomposable;
\item
$\mT', \mT'' \in \Ob\bigl(\Tri(\rA)\bigr)$
 are isomorphic if and only if $\HH(\mT')$ and
$\HH(\mT'')$ are isomorphic.
\end{itemize}
\end{proposition}

\begin{theorem}\label{T:Schreyer} The ring $\rA = \rA(\underline{w})$ has discrete Cohen--Macaulay representation type.
 Moreover, any indecomposable maximal Cohen--Macaulay $\rA$--module
has  multi--rank of  type $(0, \dots, 0, 1, \dots, 1, 0, \dots, 0)$.
\end{theorem}

\begin{proof}
As we have shown before, the category $\Rep\bigl(\check{\mathfrak{X}}(\underline{w})\bigr)$
has discrete representation type (there are no bands  in this case). Hence, $\CM(\rA)$
has discrete representation type, too.  Moreover,  indecomposable objects of
 $\Tri(\rA)$ are described by the following data  $\bigl((p, q),  \kappa, \omega)\bigr)$, where
\begin{itemize}
\item $(p, q) \in \mathbbm{Z}^2$ are  such that  $1 \le p \le q \le t$.
\item $\kappa = (k_p, \dots, k_q) \in \mathbbm{Z}^{q-p+1}$ with $0 \le k_l \le n_l-1$ for $p \le l \le q$.
\item $\omega = (b_p, \dots, b_q\bigr)$. For $\max(2, p) \le l \le \min(q, t-1)$ we have: $b_l = (a_l, c_l) \in \mathbbm{Z}^2$, while  $b_1 = c_1 \in \mathbbm{Z}$ (if $p = 1$) and $b_t = a_t \in
\mathbbm{Z}$ (if $q = t$). Moreover, we impose that $\min(a_{l+1}, c_l) = 1$ for all $p \le l \le q-1$.
\end{itemize}
If  $T_{p, q}(\kappa, \omega) = (\widetilde\mM, V, \theta) \in
\mathrm{Ob}\bigl(\Tri(\rA)\bigr)$ is the triple corresponding to
$\bigl((p, q),  \kappa, \omega)\bigr)$, then we have:
$
\widetilde\mM = I_{p, k_p}   \oplus
\dots \oplus I_{q, k_q}.
$
Let $\mM= \mM_{p, q}(\kappa, \omega)$ be the maximal Cohen--Macaulay $\rA$--module corresponding to
$T_{p, q}(\kappa, \omega)$.
Then we have:
$
\rQ  \otimes_\rA \mM \cong \rQ_p \oplus \dots \oplus \rQ_q,
$
where $\rQ = \rQ(\rA) = \rQ(\rR) $ is the total ring of fractions of $\rA$ and
$\rQ_l = \rQ(\rR_l)$ is the quotient field of $\rR_l$, $1 \le l \le t$.
This proves the statement about the multi--rank of an indecomposable maximal Cohen--Macaulay
$\rA$--module.
\end{proof}

\begin{example} Since all  proofs in this section are written in a sketchy way,
we describe in details the following example. For $t \ge 1$ and $1 \le i \le t + 1$,
let $\rR_i = \kk\llbracket x_i, y_i\rrbracket$ and
$$
\rR:= \rR_1 \times \dots \times \rR_{t+1} \supset
\rA := \Bigl\{(r_1, r_2, \dots, r_{t+1}) \, \big| \, r_{i}(0, z) = r_{i+1}(z, 0) \; \mbox{for} \;  1 \le i \le t \Bigr\}.
$$
Consider the following elements of $\rR$:   $u = (x_1, 0, 0, \dots, 0)$, $z_1 = (y_1, x_2, 0,\dots, 0), \dots,$ $
z_t = (0, \dots, 0, y_t, x_{t+1})$ and $v = (0, \dots, 0, 0, y_{t+1})$. Then $\rA$ is the ring of formal power series in $u, v, z_1, \dots, z_t$.
Note that $\rA \cong  \rA(\underline{w})$ for $\underline{w} = \bigl(\underbrace{(1,0), \dots, (1, 0)}_{\scriptsize t+1\, \mbox{times}} \bigr)$.
As in Proposition \ref{P:glue-of-rings} one can show that $\rR$ is the normalization of $\rA$.
As usual, let  $I = \mathsf{ann}_\rA(\rR/\rA)$ be the conductor ideal,
 $\bar\rA = \rA/I$ and $\bar\rR = \rR/I$.
Then we have:
$$
I = \bigl\langle u, z_1 z_2, \dots, z_{t-1} z_t, v\bigr\rangle_{\rA},
\quad \bar\rA = \kk\llbracket z_1, \dots, z_t\rrbracket/(z_p z_q \, | \, 1 \le p < q \le t)
$$
and $
\bar\rR = \kk\llbracket y_1\rrbracket \times \kk\llbracket x_2, y_2\rrbracket/(x_2 y_2) \times
\dots
 \times \kk\llbracket x_t, y_t\rrbracket/(x_t y_t) \times \kk\llbracket x_{t+1}\rrbracket.
$
Under the canonical embedding $\bar\rA \rightarrow \rR$, $z_i$ is mapped to $y_i + x_{i+1}$ for
all $1 \le i \le t$. The matrix problem, corresponding to the description of isomorphy classes
of object in $\Rep(\check{\mathfrak{X}}_\rA)$, is the following.
\begin{itemize}
\item
We have $2t$  matrices
$\Bigl(\bigl(\theta_1^{(x)}, \theta_1^{(y)}\bigr), \dots,
\bigl(\theta_t^{(x)}, \theta_t^{(y)}\bigr)\Bigr)$, where
$\theta_i^{(x)} \in \Mat_{m_{i+1} \times n_i}(\rK)$ and  $\theta_i^{(y)} \in \Mat_{m_{i} \times n_i}(\rK)$,
for some   $m_1, \dots, m_{t+1}, n_1, \dots, n_t \in \mathbb{N}$.
\item  These matrices can be transformed using the  rule:
$$
\bigl(\theta_i^{(x)}, \theta_i^{(y)}\bigr) \mapsto
\bigl(S_i^{(x)} \theta_i^{(x)} T_i^{-1}, S_i^{(y)}\theta_1^{(y)} T_i^{-1}\bigr),
$$
where $T_i \in \GL_{n_i}(\rK)$ for $1 \le i \le t$;  whereas $S_i^{(x)} \in \GL_{m_{i+1}}(\bD)$,
$S_i^{(y)} \in \GL_{m_{i}}(\bD)$ are such that  $S_{i+1}^{(x)}(0) = S_i^{(y)}(0)$ for
$1 \le i \le t-1$.
\end{itemize}
Observe that the obtained matrix problem is very close  to
 the problem of classification of indecomposable representations
 of the quiver $$ \stackrel{\rightarrow}{Q} \; := \;
\circ \longleftarrow \bullet \lar \circ  \longleftarrow \bullet \lar \circ\dots \circ \longleftarrow \bullet \lar \circ
$$
of type $A_{2t+1}$
 over the field $\rK$. In fact, there is an obvious map
$$\Ob\bigl(\Rep(\check{\mathfrak{X}})\bigr) \longrightarrow  \Ob\bigl(\Rep(\stackrel{\rightarrow}{Q})\bigr),$$ which is however
 not functorial (for category $\Rep(\check{\mathfrak{X}})$, morphisms  at sources $\bullet$ are as for quiver representations, whereas
 for targets $\circ$ they are given by ``decorated rules''). Nevertheless, in these terms it is convenient to state the final result.
Using just elementary linear algebra one can show that the indecomposable
objects of $\Ob\bigl(\Rep(\check{\mathfrak{X}})\bigr)$ can be written as follows:
$$
\dots \, \lar 0  \longleftarrow \rK \stackrel{z^{e_1}}\lar \rK \stackrel{z^{e_2}}\longleftarrow
\dots \stackrel{z^{e_n}}\longleftarrow \rK \lar 0 \longleftarrow \, \dots,
$$
where the left (respectively right) zero can be both sink and source. Of course, those indecomposable objects of $\Rep(\check{\mathfrak{X}})$ which belong to the image of $\HH$, have to satisfy certain additional constraints, analogous to (\ref{E:gluing-constraints}).

Let us now describe the  indecomposable objects of $\CM^{\mathsf{lf}}(\rA)$.
They are classified by a discrete parameter
$\omega = \bigl((a_1, c_1), \dots, (a_t, c_t)\bigr) \in \mathbbm{Z}^{2t}$, where $\min(a_i, c_i) = 1$ for
$1 \le i \le t$. If $\mM(\omega)$ is the corresponding maximal Cohen--Macaulay $\rA$--module, then
for $\FF\bigl(\mM(\omega)\bigr) =: \mT \cong (\widetilde\mM, \mV, \theta)$
we have:
\begin{itemize}
\item $\widetilde\mM = \rR_1 \oplus \dots \oplus \rR_{t+1}$,
\item $\mV = \rK_1 \oplus \dots \oplus  \rK_t$,
\item $\theta_{i}^{(x)} = (z^{a_i})$ and $\theta_i^{(y)} = (z^{c_i})$ for $1 \le i \le t$.
\end{itemize}
As usual, we have: $\mM\bigl((1,1), \dots, (1, 1)\bigr) \cong \rA$. \qed
\end{example}

\section{Remarks on rings  of discrete and tame
CM--representation type}
\label{sec12}

\subsection{Non--reduced curve singularities}
First note that our results on classification of maximal Cohen--Macaulay modules over surface singularities
imply the following interesting conclusions  for non--reduced curve singularities.

\begin{theorem}\label{T:tame-curve-sing}
Let $\kk$ be an algebraically closed field of characteristic zero,
$f = x^2 y^2$ or $x^2 y^2 + x^p$, $p \ge 3$ and
$\rA = \kk\llbracket x, y\rrbracket/(f)$.
Then the curve singularity $\rA$ has tame Cohen--Macaulay representation type.
\end{theorem}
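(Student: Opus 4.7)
The plan is to reduce each of these non-reduced curve singularities to a two-dimensional degenerate cusp by Kn\"orrer's periodicity \cite{Knoerrer2}, and then to invoke the main tameness result for Cohen-Macaulay modules over degenerate cusps (the corollary at the end of Section~\ref{sec:Definition}). Specifically, I intend to use the ``single-variable'' form of Kn\"orrer's theorem, which relates matrix factorizations of $f \in \kk\llbracket x,y\rrbracket$ with those of $f + z^2 \in \kk\llbracket x,y,z\rrbracket$ via an explicit $2\times 2$-block construction; the resulting correspondence between the two stable MCM categories preserves tame representation type, so in particular $\CM\bigl(\kk\llbracket x,y\rrbracket/(f)\bigr)$ is tame if and only if $\CM\bigl(\kk\llbracket x,y,z\rrbracket/(f+z^2)\bigr)$ is.

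For $f = x^2 y^2$ the auxiliary surface is $\kk\llbracket x,y,z\rrbracket/(x^2 y^2 + z^2)$. Using $\sqrt{-1}\in\kk$, the automorphism $z \mapsto z + ixy$ followed by the rescaling $z \mapsto -2iz$ identifies this ring with $\kk\llbracket x,y,z\rrbracket/(xyz - z^2)$, which is precisely the degenerate cusp of Case~6 in Section~\ref{S:degenarate-cusps}, whose normalization is $\kk\llbracket x_1, x_2\rrbracket \times \kk\llbracket y_1, y_2\rrbracket$. For $f = x^2 y^2 + x^p$ with $p \ge 3$, a corresponding quadratic change of coordinates (completing the square in $y$, rescaling $z$, together with the sign-changing substitution $y \mapsto iy$) brings $\kk\llbracket x,y,z\rrbracket/(x^2 y^2 + x^p + z^2)$ into the form $\kk\llbracket x,y,z\rrbracket/(x^p + y^2 - xyz) = T_{p,2,\infty}$, which is the degenerate cusp of Case~1 for $p = 3$ and of Case~2 for $p \ge 4$ of Section~\ref{S:degenarate-cusps}.

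Having made these identifications, the tameness of the two auxiliary surface rings follows at once from the main tameness theorem for degenerate cusps, and Kn\"orrer periodicity then transfers this conclusion back to the curves $\kk\llbracket x,y\rrbracket/(x^2 y^2)$ and $\kk\llbracket x,y\rrbracket/(x^2 y^2 + x^p)$, completing the proof. The one delicate point will be to justify the applicability of Kn\"orrer's theorem in this non-reduced, non-isolated setting: although the theorem is most often stated for isolated hypersurface singularities, Kn\"orrer's argument is in fact purely matrix-theoretic and depends only on $f$ being a non-zero element of a formal power series ring, so the non-reducedness here is not a real obstacle. What will still have to be verified is merely that the explicit Kn\"orrer functors respect one-parameter families of indecomposable matrix factorizations, which is a routine check given how concrete these functors are.
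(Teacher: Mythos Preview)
Your proposal is correct and follows essentially the same route as the paper's own proof: apply Kn\"orrer periodicity to pass from the curve $\kk\llbracket x,y\rrbracket/(f)$ to the surface $\kk\llbracket x,y,z\rrbracket/(f+z^2)$, identify the latter by an explicit completing-the-square change of coordinates with one of the degenerate cusps of Section~\ref{S:degenarate-cusps}, and then invoke the tameness corollary at the end of Section~\ref{sec:Definition}. Your extra remark about checking that the Kn\"orrer functors respect one-parameter families is unnecessary, since Kn\"orrer's result already gives an equivalence of stable MCM categories, and representation type is determined by the isomorphism classes of indecomposables; the paper accordingly simply states that the two rings ``have the same Cohen--Macaulay representation type'' without further justification.
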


\begin{proof}
According to  Kn\"orrer \cite{Knoerrer2}, the ring $\rA$ has the same Cohen-Macaulay representation type
as the surface singularity $\rB = \kk\llbracket x, y, z\rrbracket/(f+z^2)$. Hence, it is sufficient
to observe that $\rB$ is a degenerate cusp. Indeed,
$
u^2 + uvw = (u +  \frac{1}{2} vw)^2 - \frac{1}{4} v^2w^2 = z^2 -  x^2 y^2
$
for $z = u + \frac{1}{2} vw$, $x = v$ and $y = \frac{1}{2} w$.
In a similar way,
$
u^2 + v^p + uvw = (u +  \frac{1}{2} vw)^2 + v^p - \frac{1}{4} v^2w^2 = z^2 + x^p - x^2 y^2
$
for $z = u + \frac{1}{2} vw$, $x = v$ and $y = \frac{1}{2} w$.
\end{proof}

\begin{remark}
Note that Kn\"orrer's periodicity theorem \cite{Knoerrer2} only requires that
the characteristic of the base field $\kk$ is different from two. Since the normalization
of  $\rA = \kk\llbracket u, v, w\rrbracket/(u^2 + uvw)$ is a product of two regular rings,
$\rA$  is tame in the case of an arbitrary characteristic. Hence, $\kk\llbracket x, y\rrbracket/(xy)^2$
is tame provided $\mathsf{char}(\kk) \ne 2$. We conjecture that the rings from
Theorem \ref{T:tame-curve-sing} are tame in the case of an arbitrary field $\kk$. This is consistent with
 tameness of  singularities $P_{\infty q} := \kk\llbracket x, y, z\rrbracket/(y^q - z^2, xy)$ for  $q \in \mathbb{N}_{\ge 2} \cup \{\infty\}$ proven in \cite{BG} by different methods.

Recall that by results of Kahn \cite{Kahn}, Dieterich \cite{DieterichInvent},
 Drozd and Greuel \cite{semicont},
the reduced curve singularities $T_{p, q}(\lambda) =
\kk\llbracket x, y\rrbracket/(x^p + y^q + \lambda x^2 y^2)$, where
$\frac{1}{p} + \frac{1}{q} \le \frac{1}{2}$
and $\lambda \in \kk\setminus \{\mbox{\textsl{finite \, set \, of \, values}}\}$, have tame Cohen-Macaulay representation type for an arbitrary
base field $\kk$. See also \cite{BIKR} for an alternative approach to study maximal Cohen--Macaulay modules
over some $T_{p,q}(\lambda)$--singularities via  cluster tilting theory.
\end{remark}

\subsection{Maximal Cohen--Macaulay modules over the ring $\widetilde{D}\bigl((1,0)\bigr)$}\label{SS:sing-semichain}

Let $\kk$ be any  field, $\rR = \kk\llbracket x, y\rrbracket$,
$\bar\rR = \kk\llbracket x, y\rrbracket/(xy)$ and $\pi: \rR \to \bar\rR$  the
canonical  projection. Let $\bar\rA = \kk\llbracket \tilde{x}, \tilde{y}\rrbracket/(\tilde{x}\tilde{y})$,
$\tilde\gamma: \bar\rA \to \bar\rR$ the ring homomorphism given by
the rule $\tilde\gamma(\tilde{x}) = x^2$ and $\tilde\gamma(\tilde{y}) = y^2$ and
$\widetilde{\rD}\bigl(1,0)\bigr):= \rA = \pi^{-1}\bigl(\tilde\gamma(\bar\rA)\bigr)$.
Then we have:
$$\kk\llbracket x, y\rrbracket \supset \rA = \kk\llbracket x^2, y^2, xy, x^2 y, y^2 x\rrbracket \cong \kk\llbracket u, v, w, a, b\rrbracket/J,
$$
where $J = (uv - w^2, ab - w^3, aw - bu, bw - av, a^2 - uw^2, b^2-vw^2)$. Obviously,  $\rA$ has  the following
$\mathbbm{Z}_2$--symmetry: $u \leftrightarrow w, w \leftrightarrow w, a \leftrightarrow b$.
Moreover, the following  results are true.
\begin{itemize}
\item The ring $\rA$ is an integral  Cohen--Macaulay surface singularity.
\item The ring
$\rR$ is the  normalization of $\rA$.
\item We have: $\Ext_\rA^2(\kk, \rA) \cong \kk^2$. In particular,
 $\rA$ is not Gorenstein.
\item Let
$I = \Ann_\rA(\rR/\rA)$ be the conductor ideal. Then we have:
$
I = \langle w, a, b\rangle_\rA = \langle xy \rangle_\rR.
$
In particular, we can identify $\rA/I$ with $\bar\rA$, $\rR/I$ with $\bar\rR$ and the canonical
ring homomorphism $\rA/I \to \rR/I$ with $\gamma$.
\end{itemize}
Let $\rK = \kk\llbrace z\rrbrace$, $\rL = \kk\llbrace z^2\rrbrace$, $\bD = \kk\llbracket z\rrbracket$ and
$\idm  = z \kk\llbracket z\rrbracket$. Then
$\rQ(\bar\rA) \cong  \rL_1 \times \rL_2$ and $\rQ(\bar\rR) \cong \rK_1 \times \rK_2$,
where $\rL_i = \rL$ and $\rK_i = \rK$ for $i = 1,2$.
An  object $\mT$  of the category of triples $\Tri(\rA)$ has the following form:
$\mT = (\widetilde\mM, \mV, \theta)$, where $\widetilde\mM \cong \rR^m$,
$\mV \cong \rL_1^{p_1} \oplus \rL_2^{p_2}$ and the gluing map $\theta$ is given
by  a pair of matrices $(\Theta_1, \Theta_2)$, where
$\Theta_i \in \Mat_{m \times p_i}(\rK)$ for $i = 1, 2$.
Additionally,
\begin{itemize}
\item $\Theta_1$ and $\Theta_2$  have full row rank.
\item If
$
\Theta_i = \Theta'_i + z  \Theta''_i
$
with $\Theta'_i, \Theta''_i  \in \Mat_{m \times p_i}(\rL)$ then $
\left(
\begin{array}{c}
\Theta'_i \\
\hline
\Theta''_i
\end{array}
\right) \in \Mat_{2m \times p_i}(\rL)
$
has full column rank, $i = 1, 2$.
\end{itemize}
The problem of classification
 of isomorphism classes of objects in $\Tri(\rA)$ reduces to the following matrix problem:
\begin{equation}
\bigl(\Theta_1, \Theta_2\bigr) \mapsto \bigl(S_1^{-1} \Theta_1 T_1, S_2^{-1} \Theta_2 T_2\bigr),
\end{equation}
where $S_1, S_2 \in \GL_m(\bD)$ are such that
$S_1(0) = S_2(0)$ and $T_i \in \GL_{p_i}(\rL)$ for $i = 1, 2$.

\begin{proposition}\label{P:computesemichains} Up to  isomorphism, there exist only the following
 maximal Cohen--Macaulay $\rA$--modules of rank one (written as ideals in $\rA$):
\begin{equation}\label{E:formcanoniqueRg1}
\begin{array}{|c|c||c|}
\hline
\Theta_1 & \Theta_2 & \mbox{\textsf{Module}} \\
\hline
(1) & (1) & \rA \\
(z) & (z) & (w^2, a, b) \\
(z) & (1) & (w^2, a, vw) \\
(1)& (z) & (w^2, b, uw) \\
\hline
\hline
(1 \, z) & (1 \, z) & \rR \cong I \\
(1) & (1 \, z) & (w, b) \\
(1 \, z) & (1) & (w, a) \\
(z) & (1 \, z) & (w^2, a, b, vw) \\
(1 \, z) & (z) & (w^2, a, b, uw) \\
\hline
\end{array}
\end{equation}
Moreover, the modules in the upper part of the table are locally free on the punctured spectrum,  whereas
the ones
in the lower part are not. In particular, up to isomorphism
there exist  \textsl{only finitely many} maximal  Cohen--Macaulay $\rA$--modules of rank one.
\end{proposition}

\begin{proof}
It is not difficult to see that in rank one,
there are only those  possibilities
for the canonical forms of $\Theta_1$ and $\Theta_2$, which are presented in the Table
(\ref{E:formcanoniqueRg1}). Recall, that the condition for  $\Theta_1$ and $\Theta_2$ to be  square and invertible is equivalent to the statement  that the corresponding maximal Cohen--Macaulay module is locally free on the punctured spectrum.

Let us consider in details the case $\Theta_1 = (z)$ and $\Theta_2 = (1)$.
First note that this pair is equivalent to $\bar\Theta_1 = (z)$ and $\bar\Theta_2 = (z^2)$.
Let $\tilde\theta: \bar\rA \rightarrow \bar\rR$ be the $\bar\rA$--linear map sending
$1$ to $x + y^2$. Then the induced $\rL \times \rL$--linear map
$$
\rL \times \rL = \rQ(\bar\rA) \otimes_{\bar\rA} \stackrel{\mathbbm{1} \times \tilde\theta}\lar \rQ(\bar\rA) \otimes_{\bar\rA} \bar\rR  \stackrel{\can}\lar \rQ(\bar\rR) = \rK \times \rK
$$
is given by the pair $(\bar\Theta_1, \bar\Theta_2)$. Consider the torsion free $\rA$--module $\mL$ given by the
pull--back diagram
$$
\xymatrix{
0 \ar[r] & I \ar[r] \ar[d]_{\mathsf{id}} & \mL \ar[r] \ar[d] & \bar\rA \ar[r] \ar[d]^{\tilde\theta} & 0 \\
0 \ar[r] & I \ar[r]  & \rR \ar[r]  & \bar\rR \ar[r]  & 0.
}
$$
Let $\mM = \mL^{\vee\vee}$. The reconstruction procedure tells that $\mM$ is the maximal
Cohen--Macaulay $\rA$--module corresponding to the triple
$\Bigl(\rR, \rL_1 \oplus \rL_2, \bigl((z), (1)\bigr)\Bigr)$. We have:
$$
\rR \supset \mL := \Bigl(\bigl(w, a, b\bigr), \frac{a}{w} + v \Bigr)_{\rA}
\xrightarrow{w \cdot} \bigl(w^2, wa, wb, a + vw\bigr)_{rA}.
$$
It is easy to see that $a \in \rR$ has the property that $\idm \cdot a \in \mL$. Hence,
$a$ belongs to the Macaulayfication of $\mL$. It is not difficult to see that
$$
\mM = (a)_{\rA} + \mL = \bigl(w^2, a, vw\bigr)_{\rA} \subset \rA
$$
is maximal Cohen--Macaulay. This proves the result.
All remaining cases can be treated is a similar way. Proposition is proven.
\end{proof}

\begin{remark}\label{R:SemichainsContSeries}
First one--parameter families of indecomposable maximal Cohen--Macaulay
$\rA$--modules arise in rank two. Consider for example the following pair of matrices
\begin{equation}
\Theta_1 = \left(
\begin{array}{cc}
1 & z \\
z^m & 0
\end{array}
\right)
\quad \mbox{and} \quad
\Theta_2 = \left(
\begin{array}{cc}
1 & z \\
\lambda z^n & 0
\end{array}
\right),
\end{equation}
where $m, n \in \mathbbm{Z}_{>0}$ and $\lambda \in \kk^*$. They define an indecomposable
maximal Cohen--Macaulay $\rA$--module $\mM\bigl((n, m), \lambda\bigr)$ of rank two, which is locally
free on the punctured spectrum. Moreover, $\mM\bigl((n, m), \lambda\bigr) \cong \mM\bigl((n', m'), \lambda'\bigr)$
if and only if $n = n', m= m'$ and $\lambda = \lambda'$. Assume that  $n$ and $m$ are even:
$n = 2 \bar{n}$ and $m = 2 \bar{m}$. Consider the following  ideals in $\rA$:
$I_1 = (w^2, a, b)$ and $I_2 = (w, a, b)$. As  in the proof of Proposition \ref{P:computesemichains}
one can show that
\begin{equation}
\mM\bigl((n, m), \lambda) \cong
\left\langle
\left(
\begin{array}{c}
a_1 \\
a_2
\end{array}
\right) +
\left(
\begin{array}{c}
w  \\
u^{\bar{m}} + \lambda v^{\bar{n}}
\end{array}
\right) \, \Big| \, a_p \in I_p, p = 1, 2
\right\rangle \subset \rA^2.
\end{equation}

\end{remark}
\subsection{Other surface singularities of discrete and tame CM--representation type}
According to Buchweitz, Greuel and Schreyer, the singularity $D_\infty = \kk\llbracket x, y,
z\rrbracket/(x^2 y - z^2)$ has discrete Cohen--Macaulay representation type, see also
\cite[Theorem 5.7]{SurvOnCM}. This singularity does not belong to the class of surface
singularities introduced in Definition \ref{D:discrete-type}. This certainly means that the list
of surface singularities of discrete Cohen--Macaulay representation type given in  Definition \ref{D:discrete-type} is not exhaustive.

\begin{definition}\label{D:generaliz-of-Dtype}
Let $t \ge 1$ and $\underline{w} = \bigl((n_1, m_1), (n_2, m_2),\dots, (n_t, m_t)\bigr) \in (\mathbb{Z}^2)^t$ be a collection
of integers such that $0 \le m_i < n_i$,   and $\gcd(m_i, n_i) = 1$ for all $1 \le i \le t$.
Let $\rR_i = \rR(n_i, m_i) =  \kk\llbracket u_i, v_i\rrbracket^{C_{n_i, \, m_i}}  \subseteq
 \kk\llbracket u_i, v_i \rrbracket$ be the corresponding cyclic quotient singularity,
$J_i \subseteq \rR_i$ the ideal defined in Theorem \ref{T:Riemenschneider},
$\bar\rR_i = \rR_i/J_i \cong \kk\llbracket x_i, y_i\rrbracket/(x_i y_i)$
and $\pi: \rR \to \bar\rR_1 \times \dots \bar\rR_t$ to be the canonical projection.
Consider the ring  \begin{equation*}
\rC   := \kk\llbracket u, z_1,  \dots, z_{t-1}, v\rrbracket/\bigl(z_i z_j, 1 \le i < j \le
t-1;  u z_i, v z_i;  1 \le i \le t-1  \bigr).
\end{equation*}
\begin{itemize}
\item
Let $\gamma: \rC \to \bar\rR_1 \times \dots \times \bar\rR_t$ be the ring homomorphism
given by the rule:
$\gamma(u) = x_1^2$, $\gamma(z_i) = y_i + x_{i+1}$ for $1 \le i \le t-1$ and
$\gamma(z_{t}) = y_t$.
\item In a similar way,  let $\tilde\gamma:
\rC \to \bar\rR_1 \times \dots \times \bar\rR_t$ be
given by the rule:
$\gamma(u) = x_1^2$, $\gamma(z_i) = y_i + x_{i+1}$ for $1 \le i \le t-1$ and
$\gamma(z_{t}) = y_t^2$.
\item We set
$\rD(\underline{w})  = \pi^{-1}\bigl(\gamma(\rC)\bigr)$.
\item In a similar way, we define
$\widetilde{\rD}(\underline{w}) := \pi^{-1}\bigl(\tilde{\gamma}(\rC)\bigr)$.
\end{itemize}
\end{definition}

\noindent
Note that $\rD(1,0) \cong \kk\llbracket x, y, z\rrbracket/(x^2 y - z^2)$ is a hypersurface
singularity of type $D_\infty$, whereas
 $\widetilde{\rD}(1,0)$ was considered  in Subsection \ref{SS:sing-semichain}.

\begin{remark}
Let $\rA$ be of type $\rD(\underline{w})$ (respectively $\widetilde{\rD}(\underline{w})$).
The description of indecomposable objects of the category of triples $\Tri(\rA)$ leads
to some new matrix problem, somewhat analogous to ``representations of a bunch of
\textsl{semi--chains}'' in the sense of  \cite{bo1}.
It can be shown that this matrix problem has discrete representation type for  $\rA = \rD(\underline{w})$
and tame representation type for $\rA = \widetilde{\rD}(\underline{w})$. Details will be treated elsewhere.
\end{remark}

\subsection{On deformations of  certain non--isolated surface singularities}
In this subsection we state a
 conjecture  inspired by our study of maximal Cohen--Macaulay modules over surface singularities.
It can be formulated  in pure deformation--theoretic terms.

\begin{conjecture}
Let $\kk$ be an algebraically closed field of characteristic zero and
$X \stackrel{\pi}\longrightarrow B$ be a flat morphism of Noetherian schemes over $\kk$ of relative dimension two. For a closed point $b \in B$ denote by $X_b$ the scheme--theoretic fiber $\pi^{-1}(b)$.
Let $b_0 \in B$ be a closed point and $X_0 = X_{b_0} = \Spec(\rA)$ be the corresponding fiber.
Assume that for all closed points $b \in B\setminus \{b_0\}$  the scheme $X_b$ is normal.
Let $\underline{w} = \bigl((n_1, m_1), \dots, (n_t, m_t)\bigr)$ with $0 \le m_i < n_i$ and
$\gcd(n_i, m_i) = 1$ for $1 \le i \le t$.
\begin{itemize}
\item Assume that $\rA$ has a singularity   of type $\rA(\underline{w})$
or  $\rD(\underline{w})$. Then there exists on open neighborhood
$B'$ of $b_0$ such that for all $b \in B'\setminus \{b_0\}$  the surface $X_b$ has only quotient
singularities.

\item Assume $\rA$ is a degenerate cusp (i.e.~it is of type $\widetilde{\rA}(\underline{w})$).
Then there exists on open neighborhood
$B'$ of $b_0$ such that for all $b \in B'\setminus \{b_0\}$  the scheme  $X_b$ has only simple, simple elliptic  or cusp singularities.

\item Assume $\rA$ is of type
  $\widetilde{\rD}(\underline{w})$. Then there exists an open neighborhood
$B'$ of $b_0$ such that for all $b \in B'\setminus \{b_0\}$  the scheme  $X_b$ has only quotient
or log--canonical singularities.
\end{itemize}
\end{conjecture}
The evidence for this conjecture is the following. By results of Auslander \cite{Auslander}
and Esnault \cite{Esnault}
it is known that the quotient surface singularities are the only surface singularities
of finite Cohen--Macaulay representation type.
In particular, the representation-finite Gorenstein surface singularities are precisely
the simple hypersurface singularities.

By results of Kahn \cite{Kahn} and
Drozd, Greuel and Kashuba \cite{DGK} it is known that log--canonical surface singularities
have tame Cohen--Macaulay representation type. Moreover, conjecturally
these are the only tame normal surface singularities.   The semi--continuity conjecture
(known to be true in the case of   reduced curve singularities \cite{Knoerrer1,DrozdGreuel})
states  that the representation type can be only improved by a flat local deformation:
Cohen--Macaulay finite singularities   deform to   Cohen--Macaulay finite singularities,
Cohen-Macaulay discrete singularities  deform to  Cohen--Macaulay finite or discrete
singularities  and Cohen--Macaulay tame  singularities  can not deform to Cohen--Macaulay wild
singularities. This philosophy is confirmed by a result of
Esnault and Viehweg  stating  that the class of quotient surface singularities is  closed under
deformations \cite{EsnaultViehweg}.

\section{Appendix A: Category of triples in dimension one}\label{S:AppendixTriplesDimOne}

The goal of this section is to provide full details of a construction, which allows to reduce
 a description  of Cohen--Macaulay modules over a local Cohen--Macaulay ring $(\rA, \gm)$ of Krull
dimension \emph{one} to a matrix problem. It seems that for the first time  a construction of this kind
appeared in the  work of Drozd and Roiter \cite{DroRoi} and Jacobinski \cite{Jacob}.
Similar constructions  appeared in the works of Ringel and Roggenkamp \cite{RingelRoggenkamp},
 Green and Reiner \cite{GreenReiner}, Wiegand \cite{Wiegand}, Dieterich
\cite{DieterichInvent} and recent monograph of Leuschke and Wiegand \cite{LeusckeWiegand}.

Since this
construction
plays a key role in our approach to maximal Cohen--Macaulay modules over non--isolated surface singularities
and its presentation in all above references is  essentially  different from the one given below,
we have decided
to include  its detailed exposition in this appendix.

Let $(\rA, \gm)$ be a local Cohen--Macaulay  ring of Krull dimension one (not necessarily reduced),
$\rA \subseteq \rR$ be a finite ring extension such that
$\rR \subset \rQ(\rA)$, where $\rQ(\rA)$ is the total ring of fractions of
$\rA$. Note that $\rR$ is automatically Cohen--Macaulay.
Let  $I = \Ann_\rA(\rR/\rA)$ be the corresponding conductor ideal.
Typically, $\rA$ is supposed  to be reduced and
$\rR$ is  the normalization of $\rA$. In that case, assuming
the completion $\widehat\rA$ to be   reduced, the ring extension
$\rA \subseteq \rR$ is automatically finite, see \cite[Chapitre 9, AC IX.33]{Bourbaki}.

\begin{lemma}\label{L:banal-stat}
In the notations as above, $I$ is a sub-ideal of the Jacobson's radical of $\rR$.
Moreover, the rings  $\bar{\rA} = \rA/I$ and
$\bar{\rR} = \rR/I$ have finite length.
\end{lemma}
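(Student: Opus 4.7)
The plan is to handle the two assertions in sequence. For the first, I would show $I \subseteq \gm$ and then propagate this upward to $\rR$ using that $\rA \subseteq \rR$ is finite. If $I \not\subseteq \gm$, then $I$ contains a unit $u \in \rA$; but $u \rR \subseteq \rA$ forces $\rR \subseteq \rA$, so $\rA = \rR$ and $I = \rA$ — a degenerate case in which $\bar\rA = \bar\rR = 0$ and the lemma is trivially true. Assuming $\rA \subsetneq \rR$, we have $I \subseteq \gm$. Now for any maximal ideal $\idn$ of $\rR$, the intersection $\idn \cap \rA$ is a prime ideal; because $\rR/\idn$ is a finite field extension of $\rA/(\idn \cap \rA)$ (since $\rR$ is finite over $\rA$), the ideal $\idn \cap \rA$ is maximal, hence equals $\gm$. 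Therefore $I \subseteq \gm = \idn \cap \rA \subseteq \idn$ for every maximal ideal $\idn$ of $\rR$, which gives $I \subseteq \rad(\rR)$.

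For the finite length statement, the key step is to exhibit a non-zero divisor of $\rA$ inside $I$. Since $\rR \subseteq \rQ(\rA)$ and $\rR$ is finitely generated as an $\rA$-module, pick generators $r_1,\dots,r_n$ of $\rR$. For each $r_i = b_i/a_i$ with $a_i$ a non-zero divisor of $\rA$, the product $a := a_1 \cdots a_n$ is again a non-zero divisor of $\rA$ (the set of non-zero divisors is multiplicatively closed) and satisfies $a \rR \subseteq \rA$, so $a \in I$. Since $\rA$ is a Noetherian local ring of Krull dimension one and $a$ is a non-unit non-zero divisor, $\rA/(a)$ is a local Noetherian ring of Krull dimension zero, hence Artinian, hence of finite length. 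As $(a) \subseteq I$, the quotient $\bar\rA = \rA/I$ is a further quotient of $\rA/(a)$ and therefore also of finite length.

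Finally, $\bar\rR$ is finitely generated as an $\bar\rA$-module (inherited from $\rR$ being finite over $\rA$), so it has finite length as an $\bar\rA$-module. Every ideal of $\bar\rR$ is in particular an $\bar\rA$-submodule, so the descending chain condition on $\bar\rA$-submodules of $\bar\rR$ forces the descending chain condition on $\bar\rR$-ideals, showing that $\bar\rR$ itself is Artinian and hence of finite length as a ring.

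I do not anticipate a serious obstacle: the only subtle point is the construction of the non-zero divisor inside $I$, which is standard once one uses the simultaneous clearing of denominators for a finite generating set of $\rR$. Both assertions are essentially formal once one recognizes (i) the lying-over property for finite extensions and (ii) that principal quotients by non-zero divisors drop Krull dimension by one in the Noetherian setting.
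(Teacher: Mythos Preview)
Your argument is correct. The first part (showing $I$ lies in the Jacobson radical of $\rR$) is essentially identical to the paper's proof: both use that $\idn \cap \rA = \idm$ for every maximal ideal $\idn$ of $\rR$ via finiteness of the extension, after noting $I \subseteq \idm$.

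For the finite-length statement, your route genuinely differs from the paper's. You construct an explicit non-zero divisor $a \in I$ by clearing denominators of a finite generating set of $\rR \subseteq \rQ(\rA)$, then use that $\rA/(a)$ is Artinian in Krull dimension one; this is elementary and uses only the definition of the conductor together with the hypothesis $\rR \subseteq \rQ(\rA)$. The paper instead localizes at each minimal prime $\idp$ of $\rA$: since $\rA$ is reduced, $\rA_\idp$ is a field, and using the identification $I \cong \Hom_\rA(\rR,\rA)$ one gets $I_\idp \ne 0$, hence $I_\idp = \rA_\idp$ and $\bar\rA_\idp = 0$; thus $\bar\rA$ is supported only at $\idm$ and has finite length. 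Your approach is more hands-on and avoids invoking the isomorphism $I \cong \Hom_\rA(\rR,\rA)$, while the paper's approach gives the slightly sharper information that $\bar\rA$ vanishes at every non-maximal prime without ever producing a specific element of $I$.
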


\begin{proof}
Let $\idn_1, \dots, \idn_t$ be the set of the maximal ideals of $\rR$.
Since the ring extension $\rA \subseteq \rR$ is finite, for any
$1 \le i \le t$ we have: $\idn_i \cap \rA = \idm$. Since $I$ is a proper ideal
in $\rA$, it is contained in $\idm$. Hence,
$I$ is contained in $\idn = \idn_1 \cap \idn_2 \cap \dots \cap \idn_t$, too.

Let $\idp$ be a prime ideal in $\rA$ of height zero. Then
$I_\idp = \Ann_{\rA_\idp}(\rR_\idp/\rA_\idp) = \rA_\idp$, thus
$\bar\rA_\idp = 0$. It implies that the associator of the $\rA$--module  $\bar\rA$ is
$\{\idm\}$, hence $\bar\rA$ has finite length. Since the extension $\bar\rA \subseteq \bar\rR$ is finite,
$\bar\rR$ has finite length, too.
\end{proof}

\begin{lemma}\label{L:prep-in-dim-one}
For a maximal Cohen-Macaulay $\rA$--module $\mM$ we denote by
$$
\widetilde\mM := \rR \otimes_\rA  \mM/\Gamma_{\{\idn\}}(\rR \otimes_\rA \mM), \;
\overline\mM := \bar\rA \otimes_\rA M \quad \mbox{and} \quad \widehat\mM := \bar\rR \otimes_{\rR} \widetilde\mM.$$ Then the following results are true.
\begin{enumerate}
\item
$\widetilde\mM$ is a maximal Cohen--Macaulay module over $\rR$;
\item  the canonical morphism
of $\rA$--modules
$ \mM \stackrel{\jmath_\mM}\lar  \widetilde\mM, \; m \mapsto [1 \otimes m]
$
is injective;
\item the canonical morphism
$ \overline\mM  \stackrel{\tilde{\theta}_\mM}\lar  \widehat\mM, \; \bar{a} \otimes m \mapsto \bar{a} \otimes \jmath_\mM(m)$ is injective
and the induced morphism $\bar\rR \otimes_{\bar\rA} \overline\mM \stackrel{{\theta}_\mM}\lar \widehat\mM,  \;
\bar{r} \otimes \bar{m} \mapsto \bar{r}\cdot \tilde\theta_\mM(\bar{m})$ is
surjective.
\end{enumerate}
\end{lemma}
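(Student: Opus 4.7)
The plan is to prove the three statements in the order (1), (2), then (3), with (2) feeding into the proof of (3).

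For (2), the injectivity of $\jmath_\mM$, I would use that $\mM$ is Cohen-Macaulay of dimension one over a reduced ring, hence $\mM$ is $\rA$-torsion-free, so the canonical map $\mM \to \rQ(\rA) \otimes_\rA \mM$ is injective. Since $\rR \subseteq \rQ(\rA)$, we have $\rQ(\rR) = \rQ(\rA)$, and the kernel of $\rR \otimes_\rA \mM \to \rQ(\rR) \otimes_\rR (\rR \otimes_\rA \mM) = \rQ(\rA) \otimes_\rA \mM$ is precisely $\tor_\rR(\rR \otimes_\rA \mM)$. Therefore $\widetilde\mM$ embeds into $\rQ(\rA) \otimes_\rA \mM$, and the composition $\mM \to \widetilde\mM \hookrightarrow \rQ(\rA) \otimes_\rA \mM$ equals the injective canonical map, forcing $\jmath_\mM$ to be injective.

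For (1), I would observe that $\rR$, being a finite extension of $\rA$ that sits inside $\rQ(\rA)$, is a reduced Noetherian ring of Krull dimension one. A reduced ring has no embedded primes, so each localization of $\rR$ at a maximal ideal is a reduced local ring of dimension $\le 1$ and hence Cohen-Macaulay; thus $\rR$ itself is Cohen-Macaulay. By construction $\widetilde\mM$ is $\rR$-torsion-free, and over a one-dimensional Cohen-Macaulay ring this is equivalent to being Cohen-Macaulay.

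For (3), the crucial point is to identify $I\widetilde\mM$ with (the image of) $I\mM$. The conductor $I$ is simultaneously an ideal of $\rA$ and of $\rR$, and in particular $I \cdot \rR = I$. Using this equality I can write $I \cdot (\rR \otimes_\rA \mM)$ as the image of the canonical map $I \otimes_\rA \mM \to \rR \otimes_\rA \mM$, which coincides with $1 \otimes I\mM$. Passing to the quotient $\widetilde\mM$ shows that the restriction of $\jmath_\mM$ gives a surjection $I\mM \twoheadrightarrow I\widetilde\mM$; combined with the injectivity proved in (2), this is in fact an isomorphism $\bar\jmath_\mM: I\mM \xrightarrow{\cong} I\widetilde\mM$. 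This fits into the commutative diagram with exact rows
\begin{equation*}
\xymatrix{
0 \ar[r] & I\mM \ar[r] \ar[d]^{\bar\jmath_\mM}_\cong & \mM \ar[r] \ar[d]^{\jmath_\mM} & \overline\mM \ar[r] \ar[d]^{\tilde\theta_\mM} & 0 \\
0 \ar[r] & I\widetilde\mM \ar[r] & \widetilde\mM \ar[r] & \widehat\mM \ar[r] & 0,
}
\end{equation*}
and a snake lemma argument yields $\ker(\tilde\theta_\mM) = 0$. The surjectivity of $\theta_\mM$ is then immediate: every element of $\widehat\mM = \widetilde\mM/I\widetilde\mM$ is represented by a class $\overline{[r \otimes m]} = \bar r \cdot \tilde\theta_\mM(\bar m) = \theta_\mM(\bar r \otimes \bar m)$.

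The main obstacle is Step (3), specifically the identification $I\mM \cong I\widetilde\mM$; all other statements are essentially formal consequences of $\mM$ being torsion-free and $\rR$ being a one-dimensional reduced ring. The equality $I \rR = I$ (conductor as a bilateral ideal) is what makes the intermediate module $I\widetilde\mM$ ``see only'' $I\mM$ and not the larger $\rR$-module $\widetilde\mM$.
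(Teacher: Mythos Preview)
Your proof is correct and follows essentially the same route as the paper's own argument: torsion-freeness gives (1) and (2), the conductor identity $I\rR = I$ yields the isomorphism $\bar\jmath_\mM: I\mM \xrightarrow{\cong} I\widetilde\mM$, and the snake lemma applied to the same commutative diagram gives injectivity of $\tilde\theta_\mM$, while surjectivity of $\theta_\mM$ is the obvious epimorphism $\bar\rR \otimes_\rR (\rR \otimes_\rA \mM) \twoheadrightarrow \bar\rR \otimes_\rR \widetilde\mM$. The only cosmetic difference is that you spell out why $\rR$ is Cohen--Macaulay, which the paper leaves implicit.
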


\begin{proof} Since $\widetilde\mM$
has no $\idn$--torsion submodules,
 it
is maximal Cohen--Macaulay over $\rR$. Next,  $\mM$ is a
torsion free $\rA$--module, hence  $\ker(\jmath_\mM)$ is also torsion free. However, the
morphism
$$
\rQ(\rA) \otimes_\rA \mM \xrightarrow{1 \otimes \jmath_\mM} \rQ(\rR) \otimes_\rR \widetilde\mM
$$
is an isomorphism, hence $\ker(\jmath_\mM) = 0$. As a result, the morphism
$I \mM \stackrel{\bar{\jmath}_\mM}\lar  I \widetilde\mM$, which is a restriction
 of $\jmath_\mM$, is also injective. Moreover,
$\bar{\jmath}_\mM$ is also surjective: for any $a \in I, b \in \rR$ and $m \in \mM$ we have:
$a \cdot [b \otimes m] = [ab \otimes m] = [1 \otimes (ab) \cdot m]$ and $ab \in I$.

\medskip
\noindent
Next, we have the following commutative diagram with exact rows:
\begin{equation}
\begin{array}{c}
\xymatrix
{ 0 \ar[r] & I \mM  \ar[r] \ar[d]_{\bar{\jmath}_\mM} & M \ar[r] \ar[d]_{\jmath_\mM} & \overline{\mM}
\ar[d]^{\tilde{\theta}_\mM} \ar[r] & 0 \\
0 \ar[r] & I \widetilde\mM  \ar[r] & \widetilde\mM \ar[r] & \widehat\mM
\ar[r] & 0. }
\end{array}
\end{equation}
Since $\jmath_\mM$ is injective and $\bar{\jmath}_\mM$ is an isomorphism, by the snake lemma
$\tilde{\theta}_\mM$ is a monomorphism.
Finally, note that $\theta_\mM$ coincides with  the composition of canonical morphisms:
$$
\bar\rR \otimes_{\bar\rA} \bar{\rA} \otimes_\rA \mM
\lar \bar\rR \otimes_{\rR} {\rR} \otimes_\rA \mM  \lar
\bar\rR \otimes_{\rR} \bigl(\rR \otimes_\rA \mM/\mathsf{tor}_\rR(\rR \otimes_\rA \mM)\bigr),
$$
where the first morphism is an isomorphism and the second one is an epimorphism.
\end{proof}

\begin{definition}\label{D:triples-on-curves}
Consider the following
\emph{category of triples} $\Tri(\rA)$. Its objects
are the triples $(\widetilde\mM, V, \theta)$, where $\widetilde\mM$
is a maximal Cohen--Macaulay $\rR$--module, $V$ is a Noetherian
$\bar\rA$--module and
$\theta:  \bar\rR \otimes_{\bar\rA} V \to
\bar\rR \otimes_\rR \widetilde\mM$ is an epimorphism
of $\bar\rR$--modules such that the induced morphism
of $\bar\rA$--modules
$\tilde\theta: V \to  \bar\rR \otimes_{\bar\rA} V
\stackrel{\theta}\lar \bar\rR \otimes_\rR \widetilde\mM $
is an monomorphism.
A morphism between two triples $(\widetilde\mM, V, \theta)$
and $(\widetilde\mM', V', \theta')$ is given by a pair $(\psi, \varphi)$, where
$\psi: \widetilde\mM \to \widetilde\mM'$ is a morphism of $\rR$--modules and
$\varphi: V \to V'$ is a morphism of $\bar\rA$--modules such that
the following diagram of $\rA$--modules
$$
\xymatrix
{
\bar\rR \otimes_{\bar\rA} V \ar[rr] \ar[d]_{\mathbbm{1} \otimes \varphi} & &
\bar\rR \otimes_\rR \widetilde\mM \ar[d]^{\mathbbm{1} \otimes \psi}\\
\bar\rR \otimes_{\bar\rA} V' \ar[rr] & &
\bar\rR \otimes_\rR \widetilde\mM'
}
$$
is commutative.
\end{definition}

\begin{remark}
Note that the morphisms $\theta$ and $\tilde{\theta}$ correspond  to each other
under the canonical isomorphisms
$
\Hom_{\bar\rR}(\bar\rR \otimes_{\bar\rA} V, \bar\rR \otimes_\rR \widetilde\mM) \cong \Hom_{\bar\rA}(V, \bar\rR \otimes_\rR \widetilde\mM).
$
\end{remark}

\noindent
Definition  \ref{D:triples-on-curves} is motivated  by the following
theorem.

\begin{theorem}\label{T:BurbanDrozdindimOne}
The functor
$
\mathbb{F}: \CM(\rA) \lar  \Tri(\rA)
$
mapping a maximal Cohen--Macaulay module $\mM$ to the triple
$\bigl(\widetilde\mM, \overline{\mM},
\theta_\mM\bigr)$,  is an equivalence of categories.
Next, let $\mN$ be  a maximal Cohen--Macaulay $\rA$--module corresponding to a triple
$(\widetilde\mN, V, \theta)$. Then $\mN$ is free if and only if
$\widetilde\mN$ and $V$  are free
and
$\theta$ is an isomorphism.
\end{theorem}

\begin{proof}
We have to construct a functor $\GG: \Tri(\rA) \lar \CM(\rA)$, which is quasi--inverse to
$\FF$.  For
a triple $\mT = (\widetilde\mM, V, \theta)$ consider the canonical morphism
$\gamma := \gamma_{\widetilde\mM}:  \widetilde\mM \lar \widehat\mM := \bar\rR \otimes_\rR \widetilde\mM$
and define $\mN := \GG(\mT)$ by taking a kernel of the following morphism
in $\rA\mathsf{-mod}$:
$$
0 \lar \mN \xrightarrow
{\left(
\begin{smallmatrix}
- \imath \\
\pi
\end{smallmatrix}
\right)
} \widetilde\mM \oplus V \xrightarrow{(\gamma \; \widetilde\theta)} \widehat\mM \lar 0.
$$
Equivalently, we have a commutative diagram in the category of $\rA$--modules
\begin{equation}\label{E:FunctorG}
\begin{array}{c}
\xymatrix
{ 0 \ar[r] & I \widetilde\mM  \ar[r]^\alpha \ar[d]_{=} & \mN \ar[r]^\pi \ar[d]_\imath   & V
\ar[d]^-{\tilde{\theta}} \ar[r] & 0 \\
0 \ar[r] & I \widetilde\mM  \ar[r]^{\beta_{\widetilde{\mM}}} & \widetilde\mM
\ar[r]^{\gamma_{\widetilde{\mM}}} & \widehat\mM
\ar[r] & 0.
}
\end{array}
\end{equation}
In other words, $\mN = \gamma^{-1}\bigl(\mathrm{Im}(\tilde\theta)\bigr)$.

Since $\tilde\theta$ is a monomorphism, the snake
lemma implies that $\imath$ is a monomorphism, too. Moreover, $\widetilde\mM$ is torsion free
viewed as $\rA$--module, hence $\mN$ is torsion free as well.
From the definition of morphisms  in the category $\Tri(\rA)$ and the universal property
of a kernel it follows that
the correspondence $ \Tri(\rA) \in \mT \mapsto \mN \in \CM(\rA)$ uniquely extends on the
morphisms in $\Tri(\rA)$. Hence,
$\GG$ is a well-defined functor.

Let $\mM$ be a Noetherian $\rA$--module and $\pi_\mM: \mM \lar \overline\mM :=  \bar\rA \otimes_\rA \mM$ be the canonical morphism. Then we
have the short exact sequence
$$
0 \lar \mM \xrightarrow{
\left(
\begin{smallmatrix}
- \jmath_\mM \\
\pi_\mM
\end{smallmatrix}
\right)
} \widetilde\mM \oplus \overline\mM
\xrightarrow{(\gamma_{\widetilde\mM} \; \widetilde{\theta}_\mM)} \widehat\mM \lar 0,
$$
yielding  an isomorphism of functors $\xi: \mathbbm{1}_{\CM(\rA)} \lar \GG \circ \FF$.
In particular, this implies that $\FF$ is faithful.

Next, we show that  $\GG$ is faithful.
Let $\mT = (\widetilde\mM, V, \theta)$ and $\mT' =
(\widetilde\mM', V', \theta')$
be a pair of objects in $\Tri(\rA)$ and $(\varphi, \psi): \mT \to \mT'$ be a morphism
in $\Tri(\rA)$.  Let $\mN = \GG(T), \mN' = \GG(T')$ and $\phi = \GG\bigl((\varphi, \psi)\bigr)$.
Then we have a commutative diagram in the category of $\rA$--modules:
\begin{equation}\label{E:proof-of-maindimone}
\begin{array}{c}
\xymatrix{
0 \ar[r] & \mN \ar[r]  \ar[d]_{\phi} & \widetilde\mM \oplus V \ar[r]^-{
\left(\begin{smallmatrix} \gamma & \tilde\theta \end{smallmatrix}\right)} \ar[d]_{\left(\begin{smallmatrix} \psi & 0 \\ 0 & \varphi \end{smallmatrix}\right)} & \widehat\mM \ar[r]
\ar[d]^{\widehat\psi} & 0 \\
0 \ar[r] & \mN' \ar[r]         & \widetilde\mM' \oplus V' \ar[r]^-{\left(\begin{smallmatrix} \gamma' &
\tilde\theta' \end{smallmatrix}\right)}        & \widehat\mM' \ar[r] & 0.
}
\end{array}
\end{equation}
First note that $(\varphi, \psi) = 0$ in $\Tri(\rA)$ if and only if $\psi = 0$. Indeed, one direction is obvious. To show the second, assume  $\psi = 0$.  Then $\widehat{\psi} = 0$ and $\tilde\theta' \circ \varphi = 0$.
Since
$\tilde\theta'$ is a monomorphism, we have: $\varphi = 0$.

Next, a morphism of Cohen--Macaulay $\rR$--modules
$\psi: \widetilde\mM \to \widetilde\mM'$ is zero if and only if
$\mathbbm{1} \otimes \psi: \rQ(\rA) \otimes_\rA \widetilde\mM \to \rQ(\rA) \otimes_\rA \widetilde\mM'$
is zero in $\rQ(\rA)-\mod$.
Suppose  the morphism of triples $(\varphi, \psi): \mT \to \mT'$ is non--zero.
  Apply the functor $\rQ(\rA) \otimes_\rA \,-\,$ on the
diagram (\ref{E:proof-of-maindimone}). It follows that $\mathbbm{1} \otimes \phi \ne 0$, hence
$\GG\bigl((\varphi, \psi)\bigr) \ne 0$ as well. Hence, $\GG$ is faithful.

Since we have constructed an isomorphism of functors
 $\xi: \mathbbm{1}_{\CM(\rA)} \lar \GG \circ \FF$ and $\GG$ is faithful, it implies that
$\FF$ is also full.
 Hence, to prove that $\FF$ is an equivalence of categories,  it remains to check
that $\FF$ is essentially surjective. For this,  it is sufficient  to show  that for any object
$\mT  = (\widetilde\mM, V, \theta) \in \Tri(\rA)$ we have: $\mT \cong \FF \GG (\mT)$.

Let $\mN := \GG(\mT)$, so that we have the diagram (\ref{E:FunctorG}). In these notations, the
morphism $\imath: \mN \to \widetilde\mM$ restricts to the morphism
$\bar\imath: I\mN \to I\widetilde\mM$ such that the following diagram is commutative:
$$
\xymatrix
{
I\mN \ar[r]^\varepsilon \ar[d]_{\bar\imath} & \mN \ar[d]^\imath\\
I\widetilde\mM \ar[r]^{\beta_{\widetilde{M}}} & \widetilde \mM,
}
$$
where $\varepsilon$ is the canonical inclusion. From the equality
$\imath \alpha \bar\imath = \beta_{\widetilde{M}} \, \bar\imath  = \imath  \varepsilon$ and the fact that $\imath$ is a monomorphism, we conclude that $\alpha  \bar\imath = \varepsilon$. In particular, we obtain
the following commutative diagram with exact rows:
$$
\xymatrix
{
0 \ar[r] & I \widetilde\mN \ar[r]^{\beta_{\widetilde\mN}}  & \widetilde\mN \ar[r]^{\gamma_{\tilde\mN}} & \widehat\mN \ar[r] & 0 \\
0 \ar[r] & I \mN \ar[r]^{\varepsilon}
 \ar[u]^{\bar{\jmath}_\mN}  \ar[d]_{\bar{\imath}} & \mN \ar[r]^{\pi_\mN}
\ar[u]_{\jmath_{\mN}} \ar[d]^{=}
 & \bar\mN \ar[r] \ar[u]_{\tilde\theta_{\mN}} \ar[d]^\varphi & 0 \\
0 \ar[r] & I \widetilde\mM \ar[r]^{\alpha} \ar[d]_{=} & \mN \ar[r]^\pi \ar[d]^{\imath} & V \ar[r]
\ar[d]^{\tilde\theta} & 0 \\
0 \ar[r] & I \widetilde\mM \ar[r]^{\beta_{\widetilde\mM}}
 & \widetilde\mM \ar[r]^{\gamma_{\widetilde\mM}}  & \widehat\mM \ar[r] & 0.
}
$$
First observe that by the snake lemma, the morphism $\varphi$ is an epimorphism. Next,
there exists a unique  morphism of \emph{$\rR$--modules}  $\psi: \widetilde\mN \to
\widetilde\mM$ such that $\psi \circ \jmath_\mN = \imath$ in the category $\rA\mathsf{-mod}$.
This follows from the natural isomorphisms
$$
\Hom_\rA(\mN, \widetilde\mM) \cong \Hom_\rR(\rR \otimes_\rA \mN, \widetilde\mM)
\cong \Hom_\rR(\widetilde\mN, \widetilde\mM).
$$
Since the morphisms $\imath$ and $\jmath_\mN$ are rational isomorphisms (i.e. they become
isomorphisms after applying the functor $\rQ(\rA) \otimes_\rA \,-\,$) the map $\psi$ is a
rational isomorphism, too.  Hence, $\psi$ is a monomorphism.

Consider the morphism $\widehat\psi: \widehat\mN \to \widehat\mM$ induced by $\psi$. It is not difficult to see
that the following diagram is commutative:
\begin{equation}\label{D:diagfortriples}
\begin{tabular}{c}
\xymatrix
{
\bar{R} \otimes_{\bar{A}} \bar\mN \ar[rr]^-{\theta_\mN} \ar[d]_{\mathbbm{1} \otimes \varphi} & & \widehat\mN \ar[d]^{\widehat\psi} \\
\bar{R} \otimes_{\bar\rA} V \ar[rr]^-\theta & & \widehat\mM.
}
\end{tabular}
\end{equation}
Since the morphisms $\theta$ and $\varphi$ are epimorphisms, $\widehat\psi$ is
an epimorphism, too. Moreover, by Lemma \ref{L:banal-stat}, $I$ is a subideal of the Jacobson's radical of $\rR$.
Hence, by
Nakayama's lemma, $\psi$ is an epimorphism, hence an isomorphism.

Note, that the  map $I \widetilde\mN \stackrel{\bar\psi}\lar I \widetilde\mM$, which is a restriction
of $\psi$,  is
again an isomorphism. Since $\bar\imath = \bar\psi \circ \bar{\jmath}_\mN$ and $\bar{\jmath}_\mN$
is an isomorphism, $\bar\imath$ is an isomorphism, too. Hence, $\varphi$ is an isomorphism
as well.
The commutativity of the diagram (\ref{D:diagfortriples}) implies that
  we get the following  isomorphism
$
(\psi, \varphi):
(\widetilde\mN, \bar\mN, \theta_\mN) \to (\widetilde\mM, V, \theta)
$
in the category $\Tri(\rA)$.
 This
concludes the proof of the fact that $\FF$ and $\GG$ are quasi--inverse equivalences of categories.

It remains to characterize the triples corresponding to the images of free modules.
One direction is clear: if $\mM = \rA^n$ for some $n \ge 1$ then
$\FF(\mM) \cong (\rR^n, \bar{\rA}^n, \theta)$, where $\theta$ is given by the
identity matrix in $\Mat_{n \times n}(\bar\rR)$.

On the other hand, let
$\mT = (\rR^n, \bar\rA^n, \theta)$ be a triple,  such that  the morphism $\theta \in \Mat_{n \times n}(\bar\rR)$
is an isomorphism. First note the induced morphism $\widetilde\theta: \bar\rA^n \to \bar\rR^n$
is automatically injective.
Next, Nakayama's lemma implies that the canonical morphism $\GL_n(\rR) \to \GL_n(\bar\rR)$
is an epimorphism.  This means that $\theta$ can be lifted to an  isomorphism
$\psi: \rR^n \to \rR^n$ and we get an isomorphism of triples
$
(\psi, \mathbbm{1}): \bigl(\rR^n, \bar\rA^n, \mathbbm{1}\bigr) \to
\bigl(\rR^n, \bar\rA^n, \theta\bigr).
$
Hence, the triple $\mT$ belongs to the image of the functor $\FF$. Since $\FF$ and $\GG$ are quasi--inverse
equivalences of categories, this concludes the proof of the theorem.
\end{proof}

\begin{remark}
There also exists a global version of Theorem \ref{T:BurbanDrozdindimOne} describing vector bundles and torsion free sheaves on a singular curve $X$ in terms of vector bundles on the normalization
$\widetilde{X}$ and some gluing data, see \cite[Proposition 42]{DrozdGreuelBundles} as well as \cite[Theorem 1.3 and Theorem 3.2]{Thesis}. See also \cite[Section 3]{Survey}, \cite[Chapter 3]{vb} and \cite[Section 5.1]{BK4} for further elaborations as well as \cite[Theorem 4.2]{Duke} for a  generalization of this
construction on the (bounded from above) derived category of coherent sheaves of a singular curve.
\end{remark}

\section{Appendix B: Decorated conjugation problem}\label{S:DecoratedConjugation}
Starting with a work  \cite{DroRoi}, the technique of matrix problems began to play an important  role in various classification results (proofs of representation finiteness and tameness) in commutative algebra and algebraic geometry. For instance, proofs of the following results are essentially  based
on the representation theory of bunches of \linebreak (semi-)chains  \cite{bo}.

\begin{itemize}
\item  The  proof of tameness of the category of indecomposable torsion free  modules over curve singularities
$P_{pq} := \kk\llbracket x, y, z\rrbracket/(x^p + y^q - z^2, xy)$, where  $p, q \in \mathbb{N}_{\ge 2} \cup \{\infty\}$ and  $(p, q) \ne (2, 2)$, see \cite{semicont, BG}.
\item A description of indecomposable vector bundles and torsion free sheaves on Kodaira cycles of projective lines
\cite{DrozdGreuelBundles, Survey, vb} as well as of indecomposable coherent sheaves on them \cite{Duke}.
\item The  proof of tameness of the category of indecomposable torsion free  modules over the cusp curve singularities
$T_{pq}:= \kk\llbracket x, y\rrbracket/(x^p + y^q + x^2 y^2)$, where $\frac{1}{p} + \frac{1}{q} < \frac{1}{2}$.
\end{itemize}
On the other hand, it seems that the technique of  proofs of \cite{bo} is not  really known apart of  the community of people
working in the domain of representation theory of finite dimensional algebras. On our mind, it  requires further elaborations.
In this section, we want to illustrate some central ideas of \cite{nar,bo}  on  the following
 example, which  generalizes  the classical conjugation problem of square matrices (Jordan--Frobenius normal form).  In what follows,  we shall use the same notation as in Section \ref{bc1}.

\begin{definition}
We say  that two matrices $W, \tilde{W} \in \Mat_{n \times n}(\rK)$ are equivalent if there
exists a pair of matrices $S, T \in \GL_n(\bD)$ satisfying the condition
 $\bar{S} = \bar{T}$, such that
 $\tilde{W} = S W T^{-1}$. In what follows we shall write $W \simeq \tilde{W}$, whereas
 $\sim$ will denote the usual conjugacy equivalence over $\kk$. A description of the canonical
 form of a square matrix over $\rK$ with respect to $\simeq$ equivalence relation is called \emph{decorated conjugation problem}.
\end{definition}

\noindent
Being a rather special example of the category of representations of a decorated bunch of chains,
the decorated conjugation problem incorporates many key features of the general case treated in  Theorem \ref{list}.

\begin{definition}\label{D:decorJordanBlock}
Consider now  the following canonical forms.

\medskip
\noindent
1.~For any tuple $\bmu = (\mu_1, \dots, \mu_n) \in \mathbb{Z}^n$ consider the $(n+1) \times (n+1)$ matrix
\begin{center}
  \begin{tikzpicture}
\matrix [tbl5,text width=22pt, minimum height=22pt,  name=table] at(6,0)
{
0& t^{\mu_1} &0& \dots &0 \\
 0&0& t^{\mu_2} & \dots &0 \\
  \vdots  &  \vdots  &  \ddots  &  \ddots  &   \vdots   \\
0&0&0& \ddots & t^{\mu_n} \\
0&0&0&\dots&0\\
};
\sfrm{table}{5}{5}
\node[base left=2pt of table.west]{$S(\bmu) =$};
\end{tikzpicture}
\end{center}

\medskip
\noindent
2.~Similarly, let $\bnu = (\nu_1, \dots, \nu_n) \in \mathbb{Z}^n$ be a non-periodic sequence,
$m \in \mathbb{N}$ and $\xi \ne \pi(\xi) \in \kk[\xi]$ be an irreducible monic polynomial of degree $d$. Let $F \in \Mat_{md \times md}(\bD)$ be such that
$\bar{F}  = F(\pi^m) \in \Mat_{md \times md}(\kk)$, where $F(\pi^m)$  is the Frobenius block corresponding to the polynomial
$\pi^m$. Consider the following $dmn \times dmn$ matrix
\begin{center}
  \begin{tikzpicture}
\matrix [tbl5,text width=30pt, minimum height=30pt,  name=table] at(6,0)
{
0& t^{\nu_1}I &0& \dots &0 \\
 0&0& t^{\nu_2}I  & \dots &0 \\
  \vdots  &  \vdots  &  \ddots  &  \ddots  &   \vdots   \\
0&0&0& \ddots & t^{\nu_{n-1}}I \\
t^{\nu_{n}}F &0&0&\dots&0\\
};
\sfrm{table}{5}{5}
\node[base left=2pt of table.west]{$B(\bnu, m, \pi) =$};

\end{tikzpicture}
\end{center}
\noindent
where $I$ is the identity matrix of size $dm \times dm$. \qed
\end{definition}

\medskip
\noindent
The main goal of this section is to give a complete proof of the following result.

\begin{theorem}\label{T:JordanKronecker}
Let $W \in \Mat_{n \times n}(\rK)$. Then we have:
 \begin{equation}\label{E:decompDecJord}
W \simeq   \mathsf{diag}\bigl(B(\bnu_1, m_1, \pi_1), \dots, B(\bnu_t, m_t, \nu_t),  S(\bmu_1)
 \dots, S(\bmu_p)\bigr),
 \end{equation}
for certain data $(\bnu_1, m_1, \pi_1), \dots, (\bnu_t, m_t, \pi_t), \bmu_1, \dots, \bmu_p$ as above.
Moreover, the blocks $B(\bnu, m, \pi)$ and $S(\bmu)$ are indecomposable (they do not split further)
and  the following uniqueness results are true.
\begin{itemize}
\item The equivalence class of $B(\bnu, m, \pi)$ does not depend on the choice of a lift
$F$ of the Frobenius block $F(\pi^m)$. In fact, it only  depends  on the
conjugacy class of $\bar{F}$ over $\kk$. In particular, if $\kk$ is algebraically closed and
$\lambda \in \kk^*$ then the Frobenius block
$F\bigl((\xi-\lambda)^m\bigr)$ can be replaced by the Jordan block $J_m(\lambda)$.
\item $B(\bnu, m, \pi) \not\simeq S(\bmu)$ for any  data $(\bnu, m, \pi)$ and $\bmu$.
\item $S(\bmu) \simeq S(\bmu')$ if and only if $\bmu' = \bmu$.
\item For $\bnu = (\nu_1, \nu_2, \dots, \nu_n)$ set $\bnu^{(1)} = (\nu_n, \nu_1, \dots, {\nu}_{n-1})$ Then we have:
\begin{equation*}
B(\bnu^{(1)}, m, \pi) \simeq B(\bnu, m, \pi)
\end{equation*}
\item Moreover, $B(\bnu, m, \pi) \simeq B(\tilde\bnu, \tilde{m}, \tilde\pi)$ if and only if
$(\pi, m)  = (\tilde\pi, \tilde{m})$ and $\tilde\bnu = \bnu^{(l)}$ for some $l \in \mathbb{Z}$.
\item The decomposition (\ref{E:decompDecJord}) is unique up to automorphisms of direct summands  and
permutation of blocks (Krull--Schmidt property).
\end{itemize}
\end{theorem}

\begin{remark} Note the following statements about the above  canonical forms \linebreak
$B(\bnu, m, \pi)$ and $S(\bmu)$.
\begin{itemize}
\item The matrix $S(\bmu)$ can be viewed as $B(\bnu, 1, \pi)$ where $\pi = \xi$ is the  ``forbidden''
polynomial  and $\bnu = (\mu_1, \dots, \mu_n, \nu)$ for $\bmu = (\mu_1, \dots, \mu_n)$ and some $\nu \in \mathbb{Z}$. However, this identification is not natural from the point of view of  generalizations of the decorated conjugation problem
(like representations of  decorated bunches of chains).
\item If the word $\bnu$ is periodic then the corresponding matrix $B(\bnu, m, \pi)$ is decomposable
(at least, if $\mathsf{char}(\kk) = 0$).
\item Theorem \ref{T:JordanKronecker} is  a special case of Theorem \ref{list}. In the notation
of Example \ref{E:DecorConjugation},  the canonical form $B(\bnu, m, \pi)$ corresponds to the band object
$B\bigl((w, \rho), m, \pi\bigr)$, where
$$
  (w, \rho) = {}\lha f\sim e  \stackrel{\nu_1}- f \sim e
  \stackrel{\nu_2}-   \dots   \stackrel{\nu_{n-1}}- f \sim e  \str{\nu_n}{\rha}.
  $$
Similarly, the canonical form $S(\bmu)$ corresponds to the string object $S(w, \rho)$, where
$$
  (w, \rho) = f\sim e  \stackrel{\mu_1}- f \sim e
  \stackrel{\mu_2}-  \dots   \stackrel{\mu_{n}}- f \sim e.
  $$
  In the notation of Theorem \ref{list}, we can always start our $\dX$--word (cyclic or not) with $f \in \dF$, reducing the number of non--equal but equivalent canonical forms.
\end{itemize}
\end{remark}

\subsection{Some preparatory results from linear algebra} In this subsection we collect some elementary results from linear algebra, playing a key role in the proof of Theorem \ref{T:JordanKronecker}. All proofs are sometimes lengthy, but always straightforward, so we leave them for an interested reader.

\begin{lemma}
Let $Y \in \Mat_{m \times n}(\bD)$ be such that $\rk_{\kk}(\bar{Y}) = r$. Then there exist
$S \in \GL_{m}(\bD)$ and $T \in \GL_n(\bD)$ such that
\begin{equation*}
SYT^{-1} =
\left(
\begin{array}{cc}
I & 0 \\
0 & t Z
\end{array}
\right),
\end{equation*}
where $I = I_{r}$ is the identity matrix of size $r \times r$ and $Z \in \Mat_{(m-r) \times (n-r)}(\bD)$.
\end{lemma}

\begin{lemma}\label{L:firstreduct}
Let $Y \in \Mat_{m \times m}(\bD)$. Then the following results are true.
\begin{itemize}
\item Assume that $\bar{Y} \in \Mat_{m \times m}(\kk)$ is invertible. Then
$
Y \simeq \mathsf{diag}(Y_1, \dots, Y_n),
$
where $\bar{Y}_l = F(\pi_l^{r_l})$ for some irreducible polynomials $\xi \ne \pi_l \in \kk[\xi]$ and
$r_l \in \mathbb{N}$, $1 \le l \le n$.
\item Assume that
$$
\bar{Y} \sim \left(
\begin{array}{cc}
\bar{Z} & 0 \\
0 & \bar{N}
\end{array}
\right),
$$
where $\bar{Z}$ is a square  invertible matrix  and $\bar{N}$ a nilpotent matrix. Then
$$
Y \simeq \left(
\begin{array}{cc}
Y^{\ast} & 0 \\
0 &  Y^{\circ}
\end{array}
\right),
$$
where $Y^{\ast}$ and $Y^{\circ}$ are
such that  $\bar{Y}^{\ast} = \bar{Z}$ and $\bar{Y}^{\circ} = \bar{N}$.
\end{itemize}
\end{lemma}

\noindent
Lemma \ref{L:firstreduct} shows that the  description of the canonical form
of  $X \in \Mat_{m \times m}(\bD)$ with respect to the $\simeq$ equivalence relation reduces to the case when the matrix $\bar{X}$ is
nilpotent.

\medskip
\noindent
Next, recall  the following version of the Jordan normal form of a nilpotent matrix.

\begin{lemma}\label{L:FormedeJordan}
Let $N \in \Mat_{m \times m}(\kk)$ be a nilpotent matrix. Then we have:

\begin{center}
  \begin{tikzpicture}
[ dot/.style={fill=blue!10,circle,draw, inner sep=1pt, minimum size=1pt},
str/.style={inner sep=1pt, minimum size=0pt}
]

\matrix  [tbl5,  name=tbl,
minimum height=20pt,
text width=20pt%
] at (0,0)
{
\ddots & & \vdots &&  		~& ~		& \vdots                \\
~ 		& 0 & I_3 &0&		 0&0 		&0                \\
\dots 	&0& 0 & I_3 &		 0&0 		&0                \\
~   	&0& 0 & 0   &		 0&0 		&0                \\
~   	&0& 0 & 0   &		 0&I_2 		&0                \\
~ 	    &0& 0 & 0   &		 0&0 		&0                \\
\dots 	&0& 0 & 0   &		 0&0 		&0                \\
};

\node[fit=(tbl-1-5)(tbl-1-6), inner sep=0pt,text depth=3pt]{$\vdots$};
\node[fit=(tbl-5-1)(tbl-6-1), inner sep=0pt]{$\dots$};

\sfrm{tbl}{7}{7};

\foreach \x in {1,4,6}{
\draw[thick] ([xshift=-10pt] tbl-\x-2.south west) to (tbl-\x-7.south east);
\draw[thick] ([yshift=10pt] tbl-2-\x.north east) to (tbl-7-\x.south east);
}

\node[base left=5pt of tbl-4-1.west] {$N \sim N(\underline{m}) :=$};

\end{tikzpicture}
\end{center}

\noindent
where the sequence $\underline{m} = (m_n, \dots, m_1) \in \mathbb{Z}^n_{\ge 0}$ is uniquely determined by $N$, $m = n m_n + \dots + 2 m_2 + m_1$
and $I_l$ is the identity matrix of size $m_l \times m_l$ for all $1 \le l \le n$.
\end{lemma}

\noindent
From Lemma \ref{L:FormedeJordan} one can easily deduce the following result.

\begin{proposition}\label{P:reductiNilpCase}
Let $W \in \Mat_{m \times m}(\bD)$ be such that $\bar{W}$ is nilpotent. Then

\begin{center}
  \begin{tikzpicture}

\matrix (first) [tbl5,  name=tbl,
minimum height=20pt,
text width=20pt%
] at (0,0)
{
\ddots & & \vdots &&  		~& ~		& \vdots                \\
~ 		& 0 & I_3 &0&		 0&0 		&0                \\
\dots 	&0& 0 & I_3 &		 0&0 		&0                \\
~   	&0& 0 & 0   &		 0&0 		&0                \\
~   	&0& 0 & 0   &		 0&I_2 		&0                \\
~ 	    &0& 0 & 0   &		 0&0 		&0                \\
\dots 	&0& 0 & 0   &		 0&0 		&0                \\
};

\node[fit=(tbl-1-5)(tbl-1-6), inner sep=0pt,text depth=3pt]{$\vdots$};
\node[fit=(tbl-5-1)(tbl-6-1), inner sep=0pt]{$\dots$};

\sfrm{tbl}{7}{7};

\foreach \x in {1,4,6}{
\draw[ thick] ([xshift=-10pt] tbl-\x-2.south west) to (tbl-\x-7.south east);
\draw[thick] ([yshift=10pt] tbl-2-\x.north east) to (tbl-7-\x.south east);
}
\node[base left=5pt of tbl-4-1.west ]{$W \simeq W_0 + t Z =$};

\matrix (first) [tbl5,  name=tbl,
minimum height=20pt,
text width=20pt%
] at (6,0)
{
\ddots & & \vdots &&  		~& ~		& \vdots                \\
~ 		& 0 & 0 &0&		 0&0 		&0                \\
\dots 	&0& 0 & 0 &		 0&0 		&0                \\
~   	   &Z_{33}& 0 &0   &Z_{32} 		&0 		&Z_{31}                 \\
~   	&0& 0 & 0   &		 0&0 		&0                \\
~ 	    &Z_{23}& 0 & 0   &Z_{22} 	&0 		&Z_{21}                \\
\dots 	&Z_{13}& 0 & 0   &Z_{12}		&0 		&Z_{11}                \\
};

\node[fit=(tbl-1-5)(tbl-1-6), inner sep=0pt,text depth=3pt]{$\vdots$};
\node[fit=(tbl-5-1)(tbl-6-1), inner sep=0pt]{$\dots$};
\sfrm{tbl}{7}{7};

\foreach \x in {1,4,6}{
\draw[thick] ([xshift=-10pt] tbl-\x-2.south west) to (tbl-\x-7.south east);
\draw[thick] ([yshift=10pt] tbl-2-\x.north east) to (tbl-7-\x.south east);
}
\node[base left=3pt of tbl-4-1]{$+ t$};
\end{tikzpicture}
\end{center}

\noindent
where the block division of $Z$ is the same as of $W_0 = N(\underline{m})$.
\end{proposition}

\noindent
Finally, we shall need the following elementary but quite  useful result.
Let $R$ be any ring and $X \in \Mat_{m \times n}(R)$. For any $1 \le l \le m$ and $1 \le t \le n$
we denote by $X^{\sharp(l, t)}$ the $(m-1) \times (n-1)$--matrix obtained from $X$ by crossing out
its $l$-th row and $t$-th column.

\begin{lemma}[\textsl{Crossing--Out Lemma}]\label{L:crossingout}
Let $m, \check{m}, n, \check{n} \in \mathbb{N}$ and $1 \le \imath \le m$, $1 \le \check{\imath} \le \check{m}$,
$1 \le \jmath \le n$ and $1 \le \check\jmath \le \check{n}$. Next, let
$W \in \Mat_{m \times n}(R)$ (respectively,  $\check{W} \in \Mat_{\check{m} \times \check{n}}(R)$)
be such that all elements of the $\imath$-th row and $\jmath$-th column of $W$ (respectively,
of the $\check\imath$-th row and $\check\jmath$-th column of $\check{W}$) but $w_{\imath\jmath}$ (respectively $w_{\check{\imath}\check{\jmath}}$)
are zero. Let
$S \in \Mat_{\check{m} \times m}(R)$ and $T \in \Mat_{n \times \check{n}}(R)$ be such that
$
S W = \check{W} T.
$
Then we have the following equality:
\begin{equation}
S^{\sharp (\check{\imath}, \imath)} W^{\sharp ({\imath}, \jmath)} =
\check{W}^{\sharp (\check{\imath}, \check\jmath)} T^{\sharp (\check{\jmath}, \jmath)}
\end{equation}
\end{lemma}

\subsection{Reduction to the decorated chessboard problem}
The further strategy is the following: we shall apply only those transformations of $W$ which
do not ``spoil'' the canonical form of $W_0$ and preserve the block structure of $Z$ in the decomposition
from Proposition \ref{P:reductiNilpCase}. The following statement plays a key role in this reduction procedure.

\begin{proposition}\label{P:morpjJordForm}
Let $\underline{m} = (m_n, \dots, m_1), \underline{\tilde{m}} = (\tilde{m}_n, \dots, \tilde{m}_1) \in \mathbb{Z}^{n}_{\ge 0}$
be two collections of non-negative integers, $m = m_1 + 2 m_2 + \dots + n m_n$  and $\tilde{m} =
\tilde{m}_1 + 2\tilde{m}_2 +\dots + n \tilde{m}_n$. Let
\begin{equation}
T(\underline{m}, \underline{\tilde{m}}) :=\Bigl\{A \in {\Mat}_{\tilde{m} \times m}(\kk) \, \big| \,
A N({\underline{m}})
= N(\underline{\tilde{m}}) A\Bigr\}.
\end{equation}
Then this vector space has the following explicit description:

\begin{center}
\begin{tikzpicture}
\matrix (first) [tbl5,  name=tbl,
minimum height=20pt,
text width=20pt%
] at (0,0)
{
\ddots & & \dots &&  		~& ~		& \dots                \\
~ 		& A_{33}^1 &A_{33}^2 &A_{33}^3&		A_{32}^1&A_{32}^2 		&A_{31}^1                \\
\vdots 	&0& A_{33}^1 & A_{33}^2 &		     0&A_{32}^1  		&0                \\
~   	    &0& 0 & A_{33}^1 &  		 0&0 		&0                \\
~   	    &0& A_{23}^1 & A_{23}^2   &		 A_{22}^1&A_{22}^2 		&A_{21}^1                \\
~ 	    &0& 0 & A_{33}^1   &		 0&A_{22}^1 		&0                \\
\vdots 	&0& 0 & A_{13}^1   &		 0&A_{12}^1 		&A_{11}^1                \\
};

\node[fit=(tbl-1-5)(tbl-1-6), inner sep=0pt,text depth=3pt]{$\dots$};
\node[fit=(tbl-5-1)(tbl-6-1), inner sep=0pt]{$\vdots$};

\sfrm{tbl}{7}{7};
\foreach \x in {1,4,6}{
\draw[thick] ([xshift=-10pt] tbl-\x-2.south west) to (tbl-\x-7.south east);
\draw[thick] ([yshift=10pt] tbl-2-\x.north east) to (tbl-7-\x.south east);
}
\node[base left=5pt of tbl-4-1.west, yshift=-3pt]{$A =$};
\node[base right=15pt of tbl-4-7.east,yshift=-3pt] {  $A_{kl}^{(p)} \in {\Mat}_{\tilde{m}_k \times m_l}(\kk)$};
\node[base left=35pt of tbl-4-1.west,yshift=-3pt]{$T(\underline{m}, \underline{\tilde{m}}) =$};

\draw[thick]
      (  [xshift=10pt] tbl-1-7.north east) --  ( [xshift=10pt]tbl-7-7.south east) ;

\draw[thick,decorate,decoration={ brace, amplitude=4pt}]
      (  [xshift=125pt] tbl-1-7.north east) --  ( [xshift=125pt]tbl-7-7.south east) ;

\draw[thick,decorate,decoration={ brace, amplitude=4pt}]
      ( [xshift=-30pt]tbl-7-1.south west) --  (  [xshift=-30pt] tbl-1-1.north west) ;

\end{tikzpicture}
\end{center}
\end{proposition}

\begin{proof}
Straightforward computation, see for example \cite[Section VIII.2]{Gantmacher}.
\end{proof}

\medskip
\noindent
For $\underline{p} = (p_1, \dots, p_n)$ and $p = p_1 + \dots + p_n$ consider the following parabolic
subalgebras ${P}^\pm(\underline{p})$ of the matrix algebra  $\Mat_{p \times p}(\kk)$:

\begin{center}
  \begin{tikzpicture}
  \matrix (first) [tbl5,  name=tbl,
minimum height=20pt,
text width=20pt%
] at (0,0)
{
B_{11}^+ 		& B_{12}^+ &\dots 		&B_{1n}^+           \\
0				   & B_{22}^+  &\dots		 &		 B_{2n}^+ 	       \\
\vdots &  \vdots	           & \ddots    &\vdots            \\
0				& 0	               &\dots		  	&		 B_{nn}^+     \\
};

\sfrm{tbl}{4}{4};

\node[base left=45pt of tbl-2-1.south west, yshift=-3pt]{$P^+(p)=$};
\node[base left=5pt of tbl-2-1.south west, yshift=-3pt]{$B^+=$};

\draw[thick,decorate,decoration={brace, amplitude= 4pt}]
( [xshift=-40pt]tbl-4-1.south west)  --  (  [xshift=-40pt] tbl-1-1.north west)  ;

\draw [thick]  ([xshift=10pt] tbl-1-4.north east) --  ( [xshift=10pt]tbl-4-4.south east) ;

\node [base right =10pt of tbl-2-4.south east, yshift=-3pt]{$B_{lk}\in \Mat_{p_l\times p_k}(\kk)$};
\draw[thick,decorate,decoration={ brace, amplitude=4pt}]
      (  [xshift=105pt] tbl-1-4.north east) --  ( [xshift=105pt]tbl-4-4.south east) ;

\end{tikzpicture}
\end{center}

\noindent
and $P^-(\underline{p})$ is the ``transpose'' of ${P}^+(\underline{p})$. Let
$$
D(\underline{p}) = \left\{(B^+, B^-) \in {P}^+(\underline{p}) \times {P}^-(\underline{p}) \left| B^{+}_{ll} = B^{-}_{ll}, 1\le l \le n\right.\right\}
$$
be the ``dyad'' of $P^{+}(\underline{p})$ and $P^{-}(\underline{p})$.

\begin{corollary}\label{E:RestrendomJordForm} Let $\underline{m} = (m_n, \dots, m_1) \in \mathbb{Z}^n_{\ge 0}$ and
$E(\underline{m}) := T(\underline{m}, \underline{m})$. Then the  map
\begin{equation}
R: \; E(\underline{m})  \lar D(\underline{m}), \;
A \mapsto (B^+, B^-),
\end{equation}
where $B^\pm$ are the following matrices

\begin{center}
  \begin{tikzpicture}
\matrix (first) [tbl5,  name=tbl,
minimum height=20pt,
text width=20pt%
] at (0,0)
{
\ddots &  \vdots &  \vdots	& \vdots                \\
\dots 	&A_{33}^1 		& 0 				 &0           \\
\dots 	&A_{23}^1		& A_{22}^1  		 &	0 	                \\
\dots 	&A_{13}^1		& A_{12}^1		 &A_{11}^1 		                \\
};

\sfrm{tbl}{4}{4};
\node[base left=5pt of tbl-2-1.south west, yshift=-5pt]{$B^-=$};

\matrix (first) [tbl5,  name=tbl,
minimum height=20pt,
text width=20pt%
] at (5,0)
{
\ddots &  \vdots &  \vdots	& \vdots                \\
\dots 	&A_{33}^1 		& A_{32}^1 		&A_{31}^1           \\
\dots 	&0				& A_{22}^1  		 &		 A_{21}^1 	                \\
\dots 	&0				& 0			  	&		 A_{11}^1 		                \\
};

\sfrm{tbl}{4}{4};
\node[base left=5pt of tbl-2-1.south west, yshift=-5pt]{$B^+=$};

\end{tikzpicture}
\end{center}

\noindent
is a surjective algebra homomorphism. Moreover, the induced algebra homomorphism
$$
\bar{R}: E(\underline{m})/\rad(E(\underline{m})) \lar
D(\underline{m})/\rad(D(\underline{m}))
$$
is an isomorphism.
\end{corollary}

\begin{proof}
The fact that $R$ is an algebra homomorphism follows from the following observation.
 Let $A \in E(\underline{m})$ be written as in Proposition \ref{P:morpjJordForm}.
 Then apart of the blocks from the ``diagonal mega--blocks'', either of two blocks of $A$, mirror to each other along the main diagonal, is zero.
Proofs of the remaining statements are straightforward.
\end{proof}

\begin{definition}\label{D:valuation}
For a matrix $ Y  \in \Mat_{m \times n}(\rK)$,  define its \emph{valuation}
$\mathsf{val}(Y)$ as the biggest
$\nu \in \mathbb{Z}$ such that $Y = t^\nu Y_{\diamond}$ for some ${Y}_{\diamond} \in \Mat_{m \times n}(\bD)$.
In particular, the  valuation of the zero matrix is  infinity.
\end{definition}

\noindent
To proceed with the further reduction, it is convenient to use the formalism of decorated bunches of chains, introduced in Section \ref{bc1}.
Let $\dJ$ (respectively $\dX$) be the decorated bunch of chains introduced in Example \ref{E:DecorConjugation} (decorated conjugation problem),
respectively in Example \ref{E:DecorChessBoard} (decorated chessboard problem), where the permutation
$\sigma$ is trivial.

\begin{definition}
For any $\nu \in \mathbb{Z}$ consider  the following full subcategory of $\Rep(\dX)$:
$$
\Rep\bigl(\dX(\nu)\bigr):= \Bigl\{W \in \Ob\bigl(\Rep(\dX)\bigr)\, \big| \, \mathsf{val}(W)\ge \nu\Bigr\}.
$$
\end{definition}

\noindent
Next, consider the  full subcategory of $\Rep(\dJ)$:
$$\Rep^{0}\bigl(\dJ(0)\bigr) :=
\Bigl\{W \in \Ob\bigl(\Rep(\dJ(0))\bigr)\, \big| \, \bar{W} \; \mbox{\rm is nilpotent}\Bigr\}.
$$  Finally,
let $\Rep^{0}_{\st}\bigl(\dJ(0)\bigr)$ be the full subcategory of $\Rep^{0}\bigl(\dJ(0)\bigr)$ consisting of standard objects,
i.e.~of matrices $W$ as in  Proposition \ref{P:reductiNilpCase}. For such  $W$   we set

\begin{center}
\begin{tikzpicture}
\matrix (first) [tbl5,  name=tbl,
minimum height=20pt,
text width=20pt%
] at (0,0)
{
\ddots &  \vdots &  \vdots	& \vdots                \\
\dots 	&Z_{33} 		& Z_{32} 		&Z_{31}           \\
\dots 	&Z_{23}				& Z_{22}  		 &		 Z_{21}	                \\
\dots 	&Z_{13}				& Z_{12}			 &		 Z_{11} 		                \\
};

\sfrm{tbl}{4}{4};
\node[base left=5pt of tbl-3-1.north west, yshift=-5pt]{$Z_{\mathsf{red}}:=$};
\end{tikzpicture}
\end{center}

\noindent
The next  result follows from  Lemma \ref{L:crossingout}, Proposition \ref{P:morpjJordForm} and Corollary
\ref{E:RestrendomJordForm}.

\begin{proposition}\label{P:secondReduct}
The map
$
R: \; \underline{\Rep}^{0}_{\st}\bigl(\dJ(0)\bigr) \lar \underline{\Rep}\bigl(\dX(0)\bigr), \,
W \mapsto  Z_{\mathsf{red}}$
extends to a $\bD$--linear  functor sending a morphism $(S, T)$ to $(S^{\sharp}, T^{\sharp\sharp})$. Here,
$S^{\sharp}$ (respectively  $T^{{\sharp}\sharp}$)  is the  matrix obtained from $S$ (respectively  $T$) by crossing out appropriate zero columns and rows according to the decomposition of $W_0$ in the presentation
as in Proposition \ref{P:reductiNilpCase}.
Moreover, the functor $R$  is a \emph{representation equivalence}, i.e.
\begin{itemize}
\item $R$ is essentially surjective.
\item For  $W, W' \in \Ob\Bigl(\underline{\Rep}^{0}_{\st}\bigl(\dJ(0)\bigr)\Bigr)$  we have:
$R(W) \cong R(W')$ if and only if $W \cong W'$.
\end{itemize}
\end{proposition}

\begin{remark} Note that the last two properties  imply that
$W \in \Ob\bigl(\underline{\Rep}^{0}_{\st}(\dJ)\bigr)$ is indecomposable if and only if $R(W)$ is indecomposable.
\end{remark}

\noindent
Proposition \ref{P:secondReduct} shows that the proof of Theorem \ref{T:JordanKronecker} reduces to a classification of ``indecomposable decorated chessboards''. At first glance it looks like a digression because  the new
matrix problem seems to be  more general (and  complicated) than the original one.
However,  it turns out to be an illusion. In the next subsection we shall see  that
the decorated  chessboard problem  is  ``self--reproducible'' in an appropriate  sense.

\subsection{Reduction procedure for the decorated chessboard problem}\label{SS:decbunchchains}
As in the previous subsection, let $\dX$ be the decorated bunch of chains from Example \ref{E:decChessBoard} with trivial permutation parameter $\sigma$.
Recall  from Subsection \ref{SS:mpConcrete} that an object of $\Rep(\dX)$ is given by the  data $W = \bigl(\sd, \{W_{pq}\}_{p, q \in \mathbb{N}}\bigr)$, where

\begin{itemize}
\item $\sd: \mathbb{N} \lar \mathbb{Z}_{\ge 0}, p \mapsto \sd_p$ is a function with finite support.
\item For any $p, q \in \mathbb{N}$,  $ W_{pq}$ is a matrix from
$\Mat_{\sd_p \times \sd_q}(\rK)$.
\end{itemize}
A morphism $W = \bigl(\sd,  \{W_{pq}\}\bigr) \lar
\check{W} = \bigl(\check\sd,  \{\check{W}_{pq}\}\bigr)$ is given
by a collection of matrices $(F_{\bullet}, G_{\bullet}) = \bigl(\{F_{uv}\}, \{G_{uv}\}\bigr)$ such that
$F_{uv}, G_{uv} \in \Mat_{\check{\sd}_u \times \sd_v}(\bD)$ for  $u, v \in \mathbb{N}$;
$\bar{F}_{uu} = \bar{G}_{uu}$ for all $u \in \mathbb{N}$ and
$F_{uv} \in \Mat_{\check{\sd}_u \times \sd_v}(\idm)$ for $u < v$ and
$G_{uv} \in \Mat_{\check{\sd}_u \times \sd_v}(\idm)$ for $u > v$,
satisfying  the following constraint:
\begin{equation}\label{E:triangularity}
\sum\limits_{l < p} F_{pl} W_{lq} + F_{pp} W_{pq} +
\sum\limits_{l > p} F_{pl} W_{lq} =
\sum\limits_{t > q} \check{W}_{pt} G_{tq} + \check{W}_{pq} G_{qq} +
\sum\limits_{t < q} \check{W}_{pt} G_{tq}.
\end{equation}
\begin{definition}
Consider the following ordering on $\mathbb{N} \times \mathbb{N}$:
$(p, q) < (p', p')$ if $p < p'$ or $p = p'$ and $q > q'$. Let  $W = \bigl(\sd, \{W_{\imath \jmath}\}\bigr)$
be an object of $\Rep(\dX)$. We say that the block $W_{pq}$  is minimal if
$\mathsf{val}(W_{p q}) \le \mathsf{val}(W_{p' q'})$ for all
$(p', q') \in \mathbb{N} \times \mathbb{N}$ and $\mathsf{val}(W_{p q}) < \mathsf{val}(W_{p' q'})$ for
$(p', q') <  (p, q)$.
\end{definition}
\begin{definition}
For any $(p, q) \in \mathbb{N} \times \mathbb{N}$,  let
$\Rep^{\le (p, q)}\bigl(\dX(\nu)\bigr)$ be the full subcategory of $\Rep\bigl(\dX(\nu)\bigr)$ consisting of those objects
$W$ for which the block $W_{pq}$ is  minimal.
\end{definition}

\noindent
\textbf{Convention}.  Let $Y \in \Mat_{m \times n}(\rK)$
be such that $\mathsf{val}(Y) \ge \nu$. In what follows we shall use the notation
$Y = t^\nu Y_{\diamond}$ for an appropriate  $Y_\diamond  \in \Mat_{m \times n}(\bD)$.


\medskip
\noindent
\textbf{Case 1}.  Suppose that $p \ne q$. Then there exist $F_{pp} \in \GL_{\sd_p}(\bD)$,
$G_{qq} \in \GL_{\sd_q}(\bD)$ such that
$$
(\tilde{W}_{pq})_\diamond := F_{pp} \bigl(W_{pq})_\diamond G_{qq}^{-1} =
\left(
\begin{array}{cc}
I & 0 \\
0 & t \Psi
\end{array}
\right)
$$
for some matrix $\Psi$ with coefficients over $\bD$.
 Moreover, the entire system of matrices $W = \bigl\{W_{\imath\jmath}\bigr\}$ can be transformed into the following ``standard'' form: $W = t^\nu W_{\diamond}$, where $W_{\diamond}$ is the following matrix:

 \begin{center}
   \begin{tikzpicture}

\matrix (first) [tbl5,  name=tbl,
minimum height=20pt,
text width=20pt,%
row 3/.style={minimum height=30pt},
row 5/.style={minimum height=40pt},
row 6/.style={minimum height=30pt},
column 3/.style={text width=30pt},
column 5/.style={text width=50pt},
column 6/.style={text width=30pt}
] at (0,0)
{
* 		& B_1		&B_2			&0 			&*				&*                \\
0		& 0 			& 0 			&I		 	&0 				&0                \\
*		& *		    &*		 	&0	 		&t\Psi          &*      \\
C_1		& D_1^1		&D_1^2		&0	 		&E_1            &G_1      \\
C_2		& D_2^1		&D_2^2		&0	 		&E_2            &G_2      \\
*		&F_1 		&F_2		    &0		 	&*		 		&*                \\
};

\sfrm{tbl}{6}{6};

\hdline{tbl}{4}{6};
\draw[dotted] (tbl-2-1.south west) -- (tbl-2-6.south east);
\hsline{tbl}{1}{6};\hsline{tbl}{3}{6};\hsline{tbl}{5}{6};

\vdline{tbl}{6}{2};
\draw[dotted] (tbl-1-4.north east) -- (tbl-6-4.south east);
\vsline{tbl}{6}{1};\vsline{tbl}{6}{3};\vsline{tbl}{6}{5};

\node[above=1pt of tbl-1-3.north]{$y_p$};
\node[above=1pt of tbl-1-5.north west, xshift=5pt]{$y_q$};

\node[below=1pt of tbl-6-2.south,red]{$y_p^1$};
\node[below=1pt of tbl-6-3.south, red]{$y_p^2$};

\node[base left=2pt of tbl-2-1.south west,yshift=-5pt]{$x_p$};
\node[base left=2pt of tbl-4-1.south west,yshift=-8pt]{$x_q$};

\node[base right=2pt of tbl-4-6.east,red]{$x_q^1$};
\node[base right=2pt of tbl-5-6.east,red]{$x_q^2$};
\end{tikzpicture}
 \end{center}

Let $\Rep^{\le (p, q)}_{\st}\bigl(\dX(\nu)\bigr)$ be the full subcategory of $\Rep^{\le (p, q)}\bigl(\dX(\nu)\bigr)$
consisting of objects in  the standard  form.
Clearly,
the embedding $\Rep^{\le (p, q)}_{\st}\bigl(\dX(\nu)\bigr) \hookrightarrow \Rep^{\le (p, q)}\bigl(\dX(\nu)\bigr) $ is an equivalence
of categories.

\begin{definition}
Consider a new decorated chessboard $\dX^{(p, q)} = \check\dX$ defined  as follows:
\begin{itemize}
\item $x_q \mapsto \bigl\{x_q^{(1)}, x_q^{(2)}\bigr\}$ and $y_p \mapsto \bigl\{y_p^{(1)}, y_p^{(2)}\bigr\}$,
\item  The new order on $\check\dE$ and $\check\dF$ is defined as follows:
$$
x_l \dec x_{q}^{(1)} \dec x_{q}^{(2)} \dec x_t
\;
\mbox{\rm and} \;
y_r \ced y_{p}^{(1)} \ced y_{p}^{(2)} \ced y_s \;\mbox{
for all} \; l < q < t \; \mbox{and} \; r < p < s.
$$
\item The new equivalence relation (defining an appropriate bijection between the column and row labels from Example \ref{E:decChessBoard}) is the following:
$$
x_p \sim y_p^{(2)}, y_q \sim x_q^{(2)} \quad \mbox{\rm and} \quad
x_q^{(1)} \sim y_p^{(1)}.
$$
The remaining equivalence relations in $\check\dX$ are the same as in $\dX$.
\end{itemize}
Let $W$ be an object of $\Rep^{\le (p, q)}_{\st}\bigl(\dX(\nu)\bigr))$ such that
$\overline{(W_{pq})_{\diamond}} \ne 0$.  Consider the object $W^{\sharp}$
of $\Rep(\check\dX)$
obtained from $W$ by the following operations.
\begin{itemize}
\item We cross out all rows and columns of $W$, containing the entry $1$  in the block ${W}_{pq}$.
\item Next,  we give new weights $x_q^{(1)}, x_q^{(2)}$ to the horizontal stripe $x_q$ and $y_p^{(1)}, y_p^{(2)}$ to the vertical  stripe $y_p$ induced by  the block division  of the matrix ${W}_{pq}$.
\end{itemize}
If $\overline{(W_{pq})_{\diamond}} =  0$ then we pose $W^\sharp = W$.
\end{definition}

\begin{example} For the object $W$ given in the example above we have
$W^\sharp = t^\nu W^\sharp_{\diamond}$, where $W^\sharp_{\diamond}$ is the following matrix:

\begin{center}
  \begin{tikzpicture}

\matrix [tbl5,  name=tbl,
minimum height=20pt,
text width=20pt,%
row 2/.style={minimum height=30pt},
row 4/.style={minimum height=40pt},
row 5/.style={minimum height=30pt},
column 3/.style={text width=30pt},
column 4/.style={text width=50pt},
column 5/.style={text width=30pt}
] at (7,0)
{
* 		& B_1		&B_2			 			&*				&*                \\
*		& *		    &*		 		 		&t\Psi          &*      \\
C_1		& D_1^1		&D_1^2			 		&E_1            &G_1      \\
C_2		& D_2^1		&D_2^2			 		&E_2            &G_2      \\
*		&F_1 		&F_2		    		 	&*		 		&*                \\
};

\sfrm{tbl}{5}{5};

\hsline{tbl}{1}{5};\hsline{tbl}{2}{5};\hsline{tbl}{3}{5};\hsline{tbl}{4}{5};
\vsline{tbl}{5}{1};\vsline{tbl}{5}{2};\vsline{tbl}{5}{3};\vsline{tbl}{5}{4};

\node[above=1pt of tbl-1-2.north]{$y_p^1$};
\node[above=1pt of tbl-1-3.north]{$y_p^2$};
\node[above=1pt of tbl-1-4.north]{$y_q$};
\node[base left=2pt of tbl-2-1.west]{$x_p$};
\node[base left=2pt of tbl-3-1.west]{$x_q^1$};
\node[base left=2pt of tbl-4-1.west]{$x_q^2$};

\end{tikzpicture}
\end{center}
\end{example}

\begin{proposition}\label{P:reductinPnotQ} The assignment
$
R^{pq}:\; \underline{\Rep}^{\le (p, q)}_{\st}\bigl(\dX(\nu)\bigr) \lar \underline{\Rep}^{<((p, q), \nu)}\bigl(\check\dX(\nu)\bigr),\;  W \mapsto W^\sharp
$
is a representation equivalence of categories, where $\Rep^{<((p, q), \nu)}\bigl(\check\dX(\nu)\bigr)$
is the full subcategory of $\Rep^{\le (p, q)}\bigl(\check\dX(\nu)\bigr)$ consisting of those objects $U$  for which
$\mathsf{val}(U_{pq}) > \nu$.
\begin{proof}
Let $(F_\bullet, G_\bullet): W \lar \tilde{W}$ be a morphism in $\Rep^{\le (p, q)}_{\st}(\dX)$.
Equality (\ref{E:triangularity}) implies:
$$
F_{pp}
\left(
\begin{array}{cc}
I & 0 \\
0 & t \Psi
\end{array}
\right)
=
\left(
\begin{array}{cc}
I' & 0 \\
0 & t \Psi'
\end{array}
\right) G_{qq}\; \mod \; \idm.
$$
Thus we have:
$
\bar{G}_{pp} = \bar{F}_{pp} =
\left(
\begin{array}{cc}
X & Y \\
0 & T
\end{array}
\right) \; \mod \; \idm$ and
$
\bar{F}_{qq} = \bar{G}_{qq} =
\left(
\begin{array}{cc}
X & 0 \\
Z & S
\end{array}
\right) \; \mod \; \idm
$
for appropriate matrices $X, Y, Z, T$ and $S$ over $\kk$.

By Crossing--Out Lemma \ref{L:crossingout},  the equality $F_\bullet W = \tilde{W} G_\bullet$
yields the equality
$
F_\bullet^{\sharp} W^{\sharp} = \tilde{W}^{\sharp} G_\bullet^{\sharp\sharp},
$
where $F_\bullet^{\sharp}$ (respectively $G_\bullet^{\sharp\sharp}$) is obtained from $F_\bullet$ (respectively $G_\bullet$) by crossing--out appropriate columns and rows.
Hence, we obtain a well--defined map
$$
R^{pq}:  \Hom_{\dX}(W, \tilde{W}) \lar \Hom_{\check\dX}(W^\sharp, \tilde{W}^\sharp),
\quad (F_\bullet, G_\bullet) \mapsto (F_\bullet^\sharp, G_\bullet^{\sharp\sharp}).
$$
Note that
$
R^{pq}: \; \underline{\Rep}^{\le (p, q)}_{\st}\bigl(\dX(\nu)\bigr) \lar \underline{\Rep}^{<((p, q), \nu)}\bigl(\check\dX(\nu)\bigr)
$
is indeed   a functor.

\medskip
\noindent
To show that $R^{pq}$ is a representation equivalence, it is sufficient to prove the following statement.
Let $\check{f}: W^\sharp \lar \tilde{W}^\sharp$ be an \emph{isomorphism} in $\underline{\Rep}^{<((p, q), \nu)}\bigl(\check\dX(\nu)\bigr)$.
Then there exists an isomorphism $f: W \lar \tilde{W}$ in $\underline{\Rep}^{\le (p, q)}_{\st}\bigl(\dX(\nu)\bigr)$ such that
$R^{pq}(f) = \check{f}$.
Since any isomorphism in $\underline{\Rep}(\check\dX)$ can be written as a composition of elementary transformations,
it suffices to prove liftability of an elementary transformation. However,  this property
 follows from a direct case-by-case
verification.
\end{proof}
\end{proposition}

\medskip
\noindent
\textbf{Case 2}.  Suppose that $p = q$. According to  Lemma \ref{L:firstreduct}, we have:
$
(W_{pp})_\diamond \simeq
\left(
\begin{array}{cc}
Z & 0 \\
0 & N
\end{array}
\right),
$
where  the matrix $\bar{Z}$ is  invertible and $\bar{N}$ is nilpotent. It is then clear, that
$W \simeq t^\nu V$, where $V$ is the following matrix:

\begin{center}
  \begin{tikzpicture}
\matrix (first) [tbl5,  name=tbl,
minimum height=20pt,
text width=20pt,%
row 2/.style={minimum height=30pt},
column 2/.style={text width=30pt},
] at (0,0)
{
~		&0		&~		&~\\
0		&Z		&0		&0\\
~		&0		&N		&~\\
~		&0		&~		&~\\
};

\sfrm{tbl}{4}{4};
\draw[dotted] (tbl-1-2.east) -- (tbl-4-2.east);
\draw[dotted] (tbl-2-1.south) -- (tbl-2-4.south);
\draw(tbl-1-1.south) -- (tbl-1-4.south); \draw(tbl-3-1.south) -- (tbl-3-4.south);
\draw(tbl-1-1.east) -- (tbl-4-1.east);\draw(tbl-1-3.east) -- (tbl-4-3.east);

\node[above=1pt of tbl-1-2.north east]{$y_p$};
\node[base left=2pt of tbl-2-1.south west]{$x_p$};
\end{tikzpicture}
\end{center}

\noindent
In particular, $t^\nu Z$ splits as a direct summand of $W$.
Thus, we may restrict ourselves on the full subcategory
$\Rep^{\le (p, p), \circ}\bigl(\dX(\nu)\bigr)$ of $\Rep^{\le (p, p)}\bigl(\dX(\nu)\bigr)$
consisting of those objects for which $\overline{(W_{pp})}_\diamond$  is a nilpotent matrix.
According to Proposition \ref{P:reductiNilpCase},
$
(W_{pp})_\diamond
 \simeq N + t Z,
$
where $N$ is the normal form  of $\overline{(W_{pp})}_\diamond$ as in Lemma \ref{L:FormedeJordan}. Then
$W \simeq t^\nu V$, where $V$ is the following matrix:

\begin{center}
  \begin{tikzpicture}

\matrix (first) [tbl5,  name=tbl,
minimum height=20pt,
text width=20pt,%
]
{
~		&~			&~		&0			&0					&~		&0			&~		&~	\\
~		&\ddots		&~		&\vdots		&\vdots				&~		&\vdots		&~		&~	\\
0		&\dots		&0		&I_3			&0					&0		&0			&0		&0	\\
0		&\dots		&0		&0			&I_3					&0		&0			&0		&0	\\
~		&~			&t Z_{33}		&0		&0					&t Z_{32}	&0			&t Z_{31}	&~	\\
0		&\dots		&0		&0			&0					&0		&I_2			&0		&0	\\
~		&~			&t Z_{23}		&0		&0					& t Z_{22}	&0			& t Z_{21}	&~	\\
~		&~			&t Z_{13}		&0		&0					&t Z_{12}	&0			&t Z_{11}	&~	\\
~		&~			&~			&0		&0					&~		&0			&~		&~ \\
};

\sfrm{tbl}{9}{9};

\draw(tbl-2-2.east) -- (tbl-9-2.north east);\draw(tbl-2-5.east) -- (tbl-9-5.north east);\draw(tbl-2-7.east) -- (tbl-9-7.north east);
\draw(tbl-2-2.south) -- (tbl-2-9.south west); \draw(tbl-5-2.south) -- (tbl-5-9.south west);\draw(tbl-7-2.south) -- (tbl-7-9.south west);

\draw[red](tbl-1-1.south) -- (tbl-1-9.south); \draw[red](tbl-8-1.south) -- (tbl-8-9.south);
\draw[red](tbl-1-1.east) -- (tbl-9-1.east);\draw[red](tbl-1-8.east) -- (tbl-9-8.east);

\node[above=1pt of tbl-1-5.north]{$y_p$};
\node[base left=2pt of tbl-5-1.south west]{$x_p$};

\end{tikzpicture}
\end{center}

\noindent
Let $\Rep^{\le (p, p), \circ}_{\st}\bigl(\dX(\nu)\bigr)$ be the full subcategory of $\Rep^{\le (p, p), \circ}\bigl(\dX(\nu)\bigr)$
consisting of objects having the  above standard form. Clearly,
the inclusion $\Rep^{\le (p, p), \circ}_{\st}\bigl(\dX(\nu)\bigr) \hookrightarrow \Rep^{\le (p, p), \circ}\bigl(\dX(\nu)\bigr)$
is an equivalence of categories.

\begin{definition}
Consider a new decorated chessboard $\dX^{(p,p)} = \check\dX$ defined as follows.
\begin{itemize}
\item The sets $\dE_\ast$ and $\dF_\ast$ are obtained by replacing
$$
x_p \mapsto \{ \dots \dec x_p^{(l)} \dec \dots \dec x_p^{(2)} \dec x_p^{(1)}\} \quad
\mbox{\rm
and} \quad y_p \mapsto \{ \dots \ced y_p^{(l)} \ced \dots \ced y_p^{(2)} \ced y_p^{(1)}\}.$$
Abusing the notation, we shall also write $x_p^{(1)} = x_p$ and $y_p^{(1)} = y_p$.
\item The new ordering in $\check\dX$ is defined as follows:
$$
x_m \dec x_p^{(l)} \dec x_n \quad
\mbox{\rm and} \quad
y_m \ced y_p^{(l)} \ced y_n
$$
for any $m < p < n \in \mathbb{N}$ and any $l \in \mathbb{N}$.
\item We have the equivalence relations $x_p^{(l)} \sim y_p^{(l)}$ for any
$l \in \mathbb{N}$ as well as all remaining old equivalences
$x_n \sim y_n$ for $n \ne p$.
\end{itemize}
Again, for any object $W$ of $\Rep^{\le (p,p), \circ}_{\st}\bigl(\dX(\nu)\bigr)$ let
$W^\sharp$ be the object of $\Rep(\check\dX)$ obtained from $W$ by the following operations.
\begin{itemize}
\item We cross out those rows of weight $x_p$ and columns of weight $y_p$ of the matrix $W$, which contain the entry $1$ in the normal form $N$.
\item
The survived  rows of stripe $x_p$ and columns of stripe $y_p$ get new labels $x_p^{(l)}$ and
$y_p^{(l)}$ according to the block division of the normal form of $N$ (i.e.~the upper label  is $l$ if the corresponding column/row corresponds to the nilpotent Jordan block $J_l(0)$).
\end{itemize}
\end{definition}

\begin{proposition}\label{P:reductinPisQ}
The map $R^{pp}: \, \underline{\Rep}^{\le (p,p), \circ}_{\st}\bigl(\dX(\nu)\bigr) \lar \underline{\Rep}^{((p, p), \nu)}\bigl(\check\dX(\nu)\bigr)$ is functorial, where $\underline{\Rep}^{((p, p), \nu)}\bigl(\check\dX(\nu)\bigr)$
is the full subcategory of $\underline{\Rep}\bigl(\check\dX(\nu)\bigr)$ consisting of those objects $V$ for which
$\mathsf{val}(V_{pp}) > \nu$.
Moreover, $R^{pp}$ is a representation equivalence of categories.
\end{proposition}

\begin{proof}
\noindent
All major  steps are basically the same  as in Proposition \ref{P:reductinPnotQ}. First,
we use the equality (\ref{E:triangularity}), Proposition \ref{P:morpjJordForm} and
Crossing--Out Lemma \ref{L:crossingout} to construct a  map
$$
R^{pp}:\; \Hom_{\dX}(W, \tilde{W}) \lar \Hom_{\check{\dX}}(W^\sharp, \tilde{W}^\sharp)
$$
for any pair of objects $W, \tilde{W}$ of $\Rep^{\le (p,p), \circ}_{\st}\bigl(\dX(\nu)\bigr)$.
The functoriality of $R^{pp}$ on the level of  stabilized bimodule categories is clear.
Finally, the claim that  for  any isomorphism $\check{f}: W^\sharp \lar \tilde{W}^\sharp$ in
$\underline{\Rep}^{((p, p), \nu)}\bigl(\check\dX(\nu)\bigr)$  there exists an isomorphism $f: W \lar \tilde{W}^\sharp$
in $\Rep^{\le (p,p), \circ}_{\st}\bigl(\dX(\nu)\bigr)$ such that $R^{pp}(f) = \check{f}$,
is essentially a consequence of Corollary \ref{E:RestrendomJordForm}.
\end{proof}

\subsection{Indecomposable representations of a decorated chessboard} Let $\dX$ be a bunch of chains from Example
\ref{E:decChessBoard}. In this  part, we describe indecomposable objects of $\Rep(\dX)$ (assume for simplicity of notation that the permutation $\sigma$ is trivial). Although the final answer can
be stated in completely elementary terms, for the proof it is convenient to use the formalism
 of strings and bands from  Subsection \ref{SS:StringsBands}.

Observe that any element of $\dX$ is tied. Hence, without loss of generality we may start any  $\dX$--word (cyclic or not)
with an element of $\dF$. Concretely, a decorated word defining a string representation has the form
\begin{equation}\label{E:chessString}
(w, \rho)= \;  y_{\jmath_1} \sim x_{\jmath_1} \stackrel{\mu_1}- y_{\jmath_2} \sim x_{\jmath_2} \stackrel{\mu_2}- \dots
 \stackrel{\mu_n}- y_{\jmath_{n+1}} \sim x_{\jmath_{n+1}}
\end{equation}
for any $n \in \mathbb{Z}_{\ge 0}$, $\jmath_1, \dots, \jmath_{n+1} \in \mathbb{N}$ and $(\mu_1, \dots, \mu_n) \in \mathbb{Z}^n$. Similarly, a cyclic decorated word defining a band representation has the form
\begin{equation}\label{E:chessBand}
(w, \rho)= \;   \lha y_{\imath_1} \sim x_{\imath_1} \stackrel{\nu_1}- y_{\imath_2} \sim x_{\imath_2} \stackrel{\nu_2}- \dots
 \stackrel{\nu_{n-1}}- y_{\imath_{n}} \sim x_{\imath_{n}} \stackrel{\nu_n}\rha
\end{equation}
where  $n \in \mathbb{N}$ and $\bigl((\imath_1, \nu_1), \dots, (\imath_{n}, \nu_n)\bigr)  \in
(\mathbb{N} \times \mathbb{Z})^n$ is a non--periodic sequence.
For a decorated word $(w, \rho)$ (respectively  for  a non--periodic cyclic decorated word
$(w, \rho)$, $m\in \mathbb{N}$ and an irreducible polynomial $\xi \ne \pi \in \kk[\xi]$) consider the objects $S(w, \rho)$
(respectively $B\bigl((w, \rho), m, \pi\bigr)$) given by the same matrix as in Definition \ref{D:decorJordanBlock} but
with an additional labeling of rows and columns with weights $x_{\jmath_1}, \dots, x_{\jmath_{n+1}}$ and
$y_{\jmath_1}, \dots, y_{\jmath_{n+1}}$ (respectively, with $x_{\imath_1}, \dots, x_{\imath_{n}}$ and
$y_{\imath_1}, \dots, y_{\imath_{n}}$).

\begin{example} Let $(w, \rho) = \lha y_1 \sim x_1 \stackrel{3}- y_1 \sim x_1 \stackrel{-2}- y_2 \sim x_2 \stackrel{0}-
y_1 \sim x_1 \stackrel{3}- y_2 \sim x_2 \stackrel{1}\rha$,
$m \in \mathbb{N}$, $\lambda \in \kk^*$ and $\pi = \xi-\lambda$.
Then we get the following canonical forms:
$B\bigl((w, \rho), m, \pi) =$

\begin{center}
\begin{tikzpicture}
\matrix (first) [tbl5,  name=tbl, minimum height=20pt,  text width=20pt,%
] at(0,0)
{
0		&t^3I			&0			&0		&0 		\\
0		&0				&t^{-2}I		&0		&0 		\\
0		&0				&0			&I		&0 		\\
0		&0 				&0			&0		&t^3I 	\\
tJ		&0 				&0			&0		&0 	\\
};
\sfrm{tbl}{5}{5};
\foreach \i in {1,2,4}
{
\node[above=1pt of tbl-1-\i.north]{$y_{1}$};
\node[base left=2pt of tbl-\i-1]{$x_1$};
}

\foreach \i in {3,5}
{
\node[above=1pt of tbl-1-\i.north]{$y_{2}$};
\node[base left=2pt of tbl-\i-1]{$x_2$};
}

\node[base right=7pt of tbl-3-5]{$\simeq$};
\matrix (first) [tbl5,  name=tbl,
minimum height=20pt,
text width=20pt,%
] at(5,0)
{
0		&t^3I			&0		&0			&0		\\
0		&0				&0		&t^{-2}I		&0		\\
0		&0				&0		&0			&t^3I 	\\
0		&0 				&I		&0			&0		\\
tJ		&0 				&0		&0			&0		\\
};

\sfrm{tbl}{5}{5}; \hsline{tbl}{3}{5}; \vsline{tbl}{5}{3};

\node[above=1pt of tbl-1-2.north]{$y_1$}; \node[above=1pt of tbl-1-4.north east]{$y_2$};
\node[base left=2pt of tbl-2-1]{$x_1$}; \node[base left=2pt of tbl-4-1.south west]{$x_2$};
\end{tikzpicture}
\end{center}

\noindent
where as usual, $I= I_m$ is the identity $m \times m$ matrix and $J = J_m(\lambda)$ is the Jordan block of size
$m \times m$ with eigenvalue $\lambda$.
\end{example}

\begin{lemma}\label{L:LocalAlgebra}
Let $p, m \in \mathbb{N}$, $\nu \in \mathbb{Z}$ and $\xi \ne \pi \in \kk[\xi]$ and irreducible polynomial.
Then the band object $W = B\bigl((w, \rho), m, \pi)$ with  $(w, \rho) = \lha y_p \sim x_p \stackrel{\nu}\rha$ is indecomposable.
Similarly, the string object $S(w, \rho)$ with
$(w, \rho) =
\underbrace{y_p \sim x_p \stackrel{\nu}- y_p \sim x_p \stackrel{\nu}- \dots \stackrel{\nu}- y_p \sim x_p}_{ m \;
\scriptsize{\mbox{\rm times}}}
$ is indecomposable as well.
\end{lemma}

\begin{proof} Let $W$ be an object as above.
It is sufficient to show that the endomorphism algebra $\Lambda := \End_{\dX}(W)$ is local. To prove this, observe that
$t \Lambda \subseteq \rad(\Lambda)$ and $\Lambda/t \Lambda = \kk[\xi]/(\pi^m)$ (where $\pi = \xi$ is the string case).
\end{proof}

\begin{lemma}\label{L:RotateBand}
For any band datum $\bigl((w, \rho), m, \pi)$ and $l \in \mathbb{Z}$ we  have an isomorphism
$$
B\bigl((w, \rho), m, \pi\bigr) \cong B\bigl((w^{(l)}, \rho^{(l)}), m, \pi\bigr),
$$
where $(w^{(l)}, \rho^{(l)})$ is the rotation of $(w, \rho)$ by $l$ positions.  In particular, we may always achieve that the
decoration of the exceptional edge takes the maximal value among all $\rho(\imath)$ for  $\imath \in \tau(w^+)$.
\end{lemma}

\begin{proof}
It is a straightforward linear algebra argument.
\end{proof}

\noindent
Now we have all ingredients to state and prove the main theorem of this section.
\begin{theorem}\label{T:ChessBoard} Let $\dX$ be a decorated chessboard  as above.
\begin{enumerate}
\item The string objects $S(w, \rho)$ and band objects $B\bigl((w, \rho), m, \pi\bigr)$ are indecomposable.
\item Moreover, any indecomposable object of $\Rep(\dX)$ is isomorphic to some string or band object.
\item We have: $B\bigl((w, \rho), m, \pi\bigr) \not\cong S(\check{w}, \check\rho)$ for any band datum $((w, \rho), m, \pi\bigr)$ and
string datum $(\check{w}, \check\rho)$.
\item Fixing the conventions (\ref{E:chessString}) and (\ref{E:chessBand}) (any $\dX$--word starts with a column element) we also get:
    \begin{enumerate}
    \item $S(w, \rho) \cong  S(w', \rho')$ if and only if $(w, \rho) = (w', \rho')$.
    \item $B\bigl((w, \rho), m, \pi\bigr) \cong  B\bigl((\tilde{w}, \tilde\rho), \tilde{m}, \tilde{\pi}\bigr)$ if and only if
    $(\pi, m) = (\tilde\pi, \tilde{m})$ and $(w, \rho)$ is a rotation of $(\tilde{w}, \tilde\rho)$.
    \end{enumerate}
\item A decomposition of an object of $\Rep(\dX)$ into a direct sum of indecomposable objects is unique up to a permutation
and automorphisms of indecomposable direct summands.
\end{enumerate}
\end{theorem}

\begin{proof} The main  ingredients are  provided by Propositions \ref{P:reductinPnotQ} and \ref{P:reductinPisQ}.

\medskip
\noindent
(1).~Let $W$ be either band or string object. According to Lemma \ref{L:LocalAlgebra}, the band object
$B\bigl((w, \rho), m, \pi\bigr)$ with $(w, \rho) = \lha y_p \sim x_p \stackrel{\nu}\rha$ is indecomposable for
any $p \in \mathbb{N}$, $\nu \in \mathbb{Z}$. In all other cases, $W$ belongs to some subcategory
$\Rep^{\le (p, q)} \bigl(\dX(\nu)\bigr)$ (with $p \ne q$) or $\underline{\Rep}^{\le (p,p), \circ} \bigl(\dX(\nu)\bigr)$, for
some $\nu \in \mathbb{Z}$. We assume that $\overline{(W_{pq})_{\diamond}} \ne 0$.
Recall that we have constructed representation equivalences
\begin{itemize}
\item $R = R^{pq}:\; \Rep^{\le (p, q)} \bigl(\dX(\nu)\bigr) \lar \Rep^{((p, q), \nu)}\bigl(\check\dX(\nu)\bigr)$ and
\item $R = R^{pp}: \, \underline{\Rep}^{\le (p,p), \circ} \bigl(\dX(\nu)\bigr) \lar \underline{\Rep}^{((p, p), \nu)}\bigl(\check\dX(\nu)\bigr)$ .
\end{itemize}
Then the following key  property is true:
\begin{equation}\label{E:InductionKeyStep}
R\Bigl(B\bigl((w, \rho), m, \pi)\Bigr) \cong B\bigl((\check{w}, \check\rho), m, \pi)\quad
\mbox{\rm and} \quad
R\bigl(S(w, \rho)\bigr) \cong S(\check{w}, \check\rho),
\end{equation}
where $(w, \rho)$ and  $(\check{w}, \check\rho)$  are related by the following rules.
\begin{itemize}
\item In the case $p \ne q$  replace, $x_q^{(2)}$ by $x_q$, $y_p^{(2)}$ by $y_p$ and
 every fragment $u \stackrel{\alpha}- y_p^{(1)} \sim x_q^{(1)} \stackrel{\beta}- v$ in $(\check{w}, \check\rho)$ by $u \stackrel{\alpha}- y_p \sim x_p \stackrel{\nu}- y_q \sim x_q \stackrel{\beta}- v$ to get  $(w, \rho)$.
\item In the case $p = q$, replace
 every fragment $u \stackrel{\alpha}- y_p^{(l)} \sim x_p^{(l)}\stackrel{\beta}-v$   in $(\check{w}, \check\rho)$ by  \linebreak $u \stackrel{\alpha}-  \underbrace{y_p \sim x_p
 \stackrel{\nu}-   \dots   \stackrel{\nu}- y_p \sim x_p}_{l \; \scriptsize\mbox{times}} \stackrel{\beta}-v$ to get $(w, \rho)$.
\end{itemize}
In the case of bands, we use  Lemma \ref{L:RotateBand} to move the Frobenius block $F$ from the matrix  $W_{pp}$ (if necessary). Now, the indecomposability of bands and strings follow  from Lemma \ref{L:LocalAlgebra} and formula
(\ref{E:InductionKeyStep}) by induction on the size of the matrix.

\smallskip
\noindent
(2).~Let $W \in \Rep(\dX)$ be an indecomposable object, $\nu = \mathsf{val}(W)$ and $(p, q) \in \mathbb{N}^2$ are such
that the block $W_{pq}$ is minimal.
Assume first that $p = q$ and $\overline{(W_{pq})_{\diamond}}$ is not nilpotent. Then
$W$ must contain some band $B\bigl((w, \rho), m, \pi\bigr)$ as a direct summand, where $(w, \rho) = \lha y_p \sim x_p \stackrel{\nu}\rha$. According to  Lemma \ref{L:LocalAlgebra}, the latter object is indecomposable, hence $W$ coincides with it.
Otherwise, consider the reduction functor $R^{pq}$. The result follows from the formulae (\ref{E:InductionKeyStep}) by induction.

\smallskip
\noindent
(3) and (4). These parts follow  from the formulae (\ref{E:InductionKeyStep}) and the fact that the reduction functor $R$ is a representation equivalence.

\smallskip
\noindent
(5) The Krull--Schmidt property follows from the observation that the endomorphism algebra of an indecomposable object of $\Rep(\dX)$ is local,  see  \cite[Chapter I.3.6]{ba}.
\end{proof}

\noindent
Finally,  it remains to observe that  Theorem \ref{T:JordanKronecker} is a special case of Theorem \ref{T:ChessBoard}.


\begin{thebibliography}{99}



\bibitem{AAEKO}
M.~Abouzaid, D.~Auroux, A.~Efimov, L.~Katzarkov and D.~Orlov, \emph{Homological mirror symmetry for punctured spheres},  J. Amer. Math. Soc. \textbf{26} (2013), no. \textbf{4}, 1051--1083.


\bibitem{ArtinVerdier}
M.~Artin and J.-L.~Verdier,
\emph{Reflexive modules over rational double points},
{Math. Ann.} \textbf{270} (1985), no. \textbf{1}, 79--82.


\bibitem{Atiyah}
M.~Atiyah, \emph{Vector bundles over an elliptic curve},
  Proc.\/ Lond.\/ Math.\/ Soc.\/ (3)  \textbf{7}  (1957)  414--452.



\bibitem{Auslander} M.~Auslander,
\emph{Rational singularities and almost split sequences},
{Trans. Amer. Math. Soc.} \textbf{293} (1986), no. \textbf{2}, 511--531.

\bibitem{PhilNotes}
M.~Auslander, \emph{Functors and morphisms determined by objects},
In Representation Theory of Algebras, Lecture Notes in Pure Appl. Math. \textbf{37},
(1978),  1--244.


\bibitem{Baciu} C.~Baciu,
 \emph{Maximal Cohen--Macaulay modules and stable vector bundles},   Computational
commutative and non-commutative algebraic geometry,  65--73, NATO Sci. Ser. III Comput.
Syst. Sci.,\textbf{196}   (2005).

\bibitem{ba}
H.~Bass,
\emph{Algebraic K-theory},  W. A. Benjamin, New York,
1968.



\bibitem{Survey} L.~Bodnarchuk, I.~Burban,  Yu.~Drozd and  G.-M.~Greuel,
  \emph{Vector bundles and torsion free sheaves on degenerations of elliptic
  curves},
Global aspects of complex geometry,  83--128, Springer, Berlin, 2006.


\bibitem{bo}
 V.~M.~Bondarenko,
 \emph{Bunches of semichained sets and their representations}, Preprint 88.60 of Institute
of Mathematics, Kiev, 1988.

\bibitem{bo1}
V.~M.~Bondarenko, \emph{
Representations of bundles of semichained sets and their applications},
{Algebra i Analiz}  \textbf{3}  (1991),  no. \textbf{5}, 38--61.


\bibitem{Bourbaki}
  N.~Bourbaki, \emph{\'El\'ements de math\'ematique. Alg\`ebre commutative. Chapitres 8 et 9},
Springer (2006).


 \bibitem{Brieskorn} E.~Brieskorn, \emph{Rationale Singularit\"aten komplexer
 Fl\"achen}, {Invent. Math.}  \textbf{4}  (1968),  336--358.


\bibitem{BrunsHerzog}
W.~Bruns and J.~Herzog,  \emph{Cohen--Macaulay Rings},
 Cambridge Studies in Advanced Mathematics 39,
 Cambridge Univ. Press, 1993.

\bibitem{Buchweitz} R.-O.~Buchweitz, \emph{Maximal Cohen--Macaulay modules and Tate--Cohomology over
Gorenstein rings}, Preprint 1987.


\bibitem{BGS}
R.~Buchweitz, G.-M.~Greuel and F.-O.~Schreyer,
\emph{
Cohen--Macaulay modules on hypersurface singularities  II},
{Invent. Math.} \textbf{88} (1987), no. \textbf{1}, 165--182.

\bibitem{Thesis} I.~Burban, \textit{Abgeleitete Kategorien und Matrixprobleme},
PhD Thesis, Kaiserslautern 2003, available at \url{https://kluedo.ub.uni-kl.de/files/1434/phd.pdf}.



\bibitem{bd}
 I.~Burban and Yu.~Drozd, \emph{
 Derived categories of nodal algebras}, J. Algebra 272 (2004) 46--94.

 \bibitem{Duke}
I.~Burban and  Yu.~ Drozd, \emph{Coherent sheaves on
rational curves with simple double points and transversal intersections},
  Duke Math. J.  \textbf{121}  (2004),  no. \textbf{2}, 189--229.

\bibitem{SurvOnCM} I.~Burban and  Yu.~Drozd,
\emph{Maximal Cohen--Macaulay modules over surface singularities},
  Trends in representation theory of algebras and related topics,  101--166, EMS Ser. Congr. Rep., Eur. Math. Soc.,  Z\"urich  (2008).



\bibitem{BG} I.~Burban and W.~Gnedin, \emph{Cohen--Macaulay modules over some non-reduced curve singularities}, arXiv:1301.3305.




\bibitem{BIKR}
 I.~Burban, O.~Iyama, B.~Keller and I.~Reiten,
\emph{Cluster tilting for one-dimensional hypersurface singularities},
 Adv. Math.  \textbf{217}  (2008),  no. \textbf{6}, 2443--2484.

\bibitem{BK4}
I.~Burban and B.~Kreu\ss{}ler,
\emph{Vector bundles on degenerations of elliptic curves and Yang--Baxter equations}, Memoirs of the AMS, vol. \textbf{220}, no. \textbf{1035} (2012).

\bibitem{CB}
W.~Crawley-Boevey,
\emph{Matrix problems and Drozd's theorem},  Topics in algebra,
Banach Center Publ.~\textbf{26}, Part \textbf{1}, 199--222 (1990).


\bibitem{CurtisReiner}
C.~Curtis and I.~Reiner, \emph{Methods of representation theory. Vol. I. With applications to finite groups and orders},    Wiley--Interscience Publication (1990).

\bibitem{deJongPfister} T.~de Jong and G.~Pfister,
\emph{Local Analytic Geometry},  Advanced Lectures in Mathematics, Vieweg  2000.


\bibitem{Dieterich}
E.~Dieterich, \emph{Tame orders},
In {Topics in Algebra, Part 1} (Warsaw, 1988), Banach Center Publ. \textbf{26}, Part 1, PWN, Warsaw, 1990,  233--261.

\bibitem{DieterichInvent}
E.~Dieterich,
\emph{Lattices over curve singularities with large conductor},
 Invent. Math. \textbf{114} (1993), no. \textbf{2}, 399--433.

\bibitem{DrozdLOMI}
Yu.~Drozd,
\emph{Matrix problems, and categories of matrices},
Investigations on the theory of representations,
Zap.~Nauchn.~Semin.~Leningr.~Otd.~Mat.~Inst.~Steklova \textbf{28}, 144--153 (1972).

\bibitem{dr}
Yu.~Drozd, \emph{
 Reduction algorithm and representations of boxes and algebras},
 Comtes Rendue Math. Acad. Sci. Canada, 23 (2001), 97--125.

\bibitem{DrozdSurvey}
Yu.~Drozd,
\emph{
Vector bundles and Cohen--Macaulay modules},
In {Representations of Finite Dimensional Algebras and Related Topics in
Lie Theory and Geometry},
Fields Inst. Commun., \textbf{40}, Amer. Math. Soc., Providence, RI, (2004), 189--222.

\bibitem{vb}
 Yu.~Drozd, \emph{Vector bundles over projective curves}, IMPA, Rio de Janeiro, 2008.





\bibitem{DrozdGreuel}
Yu.~Drozd and  G.-M.~Greuel,
\emph{
Tame-wild dichotomy for Cohen--Macaulay modules},
Math. Ann. \textbf{294} (1992), no. \textbf{3}, 387--394.




\bibitem{semicont}
Yu.~Drozd and G.-M.~Greuel,
\emph{
Cohen--Macaulay module type},
{Compositio Math.}  \textbf{89}  (1993),  no. 3, 315--338.


\bibitem{DrozdGreuelBundles}
Yu.~Drozd and  G.-M.~Greuel, \emph{Tame and wild projective curves and
classification of vector bundles},   J. Algebra  \textbf{246}  (2001),  no.
\textbf{1}, 1--54.





\bibitem{DGK}
Yu.~Drozd, G.-M.~Greuel and I.~Kashuba,
\emph{
On Cohen--Macaulay modules on surface singularities},
{Mosc. Math. J.} \textbf{3} (2003), no. 2, 397--418.


\bibitem{DroRoi} Yu.~Drozd and A.~Roiter,
\emph{Commutative rings with a finite number of indecomposable integral representations},
{Izv. Akad. Nauk SSSR Ser. Mat.}  \textbf{31}  (1967),  783--798.


\bibitem{Efimov}
A.~Efimov, \emph{Homological mirror symmetry for curves of higher genus},
 Adv. Math. \textbf{230} (2012), no. \textbf{2}, 493--530.

\bibitem{Eisenbud} D.~Eisenbud, \emph{Homological algebra on a
complete intersection, with an application to group representations},
 {Trans. Amer. Math. Soc.}  \textbf{260}  (1980), no. \textbf{1}, 35--64.



\bibitem{Esnault}
H.~Esnault,
\emph{
Reflexive modules on quotient surface singularities},
{J. Reine Angew. Math.} \textbf{362} (1985), 63--71.

\bibitem{EsnaultViehweg}
H.~Esnault and E.~Viehweg, \emph{Two--dimensional
quotient singularities deform to quotient singularities},
  {Math. Ann.}  \textbf{271}  (1985),  no. \textbf{3}, 439--449.








\bibitem{Gabriel}
P.~Gabriel, \emph{Des cat\'egories  ab\'eliennes},
{Bull. Soc. Math. France} \textbf{90} (1962), 323--448.

\bibitem{Galinat} L.~Galinat, \emph{Orlov's Equivalence and Maximal Cohen--Macaulay Modules over the
Cone of an Elliptic Curve},  arXiv:1302.1383, to appear in Math. Nachrichten.


\bibitem{Gantmacher}
F.~Gantmacher, \emph{The theory of matrices},  vol.~\textbf{1},
Providence, AMS Chelsea Publishing  (1998).

\bibitem{GSVerdier}
G.~Gonzalez--Sprinberg and J.-L.~Verdier, \emph{Construction g\'eom\'etrique de la correspondance de McKay},
Ann. Sci. \'Ecole Norm. Sup. (4) \textbf{16} (1983), no. \textbf{3}, 409--449 (1984).


\bibitem{GreenReiner}
E.~Green and  I.~Reiner, \emph{
Integral representations and diagrams},
Michigan Math. J. \textbf{25} (1978), no. \textbf{1}, 53--84.


\bibitem{GreuelKnoerrer}
G.-M.~Greuel and  H.~Kn\"orrer,
\emph{
Einfache Kurvensingularit\"aten und torsionsfreie Moduln},
Math. Ann. \textbf{270} (1985), no. \textbf{3}, 417--425.

\bibitem{GLS} G.-M.~Greuel, C.~Lossen and E.~Shustin,
\emph{Introduction to Singularities and Deformations},
 Springer Monographs in Mathematics. Springer, Berlin, 2007.


\bibitem{Jacob} H.~Jacobinski,
\emph{Sur les ordres commutatifs avec un nombre fini de r\'eseaux ind\'ecomposables},
{Acta Math.} \textbf{118} (1967), 1--31.




\bibitem{Herzog1}
J.~Herzog, \emph{Ringe mit nur endlich vielen Isomorphieklassen von maximalen, unzerlegbaren Cohen--Macaulay--Moduln},
{Math. Ann.}  \textbf{233}  (1978), no. \textbf{1}, 21--34.

\bibitem{IyamaReiten}
O.~Iyama and  I.~Reiten, \emph{Fomin--Zelevinsky mutation and tilting modules over Calabi--Yau algebras},
 Amer. J. Math. \textbf{130} (2008), no. \textbf{4}, 1087--1149.

\bibitem{IyamaYoshino}
O.~Iyama and Yu.~Yoshino,  \emph{Mutation in triangulated categories and rigid Cohen--Macaulay modules},
 Invent. Math. \textbf{172} (2008), no. \textbf{1}, 117--168.






\bibitem{KahnDiss}
 C.~Kahn, \emph{Reflexive Moduln auf einfach--elliptischen Fl\"achensingularit\"aten}, Dissertation,
   Bonner Mathematische Schriften  (1988).


\bibitem{Kahn}
C.~Kahn, \emph{Reflexive modules on minimally elliptic singularities},
{Math. Ann.}  \textbf{285}  (1989),  no. \textbf{1}, 141--160.




 \bibitem{KapLi} A.~Kapustin and Yi Li, \emph{Topological
correlators in Landau--Ginzburg models with boundaries},
 {Adv. Theor. Math. Phys.}  \textbf{7}  (2003),  no. \textbf{4}, 727--749.


\bibitem{KellerMurfetVdB}
B.~Keller, D.~Murfet and M.~Van den Bergh,  \emph{On two examples by Iyama and Yoshino},
 Compos. Math. \textbf{147} (2011), no. \textbf{2}, 591--612.

\bibitem{KhovanovRozansky}
M.~Khovanov and  L.~Rozansky, \emph{Matrix factorizations and link homology},
 Fund. Math. \textbf{199} (2008), no. \textbf{1}, 1--91.

\bibitem{Knoerrer1}
H.~Kn\"orrer, \emph{Torsionsfreie Moduln bei Deformation von Kurvensingularit\"aten},
Singularities, Representation of Algebras  and Vector Bundles,
Lecture  Notes Math. \textbf{1273} (1987), 150-155.


\bibitem{Knoerrer2}
H.~Kn\"orrer, \emph{Cohen--Macaulay modules on hypersurface singularities I},
Invent. Math \textbf{ 88}  (1987),  no. \textbf{1}, 153--164.

\bibitem{Kontsevich}
M.~Kontsevich, \emph{Homological algebra of mirror symmetry} Proceedings of the International Congress of Mathematicians, vol. 1 (Z\"urich, 1994), 120--139, Birkh\"auser (1995).


\bibitem{LeusckeWiegand} G.~Leuschke and R.~Wiegand, \emph{Cohen--Macaulay representations},
AMS Mathematical Surveys and Monographs \textbf{181} (2012).

\bibitem{nar}
L.~Nazarova and  A.~Roiter, \emph{On a problem of I.~M.~Gelfand},
Functional analysis and its applications {\bf 7}, (1973) no. \textbf{4},  54--69.





\bibitem{Orlov}
D.~Orlov, \emph{
Derived categories of coherent sheaves and triangulated categories of singularities},
in Algebra, arithmetic, and geometry. In honor of Y. I. Manin on the occasion of his 70th birthday,
 Vol. II,  Birkh\"auser, Progress in Mathematics \textbf{270}, 503--531 (2009).

\bibitem{OrlovOcCompletions}
D.~Orlov, \emph{
Formal completions and idempotent completions of triangulated categories of singularities},
 Adv. Math. \textbf{226} (2011), no. \textbf{1}, 206--217.

\bibitem{Popescu} N.~Popescu, \emph{Abelian categories with applications to rings and
modules}, London Mathematical Society Monographs, no. \textbf{3},
 Academic Press  (1973).




\bibitem{Riemenschneider}
O.~Riemenschneider, \emph{
Zweidimensionale Quotientensingularitaeten: Gleichungen und Syzygien},
{Arch. Math.} \textbf{37} (1981), 406--417.

\bibitem{RingelRoggenkamp}
C.-M.~Ringel and  K.~Roggenkamp, \emph{Diagrammatic methods in the representation theory of orders},
  J. Algebra  \textbf{60}  (1979), no. \textbf{1}, 11--42.


\bibitem{Schreyer}
F.-O.~Schreyer,  \emph{Finite and countable CM--representation type},
In {Singularities, Representation of Algebras, and Vector Bundles},
 Lecture  Notes Math. \textbf{1273} (1987), 9--34.

\bibitem{Seidel}
P.~Seidel, \emph{Homological mirror symmetry for the genus two curve},
 J. Algebraic Geom. \textbf{20} (2011), no. \textbf{4}, 727–769.


\bibitem{Serre} J.~-P.~Serre, \emph{Local Algebra}, Springer Monographs in Mathematics. Springer-Verlag,
Berlin (2000).

\bibitem{Sheridan} N.~Sheridan, \emph{On the homological mirror symmetry conjecture for pairs of pants},
 J. Differential Geom. \textbf{89} (2011), no. \textbf{2}, 271--367.



\bibitem{ShepherdBarron}
N.~Shepherd--Barron, \textit{Degenerations with numerically effective canonical divisor},
The birational geometry of degenerations, 33--84,
Progr. Math.  \textbf{29}, Birkh\"auser  (1983).

\bibitem{Stevens}
J.~Stevens,
\emph{
Degenerations of elliptic curves and equations for cusp singularities},
Math. Ann. \textbf{311} (1998), no. \textbf{2}, 199--222.

\bibitem{Stevens2}
J.~Stevens,
\emph{Improvements of nonisolated surface singularities},
  J. London Math. Soc. (2)  \textbf{39}  (1989),  no. \textbf{1}, 129--144.




\bibitem{vanStraten} D.~van Straten, \emph{Weakly  normal surface singularities and their improvements},
Dissertation, Leiden 1987.

\bibitem{VandenBergh}
M.~Van den Bergh, \emph{Three--dimensional flops and noncommutative rings},
 Duke Math. J. \textbf{122} (2004), no.~\textbf{3}, 423--455.

\bibitem{Wiegand}
R.~Wiegand, \emph{Noetherian rings of bounded representation type},
Commutative algebra,  497--516, Math. Sci. Res. Inst. Publ. \textbf{15},  1989.

\bibitem{Wunram}
J.~Wunram, \emph{
Reflexive modules on cyclic quotient surface singularities},
In {Singularities, Representation of Algebras and Vector Bundles},
 Lecture  Notes Math. \textbf{1273} (1987),
  221--231.




\bibitem{Yoshino}
Y.~Yoshino,
\textit{Cohen--Macaulay Modules over Cohen--Macaulay Rings},
 London Mathematical Society Lecture Note Series \textbf{146},
 Cambridge Univ. Press, 1990.

\bibitem{YoshinoKawamoto}
Y.~Yoshino and T.~Kawamoto,
\emph{The fundamental module of a normal local domain of dimension 2},
{Trans. Amer. Math. Soc.} \textbf{309} (1988), no. \textbf{1}, 425--431.







\end{thebibliography}
\end{document}